\documentclass[a4paper,11pt]{article}

\usepackage[T1]{fontenc}
\usepackage{lmodern}

\usepackage{dsfont}

\usepackage[latin1]{inputenc}
\usepackage{amsmath}
\usepackage{amsthm}
\usepackage{amssymb}
\usepackage{mathrsfs}
\usepackage{graphicx}
\usepackage[all]{xy}
\usepackage{hyperref}

\usepackage{makeidx}

\usepackage{stmaryrd}
\usepackage{caption}

\usepackage{abstract} 

\usepackage{cleveref}

\usepackage{xcolor}

\usepackage[toc,page]{appendix}

\newtheorem{thm}{Theorem}[section]
\newtheorem{cor}[thm]{Corollary}
\newtheorem{claim}[thm]{Claim}

\newtheorem{lemma}[thm]{Lemma}
\newtheorem{prop}[thm]{Proposition}

\theoremstyle{definition}
\newtheorem{definition}[thm]{Definition}
\newtheorem{ex}[thm]{Example}
\newtheorem{remark}[thm]{Remark}
\newtheorem{question}[thm]{Question}

\newtheorem{conj}[thm]{Conjecture}

\definecolor{oussamacomment}{rgb}{0.8,0.33,0}

\def\rquotient#1#2{%
	\makeatletter
	\raise.3ex\hbox{$#1$}/\lower.3ex\hbox{$#2$}%
	\makeatother
}	

\makeatletter
\newcommand{\subjclass}[2][2010]{%
	\let\@oldtitle\@title%
	\gdef\@title{\@oldtitle\footnotetext{#1 \emph{Mathematics subject classification.} #2}}%
}
\newcommand{\keywords}[1]{%
	\let\@@oldtitle\@title%
	\gdef\@title{\@@oldtitle\footnotetext{\emph{Key words and phrases.} #1.}}%
}
\makeatother

\newcommand{\Address}{{
		\bigskip
		\small
		
\noindent\textsc{UCLouvain\\ 
Institut de recherche en Math\'ematiques et physique\\
 Chemin du Cyclotron 2\\
1348 Louvain-la-Neuve (Belgium)}\par\nopagebreak
\noindent\textit{E-mail address}: \texttt{oussama.bensaid@uclouvain.be}
  
  	\bigskip
		\small
		
\noindent\textsc{University of Montpellier\\ 
Institut Math\'ematiques Alexander Grothendieck\\
Place Eug\`ene Bataillon\\
34090 Montpellier (France)}\par\nopagebreak
\noindent\textit{E-mail address}: \texttt{anthony.genevois@umontpellier.fr}
  
    \bigskip
		\small
		
\noindent\textsc{University of Paris-Cit\'e\\ 
Institut de Math\'ematiques de Jussieu-Paris Rive Gauche\\
Place Aur\'elie Nemours\\
75013 Paris (France)}\par\nopagebreak
\noindent\textit{E-mail address}: \texttt{romain.tessera@imj-prg.fr}

}}

\makeindex

\title{Coarse separation and splittings in right-angled Artin groups}
\date{\today}
\author{Oussama Bensaid, Anthony Genevois, and Romain Tessera}

\subjclass{Primary 20F65. Secondary 20F67, 20F69.}
\keywords{Right-angled Artin groups, splittings, coarse separation}

\begin{document}

\maketitle

\begin{abstract}
In this article, we characterise geometrically when a right-angled Artin group splits over an abelian subgroup. More precisely, given a finite graph $\Gamma$, we show that $A(\Gamma)$ splits over an abelian subgroup if and only if it is coarsely separable by a family of subexponential growth, which amounts to saying that $\Gamma$ is complete or separated by a complete subgraph. 
\end{abstract}

\tableofcontents

\section{Introduction}
Separation phenomena in groups have long been central in geometric group theory, as they frequently signal the existence of algebraic splittings. A foundational result is Stallings' theorem \cite{MR0415622}: a finitely generated group has more than one end if and only if it splits over a finite subgroup. Equivalently, the Cayley graph of such a group can be divided into at least two deep components by removing a finite set precisely when the group admits a splitting over a finite subgroup. Thus, a coarse separation property already reveals a nontrivial splitting, and therefore an algebraic feature that remains invariant under quasi-isometry.
This perspective has been extended in several directions. Dunwoody and Swenson \cite{MR1760752} provided an initial generalization by broadening the subgroup-theoretic aspect of Stallings' theorem from finite subgroups to finitely generated virtually polycyclic codimension one subgroups. In particular, if $G$ is one-ended and not virtually a surface group, then $G$ splits over a two-ended subgroup if and only if it contains an infinite cyclic codimension-one subgroup. Here, a subgroup $H \leq G$ is called codimension-one if some (or equivalenlty, any) Schreier graph $\mathrm{Sch}(G,H)$ (with respect to a finite generating set) has at least two ends. We refer the reader to \cite{MR2031877} for more background on the connection between codimension-one subgroups and splittings. A further generalization, due to Papasoglu \cite{MR2153400}, addresses the geometric side of Stallings' theorem by replacing finite separating sets with quasi-lines. He proved that a one-ended finitely presented group that is not commensurable to a surface group splits over a two-ended subgroup if and only if its Cayley graph is separated by a quasi-line. In particular, the existence of such a splitting is invariant under quasi-isometry.

\medskip \noindent
In the same spirit, and building on our previous work \cite{bensaid2024coarse}, we investigate coarse separation by families of subsets rather than individual ones. Our approach is quantitative, focusing on the volume growth of the separating sets instead of their quasi-isometry type. We refer to \S \ref{sec:prelim} for our definition of coarse separation.

\medskip \noindent
In the present paper, we consider the class of right angled Artin groups.  Recall that, given a finite simple graph $\Gamma$ (that is, a graph with no loops nor multiple edges), the associated right-angled Artin group $A(\Gamma)$ is defined by the following presentation: its generators correspond to the vertices of $\Gamma$, and two generators commute if and only if the corresponding vertices are connected by an edge. Our main result is the following.

\begin{thm}\label{thm:BigIntro}
Let $\Gamma$ be a finite graph. The following assertions are equivalent:
\begin{itemize}
	\item[(i)] $A(\Gamma)$ is coarsely separable by a family of subexponential growth;
	\item[(ii)] $A(\Gamma)$ splits over an abelian subgroup;
	\item[(iii)] $\Gamma$ is complete or separated by a complete subgraph.
\end{itemize}
\end{thm}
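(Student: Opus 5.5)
The plan is to prove the cycle (iii) $\Rightarrow$ (ii) $\Rightarrow$ (i) $\Rightarrow$ (iii). The first two implications are essentially algebraic and rely on standard facts. The last one carries the geometric content and is where the real work lies.

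\textbf{(iii) $\Rightarrow$ (ii).} Suppose first that $\Gamma$ is complete. Then $A(\Gamma)\simeq\mathbb{Z}^n$, which splits over the abelian subgroup $\mathbb{Z}^{n-1}$ as an HNN extension $\mathbb{Z}^{n-1}\ast_{\mathbb{Z}^{n-1}}$, with trivial twisting. If instead $\Gamma$ is separated by a complete subgraph $\Lambda$, write $\Gamma=\Gamma_1\cup\Gamma_2$ with $\Gamma_1\cap\Gamma_2=\Lambda$ and no edges between $\Gamma_1\setminus\Lambda$ and $\Gamma_2\setminus\Lambda$. The presentation then gives an amalgam $A(\Gamma)=A(\Gamma_1)\ast_{A(\Lambda)}A(\Gamma_2)$. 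Here $A(\Lambda)\simeq\mathbb{Z}^{|\Lambda|}$ is abelian, and the splitting is nontrivial because both factors are proper.

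\textbf{(ii) $\Rightarrow$ (i).} Assume $A(\Gamma)$ acts on a tree $T$ without global fixed point, with an edge stabiliser $H$ that is abelian. Subgroups of RAAGs are torsion-free, and abelian subgroups of RAAGs are finitely generated free abelian, since they embed in $\mathbb{Z}^n$ via the standard cubulation. So $H$ has polynomial growth. Consider the cosets $gH$, or rather their $R$-neighbourhoods, which are coarsely equivalent to $\mathbb{Z}^k$ and therefore have uniformly polynomial growth. I would show that they coarsely separate $A(\Gamma)$. Removing the edge $ge$ separates $T$ into two half-trees, and each half-tree contains vertices arbitrarily far from $ge$ because the splitting is nontrivial (minimality of the action). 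Pulling back via an orbit map, the preimages of the two sides contain points arbitrarily far from $gH$, and any path between them must pass near $gH$. Here I would invoke the criterion from \S\ref{sec:prelim} that a splitting over a subgroup yields coarse separation by the family of its cosets. One subtlety to handle: the ``deep components'' condition requires depth relative to the separating set, which follows once $H$ has infinite index in the vertex groups, or otherwise after passing to a reduced splitting.

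\textbf{(i) $\Rightarrow$ (iii).} This is the main obstacle. I would argue by contrapositive: assume $\Gamma$ is not complete and has no complete separating subgraph, and show that $A(\Gamma)$ is not coarsely separable by any family of subexponential growth. Several cases arise. If $\Gamma$ is disconnected it is separated by the empty (complete) subgraph, so we may assume $\Gamma$ is connected. If $\Gamma$ is a join $\Gamma_1\ast\Gamma_2$ with one factor not complete, then $A(\Gamma)$ contains a product with a free factor. Otherwise $\Gamma$ is irreducible and $A(\Gamma)$ is acylindrically hyperbolic.

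The strategy, following our earlier work \cite{bensaid2024coarse}, is to cover $A(\Gamma)$ by a coarsely connected family of quasi-flats or subgroups that are not coarsely separable by subexponential families. Candidates are subgroups $A(\Lambda)\simeq F_2\times\mathbb{Z}^m$ or $F_2\times F_2$ for induced subgraphs $\Lambda$, which contain products of trees and therefore exponential-growth obstructions. Given a separating family, one then propagates non-separation along overlaps. The key combinatorial input is that, when no complete subgraph separates $\Gamma$, any two maximal join (or non-clique) subgraphs can be connected by a chain in which consecutive members intersect in a non-complete subgraph. Each such intersection yields a subgroup of exponential growth, which cannot be coarsely separated or ``cut'' by subexponential sets. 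I expect two delicate points. The first is the base case: showing that a product such as $F_2\times\mathbb{Z}$, or more generally $A(\Lambda)$ for a non-complete join $\Lambda$, is not coarsely separable by subexponential families. The second is the treatment of graphs with no square, where cycles of length at least $5$ without complete separators must be handled via a different coarse-connectivity argument, possibly using the Bass--Serre structure of the Salvetti complex and exponential divergence.
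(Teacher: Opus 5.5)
\paragraph{Verdict.} Your directions (iii)$\Rightarrow$(ii)$\Rightarrow$(i) are fine; the paper treats them as clear. For (ii)$\Rightarrow$(i), add that abelian subgroups of $A(\Gamma)$ are undistorted, so their cosets have polynomial relative growth, and that the orbit map to the minimal Bass--Serre subtree gives the deep components. The real gap is in (i)$\Rightarrow$(iii), and it is structural, not a missing detail.

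\paragraph{Your building blocks are separable.} $F_2\times\mathbb{Z}^m$ is coarsely separated by the single subset $\{v\}\times\mathbb{Z}^m$, which has polynomial growth: removing a neighbourhood of it from $T_4\times\mathbb{Z}^m$ leaves several deep components. This is consistent with the theorem, since the defining graph of $F_2\times\mathbb{Z}^m$ is separated by the clique $K_m$. So the ``base case'' you propose to prove is false: $A(\Lambda)$ for a non-complete join $\Lambda$ need not be non-separable.

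\paragraph{The propagation fails without squares.} The only join pieces that are genuinely non-separable are copies of $F_2\times F_2$ (and products containing them) coming from induced squares, by \cite{bensaid2024coarse}. These need not exist. For $\Gamma=C_5$, the maximal induced joins are the five stars $v\ast\{u,w\}$, each with $A\cong F_2\times\mathbb{Z}$. Any two of them meet in an edge or a single vertex, so your claimed chain condition (consecutive members meeting in a non-complete subgraph) also fails. Your strategy therefore collapses exactly on square-free unpinched graphs. You flag this case as a ``delicate point'' but offer only a vague appeal to divergence and the Salvetti complex.

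\paragraph{How the paper handles this case.} It first reduces to triangle-free graphs (Proposition~\ref{prop:reduction_triangle_free}). It then reduces to cycles via the thick decomposition of Theorem~\ref{thm:RAAGs_are_thick}. There the pieces are neighbourhoods of cosets of cycle parabolics and the overlaps are cosets of $\langle a,b\rangle\cong F_2$ for non-adjacent $a,b$, combined with Propositions~\ref{prop:Thick} and~\ref{prop:coarse_sep_family_converse}. The decisive input for $A(C_n)$, $n\geq 5$, is hyperbolic, in three steps:
\begin{itemize}
\item By acylindrical hyperbolicity, a subexponential separating family must coarsely separate a bi-infinite Morse geodesic (Theorem~\ref{thm:SeparatingAcylHyp}).
\item Such a geodesic lies in a quasi-isometrically embedded copy of a graph product $C_n(\nu)$ of finite cyclic groups with $\nu\geq 3$, obtained as an invasive subgraph of the quasi-median graph (Theorem~\ref{thm:MorseSubBuilding}).
\item $C_n(\nu)$ is a one-ended hyperbolic group that is not virtually a surface group and does not split over a two-ended subgroup. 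This contradicts the hyperbolic classification of Theorem~\ref{thm:IntroHyp}.
\end{itemize}
The case $n=4$ is covered by $F_2\times F_2$. Nothing in your outline replaces this hyperbolic step, so (i)$\Rightarrow$(iii) remains unproved.
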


\medskip \noindent
The classification of right-angled Artin groups up to quasi-isometry is a longstanding problem that has only been resolved in certain special cases. It is clear though that right-angled Artin groups both exhibit flexibility and rigidity features depending on the properties of the defining graph. 
For instance, on the one hand, Behrstock and Neumann \cite{MR2376814} showed that all groups $A(\Gamma)$, where $\Gamma$ is a tree of diameter at least 3, are quasi-isometric (see \cite{MR2727658} for a higher-dimensional analogue of this result). On the other hand, widely generalising an earlier result of Bestvina, Kleiner and Sagev \cite{MR2421136}, Huang \cite{MR3692971} proved that if $A(\Gamma_1)$ and $A(\Gamma_2)$ both have finite automorphism groups, then they are quasi-isometric if and only if they are isomorphic. The property of having a finite automorphism group can be detected directly from the graph $\Gamma$ (see \cite{MR3692971}). In particular, such graphs are not separated by complete subgraphs. 
 We have the following non-trivial consequence of Theorem \ref{thm:BigIntro}. 

\begin{cor}
    Let $G$ and $H$ be two quasi-isometric right-angled Artin groups. If $G$ splits over an abelian subgroup, then so does $H$.
\end{cor}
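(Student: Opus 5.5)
The plan is to deduce the corollary directly from Theorem \ref{thm:BigIntro}, using the equivalence (i)$\Leftrightarrow$(ii). Write $G = A(\Gamma_1)$ and $H = A(\Gamma_2)$, and fix a quasi-isometry $f \colon G \to H$. Suppose $G$ splits over an abelian subgroup. By (ii)$\Rightarrow$(i), $G$ is coarsely separable by a family of subexponential growth. The whole argument then comes down to one fact: this geometric property is invariant under quasi-isometry. Once we know that, $H$ is coarsely separable by a family of subexponential growth, and (i)$\Rightarrow$(ii) applied to $\Gamma_2$ shows that $H$ splits over an abelian subgroup.

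The work is in checking quasi-isometry invariance against the definition of coarse separation in \S\ref{sec:prelim}. Let $\mathcal{Z}$ be a family of subsets of $G$ that coarsely separates $G$ and has subexponential growth. I would take $f(\mathcal{Z})$ as the candidate family in $H$ and verify two things.

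\textbf{Growth.} Each image $f(Z)$, and its $R$-neighbourhoods, should have volume growth controlled by that of $Z$ after rescaling radii linearly. A subexponential function composed with an affine map stays subexponential. The bounded multiplicity of $f$ on balls changes volumes only by a multiplicative constant, so the bound should remain uniform over the family.

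\textbf{Separation.} We need: for every $L$ there is some $Z \in \mathcal{Z}$ with $N_R(Z)^c$ having at least two components, each containing points at distance at least $L$ from $Z$. These conditions transfer through $f$ and a quasi-inverse $\bar f$. Coarse connectedness of a complement is preserved, with a change of constants. Deep points map to deep points, since distances to $Z$ are distorted only affinely. If two deep components of $N_R(Z)^c$ merged after applying $f$, we could pull a path in $H \setminus N_{R'}(f(Z))$ back through $\bar f$ to a coarse path in $G$ avoiding $N_R(Z)$, which is a contradiction.

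The main obstacle I expect is bookkeeping the constants so that they are uniform across the family. The definition quantifies over $L$ and the thickening radius $R$, and the new $R'$ and $L'$ must depend only on the quasi-isometry constants, not on the particular $Z$. If \S\ref{sec:prelim} already records quasi-isometry invariance of coarse separation by families of a given growth type, presumably as a lemma carried over from \cite{bensaid2024coarse}, I would simply cite it, and the corollary becomes a two-line consequence of Theorem \ref{thm:BigIntro}.
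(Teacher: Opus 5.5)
Your proposal is correct and is the argument the paper intends. The paper states the corollary as an immediate consequence of Theorem~\ref{thm:BigIntro}, via (ii)$\Rightarrow$(i), quasi-isometry invariance of coarse separability by families of subexponential growth, and then (i)$\Rightarrow$(ii). The invariance step is not proved here but imported from \cite[Lemma~2.3]{bensaid2024coarse}, which the paper invokes explicitly in the proof of Proposition~\ref{prop:reduction_triangle_free}, so your sketch of that step can be replaced by this citation.
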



\medskip \noindent
We now present the strategy of the proof of Theorem \ref{thm:BigIntro}. We first present the main ingredients, which are of independent interest, and then explain how to combine them to obtain the final result.

\medskip \noindent
{\bf Ingredient 1: From right-angled Artin groups to graph products of finite groups.} Given a graph $\Gamma$ and a weight $\nu : V(\Gamma) \to \mathbb{N}_{\geq 2}$, we denote by $\Gamma(\nu)$ the graph product $\Gamma \{ \mathbb{Z}_{\nu(u)} \mid u \in V(\Gamma)\}$.

\medskip \noindent
The following theorem plays a central role in the proof of  Theorem \ref{thm:BigIntro}. (See Theorem~\ref{thm:CoarseSepGP} for a more general statement.) 

\begin{thm}\label{thm:IntroRAAGGP}
Let $\Gamma$ be a finite graph that is not a join and that contains at least two vertices. If $A(\Gamma)$ is coarsely separable by a family of subexponential growth, then, for every $k \geq 0$, there exists a weight $\nu \geq k$ such that $\Gamma(\nu)$ is coarsely separable by a family of subexponential growth.  
\end{thm}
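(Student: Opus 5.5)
Since coarse separability by a family of subexponential growth is a quasi-isometry invariant (established in \cite{bensaid2024coarse}), the natural strategy is to transfer the separating family from $A(\Gamma)$ to $\Gamma(\nu)$. This cannot be done through a quasi-isometry, because $A(\Gamma)$ and $\Gamma(\nu)$ are generally not quasi-isometric. I would instead use the standard finite-index embedding in the opposite direction. The kernel $K$ of the natural surjection $\Gamma(\nu) \to \bigoplus_{u} \mathbb{Z}_{\nu(u)}$ is a finite-index subgroup, and it is itself a right-angled Artin group, on a graph $\Gamma_\nu$ obtained by repeatedly doubling $\Gamma$ along vertex links (Davis--Januszkiewicz type constructions). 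A cleaner route is to realise $\Gamma(\nu)$ directly inside the geometry of $A(\Gamma)$. The universal cover of the Salvetti complex, $X(\Gamma)$, is a CAT(0) cube complex. The quasi-median graph of $\Gamma(\nu)$ is obtained by replacing each $\mathbb{Z}$-line by a clique of size $\nu(u)$. Conversely, for $\nu$ large, $\Gamma(\nu)$ contains convex subcomplexes looking like large balls of $A(\Gamma)$: a word in $A(\Gamma)$ whose exponents are bounded by $\nu/2$ maps injectively and isometrically (with respect to syllable structure) into $\Gamma(\nu)$.

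The plan is as follows. First, from a separating family $\mathcal{F}$ of subexponential growth in $A(\Gamma)$, extract the quantitative data guaranteed by the definition: for every $R$ there exist $F \in \mathcal{F}$ and points at distance $\geq R$ from $F$ lying in distinct deep components of $A(\Gamma)\setminus N_L(F)$. Second, since $\Gamma$ is not a join and has at least two vertices, $A(\Gamma)$ has exponential growth and contains many quasi-isometrically embedded free subgroups; I would use this to show that the separation can be witnessed inside a ball $B(R')$, with $R'$ depending on $R$ and the growth of $\mathcal{F}$. Third, embed the ball $B(R')$ of $A(\Gamma)$ quasi-isometrically, with uniform constants, into $\Gamma(\nu)$ for all $\nu \geq k$ with $k \gg R'$, via the map sending $u^n$ to the element $n \bmod \nu(u)$ of $\mathbb{Z}_{\nu(u)}$. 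Fourth, push the separating set forward and check that it still separates in $\Gamma(\nu)$, so that the images form a family of subexponential growth coarsely separating $\Gamma(\nu)$. Because the weight must be fixed while the separating sets range over all scales, I would choose $\nu$ growing with the scale only along one orbit of vertices. Alternatively, I would use a single $\nu \geq k$ together with a limiting argument: pass to finite-index subgroups of $\Gamma(\nu)$ isomorphic to RAAGs over graphs $\Gamma'$ that contain $\Gamma$ as an induced subgraph in a retraction-friendly way.

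The main obstacle is the fourth step, where separation must survive in the larger space. Components of the complement of a set in a ball of $A(\Gamma)$ may reconnect in $\Gamma(\nu)$ through paths that leave the embedded ball. Here the non-join hypothesis is essential: it provides a retraction onto the image, obtained via projections to convex subcomplexes (gates in quasi-median graphs), which is coarsely Lipschitz. I would handle this by replacing $F$ with its saturation $\pi^{-1}(F)$ under the gate projection $\pi$ onto the convex hull of the embedded ball. This preserves separation, since $\pi$ is 1-Lipschitz and maps connected sets to connected sets. The cost is a possible increase in growth, which I would control by using the tree-like structure of $\Gamma(\nu)$ transverse to that hull, with branching governed by $\nu$. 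Making this growth estimate uniform, so that subexponential growth is preserved, is the step I expect to be hardest.
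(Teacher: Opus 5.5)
Your proposal has a genuine gap, and the route through steps 2--4 cannot work as set up. Coarse separation of $\Gamma(\nu)$ requires deep components at every depth $D$ for one \emph{fixed} weight $\nu$. But the map $u^n \mapsto n \bmod \nu(u)$ is injective, let alone isometric, only on balls of $A(\Gamma)$ whose radius is small compared to $\min(\nu)$. So any separation you transport this way has bounded depth. Since $\nu$ is a function on the finite set $V(\Gamma)$, it cannot ``grow with the scale along one orbit of vertices'', and the alternative limiting argument is not specified.

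This is not a technicality. For $\Gamma=C_5$, $A(\Gamma)$ contains $\mathbb{Z}^2$-flats while every $\Gamma(\nu)$ is hyperbolic, so large balls cannot embed with uniform constants. For $\Gamma$ a single edge, which only the non-join hypothesis excludes, $A(\Gamma)=\mathbb{Z}^2$ is separated by a line, whereas every $\Gamma(\nu)$ is finite. This last example shows that the hypothesis must be used to force the separation into a hyperbolic direction. It cannot enter through a retraction, since gate projections onto convex subgraphs exist for every $\Gamma$.

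Step 4 also fails on its own terms. The saturation $\pi^{-1}(F)$ typically has exponential growth, because fibres of gate projections contain entire branching subtrees. This already happens when $\Gamma$ is two non-adjacent vertices and $\nu\geq 3$, where $\Gamma(\nu)$ is quasi-isometric to a branching tree. Incidentally, the kernel of $\Gamma(\nu)\to\bigoplus_u \mathbb{Z}_{\nu(u)}$ is not a right-angled Artin group in general: for $\Gamma=C_5$ and $\nu\equiv 2$ it is a closed hyperbolic surface group.

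The missing idea, which is how the paper proceeds, is to localise the separation on a single bi-infinite Morse geodesic. One then embeds a copy of $\Gamma(\nu)$ \emph{into} $A(\Gamma)$ around it, rather than mapping $A(\Gamma)$ into $\Gamma(\nu)$.
\begin{itemize}
\item \emph{A separated Morse geodesic.} Because $\Gamma$ is not a join, $A(\Gamma)$ is acylindrically hyperbolic and has uniformly Morse binary quasi-trees rooted at every point. A set of subexponential growth far from the root misses some ray of such a tree (the counting of Lemma~\ref{lem:SubExpTree}). So a single separating $Z$ yields two Morse rays in distinct components of $A(\Gamma)\setminus Z^{+L}$, each escaping every neighbourhood of $Z$. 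By visibility they are joined by a Morse geodesic $\eta$ (Theorem~\ref{thm:SeparatingAcylHyp}).
\item \emph{Bounded projections.} After disposing of disconnected $\Gamma$ (free products), every hyperplane carrier of the quasi-median graph is the product of a clique and a fibre, both unbounded for $\delta$. A Morse geodesic meets such a product in a set of uniformly bounded diameter (Proposition~\ref{prop:MorseInterProd}), hence $\mathrm{diam}(\eta,\delta_J)\leq K$ for every hyperplane $J$.
\item \emph{The embedded copy of $\Gamma(\nu)$.} For each $J$, keep the sectors within $\delta_J$-distance $\max(K,k)$ of a sector meeting $\eta$. Their intersection is a $\kappa$-regular invasive subgraph $Y\supset\eta$, bi-Lipschitz embedded in $A(\Gamma)$. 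It is isomorphic to $\mathrm{QM}(\Gamma,\tau)$ with $\tau\geq k$, i.e.\ quasi-isometric to $\Gamma(\tau)$.
\item \emph{Restriction.} The two ends of $\eta$ already lie in distinct components of the ambient complement. So the trace of $Z^{+L}$ on $Y$ coarsely separates $Y$, with subexponential growth in $Y$'s metric.
\end{itemize}
Restricting to a quasi-isometrically embedded subspace needs no retraction or saturation. A single finite weight works at all depths precisely because a Morse geodesic has bounded projection to every hyperplane, which is exactly what fails for flats.
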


\noindent
The assumptions on $\Gamma$ amounts to saying that $A(\Gamma)$ be acylindrically hyperbolic. We show that, in any finitely generated acylindrically hyperbolic group, a coarsely separating family with subexponential growth must coarsely separate a bi-infinite Morse geodesic (see Theorem~\ref{thm:SeparatingAcylHyp}).
We then observe that, in the right-angled Artin group $A(\Gamma)$, every bi-infinite Morse geodesic lies inside a quasi-isometrically embedded copy of $\Gamma(\nu)$, for some weight $\nu$ whose minimum is sufficiently large compared to the Morse gauge of the geodesic (see Theorem~\ref{thm:MorseSubBuilding}). This reduces the problem of coarse separation in a right-angled Artin group to that of coarse separation in a graph product of finite groups.

\medskip \noindent
{\bf Ingredient 2: Coarse separation in hyperbolic groups.} 
Despite the fact that right-angled Artin groups are usually not hyperbolic, many graph products of finite groups are hyperbolic, which will allow us to deduce some valuable information on coarse separation in right-angled Artin groups thanks to Theorem~\ref{thm:IntroRAAGGP}.
The hyperbolic case follows from the main result of a companion paper \cite{BGT26hyp}, where we prove the following result.

\begin{thm}\label{thm:IntroHyp}
Let $G$ be a hyperbolic group. Assume that $G$ is one-ended and not virtually a surface group. Then $G$ is coarsely separable by a family of subexponential growth if and only if it splits over a virtually cyclic subgroup. 
\end{thm}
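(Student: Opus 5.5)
The plan is to treat the two implications separately. The implication from splitting to coarse separation should be soft. The converse is the real content, and I would prove it by passing to the Gromov boundary and using Bowditch's characterisation of splittings via local cut points.

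\emph{Splitting $\Rightarrow$ coarse separation.} Suppose $G = A \ast_C B$ or $G = A \ast_C$ with $C$ virtually cyclic. Since $G$ is one-ended, $C$ is infinite, hence two-ended, and the splitting is non-degenerate, i.e.\ $C$ has infinite index in the adjacent vertex groups. Consider the family $\{gC\}_{g \in G}$ of cosets of the edge group in the Cayley graph. Each coset is a quasi-line, so the family has uniformly linear growth, and in particular subexponential growth. Using the Bass--Serre tree $T$ and a $G$-equivariant coarsely Lipschitz map $\mathrm{Cay}(G) \to T$, the edge space $gC$ coarsely disconnects $\mathrm{Cay}(G)$ into the preimages of the two half-trees. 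Each side contains points arbitrarily far from $gC$ because the vertex groups are infinite-index over $C$. This gives uniformly deep complementary components, as required by the definition of coarse separation in \S\ref{sec:prelim}.

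\emph{Coarse separation $\Rightarrow$ splitting.} I argue by contraposition. Assume $G$ is one-ended, hyperbolic, not virtually a surface group, and does not split over a virtually cyclic subgroup. By Bowditch's JSJ theory, $\partial G$ is then a Peano continuum with no local cut points, and in particular no cut pairs. The goal is a quantitative lower bound: any subset $Z$ that coarsely separates a ball $B(x,R)$ into two pieces both of depth $\gtrsim R$ must satisfy $|Z \cap B(x,CR)| \geq e^{\epsilon R}$. By definition this rules out every family of subexponential growth. I would proceed in three steps.
\begin{itemize}
\item[(1)] Translate the absence of local cut points into a uniform combinatorial connectivity statement at every scale. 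By compactness of $\partial G$ and cocompactness of the action on triples, there are constants such that any two boundary regions of visual size comparable to $e^{-n}$, at comparable distance, are joined by many chains of small balls. Concretely, I would aim for a lower bound on combinatorial modulus in the sense of Bourdon--Kleiner, or alternatively use Mackay's result that conformal dimension is $>1$ together with his construction of quasi-arcs avoiding small sets.
\item[(2)] Lift these chains to paths in the Cayley graph between the two deep components. Since visual balls of size $e^{-R}$ correspond to $R$-balls, the number of ``essentially disjoint'' such paths through $B(x,CR)$ grows exponentially in $R$.
\item[(3)] Each path must meet the $L$-neighbourhood of $Z$, and each point of $Z$ lies on boundedly many of these paths. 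This yields the exponential lower bound on $|Z \cap B(x,CR)|$.
\end{itemize}

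I expect step (1)–(2) to be the main obstacle: extracting \emph{exponentially many} coarsely disjoint connecting paths from the purely topological hypothesis ``no local cut points''. My first attempt would be to prove a discrete Poincaré-type inequality on $\partial G$, via Bourdon--Kleiner combinatorial Loewner-type estimates, or via iterating a uniform ``two-disjoint-arcs'' property given by the absence of cut pairs across scales. I would then convert it into a separation lower bound, exactly as was done for symmetric spaces in \cite{bensaid2024coarse}. The surface-group exclusion enters precisely here: for $\partial G = S^1$, pairs of points do disconnect, and quasi-lines separate.
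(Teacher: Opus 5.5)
The paper does not prove this statement. It quotes it from the companion paper \cite{BGT26hyp}, and uses only the implication ``coarsely separable by a family of subexponential growth $\Rightarrow$ multi-ended, virtually a surface group, or splits over a two-ended subgroup'', in the last step of the proof of Theorem~\ref{thm:BigIntro}. Your proposal therefore has to stand on its own. For that implication it does not: you outline a strategy, but its decisive step is left open.

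Your direction ``splitting $\Rightarrow$ coarse separation'' is fine in substance; it is the classical fact that the edge group of a nontrivial splitting has codimension one. Two inaccuracies should be fixed.
\begin{itemize}
\item One-endedness makes $C$ infinite, but it does not force $C$ to have infinite index in the vertex groups: a vertex group may be virtually cyclic and contain $C$ with finite index. You do not need that claim. Nontriviality of the splitting already gives every vertex of the Bass--Serre tree valence $\geq 2$, so both half-trees are unbounded.
\item An arbitrary equivariant coarsely Lipschitz map $\mathrm{Cay}(G)\to T$ does not show that $gC$ separates, because the preimage of a neighbourhood of an edge contains whole cosets of vertex groups. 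Take generators in the vertex groups and base the orbit map at the midpoint of a suitable edge (equivalently, use the tree of spaces). Then a path crossing from one half-tree to the other must pass through $gC$ itself.
\end{itemize}

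The converse has two genuine gaps.

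First, the reduction to cutting a ball is not ``by definition''. Coarse separation only provides, for each $D$, points $x,y$ of depth $\geq D$ in distinct components of $G\setminus Z^{+L}$. Nothing bounds $d(x,y)$ in terms of $D$, and nothing guarantees a ball $B(p,R)$ with $R\asymp D$ that contains fat pieces of both components. Without such a ball, a lower bound on cuts of balls says nothing about $V_{\mathcal{Z}}$. You need a separate localization argument. This is exactly what the persistence hypothesis in \cite[Theorem~3.5]{bensaid2024coarse} supplies, and persistence would have to be established for hyperbolic groups.

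Second, and more seriously, steps (1)--(2) are the entire content of the theorem, and you do not carry them out. What step (3) actually needs is a scale-invariant estimate of roughly this form: two balls of radius $cR$ at distance $\leq CR$ can be joined inside $B(p,C'R)$ by a flow of value $\geq e^{\varepsilon R}$ with bounded congestion. By max-flow/min-cut, this is what makes ``each point of $Z$ lies on boundedly many paths'' meaningful, since all your paths must leave the same two regions.

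The results you invoke do not give this estimate as stated:
\begin{itemize}
\item Mackay's theorem that the conformal dimension exceeds $1$ is a statement about Hausdorff dimensions of metrics in the conformal gauge, not a lower bound for cuts. You would have to reopen his construction of families of quasi-arcs.
\item The combinatorial Loewner property of Bourdon--Kleiner is not known for general boundaries without local cut points.
\item Iterating a ``two disjoint arcs'' property across scales gives only boundedly many alternatives per scale, with no control on their overlaps.
\end{itemize}
Turning the purely topological hypothesis (no local cut points, hence annular linear connectedness of $\partial G$ by Bonk--Kleiner) into such a quantitative, localized cut estimate is exactly the missing idea. Until it is supplied, the implication ``coarse separation $\Rightarrow$ splitting'' remains unproved.
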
 

\noindent
Combining \Cref{thm:IntroHyp} and \Cref{thm:IntroRAAGGP} we deduce the following special case of Theorem \ref{thm:BigIntro}.

\begin{cor}\label{cor:RAAGsquareFree}
Let $\Gamma$ be a finite graph. If $\Gamma$ is not complete, without cut-point, and $\square$-free, then $A(\Gamma)$ is not coarsely separable by a family of subexponential growth. 
\end{cor}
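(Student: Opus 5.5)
We argue by contradiction, combining Theorem~\ref{thm:IntroRAAGGP} with Theorem~\ref{thm:IntroHyp}. Assume $A(\Gamma)$ is coarsely separable by a family of subexponential growth.

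\textbf{Step 1: reduce to the case where $\Gamma$ is connected and not a join.}
\begin{itemize}
\item If $\Gamma$ were disconnected, we would need to treat this case separately. A disconnected graph has no cut-point in the vertex sense, so the hypotheses do not exclude it. However, $A(\Gamma)$ is then a nontrivial free product, and (up to the usual conventions about the empty subgraph) this case belongs to the ``separated by a complete subgraph'' regime. I would read the hypothesis as including connectedness, or note that the empty graph counts as complete.
\item If $\Gamma$ is a join $\Gamma_1 * \Gamma_2$, then $\square$-freeness forces one factor, say $\Gamma_1$, to be complete. Otherwise a non-edge in each factor would produce an induced $4$-cycle.
\item So, granting that $\Gamma$ is connected with at least two vertices, Theorem~\ref{thm:IntroRAAGGP} applies whenever $\Gamma$ is not a join. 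If it is a join with a complete factor $\Gamma_1$, the cone case has to be handled separately. In that case $A(\Gamma)=\mathbb{Z}^{n}\times A(\Gamma_2)$, and I expect a direct argument or the more general Theorem~\ref{thm:CoarseSepGP} is needed. When $\Gamma_2$ is complete, $\Gamma$ is complete, which is excluded.
\end{itemize}

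\textbf{Step 2: obtain hyperbolic graph products.} Apply Theorem~\ref{thm:IntroRAAGGP} with some $k \geq 2$. This yields a weight $\nu \geq k$ such that $\Gamma(\nu)$ is coarsely separable by a family of subexponential growth. We then check the following properties of $\Gamma(\nu)$, using standard facts on graph products of finite groups.
\begin{itemize}
\item \emph{Hyperbolicity.} By Meier's criterion, a graph product of finite groups is hyperbolic if and only if $\Gamma$ contains no induced $\square$. So $\Gamma(\nu)$ is hyperbolic.
\item \emph{One-endedness.} A graph product of finite groups is one-ended exactly when $\Gamma$ is connected, not complete, and has no separating complete subgraph. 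For $\square$-free (chordal-type) considerations this is where ``no cut-point'' enters. More precisely, the number of ends is governed by whether some complete subgraph separates $\Gamma$.
\end{itemize}

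\textbf{Step 3 (main obstacle): separating complete subgraphs.} A complete subgraph separating $\Gamma$ need not be a single vertex, so ``no cut-point'' does not immediately give one-endedness. I would first try to show the following: if a clique $\Lambda$ separates $\Gamma$, then $\Gamma$ is already in the splitting case. We may hope this is excluded by the intended hypotheses; otherwise the statement should be understood together with this case. If it is genuinely excluded, $\Gamma(\nu)$ is one-ended.

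\textbf{Step 4: rule out surface groups and conclude.} We must show $\Gamma(\nu)$ is not virtually a surface group. A virtually surface graph product of finite groups has a defining graph that is a cycle (or the graph has a very small special shape). With large weights $\nu \geq k \geq 3$, vertex links yield links in the Davis complex that are not circles. So for $k$ large, $\Gamma(\nu)$ is not virtually a surface group; the only exception would be small cycles, which the large weight destroys. Theorem~\ref{thm:IntroHyp} then says $\Gamma(\nu)$ splits over a virtually cyclic subgroup.

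Finally we contradict this splitting. Splittings of graph products of finite groups over finite-by-cyclic subgroups should correspond to separating subgraphs of the form (clique) or (clique $\ast$ two non-adjacent vertices). The latter is an induced square-type configuration, excluded by $\square$-freeness; the former is excluded by Step 3. The identification of such splittings (for example via the action on the Davis complex and a JSJ-type or Bowditch boundary cut-pair argument) is the second delicate point. I would try to derive it from local cut points in the boundary of the right-angled building.
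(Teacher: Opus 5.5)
Your overall route is the paper's. You pass from $A(\Gamma)$ to $\Gamma(\nu)$ with $\nu\geq 3$ via Theorem~\ref{thm:IntroRAAGGP}, check that $\Gamma(\nu)$ is hyperbolic, one-ended and not virtually a surface group, and play Theorem~\ref{thm:IntroHyp} against a non-splitting statement.

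You are right that the hypothesis cannot be read literally. If $\Gamma$ is two triangles glued along an edge, it is $\square$-free, not complete and has no cut vertex. Yet $A(\Gamma)=\mathbb{Z}^3*_{\mathbb{Z}^2}\mathbb{Z}^3$ is coarsely separated by cosets of $\mathbb{Z}^2$. Disconnected $\Gamma$ likewise give free products. ``Without cut-point'' has to mean ``not separated by a complete subgraph''; the paper only ever uses the corollary for $C_n$, $n\geq 5$. With that reading your Step~3 is not an obstacle: one-endedness of $\Gamma(\nu)$ is the standard criterion \cite{GPends}.

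For cones, Theorem~\ref{thm:CoarseSepGP} does not help, since it also assumes $\Gamma$ is not a join. Instead, let $K_m$ be the set of vertices adjacent to all others and write $\Gamma=K_m*\Gamma_0$. Then $\Gamma_0$ is again $\square$-free and not complete. It is not separated by a complete subgraph, since a separating clique $K$ of $\Gamma_0$ would give the separating clique $K_m*K$ of $\Gamma$. It is not a join, because by your own argument one factor would be complete, i.e.\ would consist of further universal vertices. So the non-join case gives $A(\Gamma_0)\in\mathfrak{M}_{\mathrm{exp}}$. Corollary~\ref{cor:stability_Mexp_product} then gives $A(\Gamma)=\mathbb{Z}^m\times A(\Gamma_0)\in\mathfrak{M}_{\mathrm{exp}}$. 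The paper's one-line deduction is silent on joins as well.

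The genuine gap is the final step, which is the only non-formal input. You assert that splittings of $\Gamma(\nu)$ over two-ended subgroups ``should correspond'' to separating subgraphs that are either a clique or a clique joined to two non-adjacent vertices. You then dismiss the second type by $\square$-freeness, and that dismissal is wrong. For $K$ complete, $\{a,b\}*K$ contains no induced square: every $4$-cycle $a,k_1,b,k_2$ has the chord $k_1k_2$, and for $|K|=1$ it is just a path of length two. Such subgraphs do separate $\square$-free graphs with no separating clique; for example, two pentagons glued along a path of length two. What neutralises them is the weight. For $\nu\geq 3$ the parabolic subgroup contains $\mathbb{Z}_{\nu(a)}*\mathbb{Z}_{\nu(b)}$, which is virtually free of rank $\geq 2$. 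For $\nu\equiv 2$ it is virtually $\mathbb{Z}$, and the right-angled Coxeter group really does split. This is also why you need $k\geq 3$, not $k\geq 2$.

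More fundamentally, the correspondence itself is unproven. The edge group of a splitting need not be parabolic, so excluding separating subgraphs of $\Gamma$ does not exclude splittings over two-ended subgroups. This is exactly what the paper imports as \cite[Corollary~4.10]{BGT26hyp}, with non-surface taken from \cite[Proposition~4.5]{BGT26hyp}. Your link argument for non-surface is also only heuristic: non-circle links in one cocompact model do not by themselves rule out a virtually surface group. Showing the boundary has no local cut points (Bowditch) is a viable route, but as written the closing contradiction is not established.
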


\medskip
\noindent
{\bf Ingredient 3: Thickness and combinatorial reduction.} Roughly speaking, a space $X$ is \emph{thick relative to a pair $(\mathcal{A}, \mathcal{B})$} of two collections of subspaces if any two points of $X$ can be connected by a chain of subspaces belonging to $\mathcal{A}$ such that the intersection between any two consecutive subspace along this chain belong to $\mathcal{B}$. A rather elementary but useful observation is:

\begin{prop}\label{prop:ThickNotSeparable}
Let $X$ be an $(\mathcal{A}, \mathcal{B})$-thick metric space. Assume that the subspaces in $\mathcal{A}$ are uniformly not coarsely separable by families of subexponential growths and that the subspaces in $\mathcal{B}$ have exponential growth.  Then $X$ is not coarsely separable by a family of subexponential growth.
\end{prop}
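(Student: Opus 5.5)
We argue by contradiction, using the definition of coarse separation from \S\ref{sec:prelim}. Suppose $\mathcal{F}=(F_i)_{i\in I}$ is a family of subexponential growth coarsely separating $X$. Then there is $L\geq 0$ such that for every $D>0$ some $F=F_i$ has the following property: $X\setminus N_L(F)$ has at least two components $U,V$ that are $D$-deep, i.e.\ each contains points at distance $\geq D$ from $N_L(F)$.

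\medskip\noindent
\textbf{Step 1: find a chain joining the two sides.} Pick deep points $x\in U$ and $y\in V$. By $(\mathcal{A},\mathcal{B})$-thickness there is a chain $A_0,\dots,A_n$ in $\mathcal{A}$ with $x\in A_0$, $y\in A_n$ and $A_{j}\cap A_{j+1}$ (coarsely) in $\mathcal{B}$. Each $A_j$ is not coarsely separated by $\mathcal{F}$ restricted to it, which is again a family of subexponential growth, with constants uniform in $j$ by the uniformity hypothesis. So once $D$ is larger than a threshold depending only on $L$ and the uniform constants of $\mathcal{A}$, the set $A_j\setminus N_L(F)$ has at most one deep component. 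Hence all points of $A_j$ far from $F$ lie in a single component of $X\setminus N_L(F)$, up to a uniformly bounded error.

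\medskip\noindent
\textbf{Step 2: pass the component along the chain.} Write $C_j$ for the component of $X\setminus N_L(F)$ containing the deep part of $A_j$. The intersection $B=A_j\cap A_{j+1}$ has exponential growth, while $F$ has subexponential growth. Balls of $B$ of large radius $R$ therefore contain points far from $N_{L'}(F)$; otherwise $B\cap B(p,R)$ would lie in a bounded neighbourhood of $F\cap B(p,R+c)$, whose cardinality is subexponential in $R$. Such a point is deep in both $A_j$ and $A_{j+1}$, so $C_j=C_{j+1}$. Also $x$ is deep in $A_0$: near $x$ the ball in $A_0$ escapes $F$, and the same volume argument places a deep point of $A_0$ near $x$ that is connected to $x$ away from $F$. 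Hence $C_0=U$ and similarly $C_n=V$. This gives $U=V$, a contradiction.

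\medskip\noindent
\textbf{Main obstacle.} The delicate point is the volume comparison in Step 2, together with uniformity. We must ensure that the growth functions of the sets in $\mathcal{B}$ uniformly dominate the growth of $\mathcal{F}$ at the relevant scales. We must also ensure that "deep" within $A_j$ (for the induced, possibly distorted metric) matches depth in $X$. This is why we need the subspaces in $\mathcal{A}$ uniformly quasi-isometrically embedded and the growths in $\mathcal{B}$ uniformly exponential. Formally, we would first check that the restriction of $\mathcal{F}$ to a subspace, namely $\{N_L(F)\cap A\}$, is again of subexponential growth with controlled constants. Then we would choose $D$ larger than every constant produced.
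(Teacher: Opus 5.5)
Your argument is correct and is essentially the paper's proof. It is the chain argument of Proposition~\ref{prop:Thick}: each $A_j \setminus F^{+L}$ has at most one component reaching far from $F$, and these components are matched along the chain through the intersections. You combine it with the observation that a subspace of exponential growth cannot lie in a bounded neighbourhood of a member of a family of subexponential growth. The extra uniformity you worry about at the end is unnecessary, and the volume argument near $x$ is superfluous. The paper measures coarse separation of $\mathcal{A}$ in the ambient metric of $X$. Each individual $B$ of exponential growth already escapes $F^{+D}$, whose growth is bounded by a constant times $V_{\mathcal{F}}(R+2D)$ uniformly in $F$. Finally, $x$ is itself a far point of $A_0$.
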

\noindent It is worth mentioning that Proposition~\ref{prop:ThickNotSeparable} has applications to others groups than right-angled Artin groups, such as mapping class groups of surfaces, free products with amalgamation, or certain connected solvable Lie groups. See Section~\ref{sec:thickAgain}.

\medskip
\noindent
In \S \ref{sec:unpinched_graphs}, we establish a graph-theoretic decomposition for triangle-free graphs that are not separated by a complete subgraph. As a consequence, \Cref{thm:RAAGs_are_thick} shows that, if $\Gamma$ is triangle-free and not separated by a complete subgraph, then $A(\Gamma)$ is $(\mathcal{A},\mathcal{B})$-thick, where $\mathcal{A}$ consists of uniform neighborhoods of left cosets of parabolic subgroups associated to cycles in $\Gamma$, and $\mathcal{B}$ consists of left cosets of parabolic subgroups associated to pairs of non-adjacent vertices. This reduces the proof of Theorem~\ref{thm:BigIntro} to the case where $\Gamma$ is a cycle of length at least $4$, and provides the main technical application of \cref{prop:ThickNotSeparable} in the setting of right-angled Artin groups: under suitable assumptions, if a family of subexponential growth coarsely separates an amalgamated product, then it also coarsely separates one of the factor groups. Finally, in \S \ref{sec:trianglefreeReduc}, we show that it is enough to restrict to right-angled Artin groups defined by triangle-free graphs.

\medskip \noindent
{\bf Recipe.} If $\Gamma$ is complete (and non-empty), then $A(\Gamma)$ is free abelian and is clearly coarsely separable by a subspace of polynomial growth. If $\Gamma$ contains a separating complete subgraph, then $A(\Gamma)$ splits over a free abelian subgroup, and consequently is coarsely separable by a subspace of polynomial growth. Conversely, assuming that $\Gamma$ is not complete and cannot be separated by a complete subgraph, we need to show that $A(\Gamma)$ is not coarsely separable by a family of subexponential growth. By ingredient 3, it suffices to verify that $A(C_n)$ cannot be coarsely separated by a family of subexponential growth for every $n \geq 4$. For $n=4$, $A(C_n)$ is a product of two free group of rank two, so the desired conclusion follows from \cite[Theorem 1.3]{bensaid2024coarse}. For $n \geq 5$, the desired conclusion follows from Corollary~\ref{cor:RAAGsquareFree}.

\paragraph{Discussion.} Beyond its conceptual interest, and the new quasi-isometric invariants it provides, it is worth mentioning that Theorem~\ref{thm:BigIntro} imposes restrictions on coarse embedding between right-angled Artin groups. Roughly speaking, it implies that, with respect to some JSJ decompositions above abelian subgroups, rigid pieces are sent into neighbourhoods of rigid pieces. See Section~\ref{section:JSJ} for a precise statement and for an explicit example for which we can show the non-existence of coarse embeddings. This connection between JSJ decomposition and coarse embeddings remains to be further investigated. (See \cite{MR4186478} for some information on the cyclic case.)

\medskip \noindent
As another application, we will use Theorem~\ref{thm:BigIntro} in \cite{CodimRAAG} in order to characterise algebraically which right-angled Artin groups virtually split over $\mathbb{Z}^n$, for a fixed $n \geq 0$, solving a problem from \cite{MR3954281}. 

\medskip \noindent
In view of Theorem~\ref{thm:BigIntro}, two questions can be naturally asked. Right-angled Artin groups belong to two more general family of groups: graph products of groups and cocompact special groups. On the one hand, we expect that:

\begin{conj}
Let $\Gamma$ be a finite graph and $\mathcal{G}$ a collection of infinite finitely generated virtually nilpotent groups. The graph product $\Gamma \mathcal{G}$ is coarsely separable by a family of subexponential growth if and only if $\Gamma$ is complete or separated by a complete subgraph.
\end{conj}

\noindent
A proof should follow by combining a generalisation of Theorem~\ref{thm:IntroHyp} to relatively hyperbolic groups (conjectured in \cite{BGT26hyp}) with the thickness of non-relatively hyperbolic graph products that can be deduced from \cite[Theorem~8.35]{QM}. 

\medskip \noindent
On the other hand, we ask:

\begin{question}
When is a cocompact special group (e.g.\ a right-angled Coxeter group, or more generally a graph product of finite groups), coarsely separable by a family of subexponential growth?
\end{question}

\subsection*{Acknowledgements}
The first-named author acknowledges support from the FWO and F.R.S.-FNRS under the Excellence of Science (EOS) programme (project ID 40007542).

\section{Preliminaries}\label{sec:prelim}

\noindent
In this section, we recall some basic definitions and notations related to coarse separation, following \cite{bensaid2024coarse}.

\medskip \noindent
If $(X,d)$ is a metric space, we denote by $B(x,r)$ (resp.\ $S(x,r)$) the closed ball (resp.\ the sphere) of radius $r$, namely:
$$B(x,r) := \{ z \in X \mid d(x,z) \leq r \}, \qquad S(x,r) := \{ z \in X \mid d(x,z) = r \}.$$
If $A \subseteq X$ is a subset, we denote by $|A| \in \mathbb{N} \cup \{+\infty\}$ its cardinality; and by $A^{+r}$ its $r$-neighborhood, namely:
$$A^{+r} := \{ x \in X \mid d(x,A) \leq r \}.$$
Our definition of coarse separation is the following:

\begin{definition}
Let $X$ be a metric space and $\mathcal{A}, \mathcal{Z}$ two collections of subspaces. Then $\mathcal{A}$ is \emph{coarsely separated by $\mathcal{Z}$} if there exist $k,L \geq 0$ such that, for every $D \geq 0$, one can find some $A \in \mathcal{A}$ and $Z \in \mathcal{Z}$ satisfying:
\begin{itemize}
	\item $A$ is $k$-connected, and 
	\item $A \backslash Z^{+L}$ has at least two $k$-coarsely connected components with points at distance $\geq D$ from $Z$. 
\end{itemize}
\end{definition}

\noindent
Notice that, in the case where $X$ is a graph and $\mathcal{Z}$ a collection of subgraphs, a connected subgraph $Y \subset X$ is coarsely separated by $\mathcal{Z}$ if there exists $L \geq 0$ such that, for every $D \geq 0$, there is some $Z \in \mathcal{Z}$ such that $Y \setminus Z^{+L}$ has at least two connected components with points at distance $\geq D$ from $Z$. 

\begin{ex}\label{ex:CoarseSep}
Note that, if $\mathcal{Z}$ coarsely separates some $A \in \mathcal{A}$, then it coarsely separates $\mathcal{A}$. The converse, however, is not true. For instance, if $X = \mathbb{R}$, if $\mathcal{Z} = \{ \{0\} \} $, and if $\mathcal{A}$ is the family of segments $[-n,n]$ with $n \in \mathbb{N}$, then $\mathcal{Z}$ coarsely separates $\mathcal{A}$ but does not coarsely separate any segment $[-n,n]$.
\end{ex}

\noindent
In this paper, we focus our attention to coarse (non-)separation by families with slow growth. More precisely:

\begin{definition}\label{def:growth_separating_family}
Let $X$ be a graph of bounded degree, and let $\mathcal S$ be a family of subgraphs of $X$. We define its relative growth by
$$V_{\mathcal{S}}(R) := \sup_{s\in S,\ S\in \mathcal S}|B_X(s,R)\cap S|.$$
We say that $\mathcal S$ has \emph{exponential volume growth} if
$$\limsup_{r\to\infty} \frac{1}{r}\log V_{\mathcal S}(r) > 0.$$
We denote by $\mathfrak{M}_{\mathrm{exp}}$ the class of bounded degree graphs such that any coarsely separating family of subsets must have exponential volume growth.
\end{definition}

\section{Coarse separation in acylindrically hyperbolic groups}

\noindent
This section is dedicated to the proof of the following statement. We refer the reader to Section~\ref{section:Morse} for the definitions of acylindrically hyperbolic groups and Morse geodesics. Roughly speaking, our theorem claims that, in a finitely generated hyperbolic-like group, a coarsely separating family of subexponential growth must coarsely separate two hyperbolic-like directions. 

\begin{thm}\label{thm:SeparatingAcylHyp}
Let $G$ be a finitely generated acylindrically hyperbolic group and let $\mathcal{Z}$ a family of subspaces. If $\mathcal{Z}$ has subexponential growth and coarsely separates $G$, then there exists a bi-infinite Morse geodesic in $G$ that is coarsely separated by $\mathcal{Z}$. 
\end{thm}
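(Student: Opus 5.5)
The plan is to show that a separating set which cuts $G$ must, at large scale, also cut some Morse direction. Acylindrical hyperbolicity gives plenty of Morse geodesics, and they branch exponentially. Subexponential growth of $\mathcal{Z}$ means it cannot block all the resulting exponentially many paths.

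Fix $k,L$ from the definition of coarse separation. For each $D$, take $Z=Z_D\in\mathcal{Z}$ such that $G\setminus Z^{+L}$ has two $k$-components $U_D,V_D$ containing points $u_D,v_D$ at distance $\geq D$ from $Z$. Since $G$ is acylindrically hyperbolic, I will fix a generalised loxodromic element, or better a free subgroup $F=\langle a,b\rangle$ all of whose nontrivial elements are Morse and which is Morse, i.e. stable and quasi-isometrically embedded. Such subgroups exist by Dahmani--Guirardel--Osin. Every bi-infinite geodesic in (a Cayley graph of) $F$ then gives a bi-infinite $\mu$-Morse quasi-geodesic in $G$ with a uniform gauge $\mu$. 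Straightening it yields a Morse geodesic at uniformly bounded Hausdorff distance, and coarse separation is insensitive to bounded Hausdorff perturbation after enlarging $L$.

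By homogeneity I will translate so that $u_D$ is the identity, and look at the orbit $F\cdot u_D$ together with the family $\{g\,v_D\}$. Next I will look at the "tree of Morse rays" $T_u$ emanating from $u_D$ along $F$. Its $R$-ball contains roughly $3^R$ branches which diverge linearly from one another, by the Morse/stability property. Points of $Z\cap B(u_D,R')$ number at most $V_{\mathcal{Z}}(R')=e^{o(R')}$. Each point of $Z^{+L}$ lies within $L$ of only boundedly many branches at any given radius, because branches diverge linearly and the geometry has bounded degree. Counting therefore shows that a proportion tending to $1$ of the geodesic rays of $T_u$ (among those of length $\sim D$) avoid $Z^{+L'}$ entirely. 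In particular, some bi-infinite Morse geodesic ray can be followed from $u_D$ to distance $\gg D$ while staying far from $Z$.

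I would rather aim for a cleaner formulation. Consider all Morse bi-infinite geodesics $\gamma=g\cdot\ell$, where $\ell$ ranges over $F$-axes through $g\in G$. The goal is a single $\gamma$ passing within bounded distance of $u_D$-side points far from $Z$, and also of $v_D$-side points far from $Z$. The intended construction is a Morse geodesic that travels from a far point $x$ of $U_D$ through a bounded neighbourhood of $Z$ to a far point $y$ of $V_D$, with both endpoints remaining far from $Z$. To get it, take one geodesic from the ray tree at $u_D$ avoiding $Z$ up to scale $D$, and likewise one at $v_D$. Join them using the Morse property and a coset $hF$ containing both ends: since $G$ is generated by finitely many elements and $F$-conjugates, one can concatenate at bounded cost. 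The resulting concatenation is again uniformly Morse, because a concatenation of Morse pieces with large "turning" (an independence argument à la ping-pong in $F$) stays Morse. Its two far portions lie in different components of $G\setminus Z^{+L}$. Hence the geodesic minus $Z^{+L}$ has two components containing points at distance $\geq D/2$ from $Z$.

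Finally, a diagonal argument is needed, since the Definition asks for a single $\gamma$ working for every $D$. Using homogeneity, translate so that the crossing point sits near the identity. Using Arzelà--Ascoli with uniform Morse gauge, extract a limit bi-infinite Morse geodesic. Alternatively, and more simply, note that the statement may mean separation of the family of Morse geodesics with fixed gauge, as in Example~\ref{ex:CoarseSep}. I expect the main obstacle to be the counting step: showing that subexponential growth of $Z$ forces most exponentially many Morse branches to avoid $Z^{+L}$, uniformly in $D$. This requires a careful bound on how many branches a single point of $Z$ can be close to at each radius. It is here that the divergence of Morse geodesics in a stable free subgroup is used.
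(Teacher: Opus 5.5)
Your first half matches the paper: a uniformly Morse quasi-tree rooted at every point (a translate of a Morse free subgroup, Lemma~\ref{lem:MorseQT}), and a count showing that a set of subexponential growth cannot meet every branch of an exponentially branching tree (Lemma~\ref{lem:SubExpTree}). That count is routine and is not the main obstacle.

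The genuine gap is in joining the two sides.
\begin{itemize}
\item The ray you find at $u_D$ lies in $u_DF$ and the one at $v_D$ lies in $v_DF$. No coset $hF$ contains both.
\item Every path from $u_D$ to $v_D$ must enter $Z^{+L}$, so $d(u_D,v_D)\geq 2(D-L)$. There is therefore no ``bounded cost'' concatenation.
\item A ray, followed by a long connecting path, followed by another ray, is in general neither Morse nor even quasi-geodesic. Ping-pong inside $F$ gives no control at the junctions with a path that leaves $F$.
\end{itemize}
The tool the paper uses is visibility of the Morse boundary \cite[Proposition~3.11]{MR3737283}. Two Morse rays with distinct endpoints are, up to finite Hausdorff distance, the two ends of one bi-infinite Morse geodesic. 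This needs genuinely infinite rays, not rays that avoid $Z$ only up to scale $D$.

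The second problem is the scale. Rays avoiding $Z^{+L'}$ for a fixed $L'$ can run parallel to $Z$ forever. At best you get a geodesic $\gamma_D$ that is separated at scale about $D$, and your diagonal argument does not repair this:
\begin{itemize}
\item After translating, the separating sets become $g_DZ_D$, which need not belong to $\mathcal{Z}$, since no $G$-invariance is assumed. Translating back gives a different geodesic for each $D$.
\item There is no reason for the points of $\gamma_D$ at distance $D'$ from $Z_D$ to lie in a parameter window independent of $D$.
\item Reinterpreting the statement as separation of a family of geodesics weakens it.
\end{itemize}

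The missing idea is the paper's growing thickening $Z^+=\bigcup_{z\in Z}B(z,R(|z|))$, where $R$ is unbounded but increases slowly (e.g.\ $\log\log$). With this choice, $Z^+$ still has subexponential spherical growth at the root, and $d(x,Z^+)\geq d(x,Z)/2$. The tree lemma then produces an infinite ray disjoint from $Z^+$, which therefore escapes to infinity away from $Z$ (Proposition~\ref{prop:RayInComponent}).

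Doing this at far points $x\in U_D$ and $y\in V_D$ gives two rays whose tails lie in different $k$-components of $G\setminus Z^{+L}$. In particular their endpoints at infinity differ. The visibility geodesic joining them is then coarsely separated by the single set $Z$ at every scale, with no limiting argument needed.
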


\noindent
Section~\ref{section:Morse} records some basic definitions and properties related to Morse geodesics, and Section~\ref{section:SeparatingMorse} contains the proof of Theorem~\ref{thm:SeparatingAcylHyp}.

\subsection{Morse geodesics}\label{section:Morse}

\noindent
First, let us recall the definition of Morse subspaces. 

\begin{definition}
Let $X$ be a geodesic metric space. Given a map $M : [0,+ \infty) \times [0,+ \infty)$, a subspace $Y \subset X$ is \emph{$M$-Morse}, or \emph{Morse with Morse gauge $M$}, if, for all $A>0$ and $B \geq 0$, every $(A,B)$-quasigeodesic connecting two points of $Y$ remains in the $M(A,B)$-neighbourhood of $Y$. 
\end{definition}

\noindent
It is worth mentioning that a Morse subspace is always coarsely connected and quasi-isometrically embedded. 

\medskip \noindent
Morse subspaces include quasiconvex subspaces in hyperbolic spaces, but they can also be easily found in \emph{acylindrically hyperbolic groups}, which is why they interest us here. Recall that:

\begin{definition}
A group $G$ is \emph{acylindrically hyperbolic} if it admits an action on some hyperbolic space $X$ that is non-elementary and \emph{acylindrical}, i.e.\ for every $D \geq 0$, we can find $N,L \geq 0$ such that
$$\forall x,y \in X, \ d(x,y) \geq L \Rightarrow \# \{ g \in G \mid d(g \cdot x, g \cdot y ) \leq D \} \leq N.$$
\end{definition}

\noindent
The following observation records the fact that, in an acylindrically hyperbolic, there are Morse rooted quasi-trees starting from every point. 

\begin{lemma}\label{lem:MorseQT}
Let $G$ be acylindrically hyperbolic group. There exists a Morse gauge $M$ such that, for every $g \in G$, there exists an $M$-Morse quasi-tree $(T_2,o) \hookrightarrow (G,g)$.
\end{lemma}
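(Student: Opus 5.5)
We will build the quasi-tree from a free subgroup generated by two independent loxodromic WPD elements. Since $G$ is acylindrically hyperbolic, it acts non-elementarily and acylindrically on a hyperbolic space $X$. By standard results (Dahmani--Guirardel--Osin, or Sisto's work on Morse elements), $G$ then contains two loxodromic elements $a,b$ with disjoint fixed points at infinity. Moreover, after replacing $a,b$ by large powers $a^n,b^n$, the subgroup $F=\langle a^n,b^n\rangle$ is free of rank two, and any orbit map $F\to X$ is a quasi-isometric embedding. This is the usual ping-pong argument in the hyperbolic space $X$.

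The key point is that such an $F$ is Morse in $G$, with respect to a word metric coming from a finite generating set. We would invoke the result that a finitely generated subgroup of an acylindrically hyperbolic group whose orbit map to $X$ is a quasi-isometric embedding is Morse (stable) in $G$. This is due to Sisto (``Quasi-convexity of hyperbolically embedded subgroups'') and Durham--Taylor/Abbott--Manning. In outline: an $(A,B)$-quasigeodesic in $G$ between points of $F$ projects, under the Lipschitz orbit map, to a path in $X$. Acylindricity, via the uniform bound on elements moving two far-apart points a bounded amount, prevents this path from making long detours in $G$ while staying near the quasi-geodesic orbit of $F$. This yields a gauge $M_0$ such that $F$ is $M_0$-Morse in $G$.

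Next, the Cayley graph of $F$ with respect to $\{a^n,b^n\}$ is the $4$-regular tree. It contains a copy of the binary rooted tree $T_2$ rooted at the identity, for example spanned by the reduced words beginning with $a^n$ or $b^n$ and avoiding backtracking, which branch with degree at least $3$. The inclusion $T_2\hookrightarrow F\hookrightarrow G$ is a quasi-isometric embedding, so the image is a quasi-tree, and it is Morse because a quasiconvex subset of a Morse, hyperbolic subspace is Morse. Concretely, any quasigeodesic in $G$ between two points of the image first stays $M_0(A,B)$-close to $F$. It can then be pushed into $F$ as a quasigeodesic with controlled constants, and in the tree $F$ such a path stays close to the geodesic subtree, which lies in $T_2$. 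Together these give a gauge $M$.

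Finally, we obtain the statement for every $g\in G$ by translation. Left multiplication by $g$ is an isometry of $G$ sending the identity to $g$ and preserving Morse gauges, so $g\cdot T_2$ is an $M$-Morse quasi-tree rooted at $g$, with $M$ independent of $g$. The main obstacle is the step showing that $F$ is Morse in $G$. We intend to cite the existing stability results, though a self-contained argument would have to use acylindricity carefully, for instance through Sisto's projection-based argument that loxodromic WPD elements are Morse, extended to the free subgroup.
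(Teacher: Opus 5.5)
Your proof is correct. It has the same overall shape as the paper's: find a Morse free subgroup, extract a rooted binary tree from it, then translate by $g$. The difference is in how you obtain the Morse free subgroup.

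The paper takes a two-line citation route. Dahmani--Guirardel--Osin give a hyperbolically embedded subgroup $F_2\times K(G)$, and Sisto's theorem says hyperbolically embedded subgroups are Morse. Hence $F_2$, being of finite index in it, is Morse.

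You instead build $F=\langle a^n,b^n\rangle$ by ping-pong in $X$. You then invoke the stability criterion that a finitely generated subgroup whose orbit map under an acylindrical action is a quasi-isometric embedding is Morse (stable) in $G$. That criterion is a genuine theorem. However, the standard reference for it in this generality is Aougab--Durham--Taylor, not Sisto's paper on hyperbolically embedded subgroups; that paper is what the authors' route uses. Also, your heuristic that acylindricity prevents long detours does not amount to a proof, so this step has to be taken as a citation.

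What your route buys is that it avoids hyperbolically embedded subgroups altogether. It also shows that any ping-pong free subgroup of independent loxodromics will do. The paper's route is shorter given DGO.

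You also supply details the paper leaves implicit, and they are right:
\begin{itemize}
\item $F$ is undistorted in $G$, because its orbit map to $X$ is a quasi-isometric embedding and the orbit map of $G$ is Lipschitz.
\item A convex binary subtree of the $4$-regular Cayley tree of $F$ is Morse in $G$: project a quasigeodesic onto $F$ and apply the Morse lemma in the tree. You should prune your subtree to exactly two children per vertex to get $T_2$ on the nose.
\item Left translation preserves both the Morse gauge and the quasi-isometry constants. This is what makes $M$, and the control functions later used in Proposition~\ref{prop:RayInComponent}, uniform in $g$.
\end{itemize}
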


\noindent
In our lemma, $(T_2,o)$ denotes a $2$-regular tree rooted at $o$. 

\begin{proof}[Proof of Lemma~\ref{lem:MorseQT}.]
According to \cite[Theorem~2.24]{MR3589159} and \cite[Theorem~2]{MR3519976}, our acylindrically hyperbolic group $G$ contains a Morse free subgroup, hence a Morse rooted tree based at $1$. By translating our Morse tree, by $g$, we get a Morse rooted tree based at $g$, as desired. 
\end{proof}

\subsection{Separating Morse geodesics}\label{section:SeparatingMorse}

\noindent
In this section, our goal is to prove Theorem~\ref{thm:SeparatingAcylHyp}. We start by proving the following general observation: 

\begin{prop}\label{prop:RayInComponent}
Let $\sigma$ be a subexponential map, $\rho_-,\rho_+: [0,\infty) \to [0,\infty)$ two increasing functions, and let $N$ be a constant. There exists some $Q \geq 0$ such that the following holds. Let $X$ be a graph of degree $\leq N$, $Z \subset V(X)$ a subset of growth $\leq \sigma$, and $\varphi : (T_2,o) \hookrightarrow (X,x)$ a coarse embedding with control functions $\rho_-$ and $\rho_+$. If $d(x,Z) \geq Q$, then there exists a geodesic ray in $(	T_2,o)$, starting at $o$, such that $\varphi(\rho)$ is disjoint from $Z$ and not contained in any neighbourhood of $Z$. 
\end{prop}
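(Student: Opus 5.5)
We argue by counting, comparing the exponential growth of the tree with the subexponential growth of $Z$. Here $(T_2,o)$ is the rooted tree in which every vertex has two children, so its sphere of radius $n$ has $2^n$ vertices. Note also that since $\varphi$ is a coarse embedding with $\rho_-$ increasing (and hence unbounded, as for any control function), each fibre of $\varphi$ has uniformly bounded cardinality. Indeed, $\varphi(a)=\varphi(b)$ forces $\rho_-(d(a,b))\le 0$, which bounds $d(a,b)$ by some constant $c_0$, and balls of radius $c_0$ in $T_2$ have at most $2^{c_0+1}$ points.

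Call a vertex $v\in T_2$ \emph{bad} if $\varphi(v)\in Z$. We want a ray from $o$ containing no bad vertex and whose image leaves every neighbourhood of $Z$.

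First, we bound the bad vertices at depth $n$. A vertex $v$ at distance $n$ from $o$ satisfies $d(\varphi(v),x)\le \rho_+(n)$. Hence the number of bad vertices at depth $n$ is at most
$$2^{c_0+1}\,|B(x,\rho_+(n))\cap Z|.$$
We may assume $Z\cap B(x,\rho_+(n))$ is nonempty, and pick $z$ in it. Then $B(x,\rho_+(n))\subset B(z,2\rho_+(n))$, so this quantity is at most $2^{c_0+1}\sigma(2\rho_+(n))$.

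Second, we find a bad-free ray by a pruning argument. Define the cone of $v$ as the subtree of its descendants. A vertex $v$ at depth $m$ has $2^{n-m}$ descendants at depth $n$. We say $v$ is \emph{dead} if every ray through $v$ meets a bad vertex. By König's lemma, if $o$ were dead, there would be a finite cut-set of bad vertices separating $o$ from infinity. Such a cut-set satisfies the Kraft-type identity
$$\sum_{v \text{ in cut}} 2^{-|v|}\ge 1,$$
where $|v|$ denotes the depth of $v$. Moreover, every bad vertex lies at depth $\ge m_0$, where $\rho_-(m_0)\approx Q$, because
$$d(\varphi(v),Z)\ge d(x,Z)-\rho_+(|v|)$$
is positive when $\rho_+(|v|)<Q$. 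So it suffices to show
$$\sum_{n\ge m_0(Q)} 2^{-n}\,2^{c_0+1}\sigma(2\rho_+(n))<1.$$
\textbf{This is the main obstacle:} the series converges only if $\sigma\circ\rho_+$ grows slower than $2^n$. This is automatic when $\rho_+$ is linear, since $\sigma$ is subexponential, and the intended application is to quasi-isometric embeddings or coarsely Lipschitz maps, where $\rho_+$ is at most affine. With $\rho_+(n)\le An+B$, the terms are $2^{-n}e^{o(n)}$, so the tail beyond $m_0$ is $<1/2$ once $m_0$ is large. We choose $Q$ so that $\rho_+(m_0)<Q$, which guarantees that no bad vertex occurs before depth $m_0$. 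If $\rho_+$ is genuinely superlinear, I would instead run the counting along the levels $n_k$ with $\rho_+(n_{k+1}-n_k)$ controlled, but I expect the affine case is what is needed.

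Third, we upgrade to a ray that escapes every neighbourhood of $Z$. Apply the same Kraft estimate to the set of vertices $v$ with $d(\varphi(v),Z)\le R$, for each $R$. Such vertices at depth $n$ number at most
$$2^{c_0+1}\,N^{R+1}\sigma(2\rho_+(n)+R),$$
still subexponential in $n$ for fixed $R$. Hence, for each $R$, the "$R$-bad" vertices beyond some depth $m_R$ cannot block all rays within a given live cone. Using this, we build the ray inductively. Stage $0$ uses the argument above to produce a live cone avoiding $Z$. At stage $R$, we descend inside the current live cone to a vertex $w$ with $d(\varphi(w),Z)>R$; this exists, since otherwise the whole subcone would be $R$-bad, contradicting the count. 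Taking the limit gives a ray $\rho$ with $\varphi(\rho)\cap Z=\emptyset$ and $\sup_{v\in\rho} d(\varphi(v),Z)=\infty$.

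The fraction of rays killed at each stage can be made summably small by choosing $Q$ large, since the measure of rays hitting $Z$ is at most $\sum_{n\ge m_0}2^{-n}2^{c_0+1}\sigma(2\rho_+(n))$, which tends to $0$ as $Q\to\infty$. In particular, the set of rays avoiding $Z$ has positive uniform measure. For each fixed $R$, the set of rays staying within distance $R$ of $Z$ has measure zero, because the density of $R$-bad vertices at depth $n$ tends to $0$. Hence a ray of positive measure avoids $Z$ and escapes every neighbourhood of $Z$, which cleanly replaces the inductive construction.
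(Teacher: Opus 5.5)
Your argument is correct, but the ``main obstacle'' you flag is not an obstacle, and you can drop the hedge about superlinear $\rho_+$. Since $T_2$ is a graph, two vertices at distance $n$ are joined by a path of $n$ edges. Applying the upper control to each edge gives $d(\varphi(a),\varphi(b))\le \rho_+(1)\,d(a,b)$. So every coarse embedding of $T_2$ is coarsely Lipschitz, and you may always take $\rho_+(n)=\rho_+(1)n$. Then $\sigma(2\rho_+(1)n)=e^{o(n)}$ and all your series converge; the ``levels $n_k$'' workaround is unnecessary. The paper's proof uses the same fact tacitly when it asserts that $\varphi^{-1}(Z^+)$ has subexponential growth in the tree. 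Two minor slips only affect constants: ``$\rho_-(m_0)\approx Q$'' should read $\rho_+$, and balls of radius $c_0$ in $T_2$ have about $3\cdot 2^{c_0}$ vertices rather than $2^{c_0+1}$.

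On the route, your Kraft inequality over a cut-set is the same estimate as the paper's Lemma~\ref{lem:SubExpTree}, which prunes level by level using $\sum_{k\ge r_0}\sigma_S(k)/2^k\le 1$. Likewise, your choice $Q>\rho_+(m_0)$ is the paper's requirement that $d(x,Z)$ be large compared to $\rho_+(r_0)$.

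The genuine difference is how you obtain escape from every neighbourhood of $Z$.
\begin{itemize}
\item The paper thickens $Z$ to $Z^+=\bigcup_{z\in Z}B(z,R(|z|))$, with $R$ unbounded but growing very slowly. It checks that $Z^+$ is still subexponential and that $d(x,Z^+)\ge d(x,Z)/2$, then applies the lemma once to $\varphi^{-1}(Z^+)$. A ray avoiding $Z^+$ automatically satisfies $d(\varphi(v),Z)\to\infty$.
\item You instead use the uniform measure on the ends of $T_2$. Rays avoiding $Z$ have positive measure, while for each fixed $R$ the rays staying in $Z^{+R}$ form a null set, and countably many null sets cannot exhaust a set of positive measure.
\end{itemize}
Your version is cleaner: no gauge $R$ has to be tuned and no estimate on $d(x,Z^+)$ is needed. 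It also makes your sketched inductive construction superfluous.

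Moreover, your density bounds $2^{-n}e^{o(n)}$ are summable in $n$. So Borel--Cantelli shows that almost every ray has only finitely many $R$-bad vertices for every $R$, i.e.\ $d(\varphi(v),Z)\to\infty$ along it. This is the stronger divergence that the paper's $Z^+$ produces. It is also what the proof of Theorem~\ref{thm:SeparatingAcylHyp} actually needs when it passes to subrays of a Morse geodesic at finite Hausdorff distance from these rays.
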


\noindent
Its proof will be based on the following preliminary observation:

\begin{lemma}\label{lem:SubExpTree}
Let $(T,o)$ be a $2$-regular rooted tree and $S \subset V(T)$ a subset whose growth based at $o$ is subexponentiel. There exists a geodesic ray $\rho$ starting at $o$ such that $\rho \cap S \subset B(o,r_0)$ where $r_0 \geq 0$ is chosen so that $\sum_{k \geq r_0} \sigma_S(k)/2^k \leq 1$. 
\end{lemma}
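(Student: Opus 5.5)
The plan is to argue by compactness on the space of rays together with a counting argument. I interpret $\sigma_S(k)$ as the number of points of $S$ in the ball $B(o,k)$, so $\sigma_S$ is non-decreasing. I read $(T,o)$ as the rooted tree in which every vertex has exactly two children, so the sphere of radius $k$ about $o$ has $2^k$ vertices.

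\emph{Step 1: a measure on rays.} Let $\partial T$ be the set of geodesic rays starting at $o$, with the product topology; it is compact, homeomorphic to a Cantor set. For a vertex $v$ at depth $k$, let $C_v \subset \partial T$ be the cylinder of rays passing through $v$. Each $C_v$ is clopen, and under the uniform (coin-flipping) probability measure $\mu$ on $\partial T$ we have $\mu(C_v)=2^{-k}$.

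\emph{Step 2: reduce to a finite cover.} Suppose for contradiction that every ray meets $S$ outside $B(o,r_0)$. Then $\partial T=\bigcup_{v} C_v$, where $v$ ranges over the points of $S$ at depth $>r_0$. This is an open cover of a compact space, so finitely many points $s_1,\dots,s_m\in S$, at distinct positions and depths $k_1,\dots,k_m>r_0$, already satisfy $\partial T=\bigcup_i C_{s_i}$. By subadditivity of $\mu$,
$$\sum_{i=1}^m 2^{-k_i}\ \geq\ 1 .$$

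\emph{Step 3: compare with the growth series.} Each $s_i$ lies in $B(o,k)$ for every $k\geq k_i$, so it contributes $1$ to $\sigma_S(k)$ for all such $k$. Since the $s_i$ are distinct and $k_i>r_0$, we get
$$\sum_{k\ge r_0}\frac{\sigma_S(k)}{2^k}\ \geq\ \sum_{i=1}^m\sum_{k\ge k_i}2^{-k}\ =\ \sum_{i=1}^m 2^{1-k_i}\ \geq\ 2,$$
which contradicts the hypothesis $\sum_{k\ge r_0}\sigma_S(k)/2^k\le 1$. Hence some ray $\rho$ satisfies $\rho\cap S\subset B(o,r_0)$.

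The counting in Step 3 leaves a factor-$2$ margin, so the non-strict inequality $\le 1$ in the statement is enough. This is where the ball-counting, hence monotone, nature of $\sigma_S$ is used: a single point of $S$ is counted at every scale beyond its depth. The main point requiring care is Step 2. The compactness argument is needed when $S$ is infinite, since a countable union of cylinders of total measure at most $1$ could a priori still cover $\partial T$. Passing to a finite subcover avoids any subtlety about measure-zero sets of uncovered rays.

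Subexponential growth is used only to guarantee that an $r_0$ with convergent tail sum $\le 1$ exists. That holds because $\sigma_S(k)2^{-k}$ is summable once $\sigma_S(k)\le (3/2)^k$ for large $k$.
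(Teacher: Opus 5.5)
\textbf{The reading of $\sigma_S$.} Your argument is correct for the reading you adopt, but that reading is not the paper's. Right after the statement, the paper defines $\sigma_S(k)=\#\{s\in S\mid d(o,s)=k\}$, the \emph{spherical} count. The ball count dominates it, so your hypothesis is stronger and you are proving a weaker lemma. The factor-$2$ margin in your Step~3 comes entirely from this choice.

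With spherical counts, Step~3 only gives $\sum_{k>r_0}\sigma_S(k)2^{-k}\geq\sum_i 2^{-k_i}\geq 1$, which does not contradict the hypothesis. This cannot be repaired, because the statement with spherical counts fails in the equality case. Let $S$ be the sphere of radius $r_0+1$ about $o$. Then $\sum_{k\geq r_0}\sigma_S(k)2^{-k}=2^{r_0+1}\cdot 2^{-(r_0+1)}=1$, yet every ray from $o$ meets $S$ at depth $r_0+1$.

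\textbf{The paper's proof.} It reaches its bound $x_n\geq 2^{n-1}$ only through an index shift. Its recursion for $y_n$ subtracts $\sigma_S(r_0+n)/2^{n+1}$. But pruning at depth $r_0+n+1$ costs $\sigma_S(r_0+n+1)/2^{n+1}$, as the paper itself writes for $x_1=2x_0-\sigma_S(r_0+1)$. With the correct index one only gets $x_n\geq 2^{r_0+n}\bigl(1-\sum_{k>r_0}\sigma_S(k)2^{-k}\bigr)$, which is exactly the threshold of your union bound.

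So the correct spherical version should assume $\sum_{k>r_0}\sigma_S(k)2^{-k}<1$. Under that hypothesis your Steps~1--2, followed by the one-line count $\sum_{k>r_0}\sigma_S(k)2^{-k}\geq\sum_i 2^{-k_i}$, prove it. Nothing is lost in Proposition~\ref{prop:RayInComponent}, where $r_0$ can simply be enlarged.

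\textbf{Comparison of routes.} The paper prunes $T$ at the points of $S$ beyond depth $r_0$, bounds the level sizes of the pruned tree by a linear recursion, and extracts a ray from the resulting infinite locally finite tree (implicitly K\"onig's lemma). Your proof is the measure-theoretic packaging of the same union bound. The level size of the pruned tree at depth $r_0+n$, divided by $2^{r_0+n}$, is exactly the $\mu$-measure of the rays avoiding $S$ at depths in $(r_0,r_0+n]$. K\"onig's lemma is replaced by the existence of $\mu$ on the compact space $\partial T$. Your formulation makes the threshold transparent and avoids the index bookkeeping where the paper slipped.

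\textbf{On your compactness step.} Contrary to your remark, the passage to a finite subcover is not needed. If the cylinders $C_s$ with $s\in S$ and $d(o,s)>r_0$ cover $\partial T$, countable subadditivity of $\mu$ already gives $\sum_s 2^{-d(o,s)}\geq 1$, which is all Step~3 uses. The real danger in the non-strict case is a \emph{finite} exact cover, such as the sphere above, and compactness does not exclude it.
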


\noindent
Here, $\sigma_S(k)$ refers to $\# \{ s \in S \mid d(o,s)=k\}$, i.e.\ $\sigma_S$ is the \emph{spherical growth} of $S$ in $(T,o)$.

\begin{proof}[Proof of Lemma~\ref{lem:SubExpTree}.]
Let $T^-$ denote the tree obtained from $T$ by removing the bottom tree hanging at each point of $S$ outside $B(o,r_0)$. At the level $r_0$, we keep the $x_0:= 2^{r_0}$ vertices of $T$ in $T^-$. At the next level, the number of vertices doubles to $2x_0$, and we remove $\leq \sigma_S(r_0+1)$ vertices to get the level of $T^-$. So the level of $T^-$ contains at most $\leq x_1:= 2x_0 - \sigma_S(r_0+1)$ vertices. By iterating, we find that the $(r_0+k)$th level of $T^-$ contains at most $x_k$ vertices where the sequence $(x_n)_n$ is defined by
$$\left\{ \begin{array}{l} x_0=2^{r_0} \\ x_{n+1}= 2x_n - \sigma_S(r_0+k) \end{array} \right..$$
Our goal is to prove that $x_n \geq 2^{n-1}$ for every $n \geq 0$. This will show that $T^-$ is a subtree of exponential growth. It particular, it contains an infinite ray $\rho$ starting from $o$. By construction, $\rho$ is disjoint from $S$ outside $B(o,r_0)$.

\medskip \noindent
Setting $y_n:=x_n/2^n$, we get a new sequence defined by
$$\left\{ \begin{array}{l} y_0= 2^{r_0} \\ y_{n+1}= y_n - \sigma_S(r_0+n)/2^{n+1} \end{array} \right..$$
We have
$$\begin{array}{lcl} y_n & = & \displaystyle 2^{r_0}- \sum\limits_{k <n} \frac{\sigma_S(r_0+k)}{2^{k+1}} \geq 2^{r_0}- \sum\limits_{k \geq 0 } \frac{\sigma_S(r_0+k)}{2^{k+1}}  \\ \\ & \geq & \displaystyle 2^{r_0} - 2^{r_0-1} \sum\limits_{k \geq 0} \frac{\sigma_S(r_0+k)}{2^{r_0+k}} = 2^{r_0} - 2^{r_0-1} \sum\limits_{k \geq r_0} \frac{\sigma_S(k)}{2^{k}}  \\ \\ & \geq & \displaystyle 2^{r_0} - 2^{r_0-1} = 2^{r_0-1} \geq 1/2 \end{array}$$
by definition of $r_0$. We conclude that $x_n = 2^ny_n \geq 2^{n-1}$ for every $n \geq 0$, as desired. 
\end{proof}

\begin{proof}[Proof of Proposition~\ref{prop:RayInComponent}.]
For convenience, we write $|p|= d(x,p)$ for every $p \in V(X)$. We also consider the thickning
$$Z^+:= \bigcup\limits_{z \in Z} B(z, R(|z|)),$$
where $R$ is a map we have to chose carefully. More precisely, we want the following conditions to hold:
\begin{itemize}
	\item $R$ is increasing and unbounded;
	\item $R$ increases sufficiently slowly so that $Z^+$ has subexponential growth based at $x$;
	\item $R$ increases sufficiently slowly so that $R(a) \leq a/2$ for every $a \geq 0$.
\end{itemize}
Notice that the spherical growth of $Z^+$ at $x$ satisfies
$$\sigma_{Z^+}(\ell) \leq \sigma_Z(\ell) \cdot \beta_X(R(\ell))$$
where $\eta_X$ denotes the growth function of $X$. Thus, by choosing $R$ of the form $\log \circ \log$, the three conditions above are satisfied. 

\medskip \noindent
According to Lemma~\ref{lem:SubExpTree}, there exists a geodesic ray $\rho$ in $T_2$ starting at $o$ that may intersect $\varphi^{-1}(Z^+)$ only in the ball $B(o,r_0)$, where $r_0$ is a constant depending only on the growth of $Z$, the degree $N$, and the functions $\rho_-,\rho_+$. Notice that, if $z \in Z$ is such that the closest point of $Z^+$ to $x$ belongs to $B(z, R(|z|))$, then we have
$$|z|  = d(x,z) \leq d(x,Z^+) + R(|z|) \leq d(x,Z^+) + |z|/2,$$
hence 
$$d(x,Z^+) \geq |z|/2 = d(x,z)/2 \geq d(x,Z) /2.$$
Thus, if we choose $Q$ sufficiently large (more precisely, if $d(x,Z)$ is assumed to be sufficiently large compared to $\rho_+(r_0)$), then $\varphi^{-1}(Z^+)$ must be disjoint from $B(o,r_0)$, which implies that $\varphi(\rho)$ is disjoint from $Z^+$. 

\medskip \noindent
A fortiori, $\varphi(\rho)$ is disjoint from $Z$, as desired. Moreover, since $R$ is unbounded, $\varphi(\rho)$ being disjoint from $Z^+$ implies that $\varphi(\rho)$ is not contained in any neighbourhood of $Z$, concluding the proof. 
\end{proof}

\begin{proof}[Proof of Theorem~\ref{thm:SeparatingAcylHyp}.]
Because $\mathcal{Z}$ coarsely separates $G$, there exist $k,L \geq 0$ such that, for every $D \geq 0$, we can find a $k$-connected $Z \in \mathcal{Z}$ such that $G \backslash Z^{+L}$ contains at least two $k$-connected components of with points at distance $\geq D$ from $Z^{+L}$. Let $Z \in \mathcal{Z}$ be such a subspace given for some $D$ larger that the constant given by Proposition~\ref{prop:RayInComponent} (where $\sigma$ is the growth of $\mathcal{Z}$, $N$ is the degree of (a Cayley graph of) $G$, and where $\rho_-,\rho_+$ are the affine functions such that an $M$-Morse quasi-tree in $G$  is $(\rho_-,\rho_+)$-quasi-isometrically embedded). Fix two points $x,y \in G$ that lie in distinct $k$-connected components of $G \backslash Z^{+L}$ and at distance $\geq D$ from $Z^{+L}$. 

\medskip \noindent
According to Lemma~\ref{lem:MorseQT}, there exists an $M$-Morse quasi-tree $(T_2,o) \to (G,x)$. Since we have chosen $D$ sufficiently large, it follows from Proposition~\ref{prop:RayInComponent} that our quasi-tree contains a geodesic ray $\zeta$, starting from $x$, disjoint from $Z^{+L}$, and not contained in any neighbourhood of $Z$. Similarly, we know that there exists an $M$-Morse geodesic ray $\xi$ starting from $y$, disjoint from $Z^{+L}$, and not contained in any neighbourhood of $Z$. Since the Morse boundary satisfy the visibility property (see for instance \cite[Proposition~3.11]{MR3737283}), there exists some Morse geodesic $\eta$ with two subrays at finite Hausdorff distance from $\zeta$ and $\xi$. Then, $\eta$ contains two subrays contained in distinct $k$-connected component of $G \backslash Z^{+L}$ but not contained in any neighbourhood of $Z$. Thus, $\eta$ is coarsely separated by $Z$, and a fortiori by $\mathcal{Z}$. 
\end{proof}

\section{Coarse separation in thick spaces}

\noindent
In this section, we introduce a notion \emph{thickness} which we will use as an obstruction to coarse separation, thanks to Propositions~\ref{prop:Thick} and~\ref{prop:coarse_sep_family_converse} below. It is reminiscent of \emph{thick metric spaces} introduced in \cite{MR2501302} as an obstruction to relative hyperbolicity. 

\begin{definition}
Let $X$ be a metric space and $\mathcal{A}, \mathcal{B}$ two collections of subspaces. A subset $E \subset X$ is \emph{$(\mathcal{A},\mathcal{B})$-thick} if, for all $x,y \in E$, there exist $A_1, \ldots, A_n \in \mathcal{A}$ such that $x \in A_1$, $y \in A_n$, and $A_i \cap A_{i+1}$ contains some $B \in \mathcal{B}$ for every $1 \leq i \leq n-1$.
\end{definition}

\noindent
In order to illustrate our definition, let us mention a couple of motivating examples.

\begin{ex}\label{ex:thick_spaces}
Thick metric spaces include for instance:
\begin{itemize}
        \item[(1)] Let $G = A_1 *_B A_2$ be an amalgamated product, and let $X$ denote a Cayley graph of $G$ (with respect to a finite generating set). Let $\mathcal{A}$ be the family of left cosets of $A_1$ and of $A_2$, and let $\mathcal{B}$ be the family of left cosets of $B$ (viewed as subgraphs of $X$). Then $X$ is $(\mathcal{A},\mathcal{B})$-thick.
        \item[(2)] Let $X$ and $Y$ be connected graphs, and let $r \geq 1$ be an integer. Let $\mathcal{A}$ be the collection of subgraphs $X \times B_Y(y,r)$, where $y$ ranges over $V(Y)$; and let $\mathcal{B}$ be the collection of subgraphs $X \times \{y\}$, where $y$ ranges over $V(Y)$. Then $X \times Y$ is $(\mathcal{A},\mathcal{B})$-thick.
\end{itemize}
\end{ex}

\noindent
The idea of the second example generalizes to left cosets of a subgroup as the following:

\begin{lemma}\label{lem:thick_family_left_cosets}
Let $G$ be a group endowed with the word metric associated to a generating set $S$, and let $H \leq G$ be a subgroup. Set
$$\mathcal{A}=\{(gH)^{+1} \mid g \in G \}.$$
For each $s \in S$, set $ K_s := H \cap sHs^{-1}$, and let
$$\mathcal{B}=\{gK_s \mid g \in G, s \in S\}.$$
Then $G$ is $(\mathcal{A},\mathcal{B})$-thick.
\end{lemma}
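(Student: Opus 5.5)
The plan is to connect two arbitrary points $x,y \in G$ by a chain of elements of $\mathcal{A}$ built along a word in $S$. Write $x^{-1}y = s_1^{\epsilon_1} \cdots s_n^{\epsilon_n}$ with $s_i \in S$ and $\epsilon_i = \pm 1$, and set $g_0 = x$, $g_i = x s_1^{\epsilon_1}\cdots s_i^{\epsilon_i}$, so that $g_n = y$. We then take $A_i := (g_i H)^{+1}$ for $0 \leq i \leq n$. Clearly $x = g_0 \in A_0$ and $y = g_n \in A_n$, so it remains to check that $A_{i-1} \cap A_i$ contains some element of $\mathcal{B}$.

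The key observation is that $g s H$ lies within distance $1$ of $gH$, and symmetrically, because right multiplication by $s$ moves a point by one edge in the word metric: every point $gsh$ satisfies $d(gsh, g\,(sHs^{-1})\cdots)$. We use this more concretely as follows. Suppose first that $\epsilon_i = +1$, so $g_i = g_{i-1}s_i$ and write $g = g_{i-1}$, $s = s_i$. Consider the coset $g K_s = g(H \cap sHs^{-1})$. It is contained in $gH \subset (gH)^{+1}$. Moreover, an element $gk$ with $k = s h s^{-1} \in sHs^{-1}$ satisfies $gk = gsh \cdot s^{-1}$, which is at distance $1$ from $gsh \in gsH = g_iH$. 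Hence $gK_s \subset (g_iH)^{+1}$. Therefore $gK_s \in \mathcal{B}$ lies in $A_{i-1} \cap A_i$.

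If instead $\epsilon_i = -1$, then $g_{i-1} = g_i s$ with $s = s_i$, and we apply the same argument with the roles of $g_{i-1}$ and $g_i$ exchanged. This shows that $g_i K_s \subset A_i \cap A_{i-1}$.

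If $S$ is assumed symmetric, this case distinction is unnecessary. If it is not, the case split above handles inverses without enlarging $\mathcal{B}$ beyond $\{gK_s\}$. In either case the chain $A_0, \ldots, A_n$ witnesses thickness.

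The only point requiring care is the distance computation $d(gsh s^{-1}, gsh) = 1$. This holds because the word metric is left-invariant and $d(u, us^{-1}) = |s^{-1}|_S = 1$. I do not expect any genuine obstacle.
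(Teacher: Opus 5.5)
Your proof is correct and follows the same route as the paper: build the chain $(g_iH)^{+1}$ along a word from $x$ to $y$, and observe that $gK_s \subset gH \cap (gsH)^{+1}$, so it lies in two consecutive members of the chain. Your case split for $\epsilon_i=-1$ (using $g_iK_{s_i}$) handles a point the paper glosses over, since the paper writes $s_i \in S^{\pm 1}$ and uses $K_{s_{i+1}}$, implicitly assuming $S$ is symmetric; the garbled sentence ``every point $gsh$ satisfies \dots'' should simply be deleted.
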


\begin{proof}
First, notice that for every $g \in G$ and $s \in S$, $g K_s \subset gs H s^{-1} \subset gs H^{+1}$. Therefore, $g K_s \subset g H^{+1} \cap gs H^{+1}$. Now, let $x,y \in G$ and write $y=xs_1\cdots s_n$ with $s_i \in S^{\pm 1}$. Set $x_i=xs_1\cdots s_i$ and $A_i=(x_iH)^{+1} \in \mathcal{A}$. Then $x \in A_0$ and $y \in A_n$. For any $0 \leq i \leq n-1$, $A_i \cap A_{i+1}$ contains $x_i K_{s_{i+1}} \in \mathcal B$.
\end{proof}

\noindent
Before proving the two centreal propositions of our section, let us record a couple of observations regarding thickness. Our first statement shows that thickness is naturally transitive. 

\begin{lemma}\label{lem:thickness_transitivity}
Let $X$ be a metric space and let $\mathcal{A}, \mathcal{B}, \mathcal{C}, \mathcal{D}$ be four collections of subspaces. Suppose that $X$ is $(\mathcal{A},\mathcal{B})$-thick and that every $A \in \mathcal{A}$ is $(\mathcal{C},\mathcal{D})$-thick. Assume moreover that, whenever $C,C' \in \mathcal{C}$ intersect, the intersection $C \cap C'$ contains some element of $\mathcal{B} \cup \mathcal{D}$. Then $X$ is $(\mathcal{C},\mathcal{B}\cup \mathcal{D})$-thick.
\end{lemma}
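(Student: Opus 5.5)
The plan is to build the required chain by concatenating chains at the two levels and then refining the transitions between them. Fix $x,y \in X$. By $(\mathcal{A},\mathcal{B})$-thickness of $X$, choose $A_1,\ldots,A_n \in \mathcal{A}$ with $x \in A_1$, $y \in A_n$, and some $B_i \in \mathcal{B}$ with $B_i \subset A_i \cap A_{i+1}$ for $1 \leq i \leq n-1$. The idea is to travel inside each $A_i$ using a $(\mathcal{C},\mathcal{D})$-chain, and to pass from $A_i$ to $A_{i+1}$ through a point of $B_i$.

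Concretely, for each $i$ we pick a point $b_i \in B_i$. This uses that elements of $\mathcal{B}$ are nonempty, which I take as implicit, since otherwise the thickness condition is vacuous. Set $b_0 := x$ and $b_n := y$. Then $b_{i-1}, b_i \in A_i$ for every $1 \leq i \leq n$, because $B_{i-1} \subset A_i$ and $B_i \subset A_i$. Since $A_i$ is $(\mathcal{C},\mathcal{D})$-thick, there are $C^i_1,\ldots,C^i_{m_i} \in \mathcal{C}$ with $b_{i-1} \in C^i_1$, $b_i \in C^i_{m_i}$, and consecutive intersections containing elements of $\mathcal{D}$.

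Concatenating these gives the chain
$$C^1_1,\ldots,C^1_{m_1},C^2_1,\ldots,C^2_{m_2},\ldots,C^n_1,\ldots,C^n_{m_n}.$$
It starts at $x$ and ends at $y$. Within each block, consecutive intersections contain an element of $\mathcal{D} \subset \mathcal{B} \cup \mathcal{D}$.

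At a junction, $C^i_{m_i}$ and $C^{i+1}_1$ both contain $b_i$, so they intersect. By the extra hypothesis, $C^i_{m_i} \cap C^{i+1}_1$ then contains some element of $\mathcal{B} \cup \mathcal{D}$. Every consecutive pair is therefore handled, and $X$ is $(\mathcal{C},\mathcal{B}\cup\mathcal{D})$-thick.

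The only delicate point is the junction step. There one needs the $\mathcal{C}$-chains in consecutive $A_i$'s to actually meet, and this is why I route each chain through a common point $b_i \in B_i \subset A_i \cap A_{i+1}$ rather than through arbitrary points. The remaining issues are degenerate cases, namely $n=1$, or $m_i=1$ meaning a single $C$ contains both $b_{i-1}$ and $b_i$. Both are covered by the same concatenation argument with no change.
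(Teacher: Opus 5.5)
Your proof is correct and matches the paper's argument: choose a point in each $B_i$, connect consecutive chosen points inside $A_i$ using $(\mathcal{C},\mathcal{D})$-thickness, and concatenate the resulting chains. At each junction the two adjacent elements of $\mathcal{C}$ both contain the chosen point, so the extra hypothesis gives an element of $\mathcal{B} \cup \mathcal{D}$ in their intersection.
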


\begin{proof}
Let $x,y \in X$. Since $X$ is $(\mathcal{A},\mathcal{B})$-thick, there exist $A_1,\dots,A_n \in \mathcal{A}$ such that $x \in A_1$, $y \in A_n$, and for every $1 \leq i \leq n-1$ the intersection $A_i \cap A_{i+1}$ contains some $B_i \in \mathcal{B}$. For each $i$, choose a point $p_i \in B_i$, and set $p_0=x$ and $p_n=y$.

\medskip \noindent
Now, we move from $p_{i-1}$ to $p_i$ in $A_i$ using the $(\mathcal{C},\mathcal{D})$-thickness: for every $1 \leq i \leq n$, there exist $C_{i,1},\dots,C_{i,m_i} \in \mathcal{C}$ such that $p_{i-1} \in C_{i,1}$, $p_i \in C_{i,m_i}$, and for every $j$ the intersection $C_{i,j} \cap C_{i,j+1}$ contains some element of $\mathcal{D}$. Concatenating these sequences, we obtain
$$(C_{1,1},\dots,C_{1,m_1}),\ (C_{2,1},\dots,C_{2,m_2}),\ \dots,\ (C_{n,1},\dots,C_{n,m_n}).$$
Inside each block, a consecutive intersection contains an element of $\mathcal{D}$. Moreover, since $p_i$ is contained in both $C_{i,m_i}$ and $C_{i+1,1}$, they intersect and $C_{i,m_i} \cap C_{i+1,1}$ contains some element of $\mathcal{B} \cup \mathcal{D}$. 
\end{proof}

\noindent
The following is an easy observation, whose proof is left to the reader.

\begin{lemma}\label{lem:thick_neighborhood}
Let $X$ be a metric space, let $\mathcal{A},\mathcal{B}$ be two collections of subspaces, and let $E \subset X$ be $(\mathcal{A},\mathcal{B})$-thick. Then, for every $r \geq 0$, the neighborhood $E^{+r}$ is $(\mathcal{A}^{+r},\mathcal{B})$-thick, where $\mathcal{A}^{+r}=\{A^{+r} \mid A \in \mathcal{A}\}$.\qed
\end{lemma}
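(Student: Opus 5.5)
The plan is to unfold the definition of $(\mathcal{A},\mathcal{B})$-thickness for $E$ and push everything into the $r$-neighbourhood. Fix $r \geq 0$ and two points $x,y \in E^{+r}$. By definition of the neighbourhood, we first choose points $x',y' \in E$ with $d(x,x') \leq r$ and $d(y,y') \leq r$. If $E^{+r}$ is defined with an infimum and $d(x,E)=r$ is not attained, we work in the graph or discrete setting the paper has in mind, where infima of distances are attained. Otherwise we take $x'$ within $r+\epsilon$, which only changes the constant; I will note this as a minor technicality.

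Next we apply the $(\mathcal{A},\mathcal{B})$-thickness of $E$ to $x',y'$. This gives $A_1,\dots,A_n \in \mathcal{A}$ with $x' \in A_1$ and $y' \in A_n$, and such that each $A_i \cap A_{i+1}$ contains some $B_i \in \mathcal{B}$. We then replace each $A_i$ by $A_i^{+r} \in \mathcal{A}^{+r}$. Since $d(x,x') \leq r$ and $x' \in A_1$, we get $x \in A_1^{+r}$, and similarly $y \in A_n^{+r}$.

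It remains to check that consecutive intersections still contain an element of $\mathcal{B}$. This follows from the monotonicity $A_i \cap A_{i+1} \subset A_i^{+r} \cap A_{i+1}^{+r}$, so that $B_i \subset A_i^{+r} \cap A_{i+1}^{+r}$. Hence the chain $A_1^{+r},\dots,A_n^{+r}$ witnesses the thickness condition for $x,y$, and $E^{+r}$ is $(\mathcal{A}^{+r},\mathcal{B})$-thick. There is no real obstacle here. The only point needing care is the endpoint step, namely that $x$ lies in the $r$-neighbourhood of the first member of the chain, and this is immediate from the triangle inequality once $x'$ is chosen.
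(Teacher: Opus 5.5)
Your argument is correct and is exactly the routine verification the paper leaves to the reader: choose $x',y'\in E$ within distance $r$ of $x,y$, take the chain $A_1,\dots,A_n$ for $x',y'$, and replace each $A_i$ by $A_i^{+r}$, using $B_i\subset A_i\cap A_{i+1}\subset A_i^{+r}\cap A_{i+1}^{+r}$. One correction to your aside: the $\epsilon$-fallback does not merely ``change the constant'', since it yields $\mathcal{A}^{+(r+\epsilon)}$ rather than $\mathcal{A}^{+r}$, and the statement genuinely needs $d(x,E)$ to be attained, which holds in the paper's graph setting where distances are integers. Without attainment the statement fails: take $r=0$, $E=\{1/n \mid n\geq 1\}\subset\mathbb{R}$, $\mathcal{A}=\{\{1,\frac12,\dots,\frac1n\} \mid n\geq 1\}$ and $\mathcal{B}=\{\{1\}\}$; then $0\in E^{+0}$ lies in no member of $\mathcal{A}^{+0}$. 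So your restriction to the discrete setting is the right fix, not just a technicality.
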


\noindent
We now turn to the main results of this section. 

\begin{prop}\label{prop:Thick}
Let $X$ be a coarsely connected space and let $\mathcal{A}, \mathcal{B}, \mathcal{Z}$ be three collections of subspaces. Assume that $X$ is $(\mathcal{A},\mathcal{B})$-thick, and that every subspace in $\mathcal{A}$ is coarsely $k$-connected in $X$, for some $k \geq 0$. If $\mathcal{Z}$ coarsely separates $X$, then one of the following assertions holds:
\begin{itemize}
    \item[(1)] $\mathcal{Z}$ coarsely separates $\mathcal{A}$; or
    \item[(2)] there exists $Z \in \mathcal{Z}$ that coarsely contains some $B \in \mathcal{B}$.
\end{itemize}
\end{prop}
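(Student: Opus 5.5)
We argue by contradiction: assume that neither (1) nor (2) holds, and show that $\mathcal{Z}$ cannot coarsely separate $X$. Let $k_0, L$ be the constants from coarse separation of $X$ by $\mathcal{Z}$. We may enlarge $k$ so that $k \geq k_0$ and every $A \in \mathcal{A}$ is $k$-connected. Since (1) fails for the constants $k$ and $L' := L+k$, there is some $D_0$ with the following property: for every $A \in \mathcal{A}$ and every $Z \in \mathcal{Z}$, at most one $k$-connected component of $A \setminus Z^{+L'}$ contains points at distance $\geq D_0$ from $Z$. Call this component, when it exists, the \emph{deep component} of $A$ relative to $Z$.

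Now fix $D$ large, to be chosen in terms of $D_0$, and take $Z \in \mathcal{Z}$ together with points $x,y$ in distinct $k$-components of $X \setminus Z^{+L}$, both at distance $\geq D$ from $Z$. By thickness, pick $A_1,\dots,A_n$ with $x \in A_1$, $y \in A_n$, and $B_i \subset A_i \cap A_{i+1}$ with $B_i \in \mathcal{B}$. Since (2) fails, $Z$ does not coarsely contain $B_i$; in particular $B_i \not\subset Z^{+R}$ for every $R$, and hence each $B_i$ contains points arbitrarily far from $Z$. Each such far point of $B_i$ lies in the deep component of $A_i$ and also in the deep component of $A_{i+1}$. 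The point $x$ lies in the deep component of $A_1$, because $D \geq D_0$ and $x \notin Z^{+L'}$. Likewise $y$ lies in the deep component of $A_n$.

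Chaining these observations gives a $k$-path from $x$ to $y$ avoiding $Z^{+L'} \supseteq Z^{+L}$. Concretely, inside the deep component of $A_1$ we travel from $x$ to a far point of $B_1$. Since that point also lies in the deep component of $A_2$, we continue inside $A_2$ to a far point of $B_2$, and so on, ending at $y$. All the points of this path lie in $A_i \setminus Z^{+L'}$ and hence outside $Z^{+L}$. The path is a $k$-path, and $k$-connectedness is measured in $X$, so we are consistent with the definition provided we took $k \geq k_0$. It follows that $x$ and $y$ lie in the same $k$-component of $X \setminus Z^{+L}$, which is a contradiction.

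The main delicate step is the quantifier bookkeeping. We need the non-separation of $\mathcal{A}$ to give uniform constants $(k, L', D_0)$ matching those in the separation of $X$. We also need to interpret "coarsely contains" so that its failure yields points of $B_i$ at distance $\geq D_0$ from $Z$. Here one must check that failure of (2) is uniform over all $Z$ and $B$, i.e.\ for every $R$, no $B$ lies in $Z^{+R}$; we use it with $R = D_0$. A second subtlety is matching components: $k$-components are taken in $A \setminus Z^{+L'}$ with the metric of $X$, so a $k$-path inside a component of $A_i$ is indeed a $k$-path of $X \setminus Z^{+L}$. If the definition forces the use of the same $L$ rather than $L'$, we simply apply the failure of (1) with constant $L$ directly, since components of $A \setminus Z^{+L}$ are already contained in $X \setminus Z^{+L}$.
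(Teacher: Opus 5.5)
Your argument is correct and is essentially the paper's proof in contrapositive form. The paper assumes only that (1) fails, takes the unique deep component $C_i$ of each $A_i\setminus Z^{+L}$, and finds an index where the components of $X\setminus Z^{+L}$ containing $C_i$ and $C_{i+1}$ differ; this forces $A_i\cap A_{i+1}\subset Z^{+D}$. You assume (2) fails as well, and link consecutive deep components through far points of the $B_i$.

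One bookkeeping remark concerns your final step. Enlarging the connectivity constant beyond the separation constant $k_0$ is not free, because a $k$-path with $k>k_0$ does not contradict $x,y$ lying in distinct $k_0$-components. You can fix this in either of two ways. If $X$ is a graph, replace each $k$-jump outside $Z^{+(L+k)}$ by a geodesic of length at most $k$, which stays outside $Z^{+L}$; this is what your margin $L'=L+k$ allows. Otherwise, use the same $k$ for the separation of $X$ and for the connectivity of $\mathcal{A}$, as the paper tacitly does.
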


\begin{proof}
Suppose that $(1)$ does not hold. On the one hand, because $\mathcal{Z}$ coarsely separates $X$, there exists $L \geq 0$ such that the following holds: for every $D \geq 0$, there exist $Z \in \mathcal{Z}$ and $x,y \in X$ such that $x$ and $y$ lie at distance $>D$ from $Z$ and belong to two distinct $k$-connected components of $X \backslash Z^{+L}$. On the other hand, because $\mathcal{Z}$ does not coarsely separate $\mathcal{A}$, there exists $D = D(k,L) \geq 0$ such that the following holds: for every $A \in \mathcal{A}$ and every $Z \in \mathcal{Z}$, at most one $k$-connected components of $A \backslash Z^{+L}$ is not contained in $Z^{+D}$. Now take $Z \in \mathcal{Z}$ and $x,y \in X$ such that  $x$ and $y$ lie at distance $>D$ from $Z$ and belong to two distinct $k$-connected component of $X \backslash Z^{+L}$. 

\medskip \noindent
Next, because $X$ is $(\mathcal{A}, \mathcal{B})$-thick, we can find $A_1, \ldots, A_n \in \mathcal{A}$ such that $x \in A_1$, $y \in A_n$, and $A_i \cap A_{i+1} \in \mathcal{B}$ for every $1 \leq i \leq n-1$. It follows from the previous paragraph that, for every $1 \leq i \leq n$, there exists at most one $k$-connected component of $A_i \backslash Z^{+L}$ that is not contained in $Z^{+D}$. If no such component exists, then $A_i$ must be contained in $Z^{+D}$, and we are done because $A_i \cap A_{i+1} \in \mathcal{B}$ or $A_i \cap A_{i-1} \in \mathcal{B}$. So, from now on, we assume that there always exists such a component $C_i$. Notice that $A_i \subset C_i \cup Z^{+D}$.

\medskip \noindent
For every $1 \leq i \leq n$, let $C_i^+$ denote the $k$-connected component of $X \backslash Z^{+L}$ containing $C_i$. Notice that, since $x$ lies at distance $>D$ from $Z^{+L}$ and $A_1 \subset C_1 \cup Z^{+D}$, necessarily $x \in C_1$, hence $x \in C_1^+$. Similarly, we have $y \in C_n^+$. We know that $x$ and $y$ must lie in distinct $k$-connected components of $X\backslash Z^{+L}$, so we deduce that $C_1^+ \neq C_n^+$. As a consequence, we can find some $1 \leq i \leq n-1$ such that $C_i^+ \neq C_{i+1}^+$. In fact, as distinct $k$-connected components, we have $C_i^+ \cap C_{i+1}^+ = \emptyset$, which implies that
$$A_i \cap A_{i+1} \subset (C_i^+ \cup Z^{+D}) \cap (C_{i+1}^+ \cup Z^{+D}) \subset Z^{+D}.$$
Thus, the subspace $A_i \cap A_{i+1} \in \mathcal{B}$ is contained in a neighbourhood of $Z$, as desired. 
\end{proof}

\noindent
We emphasize that, despite the fact that, if $\mathcal{Z}$ coarsely separates some $A \in \mathcal{A}$, then it coarsely separates $\mathcal{A}$, the converse may not hold; see for instance Example~\ref{ex:CoarseSep} or \Cref{remark:coarsesep_of_family}. The following lemma gives a condition under which the converse does hold. 

\begin{prop}\label{prop:coarse_sep_family_converse}
Let $X$ be a graph of bounded degree, and let $\mathcal{A}, \mathcal{Z}$ be two families of subgraphs. Assume that $\mathcal{Z}$ has subexponential growth, and that there exist two increasing functions $\rho_-,\rho_+: [0,\infty) \to [0,\infty)$ such that, for every $A \in \mathcal{A}$ and every $a \in A$, there exists a coarse embedding $(T_2,o) \hookrightarrow (A,a)$ with control functions $\rho_-,\rho_+$. If $\mathcal{Z}$ coarsely separates $\mathcal{A}$, then there exist $Z \in \mathcal{Z}$ and $A \in \mathcal{A}$ such that $Z$ coarsely separates $A$. In particular, $\mathcal{Z}$ coarsely separates some $A \in \mathcal{A}$.
\end{prop}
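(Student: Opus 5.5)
We use the same mechanism as in the proof of Theorem~\ref{thm:SeparatingAcylHyp}: the trees of Proposition~\ref{prop:RayInComponent} provide deep rays that avoid a slow-growth separator. Since $\mathcal{Z}$ coarsely separates $\mathcal{A}$, there are $k,L \geq 0$ such that, for every $D$, we can choose $A_D \in \mathcal{A}$ and $Z_D \in \mathcal{Z}$ with $A_D$ $k$-connected and $A_D \backslash Z_D^{+L}$ containing two $k$-connected components $C_D, C_D'$ with points $x_D \in C_D$, $y_D \in C_D'$ at distance $\geq D$ from $Z_D$. The plan is to fix a single $D$, large enough that Proposition~\ref{prop:RayInComponent} applies. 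Then $Z := Z_D$ and $A := A_D$ already give a genuine separation of $A$ by $Z$: for every $D'$ there are points of both components at distance $\geq D'$ from $Z$.

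\textbf{Step 1: choose the constant.} Apply Proposition~\ref{prop:RayInComponent} to the following data: $\sigma$ the growth of the family $\{Z^{+L} : Z \in \mathcal{Z}\}$ (subexponential, since $X$ has bounded degree and $\mathcal{Z}$ has subexponential growth); the bounded degree $N$ of $X$; and the given control functions $\rho_-,\rho_+$. This yields a constant $Q$. We take $D \geq Q + L$, with $D$ also large compared to $\rho_+(1)$ and $k$.

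\textbf{Step 2: produce rays.} By hypothesis there is a coarse embedding $\varphi:(T_2,o) \hookrightarrow (A,x_D)$ with controls $\rho_\pm$; we view it as a map into $X$. Since $d(x_D, Z^{+L}) \geq D - L \geq Q$, Proposition~\ref{prop:RayInComponent} gives a geodesic ray $\zeta$ from $o$ such that $\varphi(\zeta)$ avoids $Z^{+L}$ and is not contained in any neighbourhood of $Z^{+L}$, hence of $Z$. Consecutive points of $\varphi(\zeta)$ are at distance $\leq \rho_+(1)$. If $k \geq \rho_+(1)$, then $\varphi(\zeta)$ is a $k$-connected subset of $A \backslash Z^{+L}$ containing $x_D$, so it lies in $C_D$. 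Consequently $C_D$ contains points arbitrarily far from $Z$. The same argument at $y_D$ shows that $C_D'$ contains points arbitrarily far from $Z$. Thus, for every $D'$, the set $A \backslash Z^{+L}$ has two $k$-components with points at distance $\geq D'$ from $Z$; with the fixed constants $k,L$, this is exactly the statement that $Z$ coarsely separates $A$. The final assertion follows from Example~\ref{ex:CoarseSep}, or directly.

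\textbf{Main obstacle: the connectivity scale.} We need $k \geq \rho_+(1)$, whereas $k$ is dictated by the separation hypothesis. The first remedy to try is to enlarge the scale to $k' = \max(k,\rho_+(1))$ and to thicken $Z$ slightly, replacing $L$ by $L' = L + k'$. The worry is that two $k$-components of $A \backslash Z^{+L}$ might merge into one $k'$-component. They can only merge through points within distance $k'$ of each other. We therefore need to arrange that such merging happens only near $Z$, for instance by working in $A \backslash Z^{+L'}$ and checking that points far from $Z$ in distinct $k$-components stay in distinct $k'$-components, using $k$-connectedness of $A$.

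If this cannot be arranged, the fallback is to replace the ray by a $k$-path. Concretely, we would connect consecutive points $\varphi(\zeta(n)),\varphi(\zeta(n+1))$ by paths of length $\leq \rho_+(1)$ in $X$. We would then require $\varphi(\zeta)$ to avoid $Z^{+L+\rho_+(1)}$, which Proposition~\ref{prop:RayInComponent} permits, since we may enlarge the thickening by a constant. The caveat is that these paths live in $X$ and not in $A$, so this route works cleanly only once $A$ is assumed coarsely connected at scale $k$; the thickening parameter is the point where the bookkeeping must be done carefully.
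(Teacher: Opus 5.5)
Your argument is the paper's own proof. Choose $D$ beyond the constant $Q$ that \Cref{prop:RayInComponent} gives for the family $\mathcal{Z}^{+L}$, grow a tree at each deep point, extract a ray that avoids $Z^{+L}$ and escapes every neighbourhood of $Z$, and conclude that the single pair $(Z,A)$ witnesses the separation with the same constants. The scale mismatch you flag ($k$ versus $\rho_+(1)$) is real. The paper handles it in one sentence (``up to taking $d(\{a_1,a_2\},Z^{+L})$ sufficiently large compared to $\rho_+(1)$, $f(\mathbb{R}_+)$ stays in the same $k$-coarsely connected component''), which is in substance your fallback.

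To make this precise, commit to the fallback, but fill in the gaps inside $A$ rather than inside $X$.
\begin{itemize}
\item Read $\rho_\pm$ in the path metric of the subgraph $A$. This is how the hypothesis is actually verified in every application (neighbourhoods of cosets of finitely or compactly generated subgroups, and $X\times B_Y(y,1)$). In those cases $A$ is also uniformly coarsely embedded in $X$, so \Cref{prop:RayInComponent} still applies to $\varphi$ viewed as a map into $X$.
\item Apply it to the family $\mathcal{Z}^{+(L+\rho_+(1))}$, which still has subexponential growth because $X$ has bounded degree.
\item Join consecutive points of $\varphi(\zeta)$ by geodesics of $A$ of length at most $\rho_+(1)$. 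Since $d_X\le d_A$, these geodesics stay outside $Z^{+L}$.
\end{itemize}
You thus get an edge-path in $A\setminus Z^{+L}$ that starts at $a_1$ and leaves every neighbourhood of $Z$. It lies in the $k$-component of $a_1$ for every $k\ge 1$, with no change to $k$ or $L$.

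Drop your first remedy. Raising the scale to $k'$ can genuinely merge the two components far from $Z$: take a U-shaped $A$ whose two branches run at distance $k'$ from each other and whose bottom passes through $Z$. No thickening of $Z$ prevents that merging.
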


\begin{proof}
On the one hand, $\mathcal{Z}$ coarsely separates $\mathcal{A}$, so there exist $k,L\geq 0$ such that, for every $D \geq 0$, there exist $A \in \mathcal{A}$ and $Z \in \mathcal{Z}$ such that $A \setminus Z^{+L}$ has at least two $k$-coarsely connected components with points at distance $\geq D$ from $Z$.

\medskip \noindent
On the other hand, the family $\mathcal{Z}^{+L}$ has subexponential growth. Let $\sigma$ be its growth function, and let $Q \geq 0$ be the constant given by \Cref{prop:RayInComponent}, which only depends on $\sigma$, the control functions $\rho_-,\rho_+$ and $X$. Let $A \in \mathcal{A}$ and $Z \in \mathcal{Z}$ be such that there exist $a_1,a_2 \in A \setminus Z^{+L}$ in two different $k$-coarsely connected components and at distance $> Q$ from $Z$. By \Cref{prop:RayInComponent}, there exists a coarse embedding $f:(\mathbb{R}_+,0) \hookrightarrow (A,a_1)$ with control functions $\rho_-,\rho_+$ such that $f(\mathbb{R}_+)$ is disjoint from $Z^{+L}$ and not contained in any neighborhood of $Z$. Up to taking $d(\{a_1,a_2\},Z^{+L})$ sufficiently large compared to $\rho_+(1)$, $f(\mathbb{R}_+)$ stays in the same $k$-coarsely connected component containing $a_1$. The same holds for $a_2$. We conclude that $Z$ coarsely separates $A$.
\end{proof}

\begin{remark}\label{remark:coarsesep_of_family}
It should be noted that the condition that $\mathcal{Z}$ has subexponential growth is necessary. Indeed, consider $X = (T_2,o) \times \mathbb{R}$, and set $Z = \bigcup_{n \in \mathbb{N}} S_{T_2}(o,n) \times \{n\}$. Let $\mathcal{A}$ be the family of copies $(T_2,o) \times \{n\}$ indexed by $n \in \mathbb{N}$. It is clear that $Z$ coarsely separates the family $\mathcal{A}$, but does not coarsely separate any copy $(T_2,o) \times \{n\}$.
\end{remark}

\section{Coarse separation in graph products}

\noindent
This section is dedicated to the proof of the following statement:

\begin{thm}\label{thm:CoarseSepGP}
Let $\Gamma$ be a finite graph and $\mathcal{G}$ a collection of infinite finitely generated groups indexed by $V(\Gamma)$. Assume that $\Gamma$ is not a join and contains at least two vertices. If $\Gamma \mathcal{G}$ is coarsely separable by a family of subexponential growth, then, for every $k \geq 0$, there exists a weight $\nu \geq k$ such that $\Gamma(\nu)$ is coarsely separable by a family of subexponential growth.  
\end{thm}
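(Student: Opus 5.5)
Since $\Gamma$ is not a join and has at least two vertices, and all vertex groups are infinite, the graph product $\Gamma\mathcal{G}$ is acylindrically hyperbolic; it acts non-elementarily and acylindrically on the crossing graph or contact graph of its quasi-median graph. So the plan is to apply Theorem~\ref{thm:SeparatingAcylHyp}. It gives a bi-infinite Morse geodesic $\eta$, with some gauge $M$ depending only on the separating family $\mathcal{Z}$ and the group, which is coarsely separated by $\mathcal{Z}$. It remains to put $\eta$ inside a quasi-isometrically embedded copy of some $\Gamma(\nu)$ with $\nu \geq k$, and then transfer the separation.

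\textbf{Step 1: the key embedding.} The analogue of Theorem~\ref{thm:MorseSubBuilding} for graph products says the following. Given a Morse gauge $M$ and $k \geq 0$, every bi-infinite $M$-Morse geodesic lies in a uniform neighbourhood of the image of a quasi-isometric embedding $\Gamma(\nu) \hookrightarrow \Gamma\mathcal{G}$ with $\nu \geq k$. This step is also what forces the weight to be large. I would build it in the quasi-median graph $X$ of $\Gamma\mathcal{G}$. A Morse geodesic crosses hyperplanes labelled by vertices $u$. Being $M$-Morse bounds how long it can stay in a coset of a join subgroup, and in particular how long it travels within a single vertex group. For each vertex $u$, pick $\nu(u) \geq k$ elements $g_{u,1}=1, \ldots, g_{u,\nu(u)}$ of $G_u$ that are pairwise far apart, and include the elements of $G_u$ actually used by $\eta$. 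Define the map $\Gamma(\nu) \to \Gamma\mathcal{G}$ on normal forms by sending the $i$-th element of $\mathbb{Z}_{\nu(u)}$ to $g_{u,i}$, letting letter $i$ act as the ``jump'' $g_{u,i}^{-1}g_{u,j}$. This realises $\Gamma(\nu)$ as a convex-like sub-quasi-median structure: a graph product of finite sets embedded in $X$, with cliques of size $\nu(u)$ chosen inside each clique of $X$. Convexity of such subcomplexes (gated subgraphs) in $X$ gives an isometric embedding for the quasi-median metric. Comparing the quasi-median metric to the word metric is then the delicate point. The displacements $|g_{u,i}^{-1}g_{u,j}|$ vary, so the embedding is only a quasi-isometry up to constants depending on the chosen elements. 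I expect to pay for this by working instead with the coned-off metric, or with bounded-length letters, using the bound on $\eta$'s excursions coming from $M$. This step, getting uniform control while allowing $\nu$ arbitrarily large, is the main obstacle.

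\textbf{Step 2: transferring the separation.} Let $Y$ be the image of $\Gamma(\nu)$ and $\phi$ the quasi-isometric embedding. Since $\eta$ is coarsely separated by $\mathcal{Z}$ and lies near $Y$, and $\eta$ is Morse hence coarsely connected, $Y$ is also coarsely separated. Indeed, the two subrays of $\eta$ lie in distinct coarse components of $\eta\setminus Z^{+L}$. If they were in the same coarse component of $Y\setminus Z^{+L'}$, a path in $Y$ avoiding $Z^{+L'}$ would connect them. Such a path can be taken quasigeodesic-like by using that $Y \cong \Gamma(\nu)$ is hyperbolic-ish along $\eta$, so the Morse property would push the path onto $\eta$, a contradiction. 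I would handle this more robustly by coarse separation of a subspace passing to the ambient space in the thick sense. Pull back the family to get $\phi^{-1}(\mathcal{Z}^{+C})$, which still has subexponential growth because $\phi$ is a quasi-isometric embedding between bounded degree graphs. This family coarsely separates $\Gamma(\nu)$, as required.
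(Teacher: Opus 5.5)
Your overall plan is the paper's: acylindrical hyperbolicity and Theorem~\ref{thm:SeparatingAcylHyp}, then a $\kappa$-regular invasive subgraph of $\mathrm{QM}(\Gamma,\mathcal{G})$ containing the Morse geodesic, then pull-back. Cutting that subgraph out with fixed finite sets $F_u\subset G_u$ is a legitimate variant of the paper's hyperplane-by-hyperplane choice in Proposition~\ref{prop:FindingInvasive}. In $\mathrm{QM}$ terms, for each hyperplane $J$ you keep the sectors indexed by $F_{\kappa(J)}$ at the projection of the basepoint onto $N(J)$. By Lemma~\ref{lem:ConstructingInvasive} this subgraph is invasive and $\kappa$-regular. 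It is convex but not gated, and convexity is all Proposition~\ref{prop:InvasiveSubIso} needs.

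The gap is the step that makes $F_u$ finite. ``Include the elements of $G_u$ used by $\eta$'' gives a finite weight only if every syllable of every vertex of $\eta$ (based at a point of $\eta$) lies in a bounded ball of $G_u$. That is, you need $\mathrm{diam}(\eta,\delta_J)\le K$ for every hyperplane $J$. This is Corollary~\ref{cor:DiamHypBounded}, the heart of the paper's proof, and you only gesture at it.

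Your heuristic also bounds the wrong quantity. Controlling how long $\eta$ stays in a single clique does not control a syllable, because $\eta$ can cross a $u$-hyperplane in several pieces interleaved with moves in $\langle\mathrm{link}(u)\rangle$. What is needed is this: the carrier $N(J)$, a coset of $G_u\times\langle\mathrm{link}(u)\rangle$, is convex in $(X,\delta)$ and splits as $(C,\delta_C)\times(F,\delta)$ with \emph{both} factors unbounded. Then Proposition~\ref{prop:MorseInterProd} bounds $\eta\cap N(J)$. Unboundedness of $F$ requires $\mathrm{link}(u)\neq\emptyset$ for every $u$, i.e.\ $\Gamma$ connected. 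So the disconnected case (free products, separated by bounded sets) has to be handled first, and you never do this.

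On the other hand, the ``main obstacle'' you name is not one. Once $F_u\supseteq B_{S_u}(1,K)$ is finite with $|F_u|\ge k$, Lemma~\ref{lem:DeltaJ} gives $d_{\mathrm{QM}}\le\delta\le(\max_u\mathrm{diam}_{S_u}F_u)\,d_{\mathrm{QM}}$ on your subgraph. A constant depending on $k$ is harmless, since the theorem asks for no uniformity in $k$. No coned-off metric and no ``far apart'' elements are needed.

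Your Step~2 argument does not work as written. Suppose $Z$ only separates $\eta$ in the sense that $\eta\setminus Z^{+L}$ has two far components. Nothing then forces a larger $Y\supseteq\eta$ to be separated: a point separates a line, not the plane. The Morse property also says nothing about a path in $Y\setminus Z^{+L'}$, which is an arbitrary path, not a quasigeodesic.

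What rescues the transfer is the stronger conclusion actually obtained in the proof of Theorem~\ref{thm:SeparatingAcylHyp}. There, $\eta$ has two subrays, not contained in any neighbourhood of $Z$, lying in distinct coarse components of the \emph{ambient} complement $\Gamma\mathcal{G}\setminus Z^{+L}$. Since $\eta\subseteq Y$ and $Y$ is bi-Lipschitz embedded, a coarse path in $Y$ avoiding $Z^{+L'}$ (with $L'$ large) refines to a path in $\Gamma\mathcal{G}$ avoiding $Z^{+L}$. Hence the two rays lie in distinct coarse components of $Y\setminus Z^{+L'}$. This is the paper's one-line transfer, and after it your pull-back of $\mathcal{Z}$ to $\Gamma(\nu)$ is fine.
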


\noindent
Here, $\nu$ is a weight function $V(\Gamma) \to \mathbb{N}$ and $\Gamma(\nu)$ denotes the graph product $\Gamma \mathcal{Z}$ where $\mathcal{Z}:= \{ \mathbb{Z}_{\nu(u)} \mid u \in V(\Gamma)\}$. (We recall the definition of graph products in the next section.)

\medskip \noindent
In other words, Theorem~\ref{thm:CoarseSepGP} allows us to reduce a separability problem in a right-angled Artin group into a separability problem in a graph product of finite groups. Working with graph products of finite groups may be easier, especially because they can be hyperbolic (while right-angled Artin groups are essentially never hyperbolic). 

\medskip \noindent
Our strategy to prove Theorem~\ref{thm:CoarseSepGP} will be to combine Theorem~\ref{thm:SeparatingAcylHyp} with the following statement:

\begin{thm}\label{thm:MorseSubBuilding}
For every Morse gauge $M$ and every constant $k_0 \geq 0$, there exist constants $A >0, B \geq 0$ such that the following holds. Let $\Gamma$ be a finite graph and $\mathcal{G}$ a collection of finitely generated groups indexed by $V(\Gamma)$. Assume that, for every $u \in V(\Gamma)$:
\begin{itemize}
	\item if $G_u$ is infinite, then so is $\langle \mathrm{link}(u) \rangle$;
	\item $G_u$ has cardinality at least $k_0$.
\end{itemize}
Every bi-infinite $M$-Morse geodesic in $\Gamma \mathcal{G}$ is contained in an $(A,B)$-quasi-isometrically embedded copy of $\Gamma(\nu)$ for some weight $\nu$ with $\min(\nu) \geq k_0$. 
\end{thm}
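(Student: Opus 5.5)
Our approach is to work in the quasi-median (or CAT(0) cube complex / right-angled building) geometry of the graph product. We view $\Gamma\mathcal{G}$ through its Cayley graph with respect to $\bigcup_u G_u$ together with finite generating sets of the vertex groups. Recall that the former graph is quasi-median, and its hyperplanes are labelled by vertices of $\Gamma$. A geodesic $\gamma$ in $\Gamma\mathcal{G}$ is described by a normal form: a sequence of syllables $g_i \in G_{u_i}$.

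The idea is to replace each syllable $g_i\in G_{u_i}$ by a ``finite model''. For each vertex $u$, we choose a finite subset $F_u \subset G_u$ containing $1$ and all syllables of $\gamma$ lying in $G_u$, with $|F_u|\geq k_0$. We cannot make $F_u$ a subgroup in general, so instead we build an embedding of the \emph{right-angled building} $\Gamma(\nu)$: the map sends a normal form $a_1\cdots a_n$ with $a_j \in \mathbb{Z}_{\nu(u_j)}\setminus\{0\}$ to $\phi(a_1)\cdots\phi(a_n)$, where $\phi_u:\mathbb{Z}_{\nu(u)}\to F_u$ are bijections sending $0$ to $1$. Because $\Gamma(\nu)$ and $\Gamma\mathcal{G}$ are both graph products over the same $\Gamma$, normal forms correspond and reduced words map to reduced words. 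Hence the map is an isometric embedding between the syllable metrics (the quasi-median graphs).

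The real issue is to compare with word metrics. Here the Morse hypothesis enters. We will show that, for an $M$-Morse geodesic, the syllables that are ``long'' in $G_u$ (with $G_u$ infinite) have length bounded in terms of $M$. Indeed, a syllable $g\in G_u$ of word length $\ell$ commutes with $\langle\mathrm{link}(u)\rangle$, which is infinite by the first assumption. So $\gamma$ passes along a flat-like strip $g\langle \mathrm{link}(u)\rangle \times [1,g]$, and quasigeodesics can detour through it at distance $\sim\ell$. The Morse property therefore forces $\ell\leq C(M)$.

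We thus choose $F_u$ to be a set of elements of $G_u$ of word length $\le C(M,k_0)$ of cardinality $\ge k_0$: the bounded syllables, padded if $G_u$ is infinite, and all of $G_u$ if finite and small, which is fine since $|G_u| \geq k_0$. With all syllable lengths in $F_u$ uniformly bounded, the word length of the image of a normal form lies between its syllable length and a constant times it. It then remains to show that the word length in $\Gamma\mathcal{G}$ of elements $\phi(a_1)\cdots\phi(a_n)$ is comparable to $n$ (lower bound). This should follow since each syllable contributes at least one hyperplane crossing and word length dominates the number of hyperplanes crossed. This gives $(A,B)$ depending only on $M,k_0$, possibly after also bounding $\max|F_u|$ (take exactly $\max(k_0,\ldots)$ elements). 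The image contains $\gamma$ since its syllables are in the $F_u$.

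The main obstacle is the Morse bound on syllable lengths: making the detour argument quantitative and uniform over all graph products, including the case where $\langle \mathrm{link}(u)\rangle$ is infinite but $\gamma$'s nearby structure differs. We would first construct an explicit $(3,0)$-type quasigeodesic going out along an element of $\langle\mathrm{link}(u)\rangle$ of length $\sim \ell/2$, across $g$, and back. This quasigeodesic stays at distance $\sim\ell/2$ from $\gamma$, since the normal form forces any path back to cross the link hyperplanes; this gives $\ell \le 2M(3,0)+O(1)$.
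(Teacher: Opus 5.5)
Your strategy is the paper's. You bound, in terms of $M$ only, the word length of every syllable of every vertex of $\gamma$; with $1\in\gamma$ this is exactly the bound $\mathrm{diam}(\gamma,\delta_J)\le K$ of Corollary~\ref{cor:DiamHypBounded}. You then realise $\gamma$ inside a copy of $\Gamma(\nu)$ by allowing only finitely many short syllables per vertex group, and compare metrics. Your normal-form map $\phi$ is a more explicit substitute for Propositions~\ref{prop:FindingInvasive} and~\ref{prop:InvasiveSubIso}. Its image is $\bigcap_J \bigcup_{a\in F_{\kappa(J)}} S_J(a)$, where $S_J(a)$ is the sector of $J$ labelled $a$ relative to the sector of $1$; this is the invasive subgraph of Lemma~\ref{lem:ConstructingInvasive} for a colour-uniform choice of sectors. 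That is fine, but ``reduced words map to reduced words'' only gives $|\phi(w)|=|w|$. Since $\phi$ is not a homomorphism, you must still control $|\phi(w_1)^{-1}\phi(w_2)|$. A quick fix: $\phi$ maps edges to edges. Moreover, for any left inverses $\psi_u$ of $\phi_u$ with $\psi_u(1)=0$, the syllable-wise map $\psi:\Gamma\mathcal{G}\to\Gamma(\nu)$ is well defined (shuffles go to shuffles), $1$-Lipschitz for syllable metrics, and satisfies $\psi\circ\phi=\mathrm{id}$. The word-metric comparison must then be done for pairs of points, not just distances to $1$. This follows because each edge of the image has word length at most $2\max_u\max_{g\in F_u}|g|$, and $|F_u|$ plays no role in the constants.

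The genuine gap is the Morse estimate, which is the heart of the theorem and which you only sketch for a special configuration.

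First, to locate the relevant piece of $\gamma$, you need two facts about the carrier $N(J)=p\langle \mathrm{star}(u)\rangle$ of the hyperplane $J$ dual to the syllable. It is convex for the word metric, and it is isometric to the $\ell^1$-product $G_u\times\langle\mathrm{link}(u)\rangle$ (Lemmas~\ref{lem:SystemGated} and~\ref{lem:ProductSyst}). Then $\gamma\cap N(J)$ is a subsegment from some $(a_1,b_1)$ to $(a_2,b_2)$ containing every crossing of $J$, and the syllable is bounded by $d(a_1,a_2)$.

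Second, your detour ``out along $\langle\mathrm{link}(u)\rangle$, across $g$, and back'' presupposes that $\gamma$ crosses this product straight, i.e.\ that $d(b_1,b_2)$ is small. A word geodesic can a priori interleave the letters of $g$ with many letters of $\langle\mathrm{link}(u)\rangle$, so that both $d(a_1,a_2)$ and $d(b_1,b_2)$ are large. Your path then does not end at $(a_2,b_2)$; once corrected, its far point can be close to $\gamma$ and its quasi-geodesic constants are no longer uniform. The paper (Proposition~\ref{prop:MorseInterProd}) handles this in two steps. The two $L$-shaped geodesics through the corners $(a_1,b_2)$ and $(a_2,b_1)$ are at Hausdorff distance at least $\min(d(a_1,a_2),d(b_1,b_2))$, yet both are uniformly close to the same subsegment of $\gamma$; so one displacement is bounded. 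Only then does a detour in the other factor, a uniform quasi-geodesic built using that both factors are unbounded, bound the remaining displacement.

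Third, ``the detour stays far from $\gamma$'' must be measured against the subsegment between the two endpoints, not merely against $\gamma$. This is what Lemma~\ref{lem:MorseHausdorff} provides. Your justification via link hyperplanes does not control points of $\gamma$ lying outside the product region.

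With these three ingredients added, your outline becomes a complete proof.
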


\noindent
The proof of Theorem~\ref{thm:MorseSubBuilding} will be based on the quasi-median geometry of graph products, which we describe in Section~\ref{section:QM} (following \cite{QM}). The main tool that we will need is the notion of \emph{invasive subgraph}, which we introduced and study in Section~\ref{section:Invasive}. Finally, we prove Theorem~\ref{thm:MorseSubBuilding} in Section~\ref{section:ProofGP}.

\subsection{Quasi-median geometry}\label{section:QM}

\noindent
In this section, we record some basic definitions and properties related to quasi-median graphs and graph products of groups. The main reference for the results here is \cite{QM}.

\paragraph{Quasi-median graphs.} There are many possible equivalent definitions of quasi-median graphs. A short one is the following. First, recall that a \emph{Hamming graph} is a (connected component of a) product of complete graphs. Then, \emph{quasi-median} graphs are retracts of Hamming graphs. This definition motivates the analogy with \emph{median graphs}, also known as one-skeletons of CAT(0) cube complexes, which can be characterised as retracts of hypercubes. However, it may not be the easiest definition to manipulate. In practice, it may be more convenient to define quasi-median graphs as \emph{weakly modular} graphs with no induced copy of $K_{2,3}$ or $K_4^-$, but this may sound more artificial and may be more difficult to digest. This characterisation can be found in \cite{MR1297190}, and this is the perspective taken in \cite{QM}. Since we will not use this definition here, we do not recall it. However, let us mention one piece of the definition of weakly modular graphs for future use:

\begin{lemma}
A quasi-median graph $X$ satisfies the \emph{triangle condition}, i.e.\ for all vertices $o,x,y \in V(X)$ satisfying $d(o,x)=d(o,y)$ and $d(x,y)=1$, there exists a common neighbour $z$ of $x$ and $y$ such that $d(o,z)=d(o,x)-1$. \qed
\end{lemma}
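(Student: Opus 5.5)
The triangle condition is part of the definition of weakly modular graphs. Since the paper has recalled that quasi-median graphs are weakly modular with no induced $K_{2,3}$ or $K_4^-$ (see \cite{MR1297190}), the statement follows immediately from that characterisation. If one instead starts from the definition of quasi-median graphs as retracts of Hamming graphs, I would argue as follows.

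First, I check the triangle condition in a Hamming graph $H$, that is, a connected component of a product of complete graphs $\prod_{i} K_i$ in which vertices differ in finitely many coordinates. The distance is the Hamming distance, namely the number of coordinates where two vertices differ. Suppose $d(x,y)=1$. Then $x$ and $y$ differ in exactly one coordinate $j$, say $x_j=a$ and $y_j=b$ with $a\neq b$.

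Since $d(o,x)=d(o,y)$ and the two vertices agree off coordinate $j$, the contribution at coordinate $j$ must be the same for both. Because $a \neq b$, $o_j$ cannot equal both of them, so $o_j\notin\{a,b\}$. Define $z$ to agree with $x$ except that $z_j:=o_j$. Then $z$ is adjacent to both $x$ and $y$, since $K_j$ is complete, and $d(o,z)=d(o,x)-1$.

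Second, I transfer the condition through a retraction. Let $r:H\to X$ be a retraction, which is $1$-Lipschitz and restricts to the identity on $X$. Assume $X$ is an isometric subgraph, as is standard for retracts. Given $o,x,y\in X$ satisfying the hypotheses, take $z\in H$ as above and set $z'=r(z)$.

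Then $d(z',x)\leq 1$ and $d(z',y)\leq 1$, while $d(o,z')\leq d(o,x)-1$. Since $d(o,x)=d(o,y)$, we get $z'\neq x$ and $z'\neq y$. Hence $z'$ is a common neighbour of $x$ and $y$. Finally, $d(o,z')\geq d(o,x)-1$ by the triangle inequality, so $d(o,z')=d(o,x)-1$, which is the required equality.

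The only delicate point is the convention that retracts are isometric subgraphs. This holds because a retraction is $1$-Lipschitz and is the identity on $X$, so distances within $X$ cannot shrink relative to $H$.
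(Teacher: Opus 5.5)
Your proposal is correct. Its first sentence is exactly the paper's justification. The lemma is stated there without proof: the triangle condition is one of the axioms of weak modularity, and quasi-median graphs are weakly modular by the characterisation of \cite{MR1297190}, which is the viewpoint adopted in \cite{QM}.

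Your retraction argument is a genuinely different, self-contained route. It uses only the retract definition, an explicit coordinate computation in a Hamming graph, and one observation: $r(z)$ lies within distance $1$ of $x$ and $y$ while being strictly closer to $o$, so it is a common neighbour at the required distance. This frees you from the (nontrivial) equivalence between the two definitions of quasi-median graphs. Moreover, the same transfer argument applies verbatim to the quadrangle condition, so it shows directly that every retract of a Hamming graph is weakly modular. The citation route, by contrast, costs nothing once one adopts the weakly modular definition, which is the one the paper's sources use.

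One point should be phrased more carefully. What you need for the isometry claim is that $r$ is $1$-Lipschitz as a map $(H,d_H)\to(X,d_X)$. This holds because a graph retraction sends each edge of $H$ to an edge or a vertex of $X$. It gives $d_X\leq d_H$ on $V(X)$, and the reverse inequality is trivial.
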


\noindent
Another rather easy consequence of the definition of quasi-median graphs as specific weakly modular graphs is:

\begin{lemma}[{\cite[Lemma~3.3]{Mediangle}}]\label{lem:ByPass}
Let $X$ be a quasi-median graph. If $\gamma_1$ and $\gamma_2$ are two geodesics with the same endpoints, then $\gamma_2$ can be obtained from $\gamma_1$ by bypassing finitely many squares. 
\end{lemma}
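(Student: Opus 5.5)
The statement is Lemma~\ref{lem:ByPass}: any two geodesics $\gamma_1,\gamma_2$ with the same endpoints in a quasi-median graph $X$ differ by finitely many square bypasses. I would argue by induction on the common length $n=d(x,y)$ of the geodesics from $x$ to $y$. The only tools would be the triangle condition (stated just above) and the quadrangle condition of weakly modular graphs, together with the absence of induced $K_{2,3}$ and $K_4^-$. Here a \emph{bypass} of a square replaces two consecutive edges $a\!-\!b\!-\!c$ of the path by $a\!-\!b'\!-\!c$, where $a,b,c,b'$ span a $4$-cycle. The cases $n\leq 1$ are trivial.

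For the induction step, let $x_1$ and $x_1'$ be the second vertices of $\gamma_1$ and $\gamma_2$. If $x_1=x_1'$, the induction hypothesis applied to the subgeodesics from $x_1$ to $y$ finishes the argument. Otherwise $d(y,x_1)=d(y,x_1')=n-1$, and there are two cases.

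\begin{itemize}
\item If $x_1$ and $x_1'$ are adjacent, the triangle condition with base point $y$ gives a common neighbour $z$ of $x_1$ and $x_1'$ with $d(y,z)=n-2$. I would then check that the configuration $x,x_1,x_1',z$ contradicts the forbidden subgraphs. Indeed, $x$ is at distance $n$ from $y$, so $x$ is not adjacent to $z$, and $\{x,x_1,x_1',z\}$ induces $K_4^-$, which is impossible. So this case does not occur.
\item If $x_1$ and $x_1'$ are not adjacent, then $d(x_1,x_1')=2$ with $x$ a common neighbour further from $y$. The quadrangle condition with base point $y$ gives a common neighbour $z$ of $x_1$ and $x_1'$ with $d(y,z)=n-2$. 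Then $x,x_1,z,x_1'$ is a $4$-cycle.
\end{itemize}

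In the second case, choose any geodesic $\delta$ from $z$ to $y$. Apply the induction hypothesis in two places. First, $\gamma_1|_{[x_1,y]}$ and $x_1z\cdot\delta$ have length $n-1$ and the same endpoints, so they differ by finitely many bypasses; do the same for $\gamma_2|_{[x_1',y]}$ and $x_1'z\cdot\delta$. This transforms $\gamma_1$ into $x\,x_1\,z\cdot\delta$. Next, one bypass of the square $x,x_1,z,x_1'$ turns it into $x\,x_1'\,z\cdot\delta$. Finally, undoing the second chain of bypasses yields $\gamma_2$.

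The main obstacle is making sure that the local conditions (triangle/quadrangle) are available with the right distance constraints, and excluding the adjacent case via $K_4^-$. If one instead allowed the first case, the same scheme would still work by treating the triangle as a degenerate step. However, the lemma speaks only of squares, so ruling it out is the cleaner path.
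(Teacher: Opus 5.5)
Your proof is correct. It is the natural argument from the weakly modular characterisation that the paper points to when it calls this lemma an easy consequence of that definition; the paper itself gives no proof and only cites \cite[Lemma~3.3]{Mediangle}. The argument is induction on length: adjacent second vertices are excluded by the absence of induced $K_4^-$, and the non-adjacent case is closed up with the square supplied by the quadrangle condition. One minor point: for $n=2$, take $z=y$ directly, since the usual formulations of the triangle and quadrangle conditions require $d(y,x_1)\geq 2$.
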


\noindent
We refer the reader to \cite{MR1297190} for other characterisations of quasi-median graphs. As for median graphs, it is more informative to introduce \emph{hyperplanes} and to describe how they are related to the geometry of quasi-median graphs.

\begin{definition}
Let $X$ be a quasi-median graph. Two edges $a,b \in E(X)$ are
\begin{itemize}
	\item \emph{parallel} if there exists $e_1=a, \ldots, e_n=b \in E(X)$ such that $e_i$ and $e_{i+1}$ are opposite sides of a square in $X$ for every $1 \leq i \leq n-1$;
	\item \emph{in the same hyperplane} if there exists $e_1=a, \ldots, e_n=b \in E(X)$ such that $e_i$ and $e_{i+1}$ are parallel or belong to the same triangle for every $1 \leq i \leq n-1$.
\end{itemize}
A \emph{hyperplane} is a maximal collection of edges pairwise all in the same hyperplane. We denote by $\mathrm{Hyp}(X)$ the set of the hyperplanes of $X$. 
\end{definition}

\noindent
See Figure~\ref{HypEx} for some examples of hyperplanes. Our next result motivates the idea that the geometry of a quasi-median graph is encoded in the combinatorics of its hyperplanes. 
\begin{figure}
\begin{center}
\includegraphics[trim=0 16.5cm 10cm 0,clip,width=0.6\linewidth]{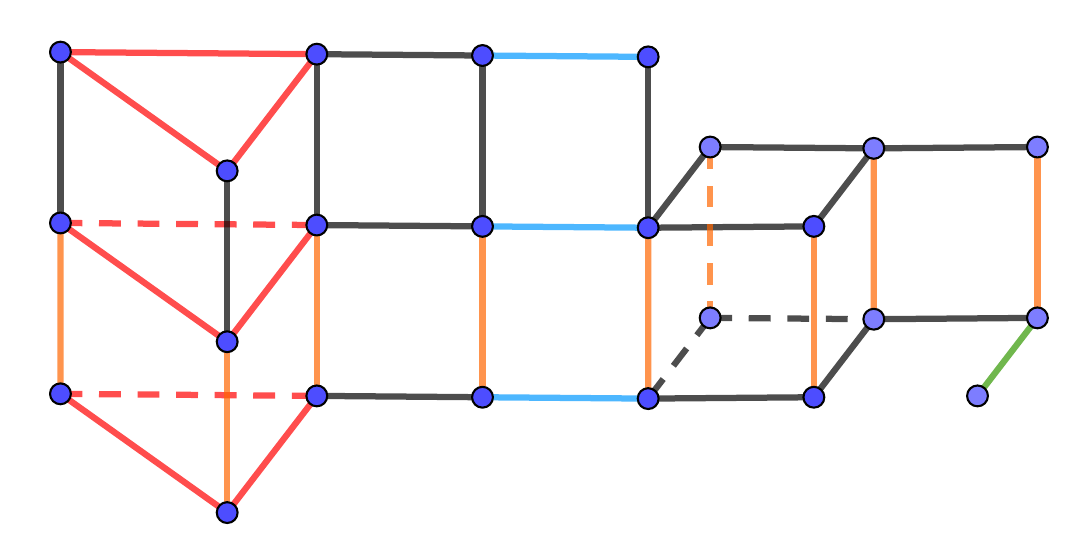}
\caption{A few hyperplanes in a quasi-median graph.}
\label{HypEx}
\end{center}
\end{figure}

\begin{thm}[\cite{QM}]\label{thm:HypQM}
Let $X$ be a quasi-median graph.
\begin{itemize}
	\item Every hyperplane $J$ separates, i.e.\ the graph $X \backslash \backslash J$ obtained from $X$ by removing all the edges of $J$ is disconnected. Its connected components are \emph{sectors}. Sectors are gated in $X$.
	\item For every hyperplane $J$, its \emph{carrier} $N(J)$ (i.e.\ the subgraph in $X$ induced by $J$) and its \emph{fibres} (i.e.\ the connected components of $N(J) \backslash \backslash J$) are gated. 
	\item For every hyperplane $J$, its carrier $N(J)$ decomposes as $C \times F$ for all clique $C$ and fibre $F$ of $J$.
	\item A path is a geodesic if and only if it crosses each hyperplane at most once. Consequently, the distance between two vertices coincides with the number of hyperplanes that separate them. 
\end{itemize}
\end{thm}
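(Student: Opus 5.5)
Since this theorem is quoted from \cite{QM}, I will only outline how I would reprove it. I will use the description of quasi-median graphs as weakly modular graphs with no induced $K_{2,3}$ or $K_4^-$; the triangle condition recorded above is one half of weak modularity, and the other half is the quadrangle condition. The first step is a preliminary fact: \emph{cliques are gated}. Fix a maximal clique $C$ and a vertex $x$, and let $c \in C$ minimise $d(x,\cdot)$. If two vertices of $C$ realised this minimum, the triangle condition would give a common neighbour one step closer to $x$. Combined with the fact that $C$ is a clique, this would produce an induced $K_4^-$ or contradict maximality. So the minimiser is unique, and every other vertex of $C$ lies at distance exactly $d(x,c)+1$ from $x$.

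Next, fix a hyperplane $J$ and a clique $C \subset N(J)$ whose edges lie in $J$. For each $c \in C$, let $S_c$ be the set of vertices whose gate on $C$ is $c$; these sets partition $V(X)$. The key lemma says that an edge $xy$ joins two different sets $S_c \neq S_{c'}$ if and only if $xy \in J$. For the forward direction, I would induct on $d(x,C)$. Applying the quadrangle condition to $x,y$ and their gates $c \neq c'$ yields either a square or a triangle one step closer to $C$ that contains an edge whose endpoints again have distinct gates. This builds a ladder of squares and triangles from $xy$ down to $C$, so $xy$ is parallel, up to triangles, to an edge of $C$. For the converse, I would show that gate-difference is preserved when passing to the opposite side of a square and within a triangle, again using the exclusion of $K_{2,3}$ and $K_4^-$. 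I expect this step to be the main obstacle. The delicate point is to check that the partition $\{S_c\}$ does not depend on the choice of clique $C$ in $J$, so that it is intrinsic to $J$; I would handle this by transporting gates along a chain of squares and triangles joining two such cliques.

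Given the key lemma, $X \backslash\backslash J$ has as components exactly the $S_c$, once each $S_c$ is shown to be connected. Connectivity follows because a geodesic from $x$ to its gate $c$ stays in $S_c$. To prove that the sectors are gated, I would first show they are convex: a geodesic leaving and re-entering $S_c$ crosses $J$ twice, and the ladder argument above lets one shorten it. Then I would use the standard fact that in weakly modular graphs, convex subgraphs closed under the triangle condition are gated. Fibres are the sets $N(J)\cap S_c$, and carriers are unions of sectors' boundaries. I would prove that both are gated in the same way, from convexity together with local closure under triangles and squares. The product decomposition $N(J) \cong C \times F$ is given by $x \mapsto (\text{gate of } x \text{ in } C,\ \text{gate of } x \text{ in } F)$. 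This map is a graph isomorphism because every edge of $N(J)$ either lies in $J$, and so changes only the $C$-coordinate, or lies in a fibre, in which case its parallel copies give the square structure.

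Finally, the metric characterisation follows from what precedes. Every path between $x$ and $y$ must cross each hyperplane separating them, so $d(x,y) \geq \#\{\text{separating hyperplanes}\}$. Conversely, a path crossing some hyperplane twice leaves and re-enters a convex sector, so it is not geodesic. Hence along a geodesic each edge crosses a new hyperplane, and each such hyperplane separates the endpoints, which gives equality. It remains to show that a path crossing each hyperplane at most once is a geodesic. Such a path crosses only separating hyperplanes, since a hyperplane crossed exactly once must separate its endpoints. So its length is at most the number of separating hyperplanes, which equals the distance.
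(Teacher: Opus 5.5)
The paper does not prove this statement; it imports it from \cite{QM}. Judged on its own terms, your route is the natural one: gates on a clique $C\subset J$, edges of $J$ being exactly the edges whose endpoints have distinct gates, and sectors being the fibres of the gate map. The opening steps are fine. Cliques are gated by your $K_4^-$ argument. The forward direction of the key lemma works, using the fact that an edge whose endpoints have distinct gates on $C$ has both endpoints at the same distance from $C$.

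The genuine gap is in the last item. You claim that a path crossing a hyperplane twice ``leaves and re-enters a convex sector''. That is the median-graph argument, and it fails exactly where quasi-median graphs differ from median graphs: a hyperplane may delimit three or more sectors. Already in $K_3$ with vertices $c,c',c''$, the path $c,c',c''$ crosses the unique hyperplane twice, visits three distinct sectors, and never returns to one. Convexity of individual sectors does not exclude such paths. So you have not shown that geodesics cross each hyperplane at most once, and both the characterisation of geodesics and the distance formula rest on this.

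You can close the gap with your own tools, as follows.
\begin{itemize}
\item Let $p_1q_1$ and $p_2q_2$ be two consecutive crossings of $J$ along a path, so that the subpath from $q_1$ to $p_2$ contains no edge of $J$. Let $C_1$ be the clique containing $p_1q_1$, and let $\pi$ be the gate map onto $C_1$.
\item By the key lemma for $C_1$, $\pi$ is constant along that subpath. Hence $\pi(p_2)=q_1$ and $d(p_2,C_1)=d(p_2,q_1)$.
\item Since $p_2q_2\in J$, we have $\pi(q_2)\neq q_1$, so $d(q_2,C_1)=d(p_2,C_1)$.
\item Therefore $d(p_1,q_2)\leq d(q_2,C_1)+1=d(q_1,p_2)+1$. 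But the path from $p_1$ to $q_2$ has length at least $d(q_1,p_2)+2$, so it is not a geodesic, whichever sectors are involved.
\end{itemize}
This also replaces your unexplained ``ladder'' shortening. Convexity of each $S_c$ follows at once, since a geodesic leaving and re-entering $S_c$ would cross $J$ twice.

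Two smaller points. First, the independence of the partition from the choice of $C$, which you single out as delicate, is automatic. Once the key lemma holds for one clique $C\subset J$, the sets $S_c$ are exactly the components of $X\backslash\backslash J$, which depend only on $J$. Second, the converse half of the key lemma is only asserted, not proved. Its triangle case is an immediate $K_4^-$ argument. Its square case amounts to showing that two edges at a common vertex, each joining vertices with distinct gates, span a triangle. That is where the quadrangle condition and $K_{2,3}$-freeness actually have to be used.
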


\noindent
Recall that, given a graph $X$, a subgraph $Y \leq X$ is \emph{gated} if every vertex $x \in V(X)$ admits a \emph{gate} in $Y$, i.e.\ a vertex $y \in V(Y)$ such that every vertex of $Y$ is connected to $x$ by some geodesic passing through $y$. The map $V(X) \to V(Y)$ given by $x \mapsto (\text{gate in } Y)$ is referred to as the \emph{(gate-)projection onto $Y$}.

\paragraph{Systems of local metrics.} Let $X$ be a graph. Assume that, for each \emph{clique} $C$ of $X$ (i.e.\ every maximal complete subgraph), we fix a metric $\delta_C$ on $V(C)$. We refer to such a data $\{ (C,\delta_C) \mid C \text{ clique}\}$ as a \emph{system of (local) metrics}. Then, we define a \emph{global (pseudo-)metric} $\delta$ on $V(X)$ as
$$\delta (x,y) = \inf \left\{ \sum\limits_{i=1}^{n-1} \delta_{C_i}(x_i,x_{i+1}) \mid \begin{array}{c} x_1=x, \ldots, x_n=y \in V(X) \text{ such that } x_i \\   \text{and } x_{i+1} \text{ belong to } C_i \text{ for every } 1 \leq i \leq n-1 \end{array} \right\}$$
for all $x,y \in V(X)$. 
In full generality, there is no reason for the global metric to extend the local metric or to be compatible in some way with the geometry of the whole graph. In practice, given a quasi-median graph (or, more generally, a \emph{paraclique graph}; see \cite{Contracting}), we ask our system of local metrics to be \emph{coherent}, i.e.\ we want, for any two clique $A$ and $B$ in the same hyperplane, the gate-projection $A \to B$ to induce an isometry $(A,\delta_A) \to (B,\delta_B)$. 

\medskip \noindent
Under this assumption, it is possible to describe the global metric in a way that is more connected with the structure of our quasi-median graph. Given a hyperplane $J$, we can fix  a clique $C$ in $J$ and define the pseudo-metric
$$\delta_J : (x,y) \mapsto \delta_C(\mathrm{proj}_C(x), \mathrm{proj}_C(y))$$
on our quasi-median graph. Because the system of metrics is coherent, $\delta_J$ does not depent on the choice of $C$. Then:

\begin{lemma}[{\cite[Proposition~3.13]{QM}}]\label{lem:DeltaJ}
Let $X$ be a quasi-median graph and $\{(C,\delta_C) \mid C \text{ clique}\}$ a coherent system of local metrics. For all vertices $x,y \in V(X)$,
$$\delta(x,y)= \sum\limits_{J \text{ hyperplane separating $x$ and $y$}} \delta_J(x,y).$$
\end{lemma}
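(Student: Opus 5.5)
We prove the formula by showing two inequalities. Write $\mathcal{W}(x,y)$ for the set of hyperplanes separating $x$ and $y$, and $\Sigma(x,y)$ for the right-hand side. Throughout, fix for each hyperplane $J$ a clique $C_J \subset J$; coherence makes $\delta_J$ independent of this choice.

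\textbf{Upper bound $\delta \leq \Sigma$.} Take a geodesic $x=x_1,\ldots,x_n=y$ in $X$. By Theorem~\ref{thm:HypQM}, its edges cross exactly the hyperplanes of $\mathcal{W}(x,y)$, each exactly once. Consecutive vertices $x_i,x_{i+1}$ lie in a common clique $C_i$, which belongs to the hyperplane $J_i$ they cross. Since $C_i$ is gated and $x_i,x_{i+1}\in C_i$, we have $\mathrm{proj}_{C_i}$ equal to the identity on these points. Hence $\delta_{C_i}(x_i,x_{i+1})=\delta_{J_i}(x_i,x_{i+1})$.

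It remains to compare $\delta_{J_i}(x_i,x_{i+1})$ with $\delta_{J_i}(x,y)$. This is the key point: for a hyperplane $J$, the projection onto $C_J$ of a vertex depends only on the sector of $J$ containing it. Indeed, the projection of $z$ onto $C_J$ is the unique vertex of $C_J$ lying in the same sector as $z$. This holds because sectors are gated and a geodesic from $z$ to $C_J$ need not cross $J$. Since the geodesic crosses $J_i$ only once, $x$ lies in the sector of $x_i$ and $y$ in the sector of $x_{i+1}$. Therefore $\delta_{J_i}(x_i,x_{i+1})=\delta_{J_i}(x,y)$, and summing gives $\delta(x,y)\le\Sigma(x,y)$.

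\textbf{Lower bound $\delta\ge\Sigma$.} Let $x_1,\ldots,x_n$ be an arbitrary admissible chain from $x$ to $y$, with $x_i,x_{i+1}$ in a clique $C_i$. The plan is to show that $\Sigma$ satisfies the triangle inequality and agrees with $\delta_C$ on pairs of vertices lying in a common clique.

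The triangle inequality is immediate, since each $\delta_J$ is a pseudo-metric and $\delta_J(x,y)=0$ whenever $J\notin\mathcal{W}(x,y)$, so one may sum over all hyperplanes. For two distinct vertices $u,v$ in a clique $C$, the only separating hyperplane is the one $J$ containing $C$. By the same projection argument as above, $\Sigma(u,v)=\delta_C(u,v)$. Thus $\Sigma(x,y)\le\sum_i\Sigma(x_i,x_{i+1})=\sum_i\delta_{C_i}(x_i,x_{i+1})$, and taking the infimum over chains gives $\Sigma\le\delta$.

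\textbf{Main obstacle.} The delicate step is the sector/projection claim: that $\mathrm{proj}_{C_J}(z)$ is determined by the sector of $J$ containing $z$. I would prove it using the product decomposition $N(J)=C\times F$ of the carrier, together with gatedness of $N(J)$. Projecting $z$ first to $N(J)$ and then to $C$ composes gate maps, and the $C$-coordinate of the gate in $N(J)$ records the sector. Combined with the fact that gate projections between cliques of $J$ commute with these identifications, this also gives the independence of $\delta_J$ from the choice of $C_J$.
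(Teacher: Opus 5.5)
The paper does not prove this lemma; it quotes it from \cite[Proposition~3.13]{QM}, so there is no in-text argument to compare with. Your proof is correct and self-contained.

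The upper bound is obtained by evaluating the defining infimum on a single geodesic of $X$. Its edges are in bijection with the separating hyperplanes, and each step contributes exactly $\delta_{J_i}(x,y)$. The lower bound uses two facts: $\sum_J \delta_J$ is a pseudo-metric (only finitely many terms are nonzero for a given pair), and it agrees with $\delta_C$ on every clique. Together these are exactly what is needed.

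The one step worth tightening is the claim that $\mathrm{proj}_C(z)$ lies in the sector of $J$ containing $z$. Gatedness of sectors is not really the reason, and the detour through $N(J)=C\times F$ in your last paragraph is unnecessary. Here is the direct argument. Let $p=\mathrm{proj}_C(z)$ and $c\in C\setminus\{p\}$. By the gate property, any geodesic from $z$ to $p$ followed by the edge $pc$ is a geodesic from $z$ to $c$. Since the edge $pc$ lies in $J$, the last item of Theorem~\ref{thm:HypQM} shows that the segment from $z$ to $p$ does not cross $J$. Hence $z$ and $p$ lie in the same sector. That vertex of $C$ is unique in its sector because distinct vertices of $C$ are separated by $J$.

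The same observation gives $\mathrm{proj}_B\circ\mathrm{proj}_A=\mathrm{proj}_B$ for any two cliques $A,B$ of $J$. This is precisely what converts coherence into the independence of $\delta_J$ from the chosen clique, which you (like the paper) take for granted.
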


\noindent
It is worth mentioning that, given a (coherent) system of graph-metrics, the global metric is also a graph-metric. Thus, we will often think of $(X,\delta)$ as a graph, as in the next lemmas. 

\begin{lemma}[{\cite[Lemma~5.16]{Quandle}}]\label{lem:SystemGated}
Let $X$ be a quasi-median graph and $\{(C,\delta_C) \mid C \text{ clique}\}$ a coherent system of graph-metrics. For every gated subgraph $Y \leq X$, the subgraph $(Y,\delta)$ is gated in $(X,\delta)$. In particular, $(Y,\delta)$ is convex.
\end{lemma}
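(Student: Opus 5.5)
The plan is to reduce to the case where $Y$ is a sector, and then use the hyperplane formula of Lemma~\ref{lem:DeltaJ}. In $(X,d)$, every gated subgraph is an intersection of sectors. Indeed, if $x \notin Y$ and $y$ is its gate, the first edge of a geodesic from $y$ to $x$ lies in some hyperplane $J$. This $J$ separates $x$ from $y$. Because geodesics cross hyperplanes at most once and $y$ is the gate, $J$ cannot cross $Y$: otherwise some vertex of $Y$ would lie on the other side of $J$, and a geodesic from $x$ to it through $y$ would cross $J$ twice. So the sector of $J$ containing $y$ contains $Y$ but not $x$. Gatedness in $(X,\delta)$ is not obviously stable under intersections, though, so rather than reduce formally I would prove the statement directly for $Y$, using the same projection map.

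Let $p: V(X)\to V(Y)$ be the gate projection in $(X,d)$. Fix $x \in V(X)$ and $y \in V(Y)$. The hyperplanes separating $x$ from $y$ split into two groups: those separating $x$ from $p(x)$, and those separating $p(x)$ from $y$. These groups are disjoint, and no hyperplane separates $p(x)$ from both $x$ and $y$. This is exactly the gate property $d(x,y)=d(x,p(x))+d(p(x),y)$ rewritten through Theorem~\ref{thm:HypQM}.

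To upgrade this to $\delta$, I would show that for each hyperplane $J$ separating $x$ from $p(x)$, the points $p(x)$ and $y$ have the same projection onto a clique $C$ of $J$, so that $\delta_J(x,y)=\delta_J(x,p(x))$. Similarly, for $J$ separating $p(x)$ from $y$, $x$ and $p(x)$ have the same projection, so $\delta_J(x,y)=\delta_J(p(x),y)$. For the remaining $J$, all three points lie in one sector of $J$ and contribute $0$.

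The first claim holds because $J$ does not cross $Y$ (argument above), so $Y$ lies in a single sector of $J$. Sectors are gated, so all of $Y$ projects to one vertex of $C$. The second claim holds because $x$ and $p(x)$ lie in the same sector of $J$: a hyperplane separating $p(x)$ from $y$ does not separate $x$ from $p(x)$, by disjointness. Since $\mathrm{proj}_C$ is constant on a sector, they share a projection. Summing with Lemma~\ref{lem:DeltaJ} then gives
\[\delta(x,y)=\delta(x,p(x))+\delta(p(x),y),\]
so $p(x)$ is a gate for $x$ in $(Y,\delta)$.

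The main obstacle I expect is the bookkeeping that makes Lemma~\ref{lem:DeltaJ} apply as written. The lemma only sums over hyperplanes separating the two points, and $\delta_J$ must vanish for non-separating $J$. One also needs that $\delta$ restricted to $Y$ is the metric of the subgraph $(Y,\delta)$, so that gatedness is meant in the intrinsic sense. The first point needs the fact that $\mathrm{proj}_C$ maps each sector of $J$ to a single vertex of $C$, which I would take from \cite{QM}.

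Convexity then follows from gatedness in the usual way. If $y,y'\in Y$ and $z$ lies on a $\delta$-geodesic between them, the gate $p(z)$ satisfies
\[\delta(y,y')\le \delta(y,p(z))+\delta(p(z),y') = \delta(y,z)+\delta(z,y')-2\delta(z,p(z)),\]
which forces $\delta(z,p(z))=0$, hence $z=p(z)\in Y$ since $\delta$ is a graph metric.
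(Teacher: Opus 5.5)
The paper does not prove this lemma; it imports it from \cite[Lemma~5.16]{Quandle}, so there is no internal argument to compare against. Your proof is correct and self-contained, using only Theorem~\ref{thm:HypQM} and Lemma~\ref{lem:DeltaJ}.

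The core of your argument is sound. Write $\mathcal{W}(a,b)$ for the set of hyperplanes separating $a$ and $b$. The $d$-gate identity $d(x,y)=d(x,p(x))+d(p(x),y)$ forces $\mathcal{W}(x,y)=\mathcal{W}(x,p(x))\sqcup\mathcal{W}(p(x),y)$. Since $\delta_J(a,b)$ depends only on the sectors of $J$ containing $a$ and $b$, each term of Lemma~\ref{lem:DeltaJ} for $\delta(x,y)$ equals the matching term for $\delta(x,p(x))$ or for $\delta(p(x),y)$. This gives $\delta(x,y)=\delta(x,p(x))+\delta(p(x),y)$.

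Three small remarks:
\begin{itemize}
\item The fact you defer to \cite{QM}, that $\mathrm{proj}_C$ is constant on sectors, takes one line. Let $c=\mathrm{proj}_C(v)$. For every $c'\in C\setminus\{c\}$, the gate identity in $C$ gives $\mathcal{W}(v,c')=\mathcal{W}(v,c)\sqcup\{J\}$. Hence $J\notin\mathcal{W}(v,c)$, so $v$ lies in the sector of $J$ containing $c$. The paper uses exactly this consequence, that $\delta_J$ depends only on sectors, in the proof of Proposition~\ref{prop:FindingInvasive}, so you can cite it there.
\item Your step showing that a hyperplane separating $x$ from $p(x)$ does not cross $Y$ already follows from the disjointness of $\mathcal{W}(x,p(x))$ and $\mathcal{W}(p(x),y)$ for every $y\in Y$. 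The preliminary discussion of intersections of sectors can therefore be dropped.
\item Your deduction of convexity from gatedness is the standard one and is fine.
\end{itemize}
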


\begin{lemma}\label{lem:ProductSyst}
Let $X$ be a quasi-median graph and $\{(C,\delta_C) \mid C \text{ clique}\}$ a coherent system of graph-metrics. If $Y \leq X$ is a gated subgraph that splits a product $Y_1 \times Y_2$ of two subgraphs $Y_1,Y_2 \leq Y$, then $(Y,\delta)= (Y_1, \delta_1) \times (Y_2, \delta_2)$. 
\end{lemma}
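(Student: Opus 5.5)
Since $(Y,\delta)$ is gated by Lemma~\ref{lem:SystemGated}, it is convex and thus isometrically embedded in $(X,\delta)$. The metric on $Y$ induced by $\delta$ is therefore intrinsic to the quasi-median graph $Y$ equipped with the restricted system of local metrics. The plan is to write both sides as sums over hyperplanes using Lemma~\ref{lem:DeltaJ}, and then split these sums according to the product decomposition.

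First, the structure of $Y = Y_1 \times Y_2$. Every clique of $Y$ lies either in a fibre $Y_1 \times \{y_2\}$ or in a fibre $\{y_1\} \times Y_2$: a clique contains no induced square, and a triangle in a product of graphs must lie in a single fibre. Each hyperplane of $Y$ is the trace on $Y$ of a hyperplane of $X$, since $Y$ is gated and hence convex, so a geodesic inside $Y$ is a geodesic of $X$. The hyperplanes of $Y$ then split into two families $\mathcal{H}_1$ and $\mathcal{H}_2$: those crossing the $Y_1$-factor and those crossing the $Y_2$-factor. A hyperplane $J \in \mathcal{H}_1$ consists of the edges $e \times \{y_2\}$, where $e$ runs over a hyperplane $J_1$ of $Y_1$ and $y_2$ ranges over $V(Y_2)$. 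Gated subgraphs of quasi-median graphs are quasi-median, so $Y_1$ and $Y_2$ are themselves quasi-median (they are gated retracts of $Y$).

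Second, set $\delta_i$ to be the global metric on $Y_i$ associated with the local metrics $\delta_C$, for $C$ ranging over the cliques of $Y_i \times \{y_j\}$. Coherence guarantees that this choice does not depend on $y_j$: the cliques $C \times \{y_2\}$ and $C \times \{y_2'\}$ lie in the same hyperplane, and the gate projection between them is the obvious identification, so it is an isometry for the local metrics. The same argument shows that the restricted system on $Y_i$ is coherent.

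Third, take $x = (a_1,a_2)$ and $y = (b_1,b_2)$ in $Y$. The hyperplanes separating $x$ and $y$ are those of $\mathcal{H}_1$ coming from hyperplanes of $Y_1$ separating $a_1$ and $b_1$, together with those of $\mathcal{H}_2$ coming from hyperplanes of $Y_2$ separating $a_2$ and $b_2$. For $J \in \mathcal{H}_1$, choose the clique $C \times \{a_2\}$. The projection of $(z_1,z_2)$ onto $C \times \{a_2\}$ is $(\mathrm{proj}_C z_1, a_2)$, because gate projections onto a product of gated subgraphs are computed coordinatewise. Hence $\delta_J(x,y) = \delta_{J_1}(a_1,b_1)$. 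Applying Lemma~\ref{lem:DeltaJ} in $X$ and in each $Y_i$ gives
$$\delta(x,y) = \delta_1(a_1,b_1) + \delta_2(a_2,b_2),$$
which is the $\ell^1$ product metric of $(Y_1,\delta_1) \times (Y_2,\delta_2)$, i.e.\ the graph product metric.

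The main obstacle is the bookkeeping in the third step: checking that hyperplanes of $X$ meeting $Y$ correspond bijectively to hyperplanes of $Y_1$ or of $Y_2$, and that the projection formula holds. Concretely, one must verify that the hyperplanes of $X$ separating two points of $Y$ are exactly those crossing $Y$, which follows from convexity of $Y$ together with the last item of Theorem~\ref{thm:HypQM}.
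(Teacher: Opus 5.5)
Your proof is correct and follows essentially the same route as the paper. Both arguments write $\delta$ as a sum over separating hyperplanes via Lemma~\ref{lem:DeltaJ}, split these hyperplanes according to whether they cross $Y_1$ or $Y_2$, and check that $\delta_J(x,y)$ depends only on the corresponding coordinate. The differences are cosmetic: you get that last point from the coordinatewise gate projection onto $C \times \{a_2\}$, whereas the paper notes that $J$ does not separate $(x_1,x_2)$ from $(x_1,o)$, and you apply Lemma~\ref{lem:DeltaJ} intrinsically in each $Y_i$ rather than in $X$.
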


\begin{proof}
Let $x,y \in V(Y)$ be two vertices, which we can write as $x=(x_1,x_2)$, $y=(y_1,y_2)$. Let $o$ denote the unique vertex in $Y_1 \cap Y_2$. Thus, $Y_1$ (resp.\ $Y_2$) is identified with the factor $Y_1 \times \{o\}$ (resp.\ $\{o\} \times Y_2$) in $Y_1 \times Y_2$. Consequently, the coordinates $x_1$ and $y_1$ (resp.\ $x_2$ and $y_2$) can be identified with $(x_1,o)$ and $(y_1,o)$ (resp.\ $(o,x_2)$ and $(o,y_2)$). 

\medskip \noindent
A hyperplane $J$ of $Y$ separating $x$ and $y$ crosses either $Y_1$ or $Y_2$. Say that $J$ crosses $Y_1$. Notice that $J$ does not separated $(x_1,x_2)$ and $(x_1,o)$ nor $(y_1,y_2)$ and $(y_1,o)$, hence $\delta_J(x,y) = \delta_J(x_1,y_1)$. Similarly, $\delta_J(x,y)=\delta(x_2,y_2)$ whenever $J$ crosses $Y_2$. We conclude thanks to Lemma~\ref{lem:SystemGated} that
$$\begin{array}{lcl} \delta(x,y) & = & \displaystyle \sum\limits_{J \text{ hyperplane separating $x$ and $y$}} \delta_J(x,y) \\ \\ & = & \displaystyle \sum\limits_{J \text{ hyperplane of } Y_1} \delta_J(x,y) + \sum\limits_{J \text{ hyperplane of } Y_2} \delta_J(x,y) \\ \\ & = & \displaystyle \sum\limits_{J \text{ hyperplane of } Y_1} \delta_J(x_1,y_1) + \sum\limits_{J \text{ hyperplane of } Y_2} \delta_J(x_2,y_2) \\ \\ & = & \delta_1(x,y)+ \delta_2(x,y). \end{array}$$
This proves our lemma. 
\end{proof}

\paragraph{Graph products.} Let $\Gamma$ be a graph and $\mathcal{G}=\{G_u \mid u \in V(\Gamma)\}$ a collection of groups indexed by the vertex-set $V(\Gamma)$ of $\Gamma$. The \emph{graph product} $\Gamma \mathcal{G}$ is the group defined by the relative presentation
$$\langle G_u \ (u \in V(\Gamma)) \mid [G_u,G_v]=1 \ (\{u,v\} \in E(\Gamma)) \rangle$$
where $E(\Gamma)$ denotes the edge-set of $\Gamma$ and where $[G_u,G_v]=1$ is a shorthand for: $[g,h]=1$ for all $g \in G_u$ and $h \in G_v$. 

\medskip \noindent
\textbf{Convention:} In the rest of the article, we will always assume that the factors of our graph products are non-trivial. 

\medskip \noindent
Every element of $\Gamma \mathcal{G}$ can be described as a product $s_1 \cdots s_n$ of elements in $\bigcup_{u \in V(\Gamma)} G_u$. We refer to $s_1 \cdots s_n$ as a \emph{word} and to the $s_i$ as its \emph{syllables}. Clearly, the element of $\Gamma \mathcal{G}$ represented by our word $s_1 \cdots s_n$ remains the same after the application of one of the following moves:
\begin{description}
	\item[(Cancellation)] if there exists $1 \leq i \leq n$ such that $s_i=1$, remove the syllable $s_i$;
	\item[(Merging)] if there exists $1 \leq i \leq n-1$ such that $s_i,s_{i+1} \in G_u$ for some $u \in V(\Gamma)$, replace the subword $s_is_{i+1}$ with the single syllable $(s_is_{i+1})$;
	\item[(Shuffle)] if there exists $1 \leq i \leq n-1$ such that $s_i \in G_u$ and $s_{i+1} \in G_v$ for some $\{u,v\} \in E(\Gamma)$, replace with the subword $s_is_{i+1}$ with the subword $s_{i+1}s_i$. 
\end{description}
A word that cannot be shortened by applying these elementary moves is \emph{graphically reduced}. According to the next statement, every element of $\Gamma \mathcal{G}$ is represented by a graphically reduced word that is unique in a resaonable sense: 

\begin{lemma}[\cite{GreenGP}]\label{lem:GPWP}
Let $\Gamma$ be a graph and $\mathcal{G}$ a collection of groups indexed by $V(\Gamma)$. If $w_1$ and $w_2$ are two graphically reduced words representing the same element of $\Gamma \mathcal{G}$, then $w_2$ can be obtained from $w_1$ by switching successive pairs of syllables that belong to adjacent vertex-groups. 
\end{lemma}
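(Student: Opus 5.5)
The statement is Green's normal form theorem. I would prove it by combining a concrete action of $\Gamma\mathcal{G}$ with a confluence argument.

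\medskip \noindent
\textbf{Step 1: normal forms.} I would first show that every graphically reduced word $w = s_1\cdots s_n$ encodes a canonical object that depends only on its \emph{shuffle class}, meaning the set of words reachable from $w$ by swapping adjacent syllables from adjacent vertex-groups. This object is a labelled poset, Cartier--Foata style. Its elements are the syllables $s_i$, and we set $s_i < s_j$ whenever $i<j$ and the vertex-groups of $s_i,s_j$ are equal or non-adjacent; we then take the transitive closure. A standard trace-monoid argument shows that two words have the same labelled poset if and only if they are related by shuffles. So it suffices to show that the poset is determined by the group element.

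\medskip \noindent
\textbf{Step 2: a faithful action.} I would let $\Gamma\mathcal{G}$ act on the set $\Omega$ of shuffle classes of graphically reduced words. For $g \in G_u\setminus\{1\}$ and a class $[w]$, the action depends on whether some representative of $[w]$ begins with a syllable $t \in G_u$, i.e.\ whether $w$ has a minimal element labelled by $u$.
\begin{itemize}
	\item If there is such a $t$ (it is then unique), replace $t$ by $gt$, and delete it if $gt=1$.
	\item Otherwise, prepend the syllable $g$.
\end{itemize}
The checks are the following.
\begin{itemize}
	\item Each case produces a graphically reduced word, and the result is independent of the representative.
	\item For each $u$ this defines an action of $G_u$; this follows directly from associativity in $G_u$.
	\item For adjacent $u,v$, the actions of $G_u$ and $G_v$ commute. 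The reason is that a minimal $u$-syllable and a minimal $v$-syllable can be brought simultaneously to the front, since $u$ and $v$ commute.
\end{itemize}
By the universal property of the presentation, these actions assemble into an action of $\Gamma\mathcal{G}$ on $\Omega$.

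\medskip \noindent
\textbf{Step 3: conclusion.} Now let $w = s_1\cdots s_n$ be graphically reduced. By induction on $n$, we get $w\cdot[\varnothing] = [w]$: acting by $s_n$, then $s_{n-1}$, and so on, each step prepends a syllable. Indeed, a merge would require an earlier syllable of the same vertex-group that can be shuffled to the front past everything in between, and that is exactly a non-reduction of $w$. Hence if $w_1$ and $w_2$ are graphically reduced and represent the same element, then $[w_1] = w_1\cdot[\varnothing] = w_2\cdot[\varnothing] = [w_2]$, which is the claim.

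\medskip \noindent
\textbf{Main obstacle.} The delicate step is the well-definedness in Step 2. One must show that replacing $t$ by $gt$, or deleting it, keeps the word graphically reduced. Deleting $t$ could bring together two syllables from the same vertex-group, say $x \in G_v$ before $t$ and $y \in G_v$ after $t$. But such a pair cannot both commute past $t$, because $t$ is minimal. One also needs the fact that graphical reducedness is equivalent to a local condition: no two syllables of the same group can be made adjacent by shuffles. I would prove this equivalence first, and then the case analysis becomes routine.
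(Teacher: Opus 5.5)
The paper contains no proof of this lemma. It is quoted from Green's thesis \cite{GreenGP} and used as a black box. Moreover, the quasi-median facts the paper imports afterwards from \cite{QM} (for instance Lemma~\ref{lem:GeodInQMGP}) are themselves built on this normal form, so the paper's own tools could not re-derive it without circularity. Your van der Waerden-style argument is a correct, self-contained proof and is the natural independent route. It runs as follows: each $G_u$ acts on shuffle classes of reduced words by modifying or prepending the unique minimal $u$-syllable; adjacent vertex groups act by commuting permutations; the actions assemble via the presentation; and one reads off $[w]=w\cdot[\varnothing]$. As a by-product, it also shows that the vertex groups embed in $\Gamma\mathcal{G}$.

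When you write it out, three points need tightening.

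\emph{Well-definedness.} Define the modifying case on a representative $tw'$ that begins with $t$. Then deleting $t$ trivially keeps the word reduced. What well-definedness really needs is left cancellation: $tw'\sim tw''$ implies $w'\sim w''$. Your poset picture gives this, because removing a minimal element from the dependence poset of $tw'$ leaves exactly the dependence poset of $w'$. Your sentence that $x,y$ ``cannot both commute past $t$'' is not the right reason. A new violation can only arise when $t$ does not commute with $G_v$, and then $x$ preceding $t$ would force $x<t$, contradicting minimality.

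\emph{The $G_u$-action axiom.} This is not purely associativity in $G_u$. When $gt=1$, you need that $w'$ has no minimal $u$-syllable, so that a subsequent $h\in G_u$ is prepended rather than merged. This holds because $tw'$ is reduced.

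\emph{The commutation check.} You must also handle the cases where only one of a minimal $u$-syllable and a minimal $v$-syllable exists. Use the fact that, for $u\sim v$, prepending, relabelling or deleting a minimal $u$-syllable neither creates nor destroys minimal $v$-syllables, since such a $u$-syllable is not directly comparable to any $v$-syllable.
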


\noindent
The connection between graph products and quasi-median graphs is the following. Given a graph $\Gamma$ and a collection of groups $\mathcal{G}$, the Cayley graph 
$$\mathrm{QM}(\Gamma,\mathcal{G}):= \mathrm{Cayl} \left( \Gamma \mathcal{G}, \bigcup \mathcal{G} \right)$$
is quasi-median \cite[Proposition~8.2]{QM}. Like in any Cayley graph, paths are naturally labelled by words of generators. We can use this perspective in order to recognise geodesics:

\begin{lemma}[{\cite[Lemma~8.3]{QM}}]\label{lem:GeodInQMGP}
Let $\Gamma$ be a graph and $\mathcal{G}$ a collection of groups indexed by $V(\Gamma)$. A path in $\mathrm{QM}(\Gamma, \mathcal{G})$ is a geodesic if and only if it is labelled by a graphically reduced word. 
\end{lemma}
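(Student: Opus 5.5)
The plan is to identify the distance in $\mathrm{QM}(\Gamma,\mathcal{G})$ with the length of graphically reduced words, using Lemma~\ref{lem:GPWP}. Since $\mathrm{QM}(\Gamma,\mathcal{G})$ is a Cayley graph, left-multiplication acts by label-preserving isometries. So a path of length $n$ from $g$ to $h$, labelled by a word $s_1\cdots s_n$ with syllables in $\bigcup\mathcal{G}$, can be translated to a path from $1$ to $g^{-1}h$ with the same label, and $s_1\cdots s_n$ represents $g^{-1}h$. Moreover $d(g,h)$ is the minimal number of syllables of a word representing $g^{-1}h$. Thus it suffices to prove: a word $w$ representing an element $x$ has minimal length among all words representing $x$ if and only if $w$ is graphically reduced.

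For the direction ``geodesic $\Rightarrow$ graphically reduced'', I argue by contraposition. If $w$ is not graphically reduced, then by definition some sequence of elementary moves shortens it. Each move preserves the represented element, shuffles preserve length, and cancellations and mergings decrease it by one. We therefore obtain a strictly shorter word representing the same element, so the path labelled by $w$ is not geodesic.

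For the converse, let $w$ be graphically reduced and let $w'$ be any word representing the same element $x$. Apply elementary moves to $w'$ as long as it can be shortened. Since length never increases and strictly decreases at each cancellation or merging, this terminates at a graphically reduced word $w''$ with $|w''| \leq |w'|$ that still represents $x$. By Lemma~\ref{lem:GPWP}, $w''$ is obtained from $w$ by switching pairs of adjacent syllables, which preserves length, so $|w| = |w''| \leq |w'|$. Hence $w$ realises the minimal length, and the path it labels is a geodesic.

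The only delicate point is the termination and well-definedness of the reduction. One must read ``cannot be shortened'' as: no sequence of shuffles produces a word to which a cancellation or merging applies. Termination is then immediate, since the length is a non-negative integer that drops at every non-shuffle move, and between two such drops only the finitely many shuffle-rearrangements of a fixed word occur. All the real content (uniqueness of reduced forms) is carried by Lemma~\ref{lem:GPWP}, so I expect no serious obstacle.
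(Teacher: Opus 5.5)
Your proof is correct. You identify $d(g,h)$ with the minimal number of syllables of a word representing $g^{-1}h$, and you use Lemma~\ref{lem:GPWP} to see that all graphically reduced representatives of an element have the same length; this is the standard route through Green's normal form. The paper itself gives no proof of this lemma and simply imports it from \cite[Lemma~8.3]{QM}.
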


\noindent
Given a graph $\Gamma$ and a collection of groups $\mathcal{G}$ indexed by $V(\Gamma)$, every edge of $\mathrm{QM}(\Gamma, \mathcal{G})$ is labelled by a generator in $\bigcup \mathcal{G}$, and consequently by the vertex of $\Gamma$ that labels the corresponding factor. The next assertion shows that this labelling of the edges of $\mathrm{QM}(\Gamma, \mathcal{G})$ extends to hyperplanes.

\begin{lemma}[{\cite[Lemmas~8.5 and~8.8]{QM}}]
Let $\Gamma$ be a graph and $\mathcal{G}$ a collection of groups indexed by $V(\Gamma)$. If two edges of $\mathrm{QM}(\Gamma , \mathcal{G})$ belong to the same hyperplane, then they are labelled by the same vertex of $\Gamma$. 
\end{lemma}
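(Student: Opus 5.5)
The lemma asserts that edges in a common hyperplane of $\mathrm{QM}(\Gamma,\mathcal{G})$ carry the same vertex label. A hyperplane is generated by two elementary relations: being opposite sides of a square, and lying in a common triangle. The plan is to show that each relation preserves the vertex label of an edge. The statement then follows by induction along the chain $e_1=a,\ldots,e_n=b$ in the definition of a hyperplane.

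\medskip \noindent
\textbf{Triangles.} Let $x,y,z$ span a triangle. The edge labels are $x^{-1}y=s$, $y^{-1}z=t$ and $x^{-1}z=st$, each a non-trivial element of $\bigcup\mathcal{G}$. Suppose $s\in G_u$ and $t\in G_v$ with $u\neq v$. Then the word $st$ is graphically reduced of length two. By Lemma~\ref{lem:GPWP}, it cannot represent an element that is also represented by a single syllable: such a syllable is a reduced word of length one, and reduced words representing the same element have the same length. Hence $u=v$, and $st\in G_u$ as well. So all three edges of a triangle carry the label $u$.

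\medskip \noindent
\textbf{Squares.} Let $x,y,z,w$ be a square, read cyclically. Its edges are labelled by syllables $s,t,s',t'$ with $st=t's'$, where $t'=(x^{-1}w)$ and $s'=w^{-1}z$. The square is induced, so $x$ and $z$ are not adjacent. Therefore $st$ is not a single syllable, which means $s\in G_u$ and $t\in G_v$ with $u\neq v$, and $st$ is graphically reduced. By the same reasoning, $t's'$ is graphically reduced with distinct labels. Lemma~\ref{lem:GPWP} then says that $t's'$ is obtained from $st$ by shuffles. There are only two possibilities:
\begin{itemize}
\item $t's'=st$ syllable by syllable, so $t'=s$. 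Then $w=y$, which is a contradiction.
\item $u$ and $v$ are adjacent, and $t'=t$, $s'=s$.
\end{itemize}
Hence opposite sides of the square, $xy$ and $wz$, both carry $s$, so their label is $u$. Likewise $yz$ and $xw$ carry $t$, with label $v$. This shows that parallelism preserves labels.

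\medskip \noindent
The only mildly delicate point is the uniqueness argument from Lemma~\ref{lem:GPWP}: one has to rule out degenerate identifications, such as the case $t'=s$, using the facts that the edges join distinct vertices and that the square is induced. Apart from this, the argument is direct.
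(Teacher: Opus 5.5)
Your proof is correct and is the standard argument behind the citation \cite[Lemmas~8.5 and~8.8]{QM}; the paper gives no proof of its own. You use the normal form theorem (Lemma~\ref{lem:GPWP}) to check that edges sharing a triangle, and opposite sides of a square, carry the same vertex label, and then chain these two relations; the one implicit ingredient, that distinct vertex-groups intersect trivially (so labels are well defined and the third edge $xz$ of a triangle is also labelled $u$), likewise follows directly from Lemma~\ref{lem:GPWP}.
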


\noindent
In the sequel, the canonical labelling or colouring of the hyperplanes of $\mathrm{QM}(\Gamma, \mathcal{G})$ by $V(\Gamma)$ will refer to this extension.

\medskip \noindent
Of course, given a finite graph $\Gamma$ and a collection of finitely generated groups $\Gamma \mathcal{G}$, our quasi-median graph $\mathrm{QM}(\Gamma, \mathcal{G})$ is typically not quasi-isometric to the graph product $\Gamma \mathcal{G}$, since this is a Cayley graph constructed from a usually infinite generating set. But it is possible to endow $\mathrm{QM}(\Gamma, \mathcal{G})$ with a system of local metrics such that the global metric recovers $\Gamma \mathcal{G}$. The local metrics we have to take are easy to guess since:

\begin{lemma}[{\cite[Lemma~8.6]{QM}}]\label{lem:GPcliqueQM}
Let $\Gamma$ be a graph and $\mathcal{G}$ a collection of groups. The cliques of $\mathrm{QM}(\Gamma, \mathcal{G})$ are given by the cosets $gG_u$ for $g \in \Gamma\mathcal{G}$ and $u \in V(\Gamma)$. 
\end{lemma}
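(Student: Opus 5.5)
The plan is to use the description of the graph product as a Cayley graph: the cliques of $\mathrm{QM}(\Gamma,\mathcal{G})$ are exactly the cosets $gG_u$. Every clique is therefore a maximal complete subgraph, and all the work consists in comparing maximal complete subgraphs with cosets of vertex-groups.

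First, each coset $gG_u$ spans a complete subgraph. Indeed, for $a \neq b \in G_u$, the elements $ga$ and $gb$ differ by right multiplication by $a^{-1}b \in G_u \setminus \{1\}$, which is a generator, so they are adjacent.

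Second, I would show that every triangle lies in a single coset $gG_u$. Take a triangle $x, xs, xst$ with $s, t, st$ nontrivial syllables. Then $st$ is a single syllable, say in $G_w$, while $s \in G_u$ and $t \in G_v$.
\begin{itemize}
\item If $u=v$, then $st \in G_u$ and the whole triangle lies in $xG_u$.
\item If $u \neq v$, the word $s\,t\,(st)^{-1}$ represents the identity. By Lemma~\ref{lem:GPWP} (word problem / normal forms), a word of length three with nontrivial syllables representing $1$ cannot be reduced by shuffles and mergings unless two of its syllables lie in the same vertex group. This forces $w \in \{u,v\}$, say $w=u$, and then $t = s^{-1}(st) \in G_u$, contradicting $u \neq v$.
\end{itemize}
The case analysis on this length-three word is the only delicate point, and it is routine.

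Third, I would show that every complete subgraph lies in a single coset. Fix two of its vertices $x$ and $xs$ with $s \in G_u$. Any third vertex $y$ forms a triangle with them, so by the previous step $y \in xG_u$, because the coset containing the edge between $x$ and $xs$ is determined by the label $u$ of that edge. Hence every complete subgraph is contained in some $gG_u$. Since the cosets are themselves complete, the maximal complete subgraphs are exactly the cosets $gG_u$: distinct cosets are not nested, as they meet in at most one vertex.
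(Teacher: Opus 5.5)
Your proof is correct and follows the standard argument behind \cite[Lemma~8.6]{QM}, which the paper cites without reproducing: the cosets $gG_u$ are complete, the normal form theorem (Lemma~\ref{lem:GPWP}) forces every triangle into a single coset, and maximality then identifies cliques with cosets. You use two facts implicitly: distinct vertex-groups intersect trivially (so edge labels are well defined and distinct cosets share at most one vertex), and the paper's standing convention that factors are nontrivial (so each coset has at least two vertices and is therefore maximal).
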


\noindent
Since cliques are cosets of factors, it is then natural to define metrics on cliques by using finite generating sets of the corresponding factors, namely:

\begin{prop}[{\cite[Claim~8.24]{QM}}]\label{prop:GPlocal}
Let $\Gamma$ be a finite graph and $\mathcal{G}$ a collection of finitely generated groups indexed by $V(\Gamma)$. For every $u \in V(\Gamma)$, fix a finite generating set $S_u$ of $G_u$. For every clique $C=gG_u$ of $\mathrm{QM}(\Gamma,\mathcal{G})$ and all vertices $x,y \in V(C)$, write $x=ga$ and $y=gb$ as graphically reduced words and define $\delta_C(x,y):= d_{S_u}(a,b)$. Then, $\{ (C,\delta_C) \mid C \text{ clique}\}$ is a coherent sytem of metrics and $(\mathrm{QM}(\Gamma, \mathcal{G}),\delta)$ is isometric to the Cayley graph $\mathrm{Cayl} \left(\Gamma, \mathcal{G}, \bigcup_{u \in V(\Gamma)} S_u \right)$. 
\end{prop}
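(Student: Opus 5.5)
The statement to prove is Proposition~\ref{prop:GPlocal}: the local metrics $\delta_C$ are well defined, they form a coherent system, and the resulting global metric $\delta$ on $\mathrm{QM}(\Gamma,\mathcal{G})$ recovers the word metric of $\Gamma\mathcal{G}$ with respect to $S:=\bigcup_u S_u$. I would handle these in three steps: well-definedness, coherence, and identification of $\delta$ with $d_S$.

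\textbf{Well-definedness.} A clique $C=gG_u$ can be written as $gG_u=g'G_u$ for any $g'=gh$ with $h\in G_u$. Choosing a different representative replaces $a,b$ by $h^{-1}a,h^{-1}b$. Since $d_{S_u}$ is left-invariant, $\delta_C$ does not depend on the representative. I would normalise $g$ to be the unique element of $gG_u$ whose graphically reduced form has no syllable of $G_u$ that can be shuffled to the end; Lemma~\ref{lem:GPWP} makes this precise. The representative itself is irrelevant anyway.

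\textbf{Coherence.} This is the main point. Let $A$ and $B$ be two cliques in the same hyperplane. By the definition of hyperplanes, they are linked by a chain of cliques in which consecutive cliques either share an edge (hence coincide, being maximal complete subgraphs) or are opposite sides of squares. So it suffices to treat two cliques $A=gG_u$ and $B=gsG_u$ that span a product $A\times\{1,s\}$, where $s\in G_v$ and $v$ is adjacent to $u$. In that case $gsh=ghs$ for every $h\in G_u$, so the gate projection $A\to B$ is $gh\mapsto ghs=gsh$. This is the identity in the $G_u$-coordinate, hence an isometry for $\delta_A$ and $\delta_B$.

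The step I expect to be the main obstacle is justifying that projections compose along the chain. I would handle it by using Theorem~\ref{thm:HypQM}: the carrier is $C\times F$, so the projection between any two cliques of $J$ equals the product identification. Alternatively, I would check directly that each square-adjacency is realised by right multiplication by a common element commuting with $G_u$.

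\textbf{Identification with $d_S$.} Both $\delta$ and $d_S$ are left-invariant on $\Gamma\mathcal{G}$, so it suffices to compare $\delta(1,x)$ with $|x|_S$.

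For $\delta\le d_S$: an $S$-word of length $n$ gives a sequence of vertices in which consecutive vertices differ by a generator in some $S_u$. Each such step lies in a clique $gG_u$ and has $\delta_C$-cost $1$, so $\delta(1,x)\le n$.

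For $d_S\le\delta$: take any chain $x_1,\dots,x_n$ with $x_i,x_{i+1}$ in a clique $C_i=g_iG_{u_i}$. Write $x_i^{-1}x_{i+1}=a^{-1}b\in G_{u_i}$. Its $S$-length is at most $d_{S_{u_i}}(a,b)=\delta_{C_i}(x_i,x_{i+1})$. Summing and concatenating the words gives $|x|_S\le\delta(1,x)$.

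Hence $\delta=d_S$, and $(\mathrm{QM},\delta)$ is isometric to the Cayley graph, as claimed.
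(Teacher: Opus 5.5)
Your proposal is correct. The paper gives no proof of its own here: it imports the statement from \cite[Claim~8.24]{QM}. Your three-step verification is the natural direct argument for it:
\begin{itemize}
\item well-definedness, since $\delta_C(x,y)=|x^{-1}y|_{S_u}$ depends only on $x^{-1}y\in G_u$;
\item coherence, by reducing to square-adjacent cliques $gG_u$ and $gsG_u$ with $s\in G_v$ and $v\in\mathrm{link}(u)$, where the gate projection is $x\mapsto xs$;
\item $\delta=d_S$, via the two inequalities comparing clique-chains with $S$-words.
\end{itemize}

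Two points are only asserted in your sketch and each needs a line.

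First, the reduction to square-adjacency uses that an induced square through $x$ with one side in $xG_u$ has the form $x,\ xh,\ xhs=xsh,\ xs$ with $s\in G_v$ and $v\in\mathrm{link}(u)$. This follows from Lemma~\ref{lem:GPWP} applied to the two length-two reduced words for the opposite corner.

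Second, the composition step you flag is harmless, and either of these settles it:
\begin{itemize}
\item The gate projection between two cliques of the same hyperplane sends a vertex to the unique vertex lying in the same sector. So the composite of projections along a chain is again the projection.
\item The cliques of the hyperplane containing $gG_u$ are exactly the $gkG_u$ with $k\in\langle \mathrm{link}(u)\rangle$. On these the projection is $x\mapsto xk$, and it is visibly a $\delta$-isometry in one step, with no chain needed.
\end{itemize}
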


\subsection{Invasive subgraphs}\label{section:Invasive}

\noindent
In this section, we introduce \emph{invasive subgraphs} in quasi-median graphs. They will play a fundamental role in our proof of Theorem~\ref{thm:MorseSubBuilding}. 

\begin{definition}
Let $X$ be a quasi-median graph. A subgraph $Y \leq X$ is \emph{invasive} if it is convex and if, for every clique $C$ of $X$, $|V(Y \cap C)| =0$ or $\geq 2$.
\end{definition}

\medskip \noindent
\begin{minipage}{0.3\linewidth}
\includegraphics[width=0.9\linewidth]{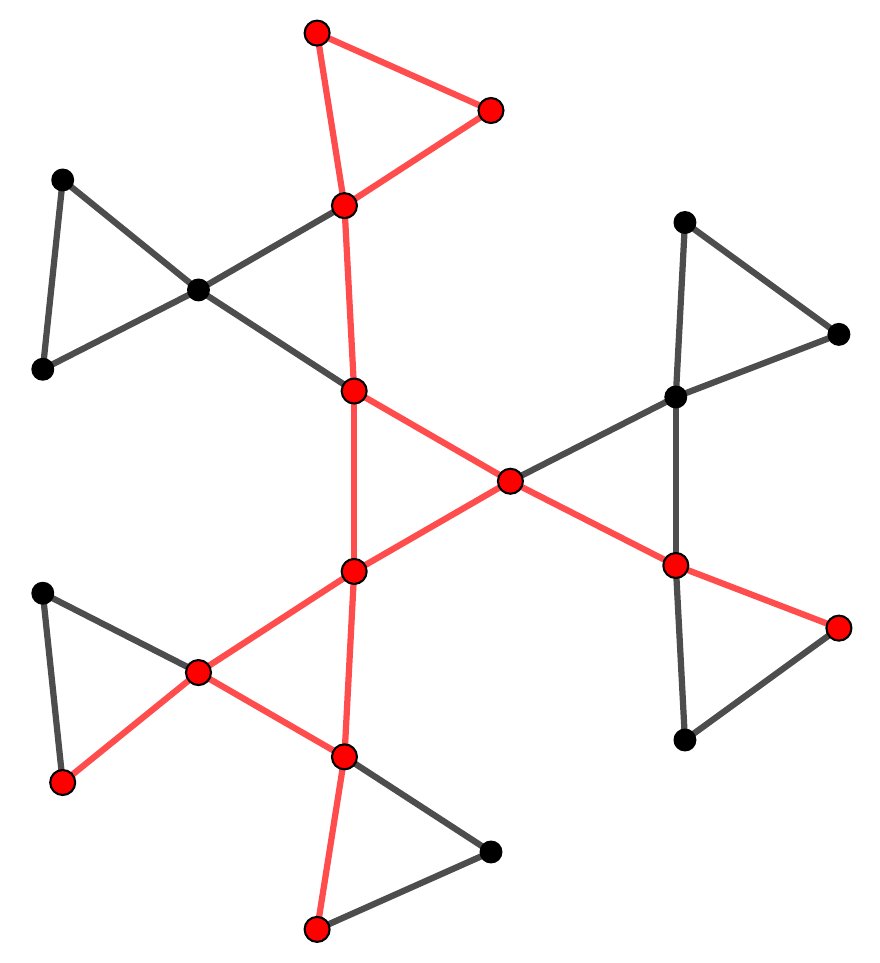}
\end{minipage}
\begin{minipage}{0.68\linewidth}
This definition comes from an analogy between quasi-median graphs and buildings that will be explored by the second-named author in a future work. The idea to keep in mind is that, given a quasi-median graph $X$, maximal convex bipartite subgraphs correspond to appartments and cliques correspond panels. Then, our invasive subgraphs correspond to subbuildings. The figure on the left illustrates an invasive subgraph in a block graph.  
\end{minipage}

\medskip \noindent
We will be especially interested in invasive subgraphs that are regular in the following sense:

\begin{definition}
Let $X$ be a quasi-median graph. Given a subgraph $Y \leq X$ and a hyperplane $J$, the \emph{thickness of $J$ in $Y$} is the number of sectors delimited by $J$ that intersect $Y$. Given a colouring $\kappa : \mathrm{Hyp}(X) \to S$, $Y$ is \emph{$\kappa$-regular} if two hyperplanes crossing $Y$ always have the same thickness in $Y$ whenever they have the same $\kappa$-colour. 
\end{definition}

\noindent
The rest of the section is dedicated to the proof of the following description of invasive subgraphs in quasi-median graphs of graph products:

\begin{prop}\label{prop:InvasiveSubIso}
Let $(\Gamma,\nu)$ be a weighted graph. Endow $\mathrm{QM}(\Gamma,\nu)$ with its canonical colouring $\kappa : \mathrm{Hyp}(\mathrm{QM}(\Gamma,\nu)) \to V(\Gamma)$. Every $\kappa$-regular invasive subgraph $Y$ of $\mathrm{QM}(\Gamma, \nu)$ is isomorphic to  
$$\mathrm{QM}\left( \Gamma,  u \mapsto \text{thickness of the hyperplanes crossing $Y$ coloured } u \right).$$
\end{prop}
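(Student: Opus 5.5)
We would prove Proposition~\ref{prop:InvasiveSubIso} by building an explicit labelled isomorphism. Write $\mu(u)$ for the common thickness in $Y$ of the hyperplanes coloured $u$ that cross $Y$; by $\kappa$-regularity this is well defined. We fix a base vertex $o \in V(Y)$; up to left translation we may take $o=1$. The target $\mathrm{QM}(\Gamma,\mu)$ is the Cayley graph of the graph product $\Gamma\{\mathbb{Z}_{\mu(u)}\}$, and its cliques are cosets of the factors (Lemma~\ref{lem:GPcliqueQM}).

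The first step is local. Every clique $C$ of $\mathrm{QM}(\Gamma,\nu)$ meeting $Y$ lies in a unique hyperplane $J$. Since $Y$ is convex, hence gated, and cliques are gated, the vertices of $Y\cap C$ correspond bijectively to the sectors of $J$ that meet $Y$: two vertices of $C$ lie in distinct sectors of $J$, and conversely any sector meeting $Y$ contains a point of $Y$ whose projection to $C$ lies in $Y$ by convexity. Hence $|Y\cap C|=\mu(\kappa(J))\ge 2$, using invasiveness for the lower bound. We also check that every hyperplane of $\mathrm{QM}(\Gamma,\nu)$ crossing $Y$ meets $Y$ in a clique, and that gate projections between cliques of the same hyperplane restrict to bijections between their $Y$-parts. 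For the latter, $Y\cap N(J)$ is convex and, by Theorem~\ref{thm:HypQM}, decomposes as a product of a clique-part and a fibre-part.

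The second step is to choose labels coherently. For each hyperplane $J$ crossing $Y$ and coloured $u$, we fix a bijection between the sectors of $J$ meeting $Y$ and $\mathbb{Z}_{\mu(u)}$, sending the sector containing $o$ to $0$. This induces an edge-labelling of $Y$: an edge $xy$ of $Y$ dual to $J$ gets the label $(\text{label of the sector of } y) - (\text{label of the sector of } x) \in \mathbb{Z}_{\mu(u)}\setminus\{0\}$. We then define $\Phi: V(Y)\to \Gamma\{\mathbb{Z}_{\mu(u)}\}$ by sending $y$ to the product of the labels read along any geodesic from $o$ to $y$ inside $Y$. Such geodesics exist since $Y$ is convex.

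The third step is to show $\Phi$ is well defined. By Lemma~\ref{lem:ByPass}, two geodesics from $o$ to $y$ differ by bypassing squares, and these squares lie in $Y$ by convexity. The two edges of a square dual to a fixed hyperplane separate the same pair of sectors, so they carry the same label. Moreover, square hyperplanes cross, so their colours are adjacent in $\Gamma$ and the corresponding syllables commute. Hence $\Phi$ is well defined.

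The fourth step shows $\Phi$ is a graph isomorphism. The word read along a geodesic of $Y$ is graphically reduced, because it crosses each hyperplane at most once and the correspondence matches the hyperplane/geodesic criterion of Lemma~\ref{lem:GeodInQMGP}. Injectivity then follows from Lemma~\ref{lem:GPWP}: the normal form determines the ordered set of hyperplanes crossed together with their sector labels, and hence $y$. For surjectivity and edge preservation, we argue by induction on the length of a reduced word $s_1\cdots s_n$ in $\Gamma\{\mathbb{Z}_{\mu(u)}\}$. At the vertex $y=\Phi^{-1}(s_1\cdots s_{n-1})$, consider the clique $yG_{u}$ with $u$ the colour of $s_n$. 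It meets $Y$ in exactly $\mu(u)$ vertices, one for each label, by the first step. The main obstacle is showing that moving in this clique to the vertex labelled $s_n$ is still geodesic from $o$, i.e.\ that the hyperplane of this clique was not already crossed. If it had been crossed, reducedness of the word would fail via the triangle condition and the product structure of carriers, since the hyperplane would be adjacent to the last occurrence of a $u$-syllable through commuting syllables. The delicate point is that labels must be compatible across different cliques of the same hyperplane, which is why labels are attached to sectors rather than to cliques. Adjacency is preserved because both graphs have edges exactly within cliques, and cliques correspond by the first step.
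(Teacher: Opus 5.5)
Your proposal is correct and follows essentially the paper's route: sectors of each hyperplane crossing $Y$ are labelled bijectively by a group of the right cardinality (normalised at $o$), and $\Phi$ reads these labels along geodesics from $o$. Well-definedness then comes from Lemma~\ref{lem:ByPass} and commuting syllables of transverse hyperplanes, reducedness from Lemma~\ref{lem:GeodInQMGP}, injectivity from Lemma~\ref{lem:GPWP}, and surjectivity from induction on word length using invasiveness and convexity. You even make explicit, via the product structure of carriers, why the hyperplane of $yG_u$ was not already crossed, which the paper passes over as ``just by construction''.

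The only slip is the aside ``$Y$ is convex, hence gated''. This is false here: if $Y\cap C\neq C$ for a clique $C$ meeting $Y$, then a vertex of $C\setminus Y$ has two neighbours in $Y$ and no gate. However, your bijection between $Y\cap C$ and the sectors meeting $Y$ only uses convexity of $Y$ and gatedness of $C$, so nothing depends on it.
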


\noindent
Here, a \emph{weighted graph} $(\Gamma, \nu)$ refers to a graph $\Gamma$ endowed with a \emph{weight} $\nu$ that associates to every vertex of $\Gamma$ a cardinal. Then, $\mathrm{QM}(\Gamma,\nu)$ refers to the quasi-median graph of the graph product $\Gamma \mathcal{G}$ of $\mathcal{G}= \{ G_u \mid u \in V(\Gamma)\}$ where each $G_u$ has cardinality $\nu(u)$ (say a cyclic group, but the choice actually does not matter since the quasi-median graph, up to graph-isomrophism, is not sensible to the algebraic structures of the factors, only to their cardinalities). 

\begin{proof}[Proof of Proposition~\ref{prop:InvasiveSubIso}.]
For every vertex $u \in V(\Gamma)$, we denote by $\tau(u)$ the thickness of the hyperplanes crossing $Y$ coloured $u$. We fix an arbitrary group $G_u$ (resp.\ $H_u$) of cardinality $\tau(u)$ (resp.\ $\nu(u)$) for every $u \in V(\Gamma)$, and we set $\mathcal{G}:= \{G_u \mid u \in V(\Gamma)\}$ (resp.\ $\mathcal{H}:= \{H_u \mid u \in V(\Gamma) \}$). For convenience, we identify $\mathrm{QM}(\Gamma, \nu)$ with $\mathrm{QM}(\Gamma, \mathcal{G})$. Our goal is to prove that $Y$ is isomorphic to $\mathrm{QM}(\Gamma, \mathcal{H})$.

\medskip \noindent
We fix a basepoint $o \in V(Y)$; and, for every hyperplane $J$ crossing $Y$, we fix a bijection $\sigma_J$ from the sectors delimited by $J$ intersecting $Y$ to $H_{\kappa(J)}$, which we can choose so that $o \in \sigma_J^{-1}(1)$. For every geodesic $\gamma$ with vertices $x_0=o, x_1, \ldots, x_n \in V(Y)$, we define its label as
$$\varphi(\gamma):= \prod\limits_{i=0}^{n-1} \sigma_{\text{hyperplane containing } \{x_i,x_{i+1}\}} ( \text{sector containing } x_{i+1}),$$
thought of as a word written over $\bigcup \mathcal{H}$. In other words, thinking of $\sigma_J(S)$ as a cost to cross a hyperplane $J$ from the sector containing $o$ to the sector $S$, $\varphi(\gamma)$ is the product of all the costs given by the hyperplanes that $\gamma$ successively crosses. A key observation is that:

\begin{claim}\label{claim:GeodSameLabel}
If $\gamma_1,\gamma_2$ are two geodesics connecting $o$ to the same vertex of $Y$, then $\varphi(\gamma_1)= \varphi(\gamma_2)$ in $\Gamma \mathcal{H}$.
\end{claim}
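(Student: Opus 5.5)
We derive the claim from Lemma~\ref{lem:ByPass}. First, the two geodesics can be taken to lie in $Y$: since $Y$ is invasive it is convex, so every geodesic between two of its vertices stays in $Y$. By Lemma~\ref{lem:ByPass}, $\gamma_2$ is obtained from $\gamma_1$ by bypassing finitely many squares. Each intermediate path is again a geodesic with the same endpoints, hence lies in $Y$, so its label is defined. It therefore suffices to show that a single square bypass leaves the label unchanged in $\Gamma\mathcal{H}$.

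So assume $\gamma$ and $\gamma'$ agree except that $\gamma$ passes through $x_{i-1},x_i,x_{i+1}$ while $\gamma'$ passes through $x_{i-1},x_i',x_{i+1}$, and that these four vertices span a square. The opposite edges $\{x_{i-1},x_i\}$ and $\{x_i',x_{i+1}\}$ belong to one hyperplane $J$. The other pair, $\{x_i,x_{i+1}\}$ and $\{x_{i-1},x_i'\}$, belong to a hyperplane $H$. Since $J$ and $H$ cross in a square, they are transverse.

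We now compare the letters contributed by each path. In $\gamma$, crossing $J$ leads into the sector of $J$ containing $x_i$, and crossing $H$ leads into the sector of $H$ containing $x_{i+1}$. In $\gamma'$, crossing $H$ leads into the sector containing $x_i'$, and crossing $J$ leads into the sector containing $x_{i+1}$.

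The square shows that $x_i$ and $x_{i+1}$ lie in the same sector of $J$, because the edge joining them lies in $H \neq J$. Likewise, $x_i'$ and $x_{i+1}$ lie in the same sector of $H$. Hence the two letters are the same in both paths, and the two subwords are $ab$ and $ba$, where $a=\sigma_J(\cdot)\in H_{\kappa(J)}$ and $b=\sigma_H(\cdot)\in H_{\kappa(H)}$.

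It remains to see that $a$ and $b$ commute in $\Gamma\mathcal{H}$. Transverse hyperplanes in $\mathrm{QM}(\Gamma,\mathcal{G})$ are labelled by adjacent vertices of $\Gamma$. This holds because the square is labelled by a word $st$ with $st=ts$ and both syllables graphically reduced; by Lemma~\ref{lem:GPWP}, $s$ and $t$ lie in distinct adjacent factors. Therefore $\kappa(J)$ and $\kappa(H)$ are adjacent, $[a,b]=1$ in $\Gamma\mathcal{H}$, and $\varphi(\gamma)=\varphi(\gamma')$. Induction on the number of bypasses concludes.

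The only delicate step is checking that the sector crossed into is the same for both orderings. This uses only that the edges of a square not belonging to $J$ do not cross $J$, so it should be routine. The adjacency of colours of transverse hyperplanes is standard; it can be seen from the square's label, as above, or quoted from \cite{QM}.
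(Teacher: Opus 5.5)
Your proof is correct and follows essentially the same route as the paper. Both reduce via Lemma~\ref{lem:ByPass} to a single square bypass, note that the two syllables are unchanged because each edge of the square not in a given hyperplane does not cross it, and conclude that the syllables commute because the square's edges are labelled by adjacent vertex-groups. You also make explicit two points the paper leaves implicit: convexity of $Y$ keeps the intermediate geodesics (and hence their labels) in $Y$, and the adjacency of the square's labels follows from Lemma~\ref{lem:GPWP}.
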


\noindent
According to Lemma~\ref{lem:ByPass}, it suffices to assume that $\gamma_2$ can be obtained from $\gamma_1$ by bypassing a square. In other words, 
$$\gamma_1 : x_0=o, \ x_1, \ldots, x_{i-1}, \ x_i, \ x_{i+1}, \ldots, x_n$$
where $x_{i-1},x_i,x_{i+1}$ span a square; and, if $x_i'$ denotes the fourth vertex of this square, then
$$\gamma_2 : x_0=o, \ x_1, \ldots, x_{i-1}, \ x_i', \ x_{i+1}, \ldots, x_n.$$
For every $0 \leq k \leq n-1$, let $J_k$ denote the hyperplane containing the edge $\{x_k,x_{k+1}\}$. Since the hyperplane $J_{i-1}$ (resp.\ $J_i$) contains both $\{x_{i-1},x_i\}$ and $\{x_i',x_{i+1}\}$ (resp.\ $\{x_i,x_{i+1}\}$ and $\{x_{i-1},x_i'\}$), we have 
$$\varphi(\gamma_1)= \sigma_{J_0}(x_1) \cdots \sigma_{J_{i-2}}(x_{i-1}) \sigma_{J_{i-1}}(x_i) \sigma_{J_i}(x_{i+1}) \sigma_{J_{i+1}}(x_{i+2}) \cdots \sigma_{J_{n-1}}(x_n)$$
and 
$$\varphi(\gamma_2)= \sigma_{J_0}(x_1) \cdots \sigma_{J_{i-2}}(x_{i-1}) \sigma_{J_{i}}(x_i') \sigma_{J_{i-1}}(x_{i+1}) \sigma_{J_{i+1}}(x_{i+2}) \cdots \sigma_{J_{n-1}}(x_n),$$
where $\sigma_{J_{i-1}}(x_i)= \sigma_{J_{i-1}}(x_{i+1})$ (resp.\ $\sigma_{J_i}(x_i') = \sigma_{J_i}(x_{i+1})$) because $x_i$ and $x_{i+1}$ (resp.\ $x_i'$ and $x_{i+1}$) belong to the same sector delimited by $J_{i-1}$ (resp.\ $J_i$). In other words, $\varphi(\gamma_2)$ is obtained from $\varphi(\gamma_1)$ by switching the two syllables $\sigma_{J_{i-1}}(x_i)$ and $\sigma_{J_i}(x_{i+1})$. But these two syllables commute in $\Gamma \mathcal{H}$. Indeed, since the edges $\{x_i,x_{i-1}\}$ and $\{x_i,x_{i+1}\}$ span a square in $\mathrm{QM}(\Gamma, \mathcal{G})$, our two edges are labelled by generators that belong to two adjacent vertex-groups in $\mathcal{G}$, which implies that $\sigma_{J_{i-1}}(x_i)$ and $\sigma_{J_i}(x_{i+1})$ belong to two adjacent vertex-groups in $\mathcal{H}$. This concludes the proof of Claim~\ref{claim:GeodSameLabel}. 

\medskip \noindent
Claim~\ref{claim:GeodSameLabel} allows us to define
$$\Phi: \left\{ \begin{array}{ccc} V(Y) & \to & V(\mathrm{QM}(\Gamma, \mathcal{H})) \\ y & \mapsto & \eta(\text{geodesic from $o$ to $y$}) \end{array} \right..$$
Our goal now is to verify that $\Xi$ induces a graph-isomorphism. We start by proving two preliminary observations.

\begin{claim}\label{claim:GeodGraphRed}
For every geodesic $\gamma$ connecting $o$ to some vertex of $Y$, $\varphi(\gamma)$ is graphically reduced.
\end{claim}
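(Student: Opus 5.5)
We must show: for every geodesic $\gamma$ in $Y$ from $o$, the word $\varphi(\gamma)$ is graphically reduced in $\Gamma\mathcal{H}$. The plan is to argue by contradiction and translate any possible reduction of the word back into the geometry of $\gamma$, using that a geodesic in a quasi-median graph crosses each hyperplane at most once (Theorem~\ref{thm:HypQM}).

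First, the syllables of $\varphi(\gamma)$ are all non-trivial. The syllable attached to the edge $\{x_i,x_{i+1}\}$ is $\sigma_{J_i}$ of the sector of $J_i$ containing $x_{i+1}$. Since $\gamma$ is a geodesic from $o$, it crosses $J_i$ exactly once, so $o$ and $x_{i+1}$ lie in different sectors of $J_i$. By the normalisation $o\in\sigma_{J_i}^{-1}(1)$ and the bijectivity of $\sigma_{J_i}$, the syllable is therefore $\neq 1$.

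Next, suppose $\varphi(\gamma)$ is not graphically reduced. Using shuffles only, we can then bring two syllables $s_i,s_j$ with $i<j$, both coloured by the same vertex $u$, next to each other. Concretely, every syllable $s_k$ with $i<k<j$ has colour adjacent to $u$ (or else $s_i$ would be commuted away from $s_j$ in an alternative choice). This is the standard characterisation of reducibility in graph products, which follows from Lemma~\ref{lem:GPWP} and the definition of the moves, and it is the step I would state carefully.

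The goal is now to show that the corresponding hyperplanes $J_{i}$ and $J_{j}$ (carrying edges $\{x_i,x_{i+1}\}$ and $\{x_j,x_{j+1}\}$) coincide, which contradicts the fact that $\gamma$ crosses each hyperplane at most once. The idea is to transport the geometry along the subpath $x_i,\ldots,x_{j+1}$. Each intermediate hyperplane $J_k$ has colour adjacent to $u$, so we aim to show $J_k$ is transverse to $J_i$, and then use Lemma~\ref{lem:ByPass}-style square flips to move the edge of $J_i$ forward along $\gamma$ until it sits next to the edge of $J_j$. Once the two edges $\{x_{j-1}',x_j\}\in J_i$ and $\{x_j,x_{j+1}\}\in J_j$ are consecutive, both are labelled $u$ in $\mathrm{QM}(\Gamma,\mathcal{G})$. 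They therefore lie in the same clique $x_jG_u$ (Lemma~\ref{lem:GPcliqueQM}), hence in the same hyperplane, so $J_i=J_j$, giving the contradiction.

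The main obstacle is the transversality step. In $\mathrm{QM}(\Gamma,\mathcal{G})$, two consecutive edges along a geodesic labelled by adjacent vertices of $\Gamma$ span a square. This holds because the corresponding generators commute, and it keeps the rotated path inside $Y$ by convexity of $Y$. Iterating this square-flip, with induction on $j-i$, performs exactly the shuffles in the geometry. I would carry out this induction explicitly, checking at each step that the flipped path remains a geodesic in $Y$ and that hyperplanes are preserved edge-by-edge, as in the proof of Claim~\ref{claim:GeodSameLabel}.
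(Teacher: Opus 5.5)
Your proof is correct. Its first step is the same as in the paper: the colour of the syllable $\sigma_{J_k}(x_{k+1})$ is $\kappa(J_k)$, which is the $\Gamma$-label of the edge $\{x_k,x_{k+1}\}$. So a reducible pattern in $\varphi(\gamma)$ (two syllables of colour $u$ separated only by syllables whose colours lie in the link of $u$) is literally the same pattern in the $\bigcup\mathcal{G}$-label of $\gamma$.

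The routes then diverge. The paper stops there and invokes Lemma~\ref{lem:GeodInQMGP} (a path is a geodesic iff its label is graphically reduced) to contradict that $\gamma$ is a geodesic. You instead push the edge of $J_i$ forward by square flips until it shares the clique $x_jG_u$ with the edge of $J_j$, and then use that geodesics cross each hyperplane once (Theorem~\ref{thm:HypQM}). This is a hands-on proof of the needed direction of that lemma: more self-contained, but longer. The convexity of $Y$ that you invoke during the flips plays no role, since the contradiction $J_i=J_j$ lives entirely in $\mathrm{QM}(\Gamma,\mathcal{G})$.

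Your preliminary check that every syllable is non-trivial (via $o\in\sigma_J^{-1}(1)$ and injectivity of $\sigma_J$) handles the cancellation move, which the paper's proof passes over silently. A trivial syllable in $\varphi(\gamma)$ has no counterpart in the $\mathcal{G}$-label of $\gamma$, so Lemma~\ref{lem:GeodInQMGP} alone would not exclude it.

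The characterisation of non-reduced words that you use does not really come from Lemma~\ref{lem:GPWP}. It follows directly from the moves. On a word without trivial syllables, the first length-decreasing move is a merge of two syllables $s_i,s_j$ of colour $u$ made adjacent by shuffles alone. Every $s_k$ with $i<k<j$ must then have been swapped past $s_i$ or $s_j$, so its colour lies in the link of $u$.
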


\noindent
Let $x_0=o, \ldots, x_n$ denote the successive vertices of $\gamma$. For every $0 \leq i \leq n-1$, let $J_i$ denote the hyperplane containing $\{x_i,x_{i+1}\}$. If $\varphi(\gamma)= \sigma_{J_0}(x_1) \cdots \sigma_{J_{n-1}}(x_n)$ is not graphically reduced, then we can find two indices $0 \leq i< j \leq n-1$ such that $\sigma_{J_i}(x_{i+1})$ and $\sigma_{J_j}(x_{j+1})$ belong to the same vertex-group (in $\mathcal{H}$), which is adjacent to all the vertex-groups containing $\sigma_{J_k}(x_{k+1})$ for $i<k<j$. This implies that the edges $\{x_i,x_{i+1}\}$ and $\{x_j,x_{j+1}\}$ are labelled by generators that belong to the same vertex-group (in $\mathcal{G}$), which is adjacent to all the vertex-groups containing the generators that label $\{x_k,x_{k+1}\}$ for $i<k<j$. In other words, our geodesic $\gamma$ is labelled by a word that is not graphically reduced, contradicting the fact that $\gamma$ is a geodesic according to Lemma~\ref{lem:GeodInQMGP}. This concludes the proof of Claim~\ref{claim:GeodGraphRed}. 

\begin{claim}\label{claim:WordSurj}
For every $y \in Y$ and for every graphically reduced word $w$ representing $\Phi(y)$ in $\Gamma \mathcal{H}$, there exists a geodesic $\gamma$ connecting $o$ to $y$ such that $\varphi(\gamma)=w$. 
\end{claim}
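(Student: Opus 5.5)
We will prove Claim~\ref{claim:WordSurj} by reducing it, via Lemma~\ref{lem:GPWP}, to a single elementary move on words, and then realise that move geometrically by bypassing a square, reversing the argument of Claim~\ref{claim:GeodSameLabel}.

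Fix $y \in V(Y)$ and pick any geodesic $\gamma_0$ from $o$ to $y$. Since $Y$ is convex, $\gamma_0$ lies in $Y$. By Claim~\ref{claim:GeodGraphRed}, $\varphi(\gamma_0)$ is graphically reduced, and by definition it represents $\Phi(y)$. Let $w$ be another graphically reduced word representing $\Phi(y)$. Lemma~\ref{lem:GPWP} gives a sequence $\varphi(\gamma_0)=w_0, w_1, \ldots, w_m=w$ in which each $w_{j+1}$ is obtained from $w_j$ by swapping two consecutive syllables lying in adjacent vertex-groups. By induction on $j$, it therefore suffices to prove the following. If $\gamma$ is a geodesic from $o$ to $y$ with $\varphi(\gamma)=w_j$, and $w_{j+1}$ is obtained by swapping the syllables in positions $i$ and $i+1$, then there is a geodesic $\gamma'$ from $o$ to $y$ with $\varphi(\gamma')=w_{j+1}$.

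Write $\gamma=(x_0,\ldots,x_n)$. The edges $\{x_{i-1},x_i\}$ and $\{x_i,x_{i+1}\}$ lie in hyperplanes $J_{i-1},J_i$ whose colours $\kappa(J_{i-1}),\kappa(J_i)$ are the vertices of the corresponding syllables. These colours are adjacent in $\Gamma$, and in particular distinct. The two edges are therefore labelled in $\mathrm{QM}(\Gamma,\mathcal{G})$ by generators $a \in G_u$ and $b \in G_v$ with $\{u,v\}\in E(\Gamma)$. Setting $x_i' := x_{i-1}b$, the relation $ab=ba$ shows that $x_{i-1},x_i,x_{i+1},x_i'$ span a square.

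The main point to check is that $x_i' \in Y$. Since $Y$ is convex and $x_{i-1},x_{i+1}\in Y$ are at distance $2$ with $x_i'$ lying on a geodesic between them, convexity gives $x_i'\in Y$. The new path $\gamma'=(x_0,\ldots,x_{i-1},x_i',x_{i+1},\ldots,x_n)$ crosses the same hyperplanes as $\gamma$, in the order with $J_{i-1},J_i$ swapped, so it is again a geodesic in $Y$. Exactly as in the computation in Claim~\ref{claim:GeodSameLabel}, the sector containing $x_i'$ with respect to $J_i$ equals the sector containing $x_{i+1}$, and the sector containing $x_{i+1}$ with respect to $J_{i-1}$ equals the sector containing $x_i$. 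Hence $\varphi(\gamma')$ is obtained from $\varphi(\gamma)$ by exchanging the two syllables, that is, $\varphi(\gamma')=w_{j+1}$. This completes the induction.

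Invasiveness is not used here beyond convexity. It will instead matter later, for surjectivity of $\Phi$ and to guarantee that syllables take all values in $H_u$.
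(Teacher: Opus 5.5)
Your proof is correct and follows the paper's argument: reduce via Lemma~\ref{lem:GPWP} to a single swap of consecutive syllables lying in adjacent vertex-groups, then realise that swap geometrically by bypassing the square spanned by the two corresponding edges. You also spell out the induction and use convexity to check that the new vertex $x_i'$, and hence the new geodesic, stays in $Y$, so that $\varphi$ is defined on it; the paper leaves both details implicit.
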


\noindent
Fix a geodesic $\eta$ connecting $o$ to $y$. If $x_0=o, \ldots, x_n$ denote the successive vertices of $\eta$ and if $J_i$ denotes the hyperplane containing $\{x_i,x_{i+1}\}$ for every $0 \leq i \leq n-1$, then we know from Claim~\ref{claim:GeodGraphRed} that $\varphi(\eta) = \sigma_{J_0}(x_1) \cdots \sigma_{J_{n-1}}(x_n)$ is graphically reduced. As a consequence of Lemma~\ref{lem:GPWP}, it suffices to assume that $w$ can be obtained from $\varphi(\eta)$ by switching two consecutive syllables that belong to adjacent vertex-groups, say
$$w= \sigma_{J_0}(x_1) \cdots \sigma_{J_{i-1}}(x_i) \sigma_{J_{i+1}}(x_{i+2}) \sigma_{J_i}(x_{i+1}) \sigma_{J_{i+2}}(x_{i+3}) \cdots \sigma_{J_{n-1}}(x_n)$$
for some $0 \leq i \leq n-2$. Since $\sigma_{J_i}(x_{i+1})$ and $\sigma_{J_{i+1}}(x_{i+2})$ belong to adjacent vertex-groups (in $\mathcal{H}$), necessarily the edges $\{x_i,x_{i+1}\}$ and $\{x_{i+1},x_{i+2}\}$ are labelled by generators that belong to adjacent vertex-groups (in $\mathcal{G}$). As a consequence, $\{x_{i+1},x_i\}$ and $\{x_{i+1},x_{i+2}\}$ span a square. By passing this square yields a new geodesic $\gamma$ from $\eta$. By construction, $\varphi(\gamma)=w$, concluding the proof of Claim~\ref{claim:WordSurj}. 

\medskip \noindent
Now, we are ready to verify that $\Phi$ is injective. So let $x,y \in V(Y)$ be two vertices such that $\Phi(x)= \Phi(y)$. Fix a graphically reduced word $w$ representing this common element. According to Claim~\ref{claim:WordSurj}, we can find two geodesic $\alpha$ and $\beta$ connecting $o$ to $x$ and $y$ respectively such that $\varphi(\alpha)=w=\varphi(\beta)$. If $\alpha \neq \beta$, then we can write $\alpha= \zeta \cup \alpha'$ and $\beta = \zeta \cup \beta'$ where $\zeta$ is a common subpath of $\alpha$ and $\beta$, and where the first edges of $\alpha'$ and $\beta'$ differ. In $\varphi(\alpha)$ and $\varphi(\beta)$, we find a common prefix, namely $\varphi(\zeta)$, followed by a syllable that differ in $\varphi(\alpha)$ and $\varphi(\beta)$. Thus, we must have $\alpha = \beta$, which implies that $x=y$, as desired. 

\medskip \noindent
Next, assume that $x,y \in V(Y)$ are two adjacent vertices. If $d(o,x)<d(o,y)$, then there exists a geodesic $\gamma$ connecting $o$ to $y$ and passing through $x$. Then, because $\varphi(\gamma)$ represents $\Phi(y)$ and $\varphi(\gamma \backslash \{\text{last edge}\})$ represents $\Phi(x)$ in $\Gamma \mathcal{H}$, it is clear that $\Phi(x)$ and $\Phi(y)$ are adjacent in $\mathrm{QM}(\Gamma, \mathcal{H})$. If $d(o,x)>d(o,y)$, the argument is symmetric. If $d(o,x)=d(o,y)$, then we know from the triangle condition that there exists a geodesic $\gamma$ connecting $o$ to a common neighbour $z$ of $x$ and $y$ such that $\gamma \cup \{z,x\}$ and $\gamma \cup \{z,y\}$ are geodesics. Let $J$ denote the hyperplane containing the triangle defined by $x,y,z$. We have
$$\varphi(\gamma \cup \{z,x\}) = \varphi(\gamma) \sigma_J(x) \text{ and } \varphi(\gamma \cup \{z,y\}) = \varphi(\gamma) \sigma_J(y).$$
Since $\sigma_J(x)$ and $\sigma_J(y)$ belong to the same vertex-group in $\mathcal{H}$, we conclude that $\Phi(x)$ and $\Phi(y)$ are adjacent in $\mathrm{QM}(\Gamma, \mathcal{H})$.

\medskip \noindent
Conversely, assume that $x,y \in V(Y)$ are two vertices with $\Phi(x)$ and $\Phi(y)$ adjacent in $\mathrm{QM}(\Gamma, \mathcal{H})$. If $d(1, \Phi(x))< d(1, \Phi(y))$, let $w$ denote the graphically reduced word labelling a geodesic that connects $1$ to $\Phi(y)$ passing through $\Phi(x)$. According to Claim~\ref{claim:WordSurj}, there exists a geodesic $\gamma$ connecting $o$ to $y$ such that $\varphi(\gamma) = w$. Since $\varphi(\gamma \backslash \{\text{last edge}\}) = w \backslash \{\text{last syllable}\}$ represents $\Phi(x)$, necessarily $x$ is the penultimate vertex of $\gamma$, proving that $x$ and $y$ are adjacent. If $d(1, \Phi(x))>d(1, \Phi(y))$, the argument is symmetric. If $d(1, \Phi(x))= d(1, \Phi(y))$, then we deduce from the triangle condition satisfied by $\mathrm{QM}(\Gamma, \mathcal{H})$ and from Lemma~\ref{lem:GeodInQMGP} that there exist two graphically reduced words $wa$ and $wb$ representing $\Phi(x)$ and $\Phi(y)$ respectively, where $a$ and $b$ are two syllables that belong to the same vertex-group. According to Claim~\ref{claim:WordSurj}, there exists a geodesic $\gamma$ connecting $o$ to some vertex $z \in V(Y)$ such that $\varphi(\gamma)=w$. Arguing as before, it is clear that $z$ is common neighbour of $x$ and $y$. Moreover, because $a$ and $b$ belong to the same vertex-group in $\mathcal{H}$, the edges $\{z,x\}$ and $\{z,y\}$ must be labelled by generators that belong to the same vertex-group in $\mathcal{G}$. We conclude that $x$ and $y$ are adjacent in $\mathrm{QM}(\Gamma, \mathcal{G})$, as desired.

\medskip \noindent
Thus, we have proved that $\Phi$ induces a graph-embedding $Y \to \mathrm{QM}(\Gamma, \mathcal{H})$. So far, we have only used the fact that $Y$ is a $\kappa$-regular convex subgraph. In order to prove that $\Phi$ is surjective, which will conclude the proof of our proposition, we need to know that $Y$ is invasive. 

\medskip \noindent
Let $p \in V(\mathrm{QM}(\Gamma, \mathcal{H}))$ be a vertex. We want to find a vertex $y \in V(Y)$ such that $\Phi(y)=p$. We argue by induction over the distance from $1$ to $p$. If $d(1,p)=0$, then $\Phi(o)=1=p$. So assume that $d(1,p) \geq 1$. Thinking of $p$ as a graphically reduced word, we can decompose $p$ as $p_0q$, where $q$ denotes the last letter of $p$ and $p_0$ the corresponding prefix of $p$. We know by induction that there exists $y_0 \in V(Y)$ such that $\Phi(y_0)=p_0$. If $u \in V(\Gamma)$ is the vertex such that $q \in H_u$, let $C$ denote the clique $y_0G_u$. Notice that the hyerplane $J$ containing $C$ necessarily crosses $Y$. Indeed, since $Y$ is invasive and $y_0 \in V(Y \cap C)$, we know that $Y$ must contain at least two vertices of $C$. The vertex we are interested in is the unique vertex $y$ of $C$ that belongs to the sector $S$ satisfying $\lambda_J(S)=q$. If $y$ belongs to $V(Y)$, then it is clear that $\Phi(y)=p_0q = p$, just by construction.

\medskip \noindent
In order to justify that $y \in V(Y)$, we start by fixing a vertex $z \in V(Y \cap S)$ and we notice that there exists a geodesic connecting $y_0$ to $z$ that passes through $y$. This follows from the gatedness of sectors. Thus, we conclude from the convexity of $Y$ that $y \in V(Y)$, as desired. 
\end{proof}

\subsection{Proof of Theorem~\ref{thm:CoarseSepGP}}\label{section:ProofGP}

\noindent
Now, we turn to the proof of Theorem~\ref{thm:CoarseSepGP}. In view of Proposition~\ref{prop:InvasiveSubIso}, our goal will be to prove that Morse geodesics are contained in neighbourhoods of invasive subgraphs. We start by stating and proving a sufficient criterion that allows us to deduce that some subgraphs are contained in invasive subgraphs. 

\begin{prop}\label{prop:FindingInvasive}
Let $X$ be a quasi-median graph, $\{(C,\delta_C) \mid C \text{ clique}\}$ a coherent system of graph-metrics, and $V \subset V(X)$ a set of vertices. Given a constant $D \geq 0$, a colouring $\kappa : \mathrm{Hyp}(X) \to S$, and a weight $\nu : S \to \mathbb{N}$, assume that
\begin{itemize}
	\item for every hyperplane $J$, $\mathrm{diam}(V,\delta_J) \leq D$;
	\item $\nu$ is bounded, and every hyperplane $J$ delimits $\geq \nu(\kappa(J))$ sectors.
\end{itemize}
There exists a $\kappa$-regular invasive subgraph $Z \leq X$ containing $V$ such that
\begin{itemize}
	\item the inclusion $Z \hookrightarrow X$ induces a $2\max(D,|\nu|)$-biLipschitz embedding $Z \hookrightarrow (X,\delta)$;
	\item for every hyperplane $J$ crossing $Z$, its thickness in $Z$ is $\geq \nu(\kappa(J))$.
\end{itemize}
\end{prop}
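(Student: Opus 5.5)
Construct $Z$ by choosing, for each hyperplane, a finite set of sectors of controlled $\delta_J$-diameter, and let $Z$ be the intersection of the corresponding unions of sectors. Fix $o\in V$. For every hyperplane $J$ we choose a set $\Sigma_J$ of sectors delimited by $J$:
\begin{itemize}
\item if $J$ separates two vertices of $V$, then $\Sigma_J$ contains every sector meeting $V$, completed by additional sectors to reach exactly $\nu(\kappa(J))$ sectors if there are fewer;
\item if $J$ does not separate vertices of $V$, then $\Sigma_J$ is just the sector containing $V$.
\end{itemize}
The additional sectors are chosen by fixing a clique $C_J$ in $J$ and taking vertices of $C_J$ at $\delta_{C_J}$-distance $\le \max(D,|\nu|)$ from the projection of $V$. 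This is possible since $J$ delimits $\geq \nu(\kappa(J))$ sectors and $\delta_{C_J}$ is a graph metric, so balls grow until the clique is exhausted. To make $Z$ $\kappa$-regular, we then enlarge every $\Sigma_J$ crossing $Z$ to exactly a common cardinality $m(s)=\max(\nu(s),\text{largest number of sectors of a colour-}s\text{ hyperplane meeting }V)$, which is $\le$ something bounded by $D+1$ and $|\nu|$. Since $\mathrm{diam}(V,\delta_J)\le D$ and $\delta_{C_J}$ is a graph metric, at most $\approx$ bounded many sectors meet $V$. Set
$$Z:=\bigcap_J \bigcup_{S\in\Sigma_J} S.$$

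We then verify that $Z$ has the required properties. $Z$ contains $V$ by construction. Each $\bigcup_{S\in\Sigma_J}S$ is convex: a geodesic between two such sectors crosses $J$ once, staying in those two sectors, and sectors are gated by Theorem~\ref{thm:HypQM}. Hence $Z$ is convex as an intersection of convex sets. Invasiveness requires that, if a clique $C$ in $J$ meets $Z$, it meets $Z$ in $\geq 2$ vertices. Any hyperplane crossing $Z$ must separate two vertices of $Z$, which forces $|\Sigma_J|\ge 2$. Then for $x\in C\cap Z$ and another sector $S\in\Sigma_J$, the vertex $C\cap S$ lies in $Z$. To see this, observe that it is adjacent to $x$ via an edge of $J$, so no other hyperplane separates it from $x$, and every other constraint $\bigcup\Sigma_{J'}$ is still satisfied. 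Regularity and the thickness lower bound follow since $Z\cap S\neq\emptyset$ for all $S\in\Sigma_J$ by the same clique argument. Here one must also check that hyperplanes crossing $Z$ are exactly those with $|\Sigma_J|\ge2$, which holds by the same argument.

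Finally, we prove the biLipschitz bound. The graph metric on $Z$ is, by Theorem~\ref{thm:HypQM} and convexity, the number of hyperplanes separating two points. By Lemma~\ref{lem:DeltaJ}, $\delta$ is the sum of $\delta_J$ over the same hyperplanes. Each such term is $\ge1$ and $\le \mathrm{diam}_{\delta_{C_J}}$ of the chosen vertices of $C_J$. This is at most $2\max(D,|\nu|)$ because all chosen vertices lie within $\max(D,|\nu|)$ of a point of the projection of $V$, whose diameter is $\le D$; this step may need adjusting of the constant, e.g.\ by choosing the extra vertices in the ball around a single projection point. The main obstacle is this diameter control simultaneously with regularity: padding every hyperplane of a colour to a common thickness must keep the added sectors within bounded $\delta_J$-distance. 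I would handle it by choosing all added vertices in a $\delta_{C_J}$-ball of radius $|\nu|$ around one fixed projection point, which contains at least $\min(|\nu|+1,\#C_J)$ vertices.
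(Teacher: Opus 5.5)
Your construction has a genuine gap at the invasiveness step. Every hyperplane $J$ that does not separate two vertices of $V$ receives a single sector $\Sigma_J$, and such hyperplanes typically touch $Z$ without crossing it. If $z\in Z$ and $C\ni z$ is a clique of such a $J$, then the other vertices of $C$ lie in the other sectors of $J$, none of which is allowed. So $C\cap Z=\{z\}$, and $Z$ is not invasive. Your verification only treats cliques in hyperplanes that \emph{cross} $Z$. It relies on the inference that $J$ crosses $Z$ whenever one of its cliques meets $Z$, which is false; invasiveness is a condition on all cliques meeting $Z$.

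Two examples make this concrete. Taking $V=\{o\}$ yields $Z=\{o\}$. In the block graph of $\mathbb{Z}_3*\mathbb{Z}_3$, take $V$ to be a bi-infinite geodesic. Your $Z$ is then the \emph{thick line} formed by the triangles crossed by $V$. At the third vertex of each such triangle, the transverse triangle meets $Z$ in one vertex only. What you build is essentially a thickened convex hull of $V$ rather than a sub-building. It is convex and regular, but it is not isomorphic to $\mathrm{QM}(\Gamma,\tau)$, which is exactly what Proposition~\ref{prop:InvasiveSubIso} extracts from invasiveness downstream.

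The missing idea is to give \emph{every} hyperplane at least two sectors. Invasiveness, convexity and the equality between thickness and the number of chosen sectors then follow at once from Lemma~\ref{lem:ConstructingInvasive}. The paper does this uniformly. For each hyperplane $J$, whether it crosses $V$ or not, it fixes a sector $S_J$ meeting $V$. It then lets $\mathscr{S}_J$ consist of all sectors at $\delta_J$-distance $\le r_J:=\max(D,\nu(\kappa(J)))$ from $S_J$. Because $\delta_C$ is a graph metric, this ball has the following properties:
\begin{itemize}
\item it contains every sector meeting $V$, so $V\subset Z$;
\item it has at least $\min(\#C,r_J+1)\ge\nu(\kappa(J))$ elements, and at least two once $r_J\ge 1$;
\item it has $\delta_J$-diameter $\le 2\max(D,|\nu|)$, which gives the biLipschitz bound exactly as in your last paragraph.
\end{itemize}

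This choice also replaces your padding to a common cardinality $m(s)$, which is fragile in this generality for three reasons. The number of sectors meeting $V$ is not bounded in terms of $D$ and $|\nu|$ alone. A colour-$s$ hyperplane crossing $V$ need not have $m(s)$ sectors at all. And $m(s)$ vertices need not fit in a $\delta_{C_J}$-ball of radius $|\nu|$. With balls of a fixed radius, regularity holds whenever same-coloured cliques are isometric and vertex-transitive, as in the graph products where the proposition is applied. This homogeneity is also what the paper tacitly uses when it asserts that $\#\mathscr{S}_J$ depends only on $\kappa(J)$.
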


\noindent
We start by showing how to construct invasive subgraphs in full generality: 

\begin{lemma}\label{lem:ConstructingInvasive}
Let $X$ be a quasi-median graph. For every hyperplane $J$, fix a collection $\mathscr{S}_J$ of $\geq 2$ sectors delimited by $J$. The subgraph $Z$ of $X$ induced by
$$\bigcap\limits_{J \text{ hyperplane}} \bigcup\limits_{S \in \mathscr{S}_J} V(S)$$
is an invasive subgraph. Moreover, for every hyperplane $J$ crossing $Z$, the thickness of $J$ in $Z$ coincides with $\# \mathscr{S}_J$. 
\end{lemma}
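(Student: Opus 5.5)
Throughout, I assume $Z$ is nonempty, as the statement implicitly requires; in the application $Z$ will contain the given set $V$. The plan has three steps: convexity, then the clique condition, then the thickness count.

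\textbf{Convexity.} Each sector is gated by Theorem~\ref{thm:HypQM}, hence convex. For a fixed hyperplane $J$, I claim that $U_J:=\bigcup_{S\in\mathscr{S}_J}V(S)$ induces a convex subgraph. Take $x,y\in U_J$ and a geodesic $\gamma$ between them. If $x,y$ lie in the same sector, $\gamma$ stays in that sector by convexity. Otherwise $\gamma$ crosses $J$, and it does so exactly once by Theorem~\ref{thm:HypQM}. So $\gamma$ consists of a subpath in the sector of $x$, one edge of $J$, and a subpath in the sector of $y$. In either case every vertex of $\gamma$ lies in $U_J$. An intersection of convex subgraphs is convex, so $Z$ is convex.

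\textbf{Clique condition.} Let $C$ be a clique meeting $Z$ at some vertex $x$, and let $J$ be the hyperplane containing $C$. The vertices of $C$ lie in pairwise distinct sectors of $J$, and every sector of $J$ meets $C$. This follows from the decomposition $N(J)=C\times F$ together with the fact that sectors are gated, so the gate of a vertex of $C$ lies in the corresponding sector.

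Since $|\mathscr{S}_J|\ge 2$, choose a sector $S\in\mathscr{S}_J$ other than the one containing $x$, and let $y$ be the vertex of $C$ in $S$. Then $y\in U_J$. For any other hyperplane $H\ne J$, the vertices $x$ and $y$ are adjacent and the edge between them belongs to $J$, so $H$ does not separate them. Hence $y$ lies in the same sector of $H$ as $x$, which gives $y\in U_H$. Therefore $y\in Z$ and $|V(Z\cap C)|\ge 2$.

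\textbf{Thickness.} Each sector of $J$ meeting $Z$ lies in $U_J$, so the thickness of $J$ in $Z$ is at most $\#\mathscr{S}_J$. For the reverse inequality, suppose $J$ crosses $Z$, so some clique $C$ of $J$ meets $Z$ at a vertex $x$. Applying the argument of the previous step to each $S\in\mathscr{S}_J$ produces a vertex $y_S\in C\cap S\cap Z$. Hence all $\#\mathscr{S}_J$ chosen sectors meet $Z$.

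\textbf{Main obstacle.} The main subtlety is the claim that every sector of $J$ meets every clique $C$ of $J$ in exactly one vertex. This is essentially the content of $N(J)=C\times F$: each fibre is contained in a single sector, and distinct vertices of $C$ are separated only by $J$.
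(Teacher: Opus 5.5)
Your proposal is correct and follows essentially the same route as the paper. You first establish convexity of each union $\bigcup_{S \in \mathscr{S}_J} V(S)$, and hence of $Z$; you then observe that two vertices of a clique $C$ in $J$ are separated by no hyperplane other than $J$, so the sectors meeting $Z$ are exactly those in $\mathscr{S}_J$, each contributing one vertex of $C \cap Z$. The only differences are minor: you prove convexity of each union directly from the fact that geodesics cross $J$ at most once, where the paper cites \cite[Lemma~2.105]{QM}, and you justify that $C$ meets each sector of $J$ in exactly one vertex, which the paper uses without comment.
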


\begin{proof}
According to \cite[Lemma~2.105]{QM}, each subgraph $Z_J$ induced by $\bigcup_{S \in \mathscr{S}_J} V(S)$ is convex. Therefore, the intersection $Z= \bigcap_J Z_J$ is convex. Next, let $C$ be a clique of $X$ with at least one vertex in $V(C \cap Z)$, and let $J$ denote the hyperplane containing $C$. We claim that, given a sector $S$ delimited by $J$, the following assertions are equivalent:
\begin{itemize}
	\item[(i)] $S$ belongs to $\mathscr{S}_J$;
	\item[(ii)] $S$ intersects $Z$;
	\item[(iii)] the vertex of $C \cap S$ belongs to $Z$.
\end{itemize}
Notice that, as a consequence, we have that $V(C \cap S)| = |\mathscr{S}_J| \geq 2$ and we deduce that thickness of $J$ in $Z$ equals $|\mathscr{S}_J|$. Thus, it suffices to prove our equivalences in order to conclude the proof of our lemma.

\medskip \noindent
The implications $(iii) \Rightarrow (ii) \Rightarrow(i)$ are clear. Now, assume that $(i)$ holds. By assumption, we know that there exists some vertex $x \in V(C \cap Z)$. Let $Y$ denote the vertex of $C \cap S$. Since $J$ is the only hyperplane separating $x$ and $y$, and since $x \in V(Z)$, we know that 
$$y \in \bigcap\limits_{H \text{ hyperplane }\neq J} \bigcup\limits_{R \in \mathscr{S}_H} V(R).$$
But, if $S$ belongs to $\mathscr{S}_J$, we also know that $y \in \bigcup_{R \in \mathscr{S}_J} V(R)$; hence $y \in V(Z)$. Thus, the implication $(i) \Rightarrow (iii)$ is proved. 
\end{proof}

\begin{proof}[Proof of Proposition~\ref{prop:FindingInvasive}.]
Given a hyperplane $J$, we want to define some collection $\mathscr{S}_J$ of sectors delimited by $J$. Notice that, given two pairs of vertices $(a,b)$ and $(a',b')$ such that $a,a'$ belong to the same sector delimited by $J$ and such that $b,b'$ also belong to the same sector delimited by $J$, we have $\delta_J(a,b)=\delta_J(a',b')$. Consequently, it makes sense to talk about the $\delta_J$-distance between two sectors delimited by $J$. Now, fix an arbitrary sector $S_J$ delimited by $J$ that intersects $V$, and define $\mathscr{S}_J$ as the collection of all the sectors at $\delta_J$-distance $\leq \max (D, \nu (\kappa(J)))$ from $S_J$. Notice that, by construction, $\# \mathscr{S}_J$ depends only on the $\kappa$-colour of $J$ and is bounded below by $\sigma(\kappa(J))$. 

\medskip \noindent
We deduce from Lemma~\ref{lem:ConstructingInvasive} that the subgroup $Z$ induced by
$$\bigcap\limits_{J \text{ hyperplane}} \bigcup\limits_{S \in \mathscr{S}_J} V(S)$$
is invasive, $\kappa$-regular; and that, for every hyperplane $J$ crossing $Z$, its thickness in $Z$ is $\geq \nu(\kappa(J))$. Next, given two vertices $x,y \in V(Z)$, we know that
$$d(x,y) \leq \delta(x,y) = \sum\limits_{J \text{ separating $x$ and $y$}} \delta_J(x,y),$$
where the equality is justified by Lemma~\ref{lem:DeltaJ} and where the inequality follows from the assumption that each $\delta_C$ (and consequently each $\delta_J$) is a graph-metric. But, by construction of $Z$, we know that, for every hyperplane $J$, our two vertices $x$ and $y$ belongs to two sectors in $\mathscr{S}_J$, necessarily at distance $\leq 2 \max(D, \nu(\kappa(J))) \leq 2 \max(D,|\nu|)$. Hence
$$\delta(x,y) = 2 \max(D,\nu(\kappa(J)))  \sum\limits_{J \text{ separating $x$ and $y$}} 1 = 2 \max(D, |\nu|) d(x,y), $$
showing that the inclusion map $Z \hookrightarrow X$ indeed induces a $2\max(D,|\nu|)$-biLipschitz embedding $Z \hookrightarrow (X,\delta)$.
\end{proof}

\noindent
The next step is to verify that the criterion provided by Proposition~\ref{prop:FindingInvasive} applies to Morse geodesics. This will be a consequence of the next proposition:

\begin{prop}\label{prop:MorseInterProd}
Given a Morse gauge $M$, there exists a constant $C \geq 0$ such that the following holds. Let $X$ be a graph, $\gamma$ an $M$-Morse geodesic, and $Y \leq X$ an isometrically embedded subgraph. If $Y$ splits as a product of two graphs with infinite diameters, then $\mathrm{diam}(\gamma \cap Y) \leq C$.
\end{prop}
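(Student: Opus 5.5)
We argue by contradiction: if $\gamma \cap Y$ has large diameter, we build a quasi-geodesic between two points of $\gamma$ that goes far from $\gamma$ by detouring through the second factor of $Y$.

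Take $p,q \in \gamma \cap Y$ with $d(p,q) = n$ large, and write $Y = Y_1 \times Y_2$. Since $Y$ is isometrically embedded, the metric of $Y$ is the restriction of $d$ to $V(Y)$; it is the $\ell^1$ product metric when both factors are graphs, as here. Write $p=(p_1,p_2)$ and $q=(q_1,q_2)$, so that $d(p,q)=d(p_1,q_1)+d(p_2,q_2)$. Exchanging the factors if necessary, we assume $d(p_1,q_1) \geq n/2$.

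Because $Y_2$ has infinite diameter and is connected, there is a vertex $w \in Y_2$ with $d(p_2,w) = \lfloor n/2 \rfloor$: follow a geodesic from $p_2$ to a point far enough away. Consider the path $\alpha$ in $Y$ obtained by concatenating three geodesics:
$$(p_1,p_2) \to (p_1,w) \to (q_1,w) \to (q_1,q_2).$$
Its length is at most $\lfloor n/2\rfloor + d(p_1,q_1) + \lfloor n/2\rfloor + d(p_2,q_2) \leq 2n$. Each of the three pieces is a geodesic in $X$, by isometric embedding and the product structure.

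We claim $\alpha$ is a $(3,0)$-quasigeodesic, with the constant independent of everything. For two points on the same segment this is clear. For points $a,b$ on different segments, their distance in $X$ equals the $\ell^1$ distance in $Y$, and we compare it with the length of the sub-path of $\alpha$ between them. This is a short case check in the coordinates: the $Y_2$-excursions to height $w$ cost at most a factor $3$ relative to the direct $\ell^1$ distance. The main subtlety is a pair of points on the two vertical legs, at heights near $w$. Their $Y_1$-distance is $d(p_1,q_1) \geq n/2$, whereas the $\alpha$-length between them is at most $d(p_1,q_1) + n \leq 3d(p_1,q_1)$. If the constant turns out not to be uniform, the fallback is to choose $d(p_2,w) = \lfloor \epsilon n \rfloor$ for a small fixed $\epsilon$, which makes the estimate clean, say a $(1+4\epsilon,0)$ bound.

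By the Morse property, $\alpha$ lies in the $M(3,0)$-neighbourhood of $\gamma$. The point $m=(p_1,w)$ is on $\alpha$, so there is $g \in \gamma$ with $d(m,g) \leq M(3,0)$. We now need a lower bound on $d(m,\gamma)$ that grows linearly in $n$, and this is the main obstacle.

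The natural route is to first compare $\gamma$ between $p$ and $q$ with a geodesic inside $Y$. Any geodesic of $Y$ from $p$ to $q$ is a geodesic of $X$, so it stays $M(1,0)$-close to $\gamma[p,q]$; conversely, by the standard argument, $\gamma[p,q]$ is uniformly Hausdorff-close to it. In particular, take the geodesic $\beta$ in $Y$ that moves first in $Y_2$ from $p_2$ to $q_2$, then in $Y_1$. Now compare $m$ with $\beta$. Every point of $\beta$ has $Y_2$-coordinate on a geodesic from $p_2$ to $q_2$ and $Y_1$-coordinate on a geodesic from $p_1$ to $q_1$. Points of $\beta$ whose $Y_1$-coordinate is near $p_1$ have $Y_2$-coordinate on the first segment of $\beta$. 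To keep $w$ far from all of these, we choose $w$ at distance $\lfloor n/2\rfloor$ from $p_2$ in a way that is also far from the $Y_2$-geodesic $[p_2,q_2]$. This is possible because $Y_2$ is unbounded: points at distance about $3n$ from $p_2$ are at distance at least $n$ from the ball $B(p_2,2n) \supseteq [p_2,q_2]$, and we then adjust the detour height accordingly, using the $\epsilon$-version above with bounded constants.

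Combining these bounds gives $d(m,\gamma) \geq cn - C'$, where $c, C'$ depend only on $M$. Together with $d(m,\gamma) \leq M(3,0)$, this shows $n$ is bounded by a constant $C=C(M)$, which proves the statement.
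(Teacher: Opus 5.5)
Your overall strategy is close to the second half of the paper's proof, but there is a genuine gap at exactly the step you flag as the main obstacle.

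The Morse property only gives $d(m,\gamma)\le M(3,0)$ for the \emph{whole} geodesic $\gamma$. All the lower bounds you propose go through the comparison geodesic $\beta$, so they only concern the distance from $m$ to the subsegment $\gamma[p,q]$. Nothing in your argument excludes that $m$ is close to $\gamma\setminus\gamma[p,q]$, and distance considerations alone cannot exclude it. Suppose $d(w,q_2)=d(w,p_2)+d(p_2,q_2)$, for instance $Y_2=\mathbb{Z}$ with $w$ on the side of $p_2$ opposite to $q_2$. Then $d(m,p)=h$ and $d(m,q)=h+n$, where $h=d(p_2,w)$. These are precisely the distances to $p$ and $q$ of a point of $\gamma$ lying at distance $h$ beyond $p$.

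The missing ingredient is a localisation statement: an $(A,B)$-quasigeodesic whose endpoints lie on an $M$-Morse geodesic is uniformly Hausdorff-close to the subsegment between them. This is the paper's Lemma~\ref{lem:MorseHausdorff}. It needs a real argument using the quasigeodesic property of your path: an excursion near $\gamma$ beyond $p$ must come back close to $p$ on its way to $q$, which produces a long subpath between nearby points. The same issue affects, more mildly, your assertion that a geodesic of $Y$ from $p$ to $q$ stays $M(1,0)$-close to $\gamma[p,q]$.

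The handling of $w$ is also muddled. With $\beta$ moving first in $Y_2$, one has $d(m,\beta)=d(w,[p_2,q_2])$. A $w$ at distance $\lfloor n/2\rfloor$ from $p_2$ need not be far from $[p_2,q_2]$: if $Y_2$ is a ray based at $p_2$ and $d(p_2,q_2)=\lfloor n/2\rfloor$, then necessarily $w=q_2$ and $m$ is the corner of $\beta$. Your patch, taking a point at distance about $3n$, does work, since a detour of height $3n$ still gives a uniform quasigeodesic. However, it contradicts the ``$\epsilon$-version'' you invoke.

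It is simpler to keep your original $w$ and compare instead with the geodesic $\beta'$ that first runs in $Y_1$ at height $p_2$ and then in $Y_2$ at $q_1$. Every point of $\beta'$ is at distance $\geq \min(d(p_2,w),d(p_1,q_1))\geq \lfloor n/2\rfloor$ from $m$. Applying Lemma~\ref{lem:MorseHausdorff} to your path $\alpha$ and to $\beta'$ (both with endpoints $p,q$) bounds $\lfloor n/2\rfloor$ by the sum of the two Hausdorff bounds, which closes the proof.

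This repaired route is a bit more economical than the paper's. The paper first compares the two L-shaped geodesics to show that one coordinate difference is bounded, and only then makes a detour of height equal to the other coordinate difference. Both approaches rely on the localisation lemma in an essential way.
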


\noindent
The following observation will be needed in our proof of the proposition:

\begin{lemma}\label{lem:MorseHausdorff}
Let $X$ be a graph, $\gamma$ an $M$-Morse geodesic, $\alpha \subset \gamma$ a subsegment, and $\beta$ an $(A,B)$-quasigeodesic with the same endpoints as $\alpha$. The Hausdorff distance between $\alpha$ and $\beta$ is at most
$$1+6A+3B+9AM(A,B).$$
\end{lemma}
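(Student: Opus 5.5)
Write $p,q$ for the endpoints of $\alpha$, and set $\ell = d(p,q)$. The plan is the standard two-step argument. First, a point of $\beta$ that is far from $\alpha$ is ruled out directly by the Morse property. Second, every point of $\alpha$ is shown to be close to $\beta$ by a continuity (connectedness) argument along $\beta$.

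\textbf{Step 1: $\beta$ lies near $\alpha$.} Since $\beta$ is an $(A,B)$-quasigeodesic connecting two points of $\gamma$, the Morse property puts $\beta$ in the $M(A,B)$-neighbourhood of $\gamma$. This is not yet the neighbourhood of $\alpha$. Suppose a point $b\in\beta$ is $M(A,B)$-close to a point $c\in\gamma\setminus\alpha$, say beyond $q$. Then $d(b,q)$ is controlled as follows.
\begin{itemize}
\item The quasigeodesic inequality bounds the length of the sub-quasigeodesic from $b$ to $q$: if $b=\beta(t)$ and $q=\beta(t_q)$, then $d(b,q)\ge |t-t_q|/A - B$.
\item Comparing with $c$, the point $q$ lies between $c$ and $p$ on the geodesic $\gamma$.
\item Follow $\beta$ from $p$ to $b$. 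By continuity (discreteness handled with an error $\le 1$), some point of this subpath is $M(A,B)$-close to a point of $\gamma$ near $q$. The Morse property is applied to that subpath, which is itself an $(A,B)$-quasigeodesic between two points at bounded distance from $\gamma$. To avoid fuss, I would instead argue that the "projection" of $\beta(t)$ to $\gamma$ moves coarsely continuously: consecutive points of $\beta$ project to points at distance $\le A+B+2M(A,B)$ apart.
\item Hence, as $\beta$ goes from $p$ to $b$, its projection passes within that distance of $q$.
\end{itemize}
Together with the lower bound from the quasigeodesic inequality, this yields $d(b,q)\lesssim A(\text{const})+B$. Consequently $d(b,\alpha)\le M(A,B)+O(A+B+AM(A,B))$.

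\textbf{Step 2: $\alpha$ lies near $\beta$.} Take $x\in\alpha$. Each point of $\beta$ is within $M(A,B)$ of some point of $\gamma$; choose such points $\pi(\beta(t))$. These move in jumps of size at most $A+B+2M(A,B)$. At the start, $\pi(\beta(0))$ is near $p$, and at the end it is near $q$. Since $\gamma$ is a geodesic line, $x$ separates $\gamma$, so some jump straddles $x$. This gives $d(x,\beta)\le M(A,B)+\tfrac12(A+B+2M(A,B))+1$.

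Combining both bounds and absorbing constants generously gives something of the form $1+6A+3B+9AM(A,B)$, using $A\ge1$ (which we may assume, replacing $A$ by $\max(A,1)$) so that $M\le AM$. The main obstacle is the bookkeeping in Step 1: points of $\beta$ may shadow $\gamma$ outside $\alpha$, and the quasigeodesic inequality must be used carefully to show such excursions are short. This is where the factor $A$ in front of $M(A,B)$ should arise.
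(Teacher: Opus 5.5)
Your argument is correct in substance, but it is organised differently from the paper's.

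\textbf{The paper's route.} Step~1 isolates a maximal subpath $\beta_0$ of $\beta$ staying at distance $>M(A,B)$ from $\alpha$, and then splits into two cases.
\begin{itemize}
\item If $\beta_0$ shadows both components $\gamma_\pm$ of $\gamma\setminus\alpha$, two adjacent vertices of $\beta_0$ force $\mathrm{length}(\alpha)\le 1+2M(A,B)$, so $\beta$ itself is short.
\item If $\beta_0$ shadows only $\gamma_+$, its entry vertex lies within $2+3M(A,B)$ of the corresponding endpoint $v$ of $\alpha$. The length bound on the subpath of $\beta$ from that vertex to $v$ then controls all of $\beta_0$.
\end{itemize}
This gives $R_1=2A+B+3AM(A,B)$. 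Step~2 then projects $\beta$ onto $\alpha$ (not onto $\gamma$) at cost $R_1$ and finds a straddling pair, which yields $1+3R_1$. That is exactly the stated constant.

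\textbf{Your route.} Your Step~1 is pointwise. For $b=\beta(t)$ shadowing $\gamma_+$, take the last $s<t$ whose projection is not in $\gamma_+$. Then $d(\beta(s),q)\le 1+3M(A,B)$. Since $b$ lies on the subpath of $\beta$ from $\beta(s)$ to $q$, you get $d(b,q)\le A(1+3M(A,B))+B$. Because you use the whole initial subpath from $p$ rather than the excursion alone, no case split is needed.

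Your Step~2 projects onto $\gamma$ using only the Morse property; take $\pi(p)=p$ and $\pi(q)=q$ so that $x$ is genuinely straddled. This step is therefore independent of Step~1 and gives $d(x,\beta)\le 2M(A,B)+1$. Altogether you obtain $\max\bigl(A+B+3AM(A,B),\,2M(A,B)+1\bigr)$, which is sharper than the lemma's bound. The paper's larger constant comes from feeding Step~1 into Step~2.

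\textbf{Caveat on constants.} The bound only comes out this way with the convention the paper actually uses: $\beta$ is a path in the graph whose subpaths have length at most $A$ times the distance between their endpoints plus $B$. This is what the paper uses when it writes $\mathrm{length}(\beta)\le A\,\mathrm{length}(\alpha)+B$ and takes $a,b$ adjacent.

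With the parametrised two-sided convention of your first bullet, you would pass from $d(\beta(s),q)$ to $|s-t_q|$ and back to $d(b,q)$. That costs a factor $A^2$ and gives roughly $A^2\bigl(A+2B+3M(A,B)\bigr)+B$. The $A^2M(A,B)$ term cannot be absorbed into $9AM(A,B)$, so ``absorbing constants generously'' does not suffice. You should fix the path convention and write out the computation.

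Relatedly, your first bullet, applied at $b$ itself, is circular. The quasigeodesic inequality must be applied at $\beta(s)$, together with the monotonicity $s\le t\le t_q$, which you should state explicitly.

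Finally, replacing $A$ by $\max(A,1)$ changes the Morse constant to $M(\max(A,1),B)$. It is cleaner simply to assume $A\ge 1$, as the paper implicitly does.
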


\begin{proof}
First, we want to prove that $\beta$ is contained in the $(2A+B+3AM(A,B))$-neighbourhood of $\alpha$. Let $x,y \in \beta$ be two vertices that delimit a maximal subsegment $\beta_0$ of $\beta$ all of whose vertices are at distance $>M(A,B)$ from $\alpha$. If no such subsegment exists, then $\beta$ is contained in the $M(A,B)$-neighbourhood of $\alpha$ and we are done. So we assume that such $x,y \in \beta$ exist. Every vertex of $\beta_0$ lies at distance $>M(A,B)$ from $\alpha$, but it lies at distance $\leq M(A,B)$ from $\gamma$, so at distance $\leq M(A,B)$ from a vertex that belongs to one of the two connected components $\gamma_-,\gamma_+$ of $\gamma \backslash \alpha$. 
\begin{center}
\includegraphics[width=0.5\linewidth]{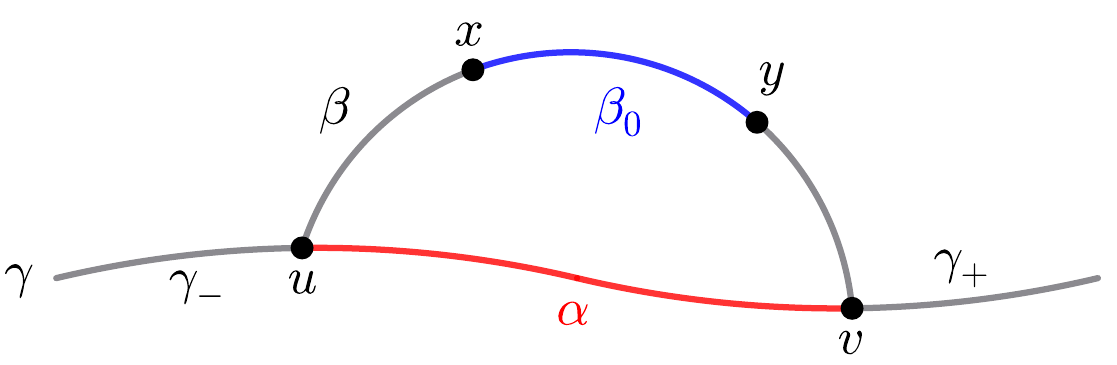}
\end{center}

 \noindent
Assume that there exist $a,b \in \beta_0$ such that $a$ (resp.\ $b$) lies at distance $\leq M(A,B)$ from some vertex $p \in \gamma_-$ (resp.\ $q \in \gamma_+$). Without loss of generality, we can assume that $a$ and $b$ are adjacent. We have
$$\mathrm{length}(\alpha) \leq d(p,q) \leq d(p,a) + d(a,b)+d(b,q) \leq 1+ 2M(A,B),$$
hence
$$\mathrm{length}(\beta) \leq A \cdot \mathrm{length}(\alpha)+B \leq A(1+2M(A,B))+B.$$
Therefore, $\beta$ is contained in the $(A+B+2AM(A,B))$-neighbourhood of $\alpha$.

\medskip \noindent
Otherwise, assume that all the vertices of $\beta_0$ lie at distances $\leq M(A,B)$ from $\gamma_+$. (The case where they all lie at distances $\leq M(A,B)$ from $\gamma_-$ is symmetric.) By maximality of $\beta_0$, necessarily $x$ has a neighbour in $\beta$ that lies at distance $\leq M(A,B)$ from some vertex $p \in \alpha$. Of course, $d(x,p) \leq 1+M(A,B)$. We also know by assumption that $x$ lies at distance $\leq M(A,B)$ from some vertex $q \in \gamma_+$. We have
$$d(p,v) \leq d(p,q) \leq d(p,x) + d(x,q) \leq 1+2M(A,B),$$
hence
$$d(x,v) \leq d(x,p) + d(p,v) \leq 2+3M(A,B).$$
Notice that, given a vertex $w$ of $\beta_0$, the subsegment of $\beta$ delimited by $w$ and $v$, which is contained in the subsegment of $\beta$ delimited by $x$ and $v$, has length at most $Ad(x,v)+B$. Therefore, 
$$d(w,v) \leq A(2+3M(A,B))+B.$$
Thus, we have proved that $\beta$ is contained in the $(2A+B+3AM(A,B))$-neighbourhood of $\alpha$, as desired.

\medskip \noindent
Conversely, we need to show that $\alpha$ is contained in a controlled neighbourhood of $\beta$. Let $z$ be a vertex of $\alpha$. For every vertex $p$ of $\beta$, fix a vertex $\pi(p)$ of $\alpha$ at distance $\leq 2A+B+3AM(A,B)$. Defining an orientation on $\alpha$ such that $\gamma_-$ (resp.\ $\gamma_+$) lies at the left (resp.\ right) of $\alpha$, we fix two successive vertices $a$ and $b$ along $\beta$ such that $\pi(a)$ lies on the left of $z$ and $\pi(b)$ on the right of $z$. We have
$$d(\pi(a),\pi(b)) \leq d(\pi(a),a)+ d(a,b)+ d(b,\pi(b)) \leq 1+ 2(2A+B+3AM(A,B)),$$
hence
$$\begin{array}{lcl} d(z,a) & \leq & d(z,\pi(a))+ d(\pi(a),a) \leq d(\pi(a),\pi(b)) + d(\pi(a),a) \\ \\ & \leq & 1+3 (2A+B+3AM(A,B)). \end{array}$$
Thus, we have proved that $\alpha$ is contained in the $(1+6A+3B+9AM(A,B))$-neighbourhood of $\beta$, as desired. 
\end{proof}

\begin{proof}[Proof of Proposition~\ref{prop:MorseInterProd}.]
For all $A,B \geq 0$, we set $C(A,B):= 1+6A+3B+9AM(A,B)$. Decompose $Y$ as $Y_1 \times Y_2$ where $Y_1$ and $Y_2$ have infinite diameters, and let $p_1:=(a_1,b_1), p_2:= (a_2,b_2)$ be two vertices of $\gamma \cap Y$. 

\medskip \noindent
\begin{minipage}{0.38\linewidth}
\includegraphics[width=0.9\linewidth]{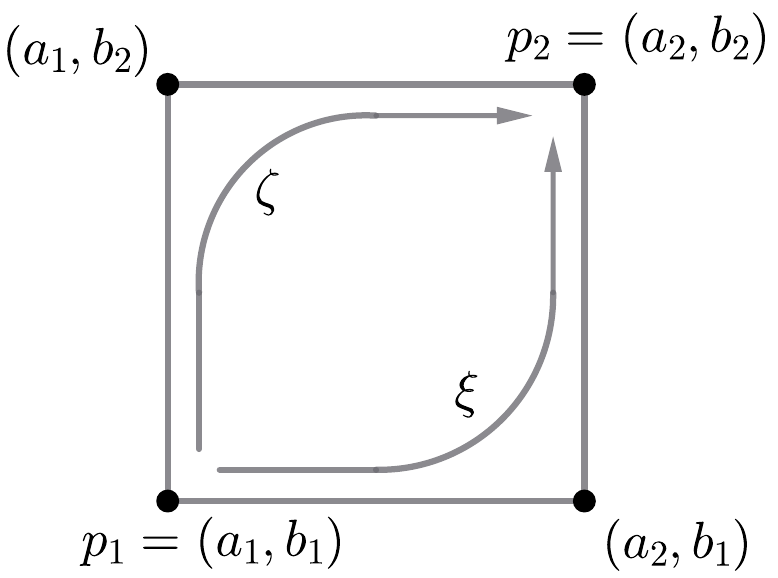}
\end{minipage}
\begin{minipage}{0.6\linewidth}
Let $\zeta$ be a geodesic in $Y$ connecting $p_1$ to $p_2$ as follows. First, we connect $p_1=(a_1,b_1)$ to $(a_1,b_2)$ by a geodesic in $\{a_1\} \times Y_2$; and, then, we connect $(a_1,b_2)$ to $p_2=(a_2,b_2)$ by a geodesic in $Y_1 \times \{b_2\}$. Similarly, we define a geodesic $\xi$ in $Y$ connecting $p_1$ to $p_2$ by first connecting $p_1=(a_1,b_1)$ to $(a_2,b_1)$ through a geodesic in $Y_1 \times \{b_1\}$ and then by connecting $(a_2,b_1)$ to $p_2=(a_2,b_2)$ through a geodesic in $\{a_2\} \times Y_2$. 
\end{minipage}

\medskip \noindent
On the one hand, it is clear that the Hausdorff distance between $\zeta$ and $\xi$ is at least $\min(d(a_1,a_2),d(b_1,b_2))$. On the other hand, it follows from Lemma~\ref{lem:MorseHausdorff} that it must be $\leq 2C(1,0)$. Therefore, either $d(a_1,a_2) \leq 2C(1,0)$ or $d(b_1,b_2) \leq 2C(1,0)$. The two cases being symmetric, we assume that $d(b_1,b_2) \leq 2C(1,0)$. 

\medskip \noindent
\begin{minipage}{0.38\linewidth}
\includegraphics[width=0.9\linewidth]{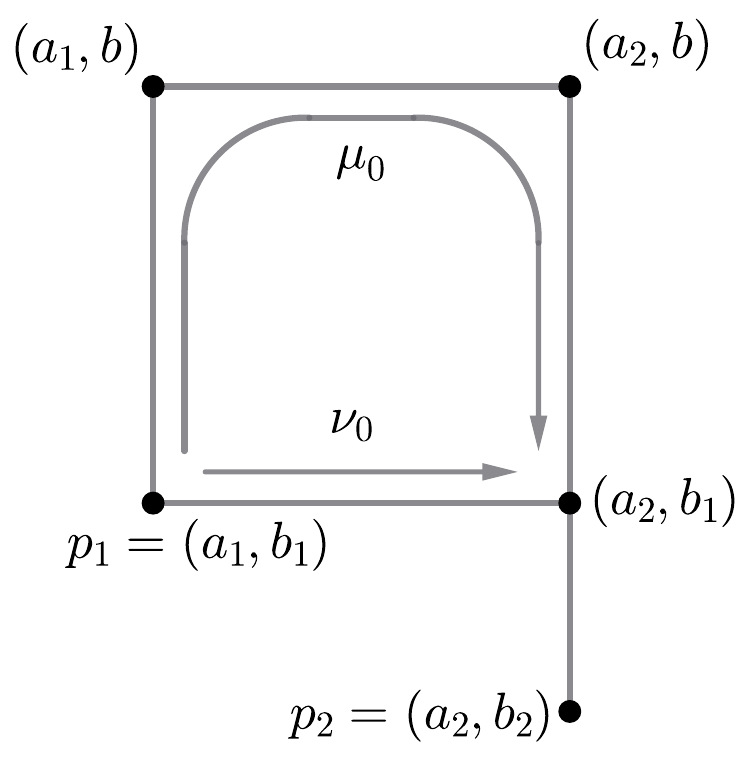}
\end{minipage}
\begin{minipage}{0.6\linewidth}
Let $\nu_0$ be a geodesic connecting $p_1=(a_1,b_1)$ to $(a_2,b_1)$ in $Y_1 \times \{b_1\}$. Because $Y_2$ has infinite diameter, it contains a vertex $b$ at distance $d_{Y_1}(a_1,a_2)$ from $b_1$. Let $\mu_0$ a $(3,0)$-quasigeodesic connecting $p_1=(a_1,b_1)$ to $(a_2,b_1)$ obtained by first connecting $p_1=(a_1,b_1)$ to $(a_1,b)$ through a geodesic in $\{a_1\} \times Y_2$, then by connecting $(a_1,b)$ to $(a_2,b)$ through a geodesic in $Y_1 \times \{b\}$, and finally by connecting $(a_2,b)$ to $(a_2,b_1)$ through a geodesic in $\{a_2\} \times Y_2$. Let $\mu$ and $\nu$ denote the path obtained from $\mu_0$ and $\nu_0$ respectively by concatenating with a geodesic between $(a_2,b_1)$ and $p_2=(a_2,b_2)$. Because $d(b_1,b_2) \leq 2C(1,0)$, $\nu$ is a $(1,2C(1,0))$-quasigeodesic and $\mu$ is a $(3,2C(1,0))$-quasigeodesic.
\end{minipage}

\medskip \noindent
On the one hand, the Hausdorff distance between $\mu$ and $\nu$ is bounded below by $d(b,b_1)=d(a_1,a_2)$. On the other hand, it follows from Lemma~\ref{lem:MorseHausdorff} that it must be at most $C(1,2C(1,0)) + C(3,2C(1,0))$. Hence $d(a_1,a_2) \leq  C(1,2C(1,0)) + C(3,2C(1,0))$. 

\medskip \noindent
By putting together our two inequalities, we conclude that
$$d(p_1,p_2) \leq d(a_1,a_2)+ d(b_1,b_2) \leq C(1,2C(1,0)) + C(3,2C(1,0)) +2C(1,0).$$
Thus, we get an upper bound on the diameter of $\gamma \cap Y$ that depends only on the Morse gauge $M$, as desired. 
\end{proof}

\noindent
In the context of quasi-median graphs endowed with local metrics, Proposition~\ref{prop:MorseInterProd} allows us to deduce that:

\begin{cor}\label{cor:DiamHypBounded}
Given a Morse gauge $M$ and a constant $D \geq 0$, there exists a constant $K \geq 0$ such that the following holds. Let $X$ be  a quasi-median graph and $\{(C,\delta_C) \mid C \text{ clique}\}$ a coherent system of graph-metrics. Assume that, for every clique $C$ of $X$:
\begin{itemize}
	\item if $(C,\delta_C)$ is bounded, then it has diameter $\leq D$;
	\item if $(C,\delta_C)$ is unbounded, then the fibres of the hyperplane containing $C$ are unbounded with respect to $\delta$. 
\end{itemize}
Then, $\mathrm{diam}(\gamma,\delta_J) \leq K$ for every hyperplane $J$ and every $M$-Morse geodesic $\gamma$ in $(X,\delta)$.
\end{cor}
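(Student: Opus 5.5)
We need to bound $\mathrm{diam}(\gamma,\delta_J)$ for an $M$-Morse geodesic $\gamma$ and a hyperplane $J$. Fix a clique $C$ in $J$ and consider $\mathrm{proj}_C$. By definition $\delta_J(x,y)=\delta_C(\mathrm{proj}_C(x),\mathrm{proj}_C(y))$, so it suffices to bound the $\delta_C$-diameter of $\mathrm{proj}_C(\gamma)$. If $(C,\delta_C)$ is bounded, there is nothing to prove: the diameter is at most $D$, and we take $K\geq D$. So assume that $(C,\delta_C)$ is unbounded. By hypothesis, the fibres of $J$ are then unbounded with respect to $\delta$.

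The idea is to apply Proposition~\ref{prop:MorseInterProd} to the carrier. By Theorem~\ref{thm:HypQM}, $N(J)$ is gated and decomposes as $C\times F$ for a fibre $F$. Lemma~\ref{lem:SystemGated} shows that $(N(J),\delta)$ is gated, hence isometrically embedded, in $(X,\delta)$. Lemma~\ref{lem:ProductSyst} shows that $(N(J),\delta)=(C,\delta)\times(F,\delta)$. Since $C$ is gated, $\delta$ restricted to $C$ is $\delta_C$ by Lemma~\ref{lem:DeltaJ}, because only $J$ separates two vertices of $C$. Both factors have infinite diameter. Proposition~\ref{prop:MorseInterProd} therefore gives a constant $C_0=C_0(M)$ with $\mathrm{diam}(\gamma\cap N(J))\leq C_0$, and the same holds for any Morse quasi-geodesic with controlled constants.

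The main obstacle is that $\gamma$ need not meet $N(J)$ much, or at all, yet its projection to $C$ could still be large. The plan is to reroute $\gamma$ through the carrier.

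Take $x,y\in\gamma$ with $\delta_J(x,y)$ large, so they lie in distinct sectors of $J$ and $J$ separates them. By gatedness of $N(J)$ with gate-projection $\pi$, there is a geodesic from $x$ to $y$ passing through $\pi(x)$ and $\pi(y)$. This holds because every hyperplane separating $x$ from $\pi(x)$ separates $x$ from $N(J)$, hence from $y$ unless it also separates $\pi(y)$ from $y$. Following the standard quasi-median argument, the concatenation $[x,\pi(x)]\cup[\pi(x),\pi(y)]\cup[\pi(y),y]$ is a $\delta$-geodesic, since it crosses each hyperplane at most once with respect to Lemma~\ref{lem:DeltaJ}. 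If this step turns out to be delicate, one can instead take a $(1,0)$-quasi-geodesic through $N(J)$.

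The Morse property then places this geodesic in the $M(1,0)$-neighbourhood of $\gamma$. In particular, its middle portion $[\pi(x),\pi(y)]\subset N(J)$ is $M(1,0)$-close to $\gamma$. Replace the relevant subsegment of $\gamma$ by this geodesic: by Lemma~\ref{lem:MorseHausdorff} it is at bounded Hausdorff distance from the subsegment of $\gamma$. A geodesic at bounded Hausdorff distance from a Morse geodesic is Morse with gauge depending only on $M$, via the standard stability of the Morse property under bounded perturbation. Applying Proposition~\ref{prop:MorseInterProd} to this Morse geodesic, whose portion inside $N(J)$ contains $[\pi(x),\pi(y)]$, gives $\delta(\pi(x),\pi(y))\leq C_0'(M)$.

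Finally, $\delta_J(x,y)=\delta_J(\pi(x),\pi(y))\leq\delta(\pi(x),\pi(y))\leq C_0'$, since projection to $C$ factors through $\pi$. Setting $K=\max(D,C_0')$ proves the corollary.
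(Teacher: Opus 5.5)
Your proof reaches the right bound, but by a detour, and the ``main obstacle'' you identify is not real. Every edge of the graph $(X,\delta)$ lies in a single clique. It changes the $J$-sector only when that clique belongs to $J$, and then both endpoints lie in $N(J)$. So if $\gamma$ misses $N(J)$, its projection to $C$ is a single vertex. In general, $\gamma$ changes $J$-sector only along $\gamma_J=\gamma\cap N(J)$, which is a subsegment by convexity (Lemma~\ref{lem:SystemGated}). Hence $\mathrm{diam}(\gamma,\delta_J)=\mathrm{diam}(\gamma_J,\delta_J)\leq\mathrm{length}(\gamma_J)\leq C_0(M)$, directly from Proposition~\ref{prop:MorseInterProd} applied to $(N(J),\delta)=(C,\delta_C)\times(F,\delta)$. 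This is exactly the paper's proof, and it needs no gate projections, no rerouting, and no stability of Morse gauges.

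Your rerouting step is also not justified as written. For a general gated subgraph, the concatenation $[x,\pi(x)]\cup[\pi(x),\pi(y)]\cup[\pi(y),y]$ is not a geodesic (take the subgraph to be a single vertex). Your sentence ``hence from $y$ unless it also separates $\pi(y)$ from $y$'' leaves open precisely the bad case: a hyperplane separating both $x$ and $y$ from $N(J)$. Your fallback to a $(1,0)$-quasi-geodesic is just a geodesic again, so it does not help.

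The correct argument runs as follows. Since $J$ separates $x$ and $y$, the subsegment of $\gamma$ between them passes through some $z\in N(J)$. Gatedness gives $\delta(x,z)=\delta(x,\pi(x))+\delta(\pi(x),z)$, and likewise for $y$. The triangle inequality then forces the concatenation to be a geodesic. With this repair, the rest of your argument is fine: Morse stability under bounded Hausdorff perturbation, $\mathrm{proj}_C=\mathrm{proj}_C\circ\pi$, and $\delta_J\leq\delta$.

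Note, however, that the repair uses the very observation from the first paragraph, and once you have it you can stop. Let $x',y'$ be the first and last vertices of $[x,y]_\gamma$ in $N(J)$. Then $\delta_J(x,y)=\delta_J(x',y')\leq\delta(x',y')\leq C_0$. Your route therefore costs an extra appeal to Morse stability and gives a worse constant, without buying anything in return.
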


\begin{proof}
Let $J$ be a hyperplane and $\gamma$ an $M$-Morse geodesic. We know from Lemma~\ref{lem:SystemGated} that $(N(J), \delta)$ is a convex subgraph of $(X,\delta)$. Therefore, the intersection the intersection between $\gamma$ and $(N(J),\delta)$ must be some (possibly infinite) subsegment $\gamma_J$. But we know from Theorem~\ref{thm:HypQM} that $N(J)$ decomposes as a product $C \times F$, where $C$ is a clique contained in $J$ and where $F$ is a fibre of $F$, so Lemma~\ref{lem:ProductSyst} implies that $(N(J) , \delta)$ decomposes as $(C, \delta_C) \times (F, \delta)$. If $(C,\delta_C)$ is bounded, then it has diameter $\leq D$, hence $\mathrm{diam}(\gamma, \delta_J) \leq D$. Otherwise, if $(C,\delta_C)$ is unbounded, then $(F,\delta)$ must be unbouded as well. Thus, we can apply Proposition~\ref{prop:MorseInterProd} and we deduce that $\gamma_0$ has length $\leq K_0$ for some constant $K_0$ that depends only on $M$. Since $\gamma$ does not cross $J$ outside $\gamma_0$, we conclude that 
$$\mathrm{diam}(\gamma ,\delta_J) = \mathrm{diam}(\gamma_J,\delta_J) = \mathrm{length}(\gamma_J) \leq K_0.$$
Thus, we have proved that $\mathrm{diam}(\gamma,\delta_J) \leq \max(D, K_0)$. 
\end{proof}

\noindent
By combining what we have proved so far, we are finally ready to prove Theorem~\ref{thm:CoarseSepGP}.

\begin{proof}[Proof of Theorem~\ref{thm:CoarseSepGP}.]
If $\Gamma$ is not connected, then $\Gamma \mathcal{G}$ and all the $\Gamma(\nu)$ are free products, so coarsely separated by bounded sets. Thus, there is nothing to prove in this case. From now one, we assume that $\Gamma$ is connected.

\medskip \noindent
We endow $\mathrm{QM}(\Gamma,\mathcal{G})$ with the canonical colouring $\kappa$ of its hyperplanes by $V(\Gamma)$ and with a system of metrics $\{(C,\delta_C) \mid C \text{ clique}\}$ such that $(\mathrm{QM}(\Gamma, \mathcal{G}),\delta)$ is (quasi-)isometric to $\Gamma \mathcal{G}$ (Proposition~\ref{prop:GPlocal}).

\medskip \noindent
Because $\Gamma$ is not a join and contains at least two vertices, we know from \cite{MR3368093} that $\Gamma \mathcal{G}$ is acylindrically hyperbolic. It follows from Theorem~\ref{thm:SeparatingAcylHyp} that, if $\Gamma \mathcal{G}$ is coarsely separated by a family $\mathcal{Z}$ of subexponential growth, then $\mathcal{Z}$ coarsely separates some Morse geodesic $\gamma$ of $\Gamma \mathcal{G}$. Let us verify that Proposition~\ref{prop:FindingInvasive} can be applied to $\gamma$. 

\medskip \noindent
Given an integer $k \geq 0$, let $\nu_k$ denote the weight function defined on $V(\Gamma)$ that is constant to $k$. Notice that, because all the groups in $\mathcal{G}$ are infinite, all the cliques in $\mathrm{QM}(\Gamma,\mathcal{G})$ are infinite, which implies that every hyperplane delimits infinitely many sectors. 

\medskip \noindent
Let $C$ be a clique of $\mathrm{QM}(\Gamma, \mathcal{G})$. According to Lemma~\ref{lem:GPcliqueQM}, it can be written as $gG_u$ for some $g \in \Gamma \mathcal{G}$ and $u \in V(\Gamma)$. Because $\Gamma$ is connected and contains at least two vertices, $u$ has at least one neighbour in $\Gamma$, say $v$. Notice that $g \langle G_u, G_v \rangle= g( G_u \oplus G_v)$ is a product of the two cliques $C$ and $D:=gG_v$, so Lemma~\ref{lem:ProductSyst} implies that $(P,\delta)$ is isometric to $(C,\delta_C) \times (D,\delta_D)$. This implies that the fibres of the hyperplane of $\mathrm{QM}(\Gamma,\mathcal{G})$ containing $C$ are unbounded with respect to $\delta$ since they contain copies of $(D,\delta_D) \simeq G_v$. Thus, Corollary~\ref{cor:DiamHypBounded} applies and shows that there exists a uniform constant $K$ such that $\mathrm{diam}(\gamma,\delta_J) \leq K$ for every hyperplane $J$ of $\mathrm{QM}(\Gamma,\mathcal{G})$.

\medskip \noindent
Thus, Proposition~\ref{prop:FindingInvasive} applies and shows that $\gamma$ is contained in a $\kappa$-regular invasive subgraph $Y$ of $\mathrm{QM}(\Gamma,\mathcal{G})$ such that $Y$ quasi-isometrically embeds into $(\mathrm{QM}(\Gamma, \mathcal{G}),\delta)$, i.e.\ in $\Gamma \mathcal{G}$, and such that every hyperplane $J$ crossing $Y$ has thickness $\geq k$. Of course, since $\mathcal{Z}$ coarsely separates $\gamma$, necessarily it coarsely separates $Y$. But we know from Proposition~\ref{prop:InvasiveSubIso} that $Y$ is (quasi-)isometric to $\Gamma(\nu)$ for some $\nu \geq k$. We conclude, as desired, that $\Gamma(\nu)$ is coarsely separated by some family of subexponential growth. 
\end{proof}

\section{Coarse separation in right-angled Artin groups}\label{section:RAAGs_thick}

\noindent
In this section, we prove Theorem~\ref{thm:BigIntro}. First, we introduce the notion of a \emph{unpinched} graph, and then, in Theorem~\ref{thm:clique_indecomp_graphs}, we prove a decomposition result for such graphs in the triangle-free case. This allows us to establish, in Theorem~\ref{thm:RAAGs_are_thick}, a thick decomposition of right-angled Artin groups whose defining graph is triangle-free and unpinched. Proposition~\ref{prop:reduction_triangle_free} then shows that, in order to prove Theorem~\ref{thm:BigIntro}, one may reduce to the triangle-free case. The thick decomposition of right-angled Artin groups further shows that it is enough to consider right-angled Artin groups defined by cycles of length at least~$4$. Finally, the proof of Theorem~\ref{thm:BigIntro} follows by combining Theorem~\ref{thm:CoarseSepGP} with the coarse-separation results for one-ended hyperbolic groups from \cite{BGT26hyp}.

\subsection{Unpinched graphs}\label{sec:unpinched_graphs}

\noindent
In this section, we introduce and study the following family of graphs:

\begin{definition}\label{def:clique_indecomposable_graph}
A graph $G$ is \emph{unpinched} if it is neither complete nor separated by a clique.
\end{definition}

\noindent
We emphasize that an unpinched graph must be connected, since otherwise it would be separated by the empty subgraph.

\medskip \noindent
A subgraph $H \leq G$ is called \emph{induced} if, for any pair of vertices $u,v \in V(H)$, the vertices $u$ and $v$ are adjacent in $H$ if and only if they are adjacent in $G$. If $W \subseteq V(G)$, we also denote by $W$ the subgraph of $G$ induced by the vertex-set $W$.

\begin{thm}\label{thm:clique_indecomp_graphs}
Let $G$ be a finite triangle-free unpinched graph. Then at least one of the following holds:
\begin{itemize}
    \item[(1)] There exists a vertex $v \in G$ such that $G \setminus \{v\}$ is unpinched.
    \item[(2)] There exists an induced subgraph $K \leq G$ such that 
    \begin{itemize} 
        \item[(i)] $K$ separates $G$; 
        \item[(ii)] there exists a partition $G \setminus K = \sqcup_{i = 1}^p G_i$, and the subgraph induced by $G_i \cup K$ is unpinched for every $i$. 
    \end{itemize}
    \item[(3)] $G$ is a cycle of length $\geq 4$.
\end{itemize}
\end{thm}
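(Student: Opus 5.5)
We argue by assuming (1) and (3) fail and producing the subgraph $K$ of (2). Since $G$ is triangle-free, its cliques are single vertices, single edges or empty. So ``unpinched'' means: $G$ has at least two non-adjacent vertices, is connected, has no cut vertex, and no edge $\{a,b\}$ with $G\setminus\{a,b\}$ disconnected. As a first step, note that $G$ has minimum degree $\geq 2$. Indeed, a vertex $w$ of degree $1$ with neighbour $x$ would make $x$ a cut vertex, unless $G$ is the single edge $\{w,x\}$, which is complete.

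Now fix any vertex $v$ and suppose $G\setminus\{v\}$ is pinched. It is not complete: a complete triangle-free graph has at most two vertices, which would force $|V(G)|\leq 3$, and every unpinched triangle-free graph on at most $3$ vertices is pinched. Hence $G\setminus\{v\}$ is separated by some clique $Q$, where $Q$ is empty, a vertex or an edge. Then $K_0:=Q\cup\{v\}$ separates $G$. It does so because the components of $(G\setminus\{v\})\setminus Q$ are exactly the components of $G\setminus K_0$.

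The natural candidate for (2) is a minimal separating induced subgraph $K$ contained in $N[v]$ or in such a $K_0$. More precisely, I would pick $v$ and a separator $K$ of the form $\{v\}\cup Q$ so as to maximise the smallest component, or minimise $|K|$. Let $G_1,\dots,G_p$ be the components of $G\setminus K$. I then need to show that each $H_i:=G_i\cup K$ is unpinched, and I would check the required properties in turn.
\begin{itemize}
\item Non-completeness of $H_i$. Since $K$ is not a clique (as $G$ is unpinched), $K$ contains two non-adjacent vertices.
\item Connectedness of $H_i$. By minimality of $K$, every vertex of $K$ has a neighbour in every $G_i$.
\item No clique $Q'$ separates $H_i$. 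Suppose one did. Either some component of $H_i\setminus Q'$ avoids $K$ entirely, in which case $Q'$ would also separate $G$, contradicting that $G$ is unpinched. Or every component meets $K$, in which case the separation splits $K$ itself. Then I would use the other components $G_j$, which connect all of $K$, to show that $Q'$ together with those pieces yields a smaller separator, contradicting the choice of $K$. If this fails, I would instead modify $H_i$ by adding to it a path through some $G_j$ between vertices of $K$, a ``virtual edge'' trick. The statement, however, asks for induced subgraphs on $G_i\cup K$, so the minimality route is preferable.
\end{itemize}

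The main obstacle is this last step, because the separators available in $H_i$ and in $G$ differ precisely on $K$. The case where every admissible $K$ fails should be forced into (3). If for every $v$ every minimal separator through $v$ gives a pinched piece, I expect every vertex to have degree exactly $2$. The idea is that a vertex of degree $\geq 3$ leaves enough room either to delete a vertex (giving (1)) or to cut well (giving (2)). A connected $2$-regular triangle-free graph is a cycle of length $\geq 4$, which is (3). I would prove this degree claim by induction on $|V(G)|$: take a vertex $v$ of degree $\geq 3$, together with an ear decomposition of the $2$-connected graph $G$. The last ear is either a single edge or a path whose interior vertices have degree $2$. Removing the last ear's interior, or the single edge's endpoint pattern, then produces either (1) or a separator $K$ consisting of the ear's endpoints plus possibly their neighbours. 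The verification that the pieces are unpinched again reduces to the triangle-free clique analysis above.
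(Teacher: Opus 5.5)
Your proposal has a genuine gap, at exactly the step you flag as the main obstacle, and a better extremal choice of $K$ cannot close it. For the separators you propose (inclusion-minimal ones, those of the form $\{v\}\cup Q$, or those inside some $N[v]$), the pieces $G_i\cup K$ are in general pinched.

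Let $\Theta$ consist of two vertices $x,y$ joined by three internally disjoint paths $x\,p_1\,p_2\,y$, $x\,q_1\,q_2\,y$, $x\,r_1\,r_2\,y$. It is triangle-free, unpinched and not a cycle, and (1) fails: deleting $x$ leaves a tree, and deleting $p_1$ leaves $p_2$ pendant at $y$. Any unpinched induced subgraph of $\Theta$ contains a cycle, and the cycles of $\Theta$ are exactly the unions of two of the three $x$--$y$ paths.

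Suppose $K$ witnesses (2) with at least two parts $G_1,G_2$. Then $G_1\cup K$ and $G_2\cup K$ each contain two of the paths, hence a common one, which must lie in $(G_1\cup K)\cap(G_2\cup K)=K$. So every valid $K$ contains a whole $x$--$y$ path. It therefore has at least $4$ vertices and lies in no $N[v]$. It also strictly contains the separator $\{x,y\}$, so it is never a minimal separator.

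Concretely, take $v=x$: then $Q=\{y\}$ is a cut vertex of $\Theta\setminus\{x\}$, and $K=\{x\}\cup Q=\{x,y\}$ is a minimum separator. Yet its pieces are the induced paths $x\,p_1\,p_2\,y$, which are separated by $p_1$. This is your case where every component meets $K$, and there is no smaller separator to contradict.

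The same graph has two vertices of degree $3$, so your expectation that failure forces $2$-regularity is false. If you subdivide the three paths to length $\ell\geq 4$, a valid $K$ must contain a full path of length $\ell$. So ``ear endpoints plus possibly their neighbours'' is also too small.

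The missing idea is that the separator has to absorb an induced path between two non-adjacent attachment vertices: your ``virtual edge'' belongs in $K$, not in $H_i$. Moreover, unpinchedness of the large piece should come from maximality of that piece, not from minimality of $K$.

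The paper takes a maximal proper induced unpinched subgraph $Q'\leq G$ containing an induced cycle. It uses the lemma that attaching to an unpinched graph an induced path with non-adjacent endpoints (or a vertex with two non-adjacent neighbours) keeps it unpinched. If $Q'$ separates $G$, then (2) holds with $K=Q'$. Otherwise, maximality forces $G\setminus Q'$ to be a single induced path $\gamma$. Its ends attach only to two non-adjacent vertices $q_1,q_2\in Q'$, and its interior attaches to at most one further vertex $p$, which is adjacent to both $q_1$ and $q_2$.

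One then takes $K$ to be an induced $q_1$--$q_2$ path $\gamma'$ in $Q'$; if $p$ exists, $\gamma'$ is chosen to avoid $p$ and $p$ is added to $K$. This gives (2), or (1) with $v=p$ when this $K$ is all of $Q'$.

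Your ear-decomposition idea is close in spirit, but an ordinary ear decomposition only preserves $2$-connectivity. Here the intermediate graphs must be unpinched, which is why ears must have non-adjacent endpoints. It is also why the separator in the final step is a long induced path rather than a few vertices near the ends of an ear.
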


\begin{figure}[htbp]
  \centering
  \includegraphics[width=0.8\textwidth]{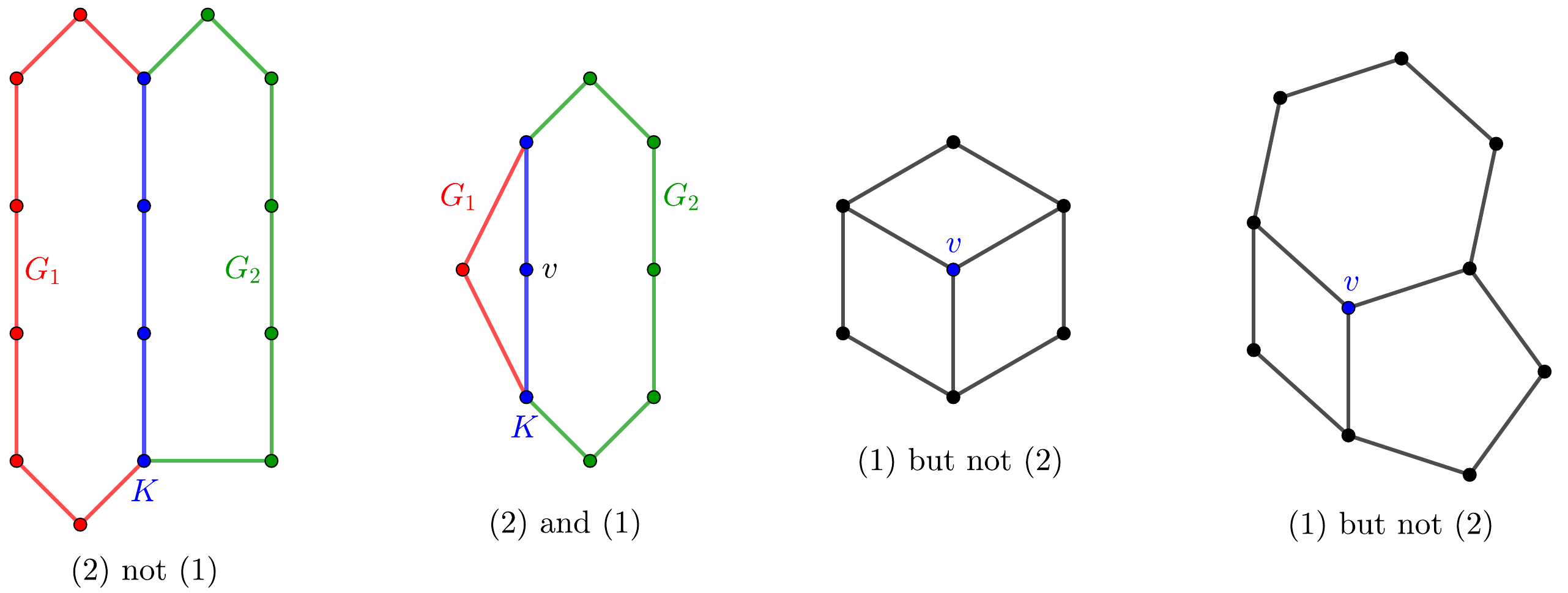}
  \caption{Examples satisfying $(1)$, $(2)$, or both.}
  \label{figure examples 1,2}
\end{figure}

\noindent
Note that in an unpinched graph $G$, the link of any vertex $v$ is not a clique, since otherwise it would be a separating clique or $G$ would be complete. Note also that the triangle-free assumption in \Cref{thm:clique_indecomp_graphs} is necessary, as shown by the counterexamples in Figure~\ref{figure Counterexamples in the presence of triangles}.
\begin{figure}[htbp]
  \centering
  \includegraphics[width=0.5\textwidth]{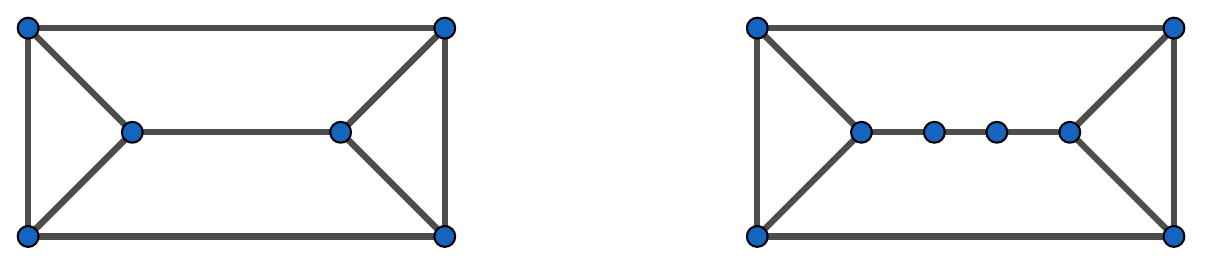}
  \caption{Counterexamples in the presence of triangles.}
  \label{figure Counterexamples in the presence of triangles}
\end{figure}

\medskip \noindent
Let us first prove some useful lemmas.

\begin{lemma}\label{lem:separating_clique_indecomp_subgraph}
    Let $G$ be an unpinched graph, and $Q \leq G$ an induced unpinched subgraph. If $Q$ separates $G$, then $G$ satisfies $(2)$ of \Cref{thm:clique_indecomp_graphs}.
\end{lemma}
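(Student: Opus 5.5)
We take $K := Q$ and let the pieces be the connected components of $G \setminus Q$, so condition (i) of (2) holds by hypothesis. For (ii) we must check, for each component $C$ of $G\setminus Q$, that the subgraph induced by $C \cup Q$ is unpinched, i.e.\ neither complete nor separated by a clique. Non-completeness is immediate: $Q$ is unpinched, hence not complete, and $Q$ is an induced subgraph of $C \cup Q$.

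The main step is to rule out a separating clique. Suppose some clique $\Delta$ of $H := C\cup Q$ separates $H$. Then $\Delta \cap Q$ is a clique of the unpinched graph $Q$, so it does not separate $Q$. Hence $Q \setminus \Delta$ lies in a single component $H_0$ of $H \setminus \Delta$. If $Q \subseteq \Delta$, then $Q$ would be complete, which is impossible; so $Q\setminus\Delta \neq \emptyset$ and $H_0$ is well defined. Now take another component $H_1$ of $H\setminus\Delta$. It is disjoint from $Q$, so $H_1 \subseteq C$.

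We claim that $\Delta$ then separates $G$, contradicting the fact that $G$ is unpinched. The key observation is that the neighbours in $G$ of vertices of $C$ lie in $C \cup Q$, because $C$ is a component of $G\setminus Q$. So any path in $G\setminus \Delta$ leaving $H_1$ must first pass through a vertex of $H\setminus \Delta$ adjacent to $H_1$. Such a vertex lies in $H_1$, since $H_1$ is a component of $H\setminus\Delta$. Therefore $H_1$ is a union of components of $G\setminus\Delta$ avoiding $Q\setminus\Delta \neq\emptyset$, and $\Delta$ (a clique of $G$, since cliques of the induced subgraph $H$ are complete subgraphs of $G$) separates $G$. This is the desired contradiction.

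The only delicate point, and the one I expect to be the main obstacle, is bookkeeping: one must check that "clique" here means a complete subgraph, not necessarily maximal, so that $\Delta\cap Q$ is still a clique of $Q$. One must also check that the empty clique is handled correctly; this case corresponds to connectedness of $H$, which follows since $Q$ is connected and every vertex of $C$ is joined within $C$ to a neighbour in $Q$. With these verifications, (2) holds with $K=Q$ and the partition given by the components of $G\setminus Q$.
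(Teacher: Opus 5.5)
Your proof is correct and follows essentially the same route as the paper. Both take $K=Q$ with the components of $G\setminus Q$ as the pieces, and both rest on two facts: the trace of a clique on $Q$ cannot separate the unpinched graph $Q$, and a component of the complement of the clique that avoids $Q$ would force that clique to separate $G$. The only difference is that the paper runs a direct case analysis on a pair $x,y$ instead of arguing by contradiction on components.
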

\begin{proof}
    Let $G_1, \dots, G_p$ be the connected components of $G \setminus Q$. We claim that, for each $i$, the graph $G_i \cup Q$ is unpinched. It is clearly connected and not complete. Let $K \leq G_i \cup Q$ be a complete subgraph, and let $x,y \in (G_i \cup Q) \setminus K$.
\begin{itemize}
    \item If $x,y \in Q$, then $Q \cap K$ is a complete subgraph in $Q$, and since $Q$ is unpinched, there exists a path in $Q$ from $x$ to $y$ avoiding $Q \cap K$ (hence avoiding $K$).

    \item If $x,y \in G_i$, then $K$ does not separate $x$ or $y$ from $Q$, otherwise it would separate them from the other connected components of $G \setminus Q$. So there exist $x',y' \in Q$ and paths $x \to x'$, $y \to y'$ avoiding $K$. As in the previous case, $Q \cap K$ is a complete subgraph in $Q$, and $Q$ is unpinched, so there is a path in $Q$ from $x'$ to $y'$ avoiding $Q \cap K$. Putting the paths together yields a path $x \to y$ in $G_i \cup Q$ avoiding $K$.

    \item If $x \in Q$ and $y \in G_i$, similarly, $K$ does not separate $y$ from $Q$, so there exists $y' \in Q$ and a path $y \to y'$ avoiding $K$, and there is a path $x \to y'$ in $Q$ avoiding $K$. \qedhere
\end{itemize}
\end{proof}

\noindent
If $\Gamma$ is a graph and $v \in \Gamma$ is a vertex, we denote by $N(v)$ the set of neighbours of $v$.

\begin{lemma}\label{lemma:adding_path_to_clique_indecomp}
    Let $\Gamma$ be a graph, and let $G \leq \Gamma$ be an induced unpinched subgraph.
    \begin{enumerate}
        \item If $\gamma \leq \Gamma$ is an induced path whose endpoints $a,b \in G$ are non-adjacent and whose interior vertices are disjoint from $G$, then the subgraph induced by $G$ and $\gamma$ is unpinched.
        \item If $v \in \Gamma \setminus G$ is such that $N(v) \cap G$ is not complete, then the subgraph induced by $G$ and $v$ is unpinched.
    \end{enumerate}
\end{lemma}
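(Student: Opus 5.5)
Both parts follow the same pattern as Lemma~\ref{lem:separating_clique_indecomp_subgraph}. Let $H$ be the new induced subgraph: $H = G \cup \gamma$ in part 1 and $H = G \cup \{v\}$ in part 2. I must check three things: $H$ is connected, $H$ is not complete, and no complete subgraph $K \leq H$ separates $H$.

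Connectivity is immediate. Non-completeness comes for free. $G$ is unpinched, hence not complete, so $G$ contains two non-adjacent vertices. These remain non-adjacent in $H$ because $G$ is induced. The substance is therefore the separation statement. Fix a complete subgraph $K \leq H$ and two vertices $x,y \in H \setminus K$; I want a path from $x$ to $y$ in $H$ avoiding $K$. The key remark is that $K \cap G$ is complete in $G$. Since $G$ is unpinched, $G \setminus (K\cap G)$ is connected, so any two vertices of $G \setminus K$ are joined by a path inside $G$ avoiding $K$. It then suffices to show that every vertex of $H \setminus K$ outside $G$ can be joined to some vertex of $G \setminus K$ by a path avoiding $K$.

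\emph{Part 1.} A vertex $x$ in the interior of $\gamma$ has two arcs of $\gamma$ running from $x$ to $a$ and to $b$. Since $\gamma$ is an induced path, its interior vertices have no neighbours in $G$ other than possibly $a$ or $b$ at the ends. Also, every clique meeting the interior of $\gamma$ consists of at most two consecutive vertices of $\gamma$. So I proceed by cases.
\begin{itemize}
\item If $K$ contains no vertex of $\gamma$, both arcs avoid $K$ and we are done.
\item If $K$ consists of consecutive vertices of $\gamma$ (at most two), then $K$ meets at most one of the two arcs from $x$. Indeed, $K$ cannot contain both $a$ and $b$, since they are non-adjacent, so it cannot touch both sides. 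Hence $x$ reaches $a$ or $b$ while avoiding $K$. That endpoint lies in $G \setminus K$, since the arc reaching it avoids $K$.
\item If $K$ contains $a$, say, then $K \cap G$ is a clique not containing $b$. Then the arc from $x$ to $b$ avoids $K$ except possibly at the vertex of $\gamma$ adjacent to $a$. That vertex lies in $K$ only if $K = \{a, a'\}$ with $a'$ the neighbour of $a$ on $\gamma$, and $a' \neq x$ because $x \notin K$. In that case the arc from $x$ to $b$ does not pass through $a'$, so it avoids $K$.
\end{itemize}
This handles interior vertices $x$. Endpoints $a,b$ lie in $G$ and need nothing extra.

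\emph{Part 2.} Take $x = v \notin K$. By hypothesis $N(v) \cap G$ is not complete, while $K \cap G$ is complete. Hence some neighbour $w \in N(v)\cap G$ satisfies $w \notin K$, giving the path $v\,w$ avoiding $K$. If instead $v \in K$, then $x,y \in G \setminus K$ and the remark above applies directly.

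I expect the main obstacle to be the careful case analysis in part 1 when $K$ straddles an endpoint, i.e.\ $K=\{a,a'\}$. There, non-adjacency of $a$ and $b$ is exactly what guarantees that one of the two directions along $\gamma$ escapes $K$. Everything else reduces to the unpinchedness of $G$ applied to the clique $K\cap G$.
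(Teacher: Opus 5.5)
Your reduction is right: connectivity, non-completeness, and the remark that $G\setminus K$ is connected because $K\cap G$ is a clique of $G$. Your part 2 is also correct. It is a direct argument, whereas the paper deduces (2) from (1) by applying (1) to the induced path $a\,v\,b$.

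The gap is in part 1. You claim two things:
\begin{itemize}
\item because $\gamma$ is induced, its interior vertices have no neighbours in $G$ other than $a$ and $b$;
\item every clique meeting the interior of $\gamma$ consists of consecutive vertices of $\gamma$.
\end{itemize}
Both claims are false. Inducedness of $\gamma$ only forbids chords between vertices of $\gamma$. Nothing in the hypothesis prevents an interior vertex of $\gamma$ from being adjacent to vertices of $G\setminus\{a,b\}$. For instance, let $G$ be the $4$-cycle $a\,p\,b\,q$, let $\gamma=a\,w_1\,w_2\,b$, and let $w_2$ also be adjacent to $p$. Then $K=\{w_2,p\}$ is a clique of $H$ and $x=w_1\notin K$. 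Yet $K$ falls under none of your three bullets: it meets $\gamma$, is not contained in $\gamma$, and avoids $a$ and $b$.

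This generality is genuinely needed. The paper applies the lemma in exactly such situations. One is the reduction of (2) to (1), where $v$ may have further neighbours in $G$. Another is adjoining a path to $Q'$ whose interior may be adjacent to a vertex $p\in Q'$.

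The repair is short, and it is what the paper does: argue with $K\cap\gamma$ instead of $K$.
\begin{itemize}
\item Since $\gamma$ is induced, $K\cap\gamma$ is a clique of a path, so it consists of at most two consecutive vertices of $\gamma$.
\item Since $x\notin K$, these vertices lie on one side of $x$. So one of the two arcs of $\gamma$ from $x$ (to $a$ or to $b$) misses $K\cap\gamma$.
\item That arc lies inside $\gamma$, so it misses $K$ itself. Its endpoint is therefore a vertex of $G\setminus K$.
\end{itemize}
Your connectivity remark then finishes the proof. This single argument replaces your three-case split, and extra edges between the interior of $\gamma$ and $G$ play no role.

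Incidentally, the justification in your second bullet (non-adjacency of $a$ and $b$) is not the real reason. What matters is that two vertices of $K\cap\gamma$ are adjacent, hence consecutive on the induced path, so they cannot lie on opposite sides of $x$.
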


\noindent
The proof is similar to that of Lemma~\ref{lem:separating_clique_indecomp_subgraph}. We include it for completeness.
\begin{proof}
    $(1)$ Let $G'$ denote the subgraph induced by $G$ and $\gamma$. It is clearly connected and not complete. Let $K \leq G'$ be a complete subgraph, and let $x,y \in G' \setminus K$. 
    \begin{itemize}
        \item If $x,y \in G$, then $G \cap K$ is a complete subgraph in $G$, and since $G$ is unpinched, there is a path in $G$ from $x$ to $y$ avoiding $G \cap K$, hence avoiding $K$.
        \item If $x,y \in \gamma \setminus G$, then each of $x$ and $y$ can be joined in $\gamma$ to at least one of $a$ or $b$ by a path avoiding $K$. We then complete with a path in $G$, as in the previous case.
        \item The case $x \in G$ and $y \in \gamma \setminus G$ is treated similarly. \qedhere
    \end{itemize}
\noindent
To prove $(2)$, let $G'$ be the subgraph induced by $G$ and $v$. Since $N(v)\cap G$ is not complete, there exist two non-adjacent neighbours $a,b \in G$ of $v$. Then $a \to v \to b$ is an induced path whose endpoints lie in $G$, so $(2)$ follows from $(1)$.
\end{proof}

\begin{proof}[Proof of \Cref{thm:clique_indecomp_graphs}]
    Let $G$ be a finite triangle-free unpinched graph. Suppose that $G$ is not a cycle. We will show that $(1)$ or $(2)$ holds.

    \noindent
    Since $G$ is unpinched, it is connected and not complete. Moreover, $G$ is not a tree, since every finite tree that is not complete is separated by a vertex. Hence $G$ contains a cycle, and therefore an induced cycle $Q \leq G$. Since $G$ is triangle-free, $Q$ has length at least $4$, so $Q$ is unpinched. Since $G$ is not a cycle, we have $Q \neq G$. As $G$ is finite, there exists a maximal proper induced unpinched subgraph $Q' \leq G$ containing $Q$.
    
    \noindent
    If $Q'$ separates $G$, then $G$ satisfies $(2)$ by Lemma~\ref{lem:separating_clique_indecomp_subgraph}. From now on, assume that $G \setminus Q'$ is connected. Let
    $$
    Z=\{z\in Q' \mid z \textup{ is adjacent to a vertex in } G\setminus Q'\}.
    $$
    Since $G$ is connected and $Q'\neq G$, the set $Z$ is non-empty. Moreover, $Z$ is not complete, otherwise it would separate $G$, contradicting the fact that $G$ is unpinched. Therefore, there exist non-adjacent vertices $q_1,q_2 \in Z$. Let
    $$
    A_1=N(q_1)\cap (G\setminus Q') \qquad \textup{and} \qquad A_2=N(q_2)\cap (G\setminus Q'),
    $$
    where $N(q_i)$ denotes the set of neighbours of $q_i$ in $G$. Let $\gamma$ be a shortest path in $G\setminus Q'$ from $A_1$ to $A_2$, with endpoints $x_1\in A_1$ and $x_2\in A_2$. Then $\gamma$ is induced in $G\setminus Q'$.
    \begin{claim}\label{claim:complement_is_path}
        $G \setminus Q' = \gamma$.
    \end{claim}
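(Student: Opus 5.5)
The plan is to prove Claim~\ref{claim:complement_is_path} by contradiction, using the maximality of $Q'$ together with Lemma~\ref{lemma:adding_path_to_clique_indecomp}. Since $q_1,q_2$ are non-adjacent vertices of $Q'$, the path $q_1, x_1, \ldots, x_2, q_2$ (where $x_1, \ldots, x_2$ are the vertices of $\gamma$) is a natural candidate for an induced path with non-adjacent endpoints in $Q'$ and interior disjoint from $Q'$.

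\textbf{Step 1: the augmented path is induced.} First check that $q_1 \gamma q_2$ is an induced path. The path $\gamma$ is induced in $G \setminus Q'$ because it is shortest. By minimality, no interior vertex of $\gamma$ other than $x_1$ lies in $A_1$, and none other than $x_2$ lies in $A_2$; otherwise a shorter path from $A_1$ to $A_2$ would exist. Also $q_1q_2$ is not an edge. One degenerate case needs care: if $x_1 = x_2$, the path is $q_1, x, q_2$ with $q_1,q_2$ non-adjacent, which is still fine. (By triangle-freeness, $x_1$ cannot be adjacent to both $q_1$ and $q_2$ in a way that creates a chord; in any case, chords to $q_i$ from interior vertices are already excluded by minimality.)

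\textbf{Step 2: apply the lemma.} By Lemma~\ref{lemma:adding_path_to_clique_indecomp}(1), the subgraph induced by $Q' \cup \gamma$ is unpinched, and it strictly contains $Q'$. By maximality of $Q'$ among proper induced unpinched subgraphs, $Q' \cup \gamma$ cannot be proper, so $Q' \cup \gamma = G$, that is, $G \setminus Q' = \gamma$.

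\textbf{Main obstacle.} The delicate point is ensuring that the extended path is genuinely induced, with no chords from interior vertices of $\gamma$ to $q_1$ or $q_2$. Minimality of $\gamma$ handles this cleanly: an interior vertex adjacent to $q_1$ would lie in $A_1$ and would yield a strictly shorter path from $A_1$ to $A_2$. Lemma~\ref{lemma:adding_path_to_clique_indecomp}(1) only requires that interior vertices avoid $Q'$ and that the path be induced, so edges from $\gamma$ to other vertices of $Q'$ are harmless.
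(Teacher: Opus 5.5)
Your proposal is correct and is essentially the paper's argument: minimality of $\gamma$ makes $q_1,x_1,\ldots,x_2,q_2$ an induced path with non-adjacent endpoints in $Q'$ and interior outside $Q'$, and then Lemma~\ref{lemma:adding_path_to_clique_indecomp}(1) together with the maximality of $Q'$ forces $Q'\cup\gamma=G$. The appeal to triangle-freeness in Step~1 is unnecessary, since minimality already rules out every chord to $q_1$ or $q_2$, including one at an endpoint such as $q_1x_2$ with $x_1\neq x_2$ (that chord would put $x_2\in A_1\cap A_2$ and give a path of length $0$).
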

    \begin{proof}[Proof of the claim]
        Let $\widetilde{\gamma}$ denote the subgraph induced by $\gamma \cup \{q_1,q_2\}$. Since $\gamma$ is induced in $G\setminus Q'$, it follows that $\widetilde{\gamma}$ is an induced path (in $G$): indeed, if for instance $q_1$ were adjacent to an interior vertex $w$ of $\gamma$, then $w \in A_1$, so the subpath of $\gamma$ from $w$ to $x_2$ would be a path in $G\setminus Q'$ from $A_1$ to $A_2$ shorter than $\gamma$, a contradiction. Similarly, $q_2$ is not adjacent to any interior vertex of $\gamma$. If $ \gamma \subsetneq G \setminus Q'$, then the subgraph induced by $Q' \cup \widetilde{\gamma}$ is unpinched by Lemma \ref{lemma:adding_path_to_clique_indecomp} and $\ne G$, which contradicts again the maximality of $Q'$. Therefore $G\setminus Q'=\gamma$.
    \end{proof}
    \begin{claim}\label{claim:interior_path}
    \begin{itemize}
        \item[(i)] $x_1$ (resp.\ $x_2$) has exactly one neighbour in $Q'$, namely $q_1$ (resp.\ $q_2$).
        \item[(ii)] The interior of $\gamma$ has at most one neighbour in $Q'$, which, if it exists, is distinct from both $q_1$ and $q_2$.
    \end{itemize}
    \end{claim}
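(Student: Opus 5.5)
The plan is to argue each item by contradiction, exploiting the maximality of $Q'$ among proper induced unpinched subgraphs of $G$. The tools are Lemma~\ref{lemma:adding_path_to_clique_indecomp} and the fact, from Claim~\ref{claim:complement_is_path}, that $G = Q' \cup \gamma$. Write $\gamma = x_1=y_0, y_1, \ldots, y_m = x_2$ (possibly $m=0$, i.e.\ $x_1=x_2$).

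For (i), suppose $x_1$ has a neighbour $q \in Q'$ with $q \neq q_1$. Since $G$ is triangle-free and $q,q_1$ are both adjacent to $x_1$, the vertices $q$ and $q_1$ are non-adjacent. So $N(x_1)\cap Q'$ is not complete. By Lemma~\ref{lemma:adding_path_to_clique_indecomp}(2), the subgraph induced by $Q' \cup \{x_1\}$ is then unpinched. It is proper unless $\gamma=\{x_1\}$, and it strictly contains $Q'$, contradicting maximality. In the degenerate case $m=0$ we have $G = Q'\cup\{x_1\}$, and $x_1$ is adjacent to both $q_1$ and $q_2$ by the choice of $A_1$, $A_2$. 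Here I would instead argue via vertex deletion: $G\setminus\{v\}$ for suitable $v$ gives alternative (1), or else one uses the proof's global hypothesis that $G$ is not already handled. The case $m=0$ seems to require care, and I expect the intended statement treats it by noting $x_1\in A_1\cap A_2$ with only neighbours $q_1,q_2$, so that (i) reads as "exactly $q_1$ and $q_2$ coincide for $x_1=x_2$". The same argument with $x_2$ gives the other half of (i).

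For (ii), suppose an interior vertex $y_i$ ($0<i<m$) has a neighbour $q\in Q'$. First, $q\ne q_1,q_2$, since $\widetilde\gamma$ is induced, as shown in the proof of Claim~\ref{claim:complement_is_path}. Now suppose two distinct vertices $q, q'$ of $Q'$ are adjacent to interior vertices $y_i, y_j$, say $i\le j$. If $i=j$, triangle-freeness makes $q,q'$ non-adjacent. In that case the induced path $q\, y_i\, q'$ is added to $Q'$ via Lemma~\ref{lemma:adding_path_to_clique_indecomp}(1), giving a proper unpinched subgraph: it is proper because $x_1 \notin$ it. This contradicts maximality.

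If $i<j$, I pick $i$ and $j$ with $j-i$ minimal, so that the subpath $y_i\ldots y_j$ together with $q$ and $q'$ forms an induced path. If $q,q'$ are non-adjacent, Lemma~\ref{lemma:adding_path_to_clique_indecomp}(1) again yields a proper unpinched subgraph strictly containing $Q'$, which is a contradiction. If $q,q'$ are adjacent, I would instead combine one of them with $q_1$ or $q_2$. Since $q_1,q_2$ are non-adjacent, at least one of the pairs $(q_1,q')$ or $(q,q_2)$ should be non-adjacent, using triangle-freeness on $q,q',q_i$. Then I would use the induced path from $q_1$ through $x_1,\ldots,y_j$ to $q'$ (or from $q$ through $y_i,\ldots,x_2$ to $q_2$), after shortcutting to make it induced. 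This path omits at least one vertex of $\gamma$, so the resulting subgraph is proper.

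The main obstacle is exactly this adjacency bookkeeping: ensuring the chosen path is induced and avoids some vertex of $\gamma$ so that properness holds. I would handle it by always choosing extremal indices, using the first neighbour along $\gamma$ and the path from $q_1$.
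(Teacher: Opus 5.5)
\textbf{There is a gap in the subcase $q\sim q'$ of (ii).} Your proof of (i) when $x_1\neq x_2$ is correct and matches the paper. So is your proof of (ii) when the two neighbours $q,q'$ lie on the same interior vertex or are non-adjacent.

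In the subcase $q\sim q'$, you assert that one of the pairs $\{q_1,q'\}$, $\{q,q_2\}$ is non-adjacent, but this does not follow from triangle-freeness. The configuration $q_1\sim q'\sim q\sim q_2$ (an induced path on four vertices) contains no triangle and is compatible with $q_1\not\sim q_2$.

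What is missing is the paper's intermediate step: \emph{every vertex $p\in Q'$ adjacent to the interior of $\gamma$ is adjacent to $q_1$ (and, symmetrically, to $q_2$)}. Suppose $p\not\sim q_1$, and let $w$ be the first vertex of $\gamma$, starting from $x_1$, that is adjacent to $p$. Then $q_1,x_1,\dots,w,p$ is an induced path with non-adjacent endpoints. Here $q_1$ sees only $x_1$ on $\gamma$ because $\widetilde{\gamma}$ is induced, and $x_1\not\sim p$ by (i). So Lemma~\ref{lemma:adding_path_to_clique_indecomp}(1) makes $Q'\cup\{x_1,\dots,w\}$ unpinched. This subgraph is proper since it misses $x_2$, contradicting maximality. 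With this fact, $q\sim q'$ is impossible because $q,q',q_1$ would form a triangle, and the rest of your argument goes through.

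The local repair of your step is to pair both vertices with the same $q_1$. Triangle-freeness of $\{q,q',q_1\}$ gives $q_1\not\sim q$ or $q_1\not\sim q'$, and then your shortcut path from $q_1$ works. Still, the stronger fact that $p$ is adjacent to both $q_1$ and $q_2$ is what the theorem's proof uses later in Case~2, so it is worth proving explicitly.

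\textbf{On the case $x_1=x_2$.} You are right that the maximality argument breaks here, since $Q'\cup\{x_1\}=G$ is not proper. The paper's proof of (i) tacitly assumes $x_1\neq x_2$. In fact (i) is false in this case. In $G=K_{2,3}$, every induced $4$-cycle $Q$ is already a maximal proper induced unpinched subgraph, so $Q'=Q$. Then $\gamma$ is the fifth vertex, which is adjacent to both $q_1$ and $q_2$.

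Your tentative reading (``$x_1$ has exactly the neighbours $q_1,q_2$'') also fails. In $G=K_{3,3}$ with $Q'\cong K_{2,3}$, the remaining vertex has three neighbours in $Q'$. Obtaining alternative (1) by deleting a vertex is a statement about the theorem, not about this claim.

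So do not try to rescue (i) in this case. State $x_1\neq x_2$ as an explicit hypothesis: your argument for (i) is complete under it, and (ii) is vacuous when $\gamma$ has no interior. The case $x_1=x_2$ then has to be treated separately in the proof of the theorem.
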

    
    \begin{proof}[Proof of the claim]
        \noindent 
        $(i)$ Suppose that $x_1$ has another neighbour $q_1' \in Q'$. Since $G$ is triangle-free, $q_1$ and $q_1'$ are not adjacent. Hence the path $q_1 \to x_1 \to q_1'$ is induced, and by Lemma~\ref{lemma:adding_path_to_clique_indecomp}, the subgraph induced by $Q' \cup \{x_1\}$ is unpinched, contradicting the maximality of $Q'$. The argument for $x_2$ is identical.
        
        \medskip\noindent
        $(ii)$ First, no interior vertex of $\gamma$ is adjacent to $q_1$ or $q_2$: otherwise, as in the proof of \Cref{claim:complement_is_path}, one obtains a shorter path from $A_1$ to $A_2$ in $G\setminus Q'$.
        
        \noindent
        Now let $w$ be an interior vertex of $\gamma$ adjacent to some $p \in Q'$. We claim that $p$ is adjacent to both $q_1$ and $q_2$. Suppose for instance that $p \nsim q_1$, and choose $w$ as close as possible to $x_1$ with this property. Then the path $q_1 \to x_1 \to \dots \to w \to p$ is induced, so by Lemma~\ref{lemma:adding_path_to_clique_indecomp}, the subgraph induced by $Q' \cup \{x_1,\dots,w\}$ is unpinched, contradicting the maximality of $Q'$. Similarly, $p$ is adjacent to $q_2$.
        
        \noindent
        Finally, suppose that two distinct interior vertices $w,w'$ of $\gamma$ are adjacent to vertices $p,p' \in Q'$. Then $p$ and $p'$ are both adjacent to $q_1$, hence are non-adjacent since $G$ is triangle-free. Choosing $w,w'$ so that the subpath between them is minimal, the path $p \to w \to \dots \to w' \to p'$ is induced. Lemma~\ref{lemma:adding_path_to_clique_indecomp} again yields a contradiction with the maximality of $Q'$. Therefore, the interior of $\gamma$ has at most one neighbour in $Q'$, and this neighbour is distinct from both $q_1$ and $q_2$.
    \end{proof}

    \noindent
    Let us treat the two cases of Claim~\ref{claim:interior_path}$(ii)$ separately.
    
    \medskip\noindent
    \textbf{Case 1}: no vertex in the interior of $\gamma$ is adjacent to $Q'$. Let $\gamma'$ be an induced path in $Q'$ from $q_1$ to $q_2$. Then $\gamma'$ separates $G$. Indeed, by Claim~\ref{claim:interior_path}, the only edges between $\gamma$ and $Q'$ are $x_1q_1$ and $x_2q_2$, and both $q_1$ and $q_2$ belong to $\gamma'$. Let
    $$
    G_1=\widetilde{\gamma}\setminus \gamma'=\gamma \qquad \textup{and} \qquad G_2=Q'\setminus \gamma',
    $$
    where $\widetilde{\gamma}$ denotes the subgraph induced by $\gamma \cup \{q_1,q_2\}$. Then $G_2\cup \gamma'=Q'$ is unpinched. Moreover, $G_1\cup \gamma'=\widetilde{\gamma}\cup \gamma'$ is an induced cycle, since there is no edge between $\gamma$ and $\gamma'$ other than $x_1q_1$ and $x_2q_2$. Its length is at least $4$, so it is also unpinched. Therefore $G$ satisfies $(2)$.
    
    \begin{figure}[htbp]
    \centering
    \includegraphics[width=0.35\textwidth]{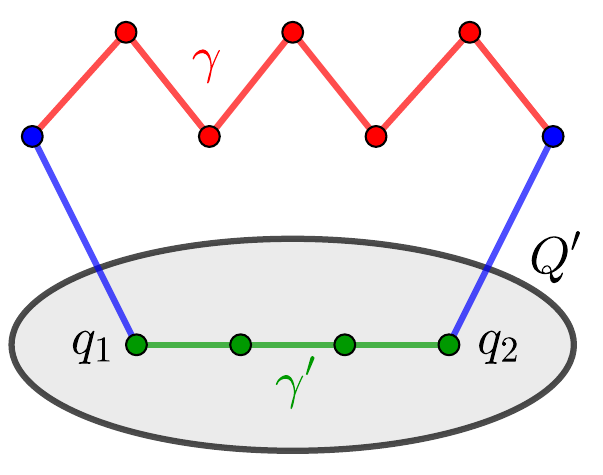}
    \caption{Case 1.}
    \label{figure case 1}
    \end{figure}

    \medskip\noindent
    \textbf{Case $2$}: The interior of $\gamma$ is adjacent to $Q'$. Let $p \in Q'$ be the unique neighbour of the interior of $\gamma$ as in Claim~\ref{claim:interior_path}. Since $Q'$ is unpinched, it is not separated by a vertex. Let $\gamma'$ be a shortest path in $Q'$ from $q_1$ to $q_2$ avoiding $p$. Then $\gamma'$ is induced. Let $K$ be the subgraph of $Q'$ induced by $\gamma' \cup \{p\}$. We distinguish two subcases.
    
    \smallskip\noindent
    - If $K = Q'$, then $G \setminus \{p\}$ is the union of $\widetilde{\gamma}$ and $\gamma'$, hence an induced cycle: indeed, $\gamma'$ avoids $p$, and by Claim~\ref{claim:interior_path}, the only edges between $\gamma$ and $Q'$ are $x_1q_1$, $x_2q_2$, and the edges from the interior of $\gamma$ to $p$. Thus, after removing $p$, the only edges between $\widetilde{\gamma}$ and $\gamma'$ are $x_1q_1$ and $x_2q_2$. Since $q_1$ and $q_2$ are non-adjacent, this cycle has length at least $4$, so $G \setminus \{p\}$ is unpinched. Hence $(1)$ holds.

    \smallskip\noindent
    - If $K \subsetneq Q'$, let $G_1=\gamma$ and $G_2=Q'\setminus K$. Then $K$ separates $G$, and
    $$
    G \setminus K = G_1 \sqcup G_2.
    $$
    Moreover, $G_2 \cup K = Q'$ is unpinched. Note that
    $$
    G_1 \cup K=\gamma \cup \gamma' \cup \{p\}.
    $$
    The subgraph induced by $\gamma$ and $\gamma'$ is a cycle, so it is unpinched by the same argument as before. Also, $p$ is adjacent to $q_1$ and $q_2$, so $N(p)\cap (\gamma \cup \gamma')$ is not a clique. Therefore, by the second item of Lemma~\ref{lemma:adding_path_to_clique_indecomp}, the subgraph $G_1 \cup K$ is unpinched. Hence $(2)$ is satisfied.
    \begin{figure}[htbp]
  \centering
  \includegraphics[width=0.4\textwidth]{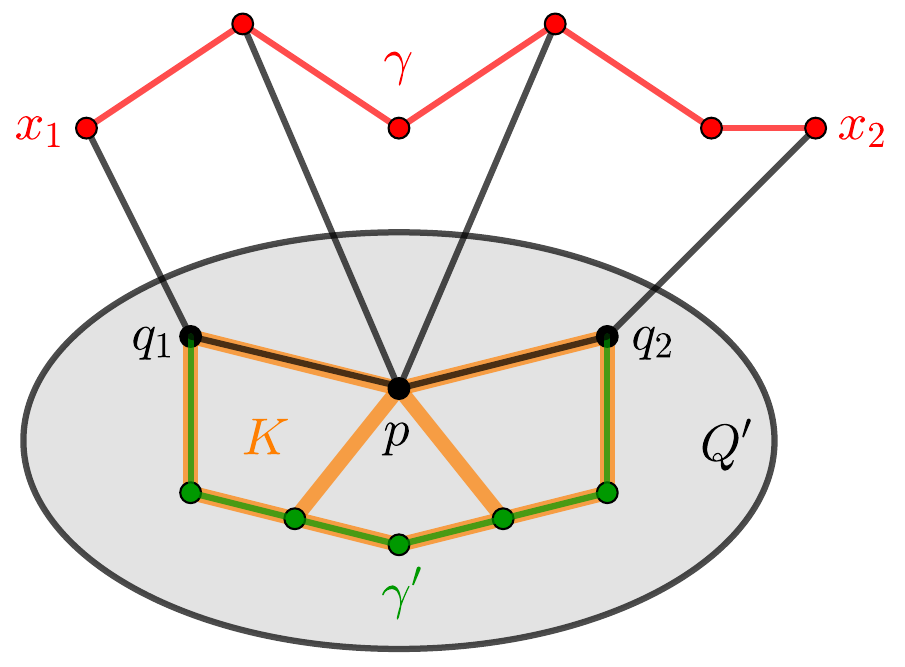}
  \caption{Case 2.}
  \label{figure case 2}
\end{figure}
\end{proof}

\subsection{Thick decomposition of right-angled Artin groups}\label{sec:thickdecomp}

\noindent
Using Theorem~\ref{thm:clique_indecomp_graphs}, we now prove the following thick decomposition result for right-angled Artin groups.

\begin{thm}\label{thm:RAAGs_are_thick}
Let $\Gamma$ be a finite triangle-free unpinched graph, and denote $r=|V(\Gamma)|$. Let $A(\Gamma)$ be the associated right-angled Artin group, equipped with the word metric with respect to the standard generating set. Let $\mathcal{A}$ be the family of $r$-neighborhoods of left cosets of the parabolic subgroups induced by cycles in $\Gamma$, and let $\mathcal{B}$ be the family of left cosets of the parabolic subgroups induced by pairs of non-adjacent vertices in $\Gamma$. Then $A(\Gamma)$ is $(\mathcal{A},\mathcal{B})$-thick.
\end{thm}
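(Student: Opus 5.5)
We will argue by induction on $r=|V(\Gamma)|$, using the trichotomy of Theorem~\ref{thm:clique_indecomp_graphs}. For an induced subgraph $\Lambda \leq \Gamma$ we write $A(\Lambda)$ for the corresponding parabolic subgroup. Since $A(\Lambda)$ is isometrically embedded (a retract, with the induced word metric agreeing with the restriction of the word metric of $A(\Gamma)$), a chain of the required type constructed inside $gA(\Lambda)$ is also a chain in $A(\Gamma)$. The statement we actually prove by induction is slightly stronger: for every triangle-free unpinched $\Lambda$, every left coset $gA(\Lambda)$ is $(\mathcal{A}_\Lambda,\mathcal{B})$-thick. Here $\mathcal{A}_\Lambda$ consists of the $|V(\Lambda)|$-neighbourhoods, taken in $A(\Gamma)$, of cosets of $A(Q)$ with $Q \leq \Lambda$ an induced cycle. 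Since neighbourhoods only grow with the radius and $|V(\Lambda)| \leq r$, the case $\Lambda=\Gamma$ gives the theorem.

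\textbf{Base case.} If $\Lambda$ is a cycle of length $\geq 4$, the coset $gA(\Lambda)$ is itself contained in an element of $\mathcal{A}$, so each pair of points is joined by a chain of length one.

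\textbf{Case (2).} Let $K$ separate $\Lambda$ with $\Lambda\setminus K=\bigsqcup G_i$ and each $\Lambda_i := G_i\cup K$ unpinched. Then $A(\Lambda)=A(\Lambda_1)*_{A(K)}\cdots *_{A(K)} A(\Lambda_p)$, an iterated amalgam, since no edges join distinct $G_i$. By Example~\ref{ex:thick_spaces}(1), $A(\Lambda)$ is thick relative to cosets of the $A(\Lambda_i)$ and cosets of $A(K)$. By induction each coset of $A(\Lambda_i)$ is $(\mathcal{A}_{\Lambda_i},\mathcal{B})$-thick. We then glue the chains as in Lemma~\ref{lem:thickness_transitivity}. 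At a junction point $p \in hA(K)$, two cycle-neighbourhoods containing $p$ come from different factors, and we must insert a link between them whose consecutive intersections contain an element of $\mathcal{B}$. For this, note that $K$ is not a clique because $\Lambda$ is unpinched, so $K$ contains non-adjacent vertices $a,b$. Moreover, every vertex of $K$ lies on an induced cycle inside each $\Lambda_i$: the graph $\Lambda_i$ is unpinched and triangle-free, hence 2-connected, and a vertex of a triangle-free 2-connected graph lies on an induced cycle.

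To make the gluing work we use the larger radius. We connect $p$, inside the coset $pA(\Lambda_i)$, to the coset $pA(\{a,b\})$ through a chain of cycle-neighbourhoods. The key point is to choose a cycle $Q_i \leq \Lambda_i$ through $a$ and $b$; such a cycle exists because $\Lambda_i$ is 2-connected. Then $pA(Q_i)$ and $pA(Q_j)$ both contain $pA(\{a,b\})\in\mathcal{B}$, which gives the required link.

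\textbf{Case (1).} Let $v$ be a vertex with $\Lambda' := \Lambda\setminus\{v\}$ unpinched. Apply Lemma~\ref{lem:thick_family_left_cosets} inside $A(\Lambda)$ with $H=A(\Lambda')$. The generators of $\Lambda'$ give $K_s=H$, while the generator $v$ gives $K_v=A(\mathrm{link}(v))$. So $A(\Lambda)$ is thick relative to the $1$-neighbourhoods of cosets of $A(\Lambda')$, with consecutive intersections containing cosets of $A(\mathrm{link}(v))$. Since $\Lambda$ is unpinched, $\mathrm{link}(v)$ is not a clique, so it contains non-adjacent $a,b$, and $A(\{a,b\}) \leq A(\mathrm{link}(v))$. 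Each coset of $A(\Lambda')$ is thick by induction. Taking $1$-neighbourhoods (Lemma~\ref{lem:thick_neighborhood}) raises the radius from $r-1$ to $r$, which is exactly why the radius in the statement is $r$. Transitivity then proceeds as in Case (2), using a cycle of $\Lambda'$ through $a$ and $b$, which exists by 2-connectivity. The intersection of the two consecutive neighbourhoods contains a $\mathcal{B}$-coset because $gA(\{a,b\})\subset gA(\Lambda')\cap gvA(\Lambda')$, as $v$ commutes with $a$ and $b$.

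\textbf{Main obstacle.} The delicate step is the hypothesis of Lemma~\ref{lem:thickness_transitivity}, which asks that \emph{any} two intersecting elements of $\mathcal{C}$ have an intersection containing a $\mathcal{B}$-element. That fails in general, so we do not invoke the lemma verbatim. Instead we use only the junction points, as sketched above: choose each junction point in a common $\mathcal{B}$-coset $hA(\{a,b\})$, and start and end the inner chains with cycle-cosets that contain that whole $\mathcal{B}$-coset. We must also check that within a single factor, chains can be forced to begin at a cycle through $a$ and $b$. This follows by prepending $hA(Q_i)$, since it intersects the first cycle-coset of the inner chain.
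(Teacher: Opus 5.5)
Your skeleton is the paper's: induction along Theorem~\ref{thm:clique_indecomp_graphs}, the amalgam of Example~\ref{ex:thick_spaces}(1) in case (2), and Lemmas~\ref{lem:thick_family_left_cosets} and~\ref{lem:thick_neighborhood} in case (1). The paper then simply invokes Lemma~\ref{lem:thickness_transitivity}. You are right that its intersection hypothesis fails here, since two cycle-coset neighbourhoods can meet in a set containing no element of $\mathcal{B}$. So the junctions do need an argument.

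\textbf{The cycle through $a$ and $b$ need not exist.} Your repair rests on a false claim. 2-connectivity gives a cycle through two non-adjacent vertices $a,b$, but not an \emph{induced} one. Only induced cycles are allowed: this is your own definition of $\mathcal{A}_\Lambda$, and a chorded cycle does not give a copy of $A(C_n)$, which is what the theorem is used for.

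Let $X$ have vertices $a,b,p_1,p_2,q_1,q_2$ and edges $ap_i$, $bq_j$, $p_iq_j$ for $i,j\in\{1,2\}$. It is bipartite and unpinched. But $a$ and $b$ have degree $2$, so any cycle through both uses all six vertices, and then $p_1$ has a chord to one of the $q_j$.

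Add a vertex $v$ adjacent exactly to $a$ and $b$. This gives a triangle-free unpinched $\Lambda$ with $\Lambda\setminus\{v\}=X$ unpinched. So your case (1) applies with $\mathrm{link}(v)=\{a,b\}$, and the required cycle of $\Lambda'$ does not exist.

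No better choice of cycles in $\Lambda'$ can help. Up to bounded distance, two distinct cosets of $A(\Lambda')$ coarsely intersect inside a coset of a subgroup of $A(\mathrm{link}(v))=\langle a,b\rangle$. So two consecutive elements built on cycles $Q,Q'$ of $\Lambda'$ lying in different cosets can share a $\mathcal{B}$-coset only if it is a coset of $\langle a,b\rangle$, which forces $a,b\in Q$. Hence a chain using only cycles of $\Lambda'$ never leaves a coset of $A(\Lambda')$.

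Case (2) fails in the same way. The graph on $q_1,q_2,w,p,t,r_1,r_2$ with edges $wq_1,wq_2,wr_1,wr_2,pq_1,pr_1,pr_2,tq_2,tr_1,tr_2$ is triangle-free and unpinched, and has no induced cycle through $q_1$ and $q_2$. Attach a new path from $q_1$ to $q_2$ and let $K$ be the path $q_1wq_2$. This is a valid instance of alternative (2), with pieces a cycle and this $7$-vertex graph, but the only non-adjacent pair in $K$ is $\{q_1,q_2\}$.

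A minor slip: $gA(\{a,b\})$ is not contained in $gvA(\Lambda')$, which is disjoint from $gA(\Lambda')$. It only lies in its $1$-neighbourhood.

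\textbf{The prepending step.} This step has the same defect in another guise. $hA(Q_i)$ and the first element of the inner chain share the junction point, but mere intersection is exactly what you observed to be insufficient. Your inductive statement (any two \emph{points} are joined by a chain) gives no control over which element a chain starts with.

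To close the argument you need two things. First, a stronger inductive statement: any two \emph{elements} of the family can be joined by a chain. Second, a combinatorial input producing the junctions. By the examples above, this must use induced cycles of the whole graph that lie in no single piece:
\begin{itemize}
\item in case (1), cycles through $v$; in the example, $vap_1q_1b$ shares the non-adjacent pair $\{a,q_1\}$ with the $4$-cycle $ap_1q_1p_2$ of $X$;
\item in case (2), cycles crossing $K$.
\end{itemize}

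One clean way to organise this is to aim to prove the following: the induced cycles of a triangle-free unpinched graph, two being adjacent when they share two non-adjacent vertices, form a connected graph. Together with the fact you noted, that every vertex lies on an induced cycle, this would give the theorem directly, even for the cosets themselves. Follow a word $s_1\cdots s_m$ from $x$ to $y$. At each vertex $x_i$ of the path, pass through cosets $x_iA(R)$ along a path of cycles $R$ from a cycle containing $s_i$ to one containing $s_{i+1}$.
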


\begin{proof}[Proof of \Cref{thm:RAAGs_are_thick}]
     Throughout the proof, all right-angled Artin groups are equipped with the word metric with respect to their standard generating set. We argue by induction on $n:=|V(\Gamma)|$. There are no unpinched graphs with at most $3$ vertices. The only unpinched graph on $4$ vertices is the square, so there is nothing to prove. Let $\Gamma$ be a triangle-free, unpinched graph on $n \geq 5$ vertices. By \Cref{thm:clique_indecomp_graphs}, we distinguish three cases.

\medskip\noindent
\textbf{$\Gamma$ is a cycle:} there is nothing to prove.

\medskip\noindent
\textbf{$\Gamma$ satisfies $(1)$ of \Cref{thm:clique_indecomp_graphs}:} there exists a vertex $v \in \Gamma$ such that the subgraph induced by $\Lambda := \Gamma \setminus \{v\}$ is unpinched. By the induction hypothesis, $H:=A(\Lambda) \leq A(\Gamma)$ is $(\mathcal{A},\mathcal{B})$-thick, where $\mathcal{A}$ is the family of $(n-1)$-neighborhoods of left cosets (in $H$) of the parabolic subgroups induced by cycles in $\Lambda$, and $\mathcal{B}$ is the family of left cosets (in $H$) of the parabolic subgroups induced by pairs of non-adjacent vertices in $\Lambda$. Enlarge both families by allowing left translations by elements of $A(\Gamma)$. Therefore, every left coset of $H$ is $(\mathcal{A},\mathcal{B})$-thick.

\medskip \noindent
Since $\Gamma$ is unpinched, $v$ admits two non-adjacent neighbors $a,b$ in $\Lambda$. Let $\mathcal C$ be the family of $1$-neighborhoods of left cosets of $H$, and let $\mathcal D$ be the family of left cosets of the subgroup $\langle a,b \rangle$. By \Cref{lem:thick_family_left_cosets}, $A(\Gamma)$ is $(\mathcal{C},\mathcal{D})$-thick. Indeed, $ H \cap vHv^{-1}$ contains $\langle a,b \rangle$. $\mathcal D$ is a subfamily of $\mathcal B$, so $A(\Gamma)$ is $(\mathcal{C},\mathcal{B})$-thick. Moreover, by the previous paragraph, every left coset $gH$ is $(\mathcal{A},\mathcal{B})$-thick, so by \Cref{lem:thick_neighborhood}, $gH^{+1}$ is $(\mathcal{A}^{+1},\mathcal{B})$-thick. Therefore, every element of $\mathcal C$ is $(\mathcal{A}^{+1},\mathcal{B})$-thick, and by the transitivity of \Cref{lem:thickness_transitivity}, we get that $A(\Gamma)$ is $(\mathcal{A}^{+1},\mathcal{B})$-thick. Since $\mathcal{A}$ is the family of $(n-1)$-neighborhoods of left cosets (in $H$) of the parabolic subgroups induced by cycles in $\Lambda$, we are done.
    
\medskip \noindent
\textbf{$\Gamma$ satisfies $(2)$ of Theorem~\ref{thm:clique_indecomp_graphs}:} there exists an induced subgraph $K \leq \Gamma$ such that $K$ separates $\Gamma$, and there exists a partition
$$
\Gamma \setminus K = \bigsqcup_{i=1}^p \Gamma_i,
$$
such that, for each $i$, the subgraph induced by $\Gamma_i \cup K$, which we denote by $\Lambda_i$, is unpinched. In particular, $A(\Gamma)$ splits as an amalgamated free product over $A(K)$ with vertex-groups $A(\Lambda_i)$. Hence $A(\Gamma)$ is $(\mathcal{C},\mathcal{D})$-thick, where $\mathcal{C}$ is the family of left cosets of the subgroups $A(\Lambda_i)$, for $1 \leq i \leq p$, and $\mathcal{D}$ is the family of left cosets of $A(K)$, see \Cref{ex:thick_spaces}.

\medskip \noindent
Let $\mathcal{A}$ and $\mathcal{B}$ be the families from the statement of the theorem. Since $K$ is not complete, it contains two non-adjacent vertices, so every $D \in \mathcal{D}$ contains some $B \in \mathcal{B}$. Therefore $A(\Gamma)$ is $(\mathcal{C},\mathcal{B})$-thick. Moreover, by the induction hypothesis, every $C \in \mathcal{C}$ is $(\mathcal{A},\mathcal{B})$-thick. Therefore, by \Cref{lem:thickness_transitivity}, we conclude that $A(\Gamma)$ is $(\mathcal{A},\mathcal{B})$-thick.
\end{proof}

\subsection{Reduction to the triangle-free case}\label{sec:trianglefreeReduc}

\noindent
In this section, we show that, to prove Theorem~\ref{thm:BigIntro}, it suffices to consider right-angled Artin groups defined by triangle-free graphs. More precisely, we prove the following.

\begin{prop}\label{prop:reduction_triangle_free}
    Let $\Gamma$ be a finite unpinched graph. If $A(\Gamma)$ admits a coarsely separating family of subexponential growth, then so does $A(\Gamma')$ for some finite triangle-free unpinched graph $\Gamma'$.
\end{prop}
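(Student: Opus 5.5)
The plan is to argue by induction on $|V(\Gamma)|$, using thickness (Proposition~\ref{prop:Thick} together with Proposition~\ref{prop:coarse_sep_family_converse}) to transfer a separating family from $A(\Gamma)$ to $A(\Lambda)$ for a proper induced unpinched subgraph $\Lambda$. The induction should stop exactly at the minimal unpinched graphs, which are triangle-free. Indeed, if $\Gamma$ is unpinched, it is not chordal: by Dirac's theorem a non-complete chordal graph is separated by a clique. So $\Gamma$ contains an induced cycle of length $\geq 4$, which is itself unpinched. Hence every unpinched graph contains an induced cycle $C_n$ with $n \geq 4$, and the minimal unpinched graphs are precisely these cycles. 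It therefore suffices to show the following: if $\Gamma$ is unpinched, not a cycle, and $A(\Gamma)$ is coarsely separated by a family $\mathcal{Z}$ of subexponential growth, then so is $A(\Lambda)$ for some proper induced unpinched $\Lambda \leq \Gamma$.

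\textbf{Join case.} First I would treat the case where $\Gamma$ is a join $\Gamma_1 * \Gamma_2$, for instance a wheel. Then $A(\Gamma)=A(\Gamma_1)\times A(\Gamma_2)$. By the product results of \cite{bensaid2024coarse}, a product of two factors of exponential growth is not coarsely separable by a subexponential family, so one factor, say $\Gamma_2$, must be complete. In that case $\Gamma_1$ is unpinched: a clique separating $\Gamma_1$ would, together with $\Gamma_2$, give a clique separating $\Gamma$. It remains to check that coarse separation of $A(\Gamma_1)\times\mathbb{Z}^k$ by a subexponential family forces coarse separation of $A(\Gamma_1)$. For this I would use Example~\ref{ex:thick_spaces}(2) and Proposition~\ref{prop:Thick} on the slices $A(\Gamma_1)\times B(y,r)$, followed by Proposition~\ref{prop:coarse_sep_family_converse}. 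The $\mathbb{Z}^k$ directions have polynomial growth, so a subexponential family cannot coarsely contain a slice $A(\Gamma_1)\times\{y\}$, which has exponential growth since $\Gamma_1$ is not complete.

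\textbf{Non-join case.} Next I would take $\Gamma$ not a join. I would choose a maximal proper induced unpinched subgraph $Q' \leq \Gamma$ containing a fixed induced cycle, and set $H=A(Q')$. The aim is to realise $A(\Gamma)$ as $(\mathcal{C},\mathcal{B})$-thick, where:
\begin{itemize}
\item $\mathcal{C}$ consists of bounded neighbourhoods of cosets of $A(\Lambda)$ for finitely many proper unpinched induced $\Lambda$;
\item $\mathcal{B}$ consists of cosets of $\langle a,b\rangle\simeq F_2$ for non-adjacent pairs $a,b$.
\end{itemize}
There are two ways to obtain this. The first is the amalgam of Example~\ref{ex:thick_spaces}(1), when some unpinched $Q'$ separates $\Gamma$, via Lemma~\ref{lem:separating_clique_indecomp_subgraph}. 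The second is Lemma~\ref{lem:thick_family_left_cosets} applied to $H=A(\Gamma\setminus\{v\})$, when $\Gamma\setminus\{v\}$ is unpinched; here $H\cap vHv^{-1}\supseteq A(\mathrm{link}(v))$, and this contains $F_2$ because $\mathrm{link}(v)$ is not complete. Thickness then follows by iterating Lemma~\ref{lem:thickness_transitivity}. Since elements of $\mathcal{B}$ have exponential growth, alternative (2) of Proposition~\ref{prop:Thick} cannot occur, so $\mathcal{Z}$ coarsely separates $\mathcal{C}$. Each $A(\Lambda)$ contains Morse quasi-trees based at every point, coming from its $F_2$ subgroups, with uniform control functions. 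So Proposition~\ref{prop:coarse_sep_family_converse} yields a single coset, and hence $A(\Lambda)$ itself, that is coarsely separated.

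\textbf{Main obstacle.} The hard step is the combinatorial one: proving that every unpinched non-join graph that is not a cycle admits such a decomposition into proper unpinched pieces glued along non-complete subgraphs. Theorem~\ref{thm:clique_indecomp_graphs} does this only in the triangle-free case, and the figure of counterexamples shows its literal statement fails with triangles. My first attempt would be to rerun the maximal-$Q'$ argument but weaken the output: I would allow an intermediate sequence $Q'=\Lambda_0\subset\Lambda_1\subset\dots\subset\Gamma$ of unpinched subgraphs, each obtained by adding a vertex $v$ whose neighbourhood in the previous one is not complete (Lemma~\ref{lemma:adding_path_to_clique_indecomp}(2)). I would then apply Lemma~\ref{lem:thick_family_left_cosets} at each stage, with the non-complete neighbourhood supplying the $F_2$ in the intersection. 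If that fails on specific triangle-rich configurations, the fallback is to pass through Theorem~\ref{thm:CoarseSepGP} and work with $\Gamma(\nu)$ instead.
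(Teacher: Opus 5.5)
\textbf{There is a genuine gap, at exactly the step you flag as the main obstacle.} The decomposition you need is not merely unproven: it is false once triangles are present. So no argument that stays inside induced subgraphs of $\Gamma$ can work.

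Take the triangular prism: triangles $a_1a_2a_3$ and $b_1b_2b_3$, plus the edges $a_ib_i$.
\begin{itemize}
\item It is unpinched, since it is $3$-connected and removing either triangle leaves the other one.
\item It is not a join, since its complement is a $6$-cycle. It is not a cycle.
\item Every vertex-deleted subgraph is pinched: in $\Gamma\setminus\{a_1\}$ the edge $\{b_2,b_3\}$ cuts off $b_1$.
\end{itemize}
Hence its only proper induced unpinched subgraphs are the three induced squares $Q_{ij}=\{a_i,a_j,b_j,b_i\}$, and any two of them meet in a single edge $\{a_k,b_k\}$. This breaks every mechanism in your proposal:
\begin{itemize}
\item Distinct cosets of the $A(Q_{ij})$, or their bounded neighbourhoods, coarsely intersect only along cosets of parabolic subgroups over complete subgraphs, such as $\langle a_k,b_k\rangle\cong\mathbb{Z}^2$ or $\langle a_1,a_2\rangle$. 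These have polynomial growth, so alternative (2) of Proposition~\ref{prop:Thick} can never be excluded.
\item Your vertex-by-vertex chain fails at once: adding $a_3$ (resp.\ $b_3$) to $Q_{12}$ sees the complete neighbourhood $\{a_1,a_2\}$ (resp.\ $\{b_1,b_2\}$).
\item The fallback via Theorem~\ref{thm:CoarseSepGP} does not help either. The prism has induced squares, so $\Gamma(\nu)$ is not hyperbolic and Theorem~\ref{thm:IntroHyp} is unavailable.
\end{itemize}
A secondary point: your join case invokes a non-separability theorem for products of arbitrary exponential-growth factors. The results of \cite{bensaid2024coarse} used in this paper only cover specific products, such as $\mathbb{F}_2\times\mathbb{F}_2$ or $X\times Y$ with $X$ already in $\mathfrak{M}_{\mathrm{exp}}$. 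So that step would also need its own argument.

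\textbf{The missing idea is to leave the class of induced subgraphs of $\Gamma$ by changing the graph within its quasi-isometry class.} The paper proceeds as follows.
\begin{itemize}
\item Pick a vertex $v$ lying in a triangle. For each non-adjacent pair in $\mathrm{link}(v)$, choose a shortest path in $\Gamma\setminus\{v\}$.
\item Glue $2N$ copies of $\Gamma$ along the star of $v$. By \cite[Section~11]{MR2421136}, the resulting $A(\widetilde{\Gamma})$ is quasi-isometric to $A(\Gamma)$, so coarse separability transfers to it.
\item The subgraph $K$ formed by the star and the copies of the paths is not complete and separates $\widetilde{\Gamma}$. The amalgam over $A(K)$, together with Propositions~\ref{prop:Thick} and~\ref{prop:coarse_sep_family_converse}, passes the separation to some $A(\Lambda)$ with $\Lambda=\Gamma_i\cup K$.
\item In $\Lambda$, every non-adjacent pair of neighbours of $v$ has a bypass outside the copy of $\Gamma$, so $\Lambda\setminus\{v\}$ is still unpinched. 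Lemma~\ref{lem:thick_family_left_cosets}, with $H=A(\Lambda\setminus\{v\})$ and $\langle a,b\rangle\cong\mathbb{F}_2$ inside $H\cap vHv^{-1}$, moves the separation to $A(\Lambda\setminus\{v\})$.
\item $A(\Lambda\setminus\{v\})$ has strictly fewer triangles than $\Gamma$.
\end{itemize}
The induction is therefore on the number of triangles, not vertices, and the vertex count grows. The final triangle-free $\Gamma'$ is in general not a subgraph of $\Gamma$. This extra room is precisely what graphs like the prism require.
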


\begin{proof}
    Let $v \in \Gamma$ be a vertex contained in a triangle, and let $L$ denote its link. For each pair of non-adjacent vertices $x,y \in L$, consider a shortest path (hence induced) from $x$ to $y$ in $\Gamma \setminus \{v\}$. Such a path exists since $\Gamma$ is unpinched, hence in particular is not separated by a single vertex. Let these paths be denoted by $\alpha_1,\dots,\alpha_N$.

\noindent
    \begin{figure}[h!]
  \centering
  \includegraphics[width=0.2\textwidth]{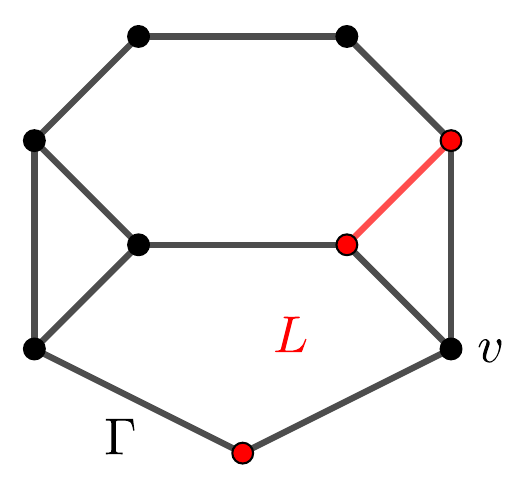}
  \caption{$\Gamma$ and link of $v$}
  \label{figure Gamma and link v}
\end{figure}  
\begin{figure}[h!]
  \centering
  \includegraphics[width=0.25\textwidth]{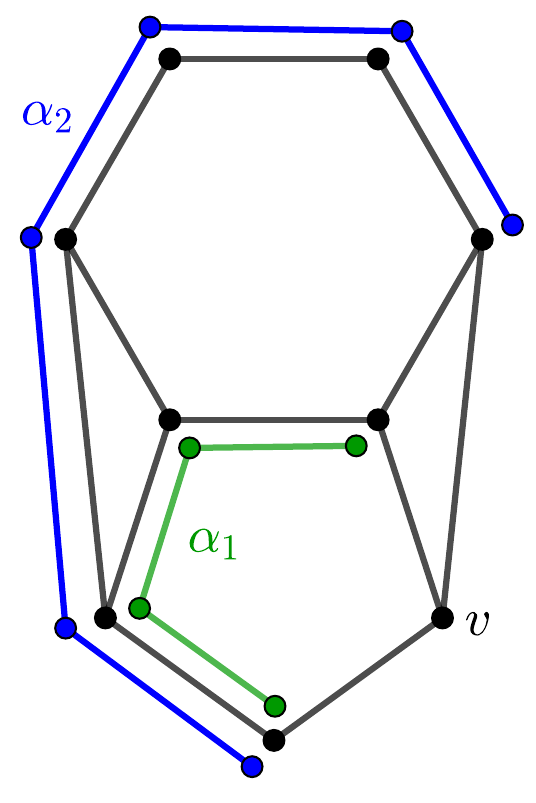}
  \caption{the paths $\alpha_i$}
  \label{figure the paths}
\end{figure}  

\noindent
    We consider $2N$ copies of $\Gamma$, denoted $\Gamma_1,\dots,\Gamma_N$ and $\Gamma_1',\dots,\Gamma_N'$. We glue all these copies along the star of $v$, and denote the resulting graph by $\widetilde{\Gamma}$. For each $i=1,\dots,N$, let $\gamma_i$ (resp.\ $\gamma_i'$) be the path corresponding to $\alpha_i$ in $\Gamma_i$ (resp.\ $\Gamma_i'$). Let $K \leq \widetilde{\Gamma}$ denote the subgraph induced by $\{v\} \cup L$ together with these $2N$ paths.

    \medskip \noindent
    Note that $A(\Gamma)$ and $A(\widetilde{\Gamma})$ are quasi-isometric \cite[Section~11]{MR2421136}. Hence, if $A(\Gamma)$ admits a coarsely separating subset of subexponential growth, then so does $A(\widetilde{\Gamma})$ by \cite[Lemma~2.3]{bensaid2024coarse}.

    \medskip \noindent
    Since the subgraph $K$ separates $\widetilde{\Gamma}$, it follows that $A(\widetilde{\Gamma})$ decomposes as an amalgamated free product over the subgroup $A(K)$ with vertex-groups $A(\Gamma_i \cup K)$ and $A(\Gamma_i' \cup K)$. Moreover, $A(K)$ has exponential growth since $K$ is not complete. Therefore, by \Cref{prop:Thick} and \Cref{prop:coarse_sep_family_converse} (see also the proof of \Cref{cor:coarse_sep_amalgams}), some left coset of one of the vertex-groups is coarsely separated by a subset of subexponential growth.
    \begin{claim}\label{claim vertex groups thick and have less triangles}
    For every $i$, the graphs $\Gamma_i \cup K$ and $\Gamma_i' \cup K$ are unpinched.
    \end{claim}
    \begin{proof}
    For every $i$, both $\Gamma_i \cup K$ and $\Gamma_i' \cup K$ are obtained from a copy of $\Gamma$ by gluing all but one of the paths. Since the endpoints of each path are glued to non-adjacent vertices, adding each path preserves being unpinched by Lemma~\ref{lemma:adding_path_to_clique_indecomp}.
    \end{proof}
    \begin{figure}[htbp]
  \centering
  \includegraphics[width=0.3\textwidth]{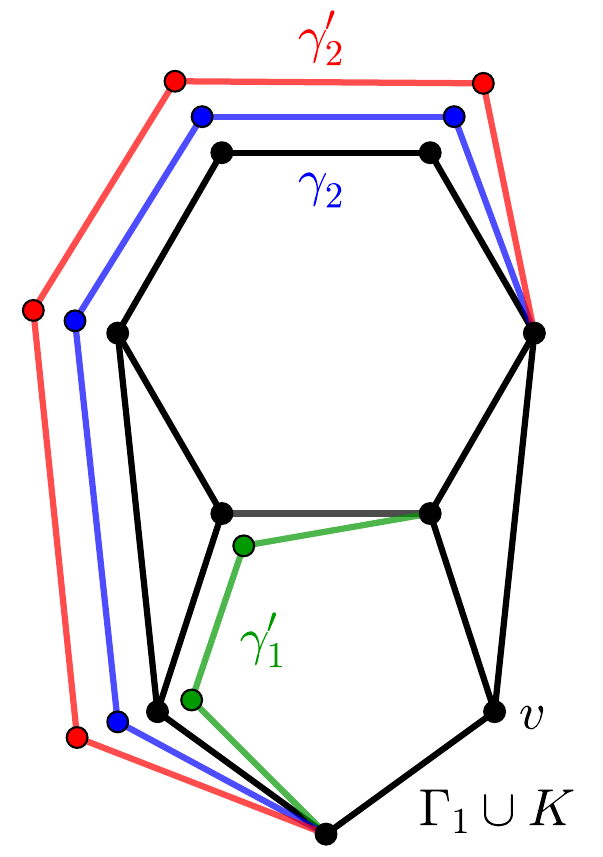}
  \caption{$\Gamma_1 \cup K$}
  \label{figure Gamma1 +K}
\end{figure}  
    \begin{claim}\label{claim removing v keeps graph thick}
    For every $i$, both $(\Gamma_i \cup K) \setminus \{v\}$ and $(\Gamma_i' \cup K) \setminus \{v\}$ are unpinched.
    \end{claim}
    \begin{proof}
        The same argument applies to both graphs, so let us consider the graph $(\Gamma_i \cup K) \setminus \{v\}$. Let us just denote $\Gamma_i$ by $\Gamma$. The graph $(\Gamma \cup K) \setminus \{v\}$ is obtained from a copy of $\Gamma \setminus \{v\}$ by gluing all but one of the paths. For convenience, we denote these paths by $\gamma_1, \dots, \gamma_p$, where $p = 2N - 1$. The graph $(\Gamma \cup K) \setminus \{v\}$ is clearly not complete. It is connected because $\Gamma$ is unpinched, hence not separated by the vertex $v$, so $\Gamma \setminus \{v\}$ is connected.

        \medskip \noindent
        Let $C \leq (\Gamma \cup K) \setminus \{v\}$ be a complete subgraph. Since $C$ has diameter $\leq 2$, it is either contained in some $\gamma_j$, or contained in $\Gamma \setminus \{v\}$. If $C$ is contained in some $\gamma_j$, then it is clearly not separating.

        \medskip \noindent
        Suppose that $C$ is contained in $\Gamma \setminus \{v\}$, and let $x, y \in \left((\Gamma \cup K) \setminus \{v\}\right) \setminus C$. Since the endpoints of each $\gamma_j$ are non-adjacent, they cannot lie in the same complete subgraph. Thus, any interior point of $\gamma_j$ is connected to $\Gamma \setminus \{v\}$ by a path that avoids $C$, so it suffices to treat the case where both $x$ and $y$ lie in $\Gamma \setminus \{v\}$. Since $C$ does not separate $\Gamma$, there exists a shortest (and hence induced) path from $x$ to $y$ in $\Gamma$ that avoids $C$. If this path passes through $v$, let $a$ and $b$ be the neighbours of $v$ along the path. Since the path is induced, $a$ and $b$ are not adjacent. Hence, one of the glued paths, say $\gamma_k$, joins $a$ to $b$. Since $C \subset \Gamma \setminus \{v\}$ and $\gamma_k$ lies outside $\Gamma \setminus \{v\}$, except at its endpoints $a$ and $b$, the path $\gamma_k$ avoids $C$. Replacing the subpath $a \to v \to b$ with $\gamma_k$ therefore yields a path from $x$ to $y$ avoiding $C$.
    \end{proof}
    
    \begin{figure}[htbp]
    \centering
    \includegraphics[width=0.35\textwidth]{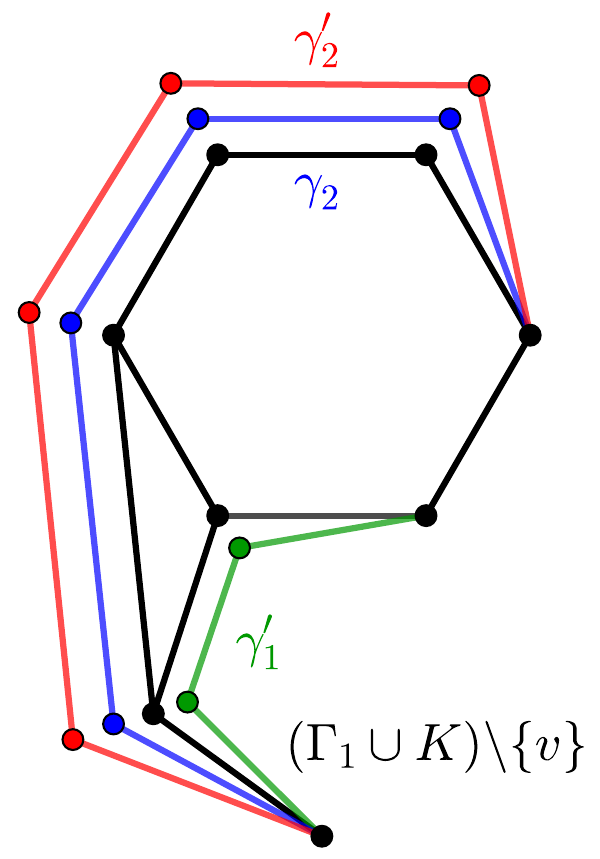}
    \caption{$(\Gamma_1 \cup K) \setminus \{v\}$}
    \label{figure Gamma1 +K minus v}
    \end{figure}  
    
    \begin{claim}\label{claim nb triangles decreasing}
    For every $i$, both $(\Gamma_i \cup K) \setminus \{v\}$ and $(\Gamma_i' \cup K) \setminus \{v\}$ contain strictly fewer triangles than $\Gamma$.
    \end{claim}
    \begin{proof}
    For every $i$, both $\Gamma_i \cup K$ and $\Gamma_i' \cup K$ are obtained from a copy of $\Gamma$ by gluing all but one of the paths. Since each added path is induced and its endpoints are non-adjacent, this operation creates no new triangle. Hence both graphs contain exactly the same number of triangles as $\Gamma$. Removing $v$ then deletes all triangles containing it. Since $v$ was chosen to lie in at least one triangle, the total number of triangles strictly decreases.
    \end{proof}

    \noindent
    Finally, let $\Lambda$ be $\Gamma_i \cup K$ or $\Gamma_i' \cup K$ for some $i$. Since both $\Lambda$ and $\Lambda \setminus \{v\}$ are unpinched, $v$ admits two non-adjacent neighbors $a,b$ in $\Lambda \setminus \{v\}$. Let $\mathcal A$ be the family of $1$-neighborhoods of left cosets of $A(\Lambda \setminus \{v\})$, and let $\mathcal B$ be the family of left cosets of the subgroup $\langle a,b \rangle$. By \Cref{lem:thick_family_left_cosets}, $A(\Lambda)$ is $(\mathcal{A},\mathcal{B})$-thick. Indeed, $ A(\Lambda \setminus \{v\}) \cap v  A(\Lambda \setminus \{v\})  v^{-1}$ contains $\langle a,b \rangle$. Therefore, by \Cref{prop:Thick} and \Cref{prop:coarse_sep_family_converse}, if a family of subexponential growth coarsely separates $A(\Lambda)$, it must coarsely separate the $1$-neighborhood of some left coset of $A(\Lambda\setminus \{v\})$. Therefore, some some left coset of $A(\Lambda\setminus \{v\})$ is coarsely separated by a family of subexponential growth.  If $\Lambda\setminus \{v\}$ is triangle-free, we are done. Otherwise, we repeat the process with a vertex $u$ that lies in a triangle. Since the number of triangles strictly decreases at each step, the process terminates after finitely many steps.
\end{proof}

\subsection{Proof of Theorem~\ref{thm:BigIntro}}

\noindent
In this section, we combine all the results we have obtained so far in order to prove the main result of our article, namely Theorem~\ref{thm:BigIntro}. 

\begin{proof}[Proof of Theorem~\ref{thm:BigIntro}.]
The implications $(iii) \Rightarrow (ii) \Rightarrow (i)$ are clear. Our goal is to prove $(i) \Rightarrow (iii)$. Assume for contradiction that there exists a graph $\Gamma$ that is neither complete nor separable by a complete subgraph but such that $A(\Gamma)$ is coarsely separable by a family of subexponential growth. As a consequence of Proposition~\ref{prop:reduction_triangle_free}, we can assume that $\Gamma$ is triangle-free. Then, we know from Theorem~\ref{thm:RAAGs_are_thick} that $A(\Gamma)$ is $(\mathcal{A},\mathcal{B})$-thick where $\mathcal{A}$ is a collection of neighbourhoods of cosets of parabolic subgroups defined by cycles in $\Gamma$ and where $\mathcal{B}$ is a collection of neighbourhoods of free subgroups. If $\mathcal{Z}$ denotes a family of subexponential growth coarsely separating $A(\Gamma)$, it follows from Proposition~\ref{prop:Thick} that $\mathcal{Z}$ coarsely separates $\mathcal{A}$, and then from Proposition~\ref{prop:coarse_sep_family_converse} that $\mathcal{Z}$ coarsely separates some member of $\mathcal{A}$.

\medskip \noindent
Thus, we have prove that there exists some $n \geq 4$ such that $A(C_n)$ is coarsely separable by a family of subexponential growth, where $C_n$ denotes the cycle of length $n$. According to Theorem~\ref{thm:CoarseSepGP}, one can find a weight $\nu \geq 3$ such that the graph product $C_n(\nu)$ is coarsely separable by a family of subexponential growth.  
Notice that $C_4(\nu)$ cannot be coarsely coarsely by a family of subexponential growth. Indeed, $C_4(\nu)$ decomposes as $(A \ast B) \oplus (C \ast D)$ for some finite groups $A,B,C,D$ of cardinality $\geq 3$, and consequently is quasi-isometric to $\mathbb{F}_2 \times \mathbb{F}_2$. But we know from \cite[Theorem~1.3]{bensaid2024coarse} that $\mathbb{F}_2 \times \mathbb{F}_2$ cannot be coarsely separated by a family of subexponential growth. Therefore, we must have $n \geq 5$. This implies that $C_n(\nu)$ is hyperbolic (see \cite{GPHyp} (and also \cite[Section~8.3]{QM})). Then, \cite{BGT26hyp} implies that $C_n(\nu)$ is either multi-ended, or virtually a surface group, or splittable over a two-ended group. But we know from \cite{GPends} that $C_n(\nu)$ is one-ended, from \cite[Proposition~4.5]{BGT26hyp} that it is not virtually a surface group, and from \cite[Corollary~4.10]{BGT26hyp} that it does not split over a two-ended group. Hence the desired contradiction.
\end{proof}

\section{Other applications}

\noindent
In this final section, we record some applications of Theorem~\ref{thm:BigIntro} and of the techniques we have used to prove it.

\subsection{Complete-cut-decompositions}\label{section:JSJ}

\noindent
As a consequence of Theorem~\ref{thm:BigIntro}, there exist strong restrictions on possible coarse embeddings between right-angled Artin groups. In order to motivate this assertion, we need the following definition:

\begin{definition}
A \emph{complete-cut-decomposition} $(T,(V_s)_{s \in V(T)})$ of a graph $X$ is the data of a tree $T$ and a collection of induced subgraphs $V_s \leq X$ indexed by $V(T)$ such that:
\begin{itemize}
	\item $\{V_s \mid s \in V(T)\}$ covers $X$, i.e.\ $E(X)= \bigcup_{s \in V(T)} E(V_s)$;
	\item for every $s \in V(T)$, $V_s$ has no complete cut;
	\item for all adjacent $r,s \in V(T)$, $V_r \cap V_s$ is a complete cut of $X$ properly contained in both $V_r$ and $V_s$.
\end{itemize}
\end{definition}

\noindent
According to \cite{CodimRAAG}, every finite graph admits a complete-cut-decomposition. Then:

\begin{cor}\label{cor:RAAGJSJ}
Let $\Phi,\Psi$ be two finite graphs. Fix two complete-cut-decompositions $(R, (U_s)_{s \in V(R)})$ and $(S, (V_s)_{s \in V(S)})$ respectively of $\Phi$ and $\Psi$. If there exists a coarse embedding $\varphi : A(\Phi) \to A(\Psi)$, then, for every $r \in V(R)$ such that $U_r$ is not complete, there exists some $s \in V(S)$ such that $\varphi$ sends $\langle U_r \rangle$ in a neighbourhood of a coset of $\langle V_s \rangle$. 
\end{cor}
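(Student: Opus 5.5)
The idea is to use the splitting of $A(\Psi)$ along the complete-cut-decomposition $(S,(V_s))$: the group acts on the associated Bass--Serre tree. Vertex stabilisers are conjugates of $\langle V_s \rangle$, and edge stabilisers are conjugates of the free abelian groups $\langle V_r \cap V_s \rangle$. Since $S$ is a tree and adjacent pieces meet in complete cuts, iterated amalgamation gives
$$A(\Psi)= \ast_{\langle V_r \cap V_s\rangle} \langle V_s \rangle,$$
a tree of groups over $S$. Geometrically, a Cayley graph of $A(\Psi)$ is then a tree of spaces. Its vertex spaces are cosets $g\langle V_s\rangle$, and its edge spaces are cosets of free abelian groups, which have polynomial growth. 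Removing (a neighbourhood of) one edge space therefore separates the Cayley graph into at least two pieces, each containing deep points.

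Fix $r$ with $U_r$ not complete. Since $U_r$ has no complete cut and is not complete, Theorem~\ref{thm:BigIntro} shows that $A(U_r)=\langle U_r\rangle$ is not coarsely separable by a family of subexponential growth. Moreover, this holds uniformly for all cosets, by equivariance. Set $Y:=\varphi(\langle U_r\rangle)$, a coarsely connected subset. I would first show the following. For every edge space $E=g\langle V_r\cap V_s\rangle$, at most one of the complementary components of $E^{+L}$ contains points of $Y$ far from $E$.

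Suppose this fails. Pull back along the coarse embedding $\varphi$. The family $\{\varphi^{-1}(E^{+L})\}$ has subexponential (indeed polynomial) growth, because $\varphi$ is a coarse embedding and edge spaces have polynomial growth. So this family would coarsely separate $\langle U_r\rangle$, a contradiction. Uniformity in $L$ and in the depth parameter comes from the bounded-degree structure and from $\varphi$ having fixed control functions. This is the step I expect to be the main obstacle. One must check that coarse separation pulls back properly: preimages of $k$-connected components of the complement are coarsely connected pieces of $\langle U_r\rangle \setminus \varphi^{-1}(E^{+L})$ lying far from the separator. One must also arrange that the two sides stay distinct after pulling back, i.e.\ that $\varphi$ cannot jump across the thickened edge space. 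Both points follow from the upper control function $\rho_+$ once $L$ exceeds $\rho_+(k)$.

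Given this, I orient each edge of the Bass--Serre tree $T$ towards the unique side where $Y$ has deep points. If $Y$ lies in a bounded neighbourhood of $E$, choose either side; then $Y$ lies near a coset of $\langle V_r\cap V_s\rangle$, which is contained in a coset of $\langle V_s\rangle$, and we are done.

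Consider the orientation just defined on $T$. Two consecutive edges at a vertex cannot both point away from that vertex in incompatible directions. Hence, if the orientation does not stabilise, there is an infinite ray of edges $e_1,e_2,\dots$ with $Y$ deep beyond each $e_n$. I would rule this out as follows. Fix a point $y_0\in Y$; it lies at bounded distance from only finitely many edge spaces along any ray, by properness of the tree of spaces. For $n$ large, $y_0$ lies deep on the wrong side of $e_n$. Then $Y$ has deep points on both sides of $e_n$, contradicting the claim above. Consequently, all edges point to a sink vertex $g\langle V_s\rangle$. Every point of $Y$ lies in that vertex space or within bounded distance of an adjacent edge space, and edge spaces are contained in the vertex space itself. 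Thus $\varphi(\langle U_r\rangle)$ lies in a uniform neighbourhood of $g\langle V_s\rangle$, as required.
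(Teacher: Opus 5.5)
Your proof is correct and follows the same route as the paper. Both arguments combine the non-separability of $\langle U_r\rangle$ from Theorem~\ref{thm:BigIntro} with the splitting of $A(\Psi)$ over free abelian edge groups that comes from the complete-cut-decomposition. The only difference is in the final step: the paper cites \cite[Proposition~1.13]{bensaid2024coarse}, whereas you reprove that step by hand, pulling back the polynomial-growth family of thickened edge spaces along $\varphi$ (with affine $\rho_+$) and then running the orientation/sink argument on the Bass--Serre tree.
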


\begin{proof}
On the one hand, because $U_r$ is not complete and has no complete cut, it follows from Theorem~\ref{thm:BigIntro} that $\langle U_r \rangle$ is not coarsely separable by a family of subexponential growth. On the other hand, to the complete-cut-decomposition $(S,(V_s)_{s \in V(S)})$ of $\Psi$, corresponds a graph-of-groups decomposition of $A(\Psi)$ whose edge-groups are free abelian and whose vertex-groups are the $\langle V_s \rangle$, $s \in V(S)$. The desired conclusion follows from \cite[Proposition~1.13]{bensaid2024coarse}. 
\end{proof}

\noindent
As a concrete application of Corollary~\ref{cor:RAAGJSJ}, let us mention a case where we can prove that there does not exist a coarse embedding between two given right-angled Artin groups. 

\begin{ex}
Let $\Phi$ and $\Psi$ be the two graphs given by Figure~\ref{EmbRAAG}. Clearly, a complete-cut-decomposition of $\Psi$ amounts to describing $\Psi$ as an amalgam between an octogon $\Omega$ and a triangle $\Delta$. The graph $\Phi$ does not contain a complete cut. As a consequence of Corollary~\ref{cor:RAAGJSJ}, if $A(\Phi)$ coarsely embeds into $A(\Psi)$, then $A(\Phi)$ must coarsely embed into either $\langle \Omega \rangle$ or $\langle \Delta \rangle$. But $A(\Phi)$ cannot coarsely embed into $\langle \Omega \rangle$ because it has a larger asymptotic dimension, and it cannot coarsely embed into $\langle \Delta \rangle$ because it has exponential growth. We conclude that $A(\Phi)$ does not coarsely embed into $A(\Psi)$. 
\end{ex}
\begin{figure}
\begin{center}
\includegraphics[width=0.9\linewidth]{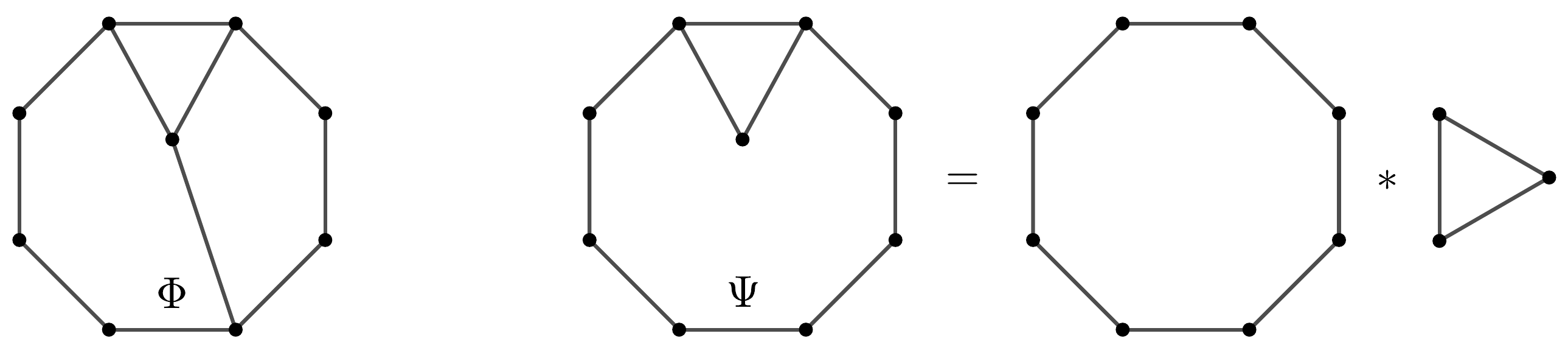}
\caption{There does not exist a coarse embedding $A(\Phi) \to A(\Psi)$.}
\label{EmbRAAG}
\end{center}
\end{figure}

\subsection{Thickness again and non-separability of other classes of groups}\label{sec:thickAgain}

\noindent
We showed in \cite[Theorem 3.5]{bensaid2024coarse} that if a metric space $X$ admits a persistent family with $\delta$-cuts of sizes growing exponentially, then $X \in \mathcal{M}_{exp}$ and for any coarsely-connected metric space $Y$, $X \times Y \in \mathfrak{M}_{exp}$. We improve this for groups in $\mathfrak{M}_{exp}$ that admit a coarse embeddeding of $T_3$. 

\begin{cor}\label{cor:stability_Mexp_product}
Let $X$ be a homogenous metric space in $\mathfrak{M}_{exp}$. If there exists a coarse embedding $T_3 \to X$, then for any coarsely-connected metric space $Y$, $X \times Y \in \mathfrak{M}_{exp}$.
\end{cor}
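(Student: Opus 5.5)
The plan is to apply the thickness machinery of Example~\ref{ex:thick_spaces}(2) to the product $X \times Y$, in the style of Proposition~\ref{prop:ThickNotSeparable}. Fix $r \geq 1$ at least the coarse connectivity constant of $Y$. Let $\mathcal{A}$ be the family of subsets $X \times B_Y(y,r)$ with $y \in Y$, and let $\mathcal{B}$ be the family of slices $X \times \{y\}$. Then $X \times Y$ is $(\mathcal{A},\mathcal{B})$-thick. Indeed, two points of $Y$ are joined by an $r$-chain $y_0,\dots,y_n$, and consecutive balls $B_Y(y_i,r)$ and $B_Y(y_{i+1},r)$ both contain $y_{i+1}$, so the corresponding members of $\mathcal{A}$ share the slice $X \times \{y_{i+1}\}$. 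Each member of $\mathcal{A}$ is coarsely connected, with a uniform constant, and lies at uniformly bounded Hausdorff distance from the slice $X \times \{y\}$, which is isometric to $X$.

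Suppose $\mathcal{Z}$ is a family of subexponential growth that coarsely separates $X \times Y$. Proposition~\ref{prop:Thick} gives two alternatives. In the first, $\mathcal{Z}$ coarsely separates $\mathcal{A}$. In the second, some $Z \in \mathcal{Z}$ coarsely contains a slice $X \times \{y\}$. The second alternative is excluded by growth. $X$ is homogeneous and contains a coarsely embedded $T_3$, so balls in $X$ grow exponentially, uniformly in the centre. A set coarsely containing a copy of $X$ therefore has exponential relative growth, and enlarging the neighbourhood by a bounded amount does not affect subexponential growth.

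In the first alternative I will invoke Proposition~\ref{prop:coarse_sep_family_converse}. Its hypothesis needs, for every $A \in \mathcal{A}$ and every $a \in A$, a coarse embedding $(T_2,o) \hookrightarrow (A,a)$ with uniform control functions. These come from homogeneity: translate the given embedding $T_3 \to X$ so that its image passes near $a$, keep a rooted binary subtree, and then move vertically inside $B_Y(y,r)$, at bounded cost, to reach the slice. Proposition~\ref{prop:coarse_sep_family_converse} then yields a single $Z$ and a single $A = X \times B_Y(y,r)$ with $Z$ coarsely separating $A$. Project $Z$ to the slice $X \times \{y\}$. This is a bounded perturbation, so the projection $Z'$ still has subexponential growth and coarsely separates $X$. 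The constants $k$ and $L$ stay uniform, and coarse separation as defined above is insensitive to bounded Hausdorff perturbations of both the separated set and the separating set. This contradicts $X \in \mathfrak{M}_{\mathrm{exp}}$.

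I expect the main obstacle to be bookkeeping the uniformity of all constants, especially in the step that transfers separation from $A$ to $X$. Proposition~\ref{prop:coarse_sep_family_converse} only yields separation of one fixed $A$ by one $Z$, which is exactly what is needed, and the two sides of the projection step are at bounded distance, so I expect this to be routine. A second delicate point is how "homogeneous" should be read: I will take it to mean that the isometry group acts coboundedly. This makes both the uniform $T_2$-embeddings and the uniform exponential growth of balls immediate.
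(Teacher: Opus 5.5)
Your argument is essentially the paper's proof. It uses the same thickness of $X\times Y$ relative to thickened slices and slices (the paper first replaces $Y$ by a connected graph and takes $r=1$), Proposition~\ref{prop:Thick} with the second alternative excluded by exponential growth of the slices, Proposition~\ref{prop:coarse_sep_family_converse} via homogeneity and the embedded $T_3$, and a final transfer to $X$, which the paper justifies only by noting that $X\times B_Y(y,1)$ is quasi-isometric to $X$. One small fix is needed in that last step: first replace $Z$ by $Z\cap A^{+L}$, which leaves $A\setminus Z^{+L}$ unchanged and only increases distances to $Z$, because projecting all of $Z$ onto the slice is not a bounded perturbation and can destroy subexponential growth (points of $Z$ spread far apart in the $Y$-direction may project onto all of $X$).
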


\begin{proof}
Since coarsely connected metric spaces are quasi-isometric to connected graphs, we may assume that $Y$ is a connected graph. Let $\mathcal{A}$ be the collection of subgraphs $X \times B_Y(y,1)$, and let $\mathcal{B}$ be the collection of subgraphs $X \times \{y\}$, as in \Cref{ex:thick_spaces}. Then $X \times Y$ is $(\mathcal{A},\mathcal{B})$-thick.

\medskip \noindent
    Suppose that $X \times Y$ is coarsely separated by a family $\mathcal{Z}$ of subexponential growth. Then, by \Cref{prop:Thick}, $\mathcal{Z}$ coarsely separates $\mathcal{A}$, since elements of $\mathcal{B}$ have exponential growth. By the homogeneity of $X$ and the existence of a coarsely embedded copy of $T_3$, we may apply \Cref{prop:coarse_sep_family_converse}. It follows that there exists $Z \in \mathcal{Z}$ that coarsely separates a copy $X \times B_Y(y,1)$ for some $y \in Y$. Since $X \times B_Y(y,1)$ is quasi-isometric to $X$, we deduce that $X$ is coarsely separated by a subset of subexponential growth, contradicting the fact that $X \in \mathfrak{M}_{exp}$.
\end{proof}

\begin{cor}\label{cor:stability_Mexp_leftcosets}
    Let $G$ be a locally compact compactly generated group, $S$ a compact generating set, and let $H$ be a closed compactly generated subgroup such that $H \in \mathfrak{M}_{exp}$ and such that there exists a coarse embedding $T_3 \to H$. 
    
    If, for every $s \in S$, the subgroup $H \cap sHs^{-1}$ has exponential relative growth in $G$, then $G \in \mathfrak{M}_{exp}$. In particular, if $H$ is normal, then $G \in \mathfrak{M}_{exp}$.
\end{cor}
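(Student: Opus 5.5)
The argument follows the same template as Corollary~\ref{cor:stability_Mexp_product}, with Lemma~\ref{lem:thick_family_left_cosets} providing the thick structure in place of the product structure.

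First, replace $G$ by a Cayley graph with respect to $S$; this is a connected graph quasi-isometric to $G$, but possibly of unbounded degree if $G$ is not discrete. Since the statement concerns locally compact groups, I would instead work with a locally finite graph quasi-isometric to $G$ obtained from a discretization (a uniformly discrete net in $G$, adjacent when at distance at most a fixed constant). Coarse separation and relative growth are quasi-isometry invariant up to changing constants, by \cite[Lemma~2.3]{bensaid2024coarse}. Lemma~\ref{lem:thick_family_left_cosets} goes through verbatim for a compact generating set: its proof only writes $y=xs_1\cdots s_n$. Hence $G$ is $(\mathcal{A},\mathcal{B})$-thick, where
\begin{itemize}
\item $\mathcal{A}=\{(gH)^{+1}\}$, and
\item $\mathcal{B}=\{gK_s\}$ with $K_s=H\cap sHs^{-1}$.
\end{itemize}
Each element of $\mathcal{A}$ is coarsely connected, uniformly, since $H$ is compactly generated.

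Now suppose a family $\mathcal{Z}$ of subexponential growth coarsely separates $G$. By Proposition~\ref{prop:Thick}, either $\mathcal{Z}$ coarsely separates $\mathcal{A}$, or some $Z\in\mathcal{Z}$ coarsely contains some $gK_s$. The second alternative is excluded: $K_s$ has exponential relative growth in $G$, so a neighbourhood of $Z$ containing $gK_s$ would have exponential growth, whereas a bounded neighbourhood of a subexponential family still has subexponential growth. So $\mathcal{Z}$ coarsely separates $\mathcal{A}$.

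Next I apply Proposition~\ref{prop:coarse_sep_family_converse}. A coarse embedding $T_3\to H$ contains a coarsely embedded $(T_2,o)$. Translating by elements of $H$, and then by $g$, yields coarse embeddings $(T_2,o)\hookrightarrow((gH)^{+1},a)$ at every basepoint $a$ of $gH$, with uniform control functions. Here I use that $H$ is quasi-isometrically embedded in $(gH)^{+1}$ with the induced metric, and that points of the $1$-neighbourhood are within distance $1$ of $gH$. It follows that some $Z$ coarsely separates some $(gH)^{+1}$. This set is quasi-isometric to $H$, because the inclusion $H\hookrightarrow G$ is a coarse embedding and the intrinsic path metric on $(gH)^{+1}$ is coarsely equivalent to the word metric of $H$. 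Pulling back, $H$ is coarsely separated by a family of subexponential growth, contradicting $H\in\mathfrak{M}_{exp}$.

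\textbf{Main obstacle.} The delicate point is the last transfer: coarse separation of $(gH)^{+1}$, measured with the metric of $G$, must be turned into coarse separation of $H$ with its own metric. Growth must also be measured consistently, since the growth of $Z\cap (gH)^{+1}$ is relative to $G$. Because $H$ is only coarsely embedded, not necessarily undistorted, I would handle this using the intrinsic coarse connectivity of $(gH)^{+1}$ together with \cite[Lemma~2.3]{bensaid2024coarse}-type invariance under coarse equivalences. If this fails, I would need to assume $H$ undistorted, or interpret $\mathfrak{M}_{exp}$ for $H$ with the induced metric.

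For the final claim, if $H$ is normal then $K_s=H\cap sHs^{-1}=H$. Its relative growth in $G$ is exponential, because $T_3$ coarsely embeds in $H$ and hence in $G$. The hypothesis is therefore satisfied, and the general case gives $G\in\mathfrak{M}_{exp}$.
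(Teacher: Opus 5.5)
Your proposal is correct and follows the paper's proof. The steps are the same: thickness via Lemma~\ref{lem:thick_family_left_cosets}, then Proposition~\ref{prop:Thick} with alternative (2) excluded by the exponential relative growth of the $K_s$, then Proposition~\ref{prop:coarse_sep_family_converse} using translated copies of the tree, and finally a transfer back to $H$. The paper dispatches that last transfer in one line: since $H$ is coarsely embedded in $G$, $\mathcal{Z}$ still has subexponential growth with respect to the metric of $H$.

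The transfer you flag as the main obstacle is harmless even when $H$ is distorted, because only the Lipschitz direction of $H \hookrightarrow G$ is used. Pull back to $W=\{h\in H \mid d_G(gh,Z)\le L\}$. Paths in $H$ that avoid $W$, taken with respect to a generating set $S_H$, become $k$-paths in $gH\setminus Z^{+L}$ as soon as $k$ is at least the $S$-length of the elements of $S_H$. You can arrange this, since the connectivity constant $k$ in Proposition~\ref{prop:Thick} can be taken as large as you like. Moreover $|B_H(w,R)\cap W|\le |B_G(gw,\rho_+(R))\cap Z^{+L}|$ with $\rho_+$ affine, so the pulled-back family still has subexponential growth, and no undistortedness assumption is needed.
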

\begin{proof}
    Let $\mathcal A$ and $\mathcal B$ be the families as in \Cref{lem:thick_family_left_cosets}. Then $G$ is $(\mathcal{A},\mathcal{B})$-thick. If a family $\mathcal Z$ of subexponential growth coarsely separates $G$, then, by \Cref{prop:Thick} and \Cref{prop:coarse_sep_family_converse}, it coarsely separates the $1$-neighborhood of some left coset of $H$. Since $H$ is coarsely embedded in $G$, the family $\mathcal Z$ still has subexponential growth with respect to the metric of $H$. This contradicts the fact that $H \in \mathfrak{M}_{exp}$.
\end{proof}

\noindent
We now turn to applications of the previous statements to concrete classes of examples.

\paragraph{Free products with amalgamation.} 

\begin{cor}\label{cor:coarse_sep_amalgams}
    Let $G = A_1 *_B A_2$. Suppose $A_1,A_2 \in \mathfrak{M}_{exp}$ and that there exist coarse embeddings $T_3 \to A_1$ and $T_3 \to A_2$. Then $G \in \mathfrak{M}_{exp}$ if and only if $B$ has exponential relative growth.
\end{cor}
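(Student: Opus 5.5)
We prove the two implications separately; the whole argument rests on the thickness of amalgams from Example~\ref{ex:thick_spaces}(1).

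\emph{If $B$ has exponential relative growth, then $G \in \mathfrak{M}_{\mathrm{exp}}$.} Let $X$ be a Cayley graph of $G$. Let $\mathcal{A}$ be the family of left cosets $gA_1, gA_2$, and let $\mathcal{B}$ be the family of left cosets $gB$. By Example~\ref{ex:thick_spaces}(1), $X$ is $(\mathcal{A},\mathcal{B})$-thick. After enlarging to uniform neighbourhoods if needed, the elements of $\mathcal{A}$ are uniformly coarsely connected, since $A_1,A_2$ are finitely generated.

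Suppose a family $\mathcal{Z}$ of subexponential growth coarsely separates $G$. Apply Proposition~\ref{prop:Thick}. Alternative (2) fails: some $Z \in \mathcal{Z}$ would coarsely contain a coset $gB$, so $Z^{+L}$ would have exponential relative growth. But a bounded-degree graph has $|B(z,R)\cap Z^{+L}|$ bounded by a constant times $V_{\mathcal{Z}}(R+L)$, so $Z^{+L}$ still has subexponential growth. Hence alternative (1) holds, and $\mathcal{Z}$ coarsely separates $\mathcal{A}$.

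Next we apply Proposition~\ref{prop:coarse_sep_family_converse}. Its hypothesis needs a coarse embedding $(T_2,o)\hookrightarrow (A,a)$ with uniform control functions for every $A\in\mathcal{A}$ and $a\in A$. We get this by translating the given embedding $T_3\to A_i$ by elements of $A_i$; by homogeneity of $A_i$, any point can serve as the root. We also use that $T_2$ (the rooted binary tree) embeds in $T_3$. The proposition then yields a single coset $gA_i$ and a single $Z$ coarsely separating it.

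Finally, we pull this back to $A_i$. Since $A_i$ is finitely generated, it is undistorted-coarsely embedded in $G$. So $Z\cap (gA_i)^{+L}$, projected to $gA_i$, has subexponential growth in the intrinsic metric of $A_i$, and it still coarsely separates $A_i$. This contradicts $A_i\in\mathfrak{M}_{\mathrm{exp}}$. The main obstacle is this transfer step: one must check that the coarse-separation constants $k,L,D$ survive the change from the metric of $G$ to the intrinsic metric of $A_i$. It follows because the coarse embedding has uniform control functions.

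\emph{If $B$ has subexponential relative growth, then $G\notin\mathfrak{M}_{\mathrm{exp}}$.} We show the single coset $B$ coarsely separates $G$ (we may assume $B$ has infinite index on both sides, i.e.\ $B\ne A_i$; otherwise $G=A_j$ and the amalgam is degenerate).

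Work in the Bass--Serre tree. Removing the edge stabilised by $B$ splits $G$ into the elements whose normal forms start in $A_1\setminus B$ and those that start in $A_2\setminus B$. Any path in the Cayley graph from one side to the other passes within bounded distance of $B$, by the normal form theorem.

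Both sides contain points arbitrarily far from $B$. For instance, take $a\in A_1\setminus B$ far from $B$ within $A_1$. Such an $a$ exists because $A_1$ has exponential growth (it contains a coarse $T_3$) while $B$ has subexponential relative growth, so $B$ is not coarsely all of $A_1$. The same argument works on the $A_2$ side. Hence $\{B\}$ is a coarsely separating family of subexponential growth, as required.
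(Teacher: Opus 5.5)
Your proof is correct. For the direction ``$B$ has exponential relative growth $\Rightarrow G\in\mathfrak{M}_{\mathrm{exp}}$'' you follow the paper's argument: thickness of the amalgam from Example~\ref{ex:thick_spaces}(1), then Proposition~\ref{prop:Thick} with alternative (2) ruled out by growth, then Proposition~\ref{prop:coarse_sep_family_converse}, then passage to the intrinsic metric of $A_i$. You also spell out two hypotheses the paper leaves implicit. First, $Z^{+L}$ keeps subexponential growth. Second, translating the given $T_3\to A_i$ provides rooted binary trees at every point of every coset, with uniform control functions. You call the transfer step the main obstacle. Choosing, as the paper does, a generating set of $G$ that contains generating sets of $A_1,A_2$ makes it immediate: intrinsic paths in $gA_i$ are then paths in $X$, and intrinsic distances dominate ambient ones.

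Where you genuinely differ is the converse. The paper simply cites \cite[Proposition~1.13]{bensaid2024coarse} for it. It calls this the ``if'' direction, although it is the ``only if'' one; your labelling is the correct one. You instead prove by hand that the single subset $B$ coarsely separates $G$. The edge $B$ of the Bass--Serre tree splits $G\setminus B$ into two half-spaces, and every Cayley-graph edge joining them has its endpoints in a bounded neighbourhood of $B$. Each half contains points arbitrarily far from $B$, because $A_i$ has exponential relative growth (via the coarse $T_3$) while $B$ does not. This makes the corollary self-contained at the cost of a few lines, whereas the citation buys brevity.

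One small imprecision: ``infinite index on both sides, i.e.\ $B\neq A_i$'' is not an equivalence. It is harmless, because your final growth argument shows $B$ is not coarsely dense in either $A_i$. That is all you use, and it excludes $B=A_i$ in particular.
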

\begin{proof}
The ``if'' direction follows from \cite[Proposition 1.13]{bensaid2024coarse}.
    Let $X$ be a Cayley graph of $G$ with respect to a finite generating set. By enlarging the generating set, we can assume that the left cosets of $A_1$, $A_2$, and $B$ are connected subgraphs. Let $\mathcal{A}$ be the family of left cosets of $A_1$ and $A_2$, and let $\mathcal{B}$ be the family of left cosets of $B$. Suppose that $X$ is coarsely separated by a family $\mathcal Z$ of subexponential growth. Since $X$ is $(\mathcal{A},\mathcal{B})$-thick, so by \Cref{prop:Thick}, either there exists $Z \in \mathcal Z$ that coarsely contains some left coset of $B$, and we get a contradiction, or $\mathcal Z$ coarsely separates the family $\mathcal{A}$. Suppose that the latter holds. By \Cref{prop:coarse_sep_family_converse}, $\mathcal Z$ coarsely separates some left coset $g \cdot A_i$ for $i \in \{1,2\}$. Since finitely generated subgroups are coarsely embedded in the ambient group, $\mathcal Z$ still has subexponential growth with respect to the subgroup metric of $A_i$. This contradicts the fact that $A_i \in \mathfrak{M}_{exp}$.
\end{proof}

\paragraph{A class of metabelian connected Lie groups.}

\begin{prop}
Consider a unimodular connected solvable Lie group of the form $G=\mathbb R^d\rtimes \mathbb R^k$, where $\mathbb R^k$ acts on $\mathbb R^d$ by diagonal matrices with positive entries. If $G$ has exponential growth, then $G\in  \mathfrak{M}_{exp}$.
\end{prop}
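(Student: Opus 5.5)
The plan is to apply Corollary~\ref{cor:stability_Mexp_leftcosets} (or directly Proposition~\ref{prop:Thick} together with Proposition~\ref{prop:coarse_sep_family_converse}) to a suitable closed normal subgroup of $G$ which is itself already known to lie in $\mathfrak{M}_{exp}$. Write the action as $t\cdot x = (e^{\alpha_1(t)}x_1,\dots,e^{\alpha_d(t)}x_d)$ for linear forms $\alpha_i$ on $\mathbb R^k$. Unimodularity means $\sum_i \alpha_i = 0$, and exponential growth means that some $\alpha_i$ is nonzero. The natural building blocks are the groups $\mathbb R^d\rtimes \mathbb R t$ for a line $\mathbb R t\subset \mathbb R^k$ on which $\sum\alpha_i(t)=0$ while some $\alpha_i(t)\neq 0$. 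Such a group is a unimodular group of the form $\mathbb R^{d_+}\times\mathbb R^{d_-}\times\mathbb R^{d_0} \rtimes \mathbb R$ with both contracting and expanding directions, that is, a group of Sol/horocyclic-product type. For these I expect to invoke \cite{bensaid2024coarse}: there, Sol-type groups (horocyclic products of hyperbolic spaces, possibly times a Euclidean factor) are shown to admit persistent families with exponentially growing cuts, hence lie in $\mathfrak{M}_{exp}$, and Corollary~\ref{cor:stability_Mexp_product} (with Euclidean factors $Y$) absorbs the neutral directions.

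Key steps, in order:
\begin{enumerate}
\item Choose $H=\mathbb R^d\rtimes \mathbb R t$ as above. It is closed and compactly generated, and normal only if $k=1$, so instead I would take $H=\mathbb R^d\rtimes L$ where $L=\ker(\cdot)$ restricted to a suitable hyperplane, or more simply use the product structure: split $\mathbb R^k = \mathbb R t\oplus W$.
\item Replace $G$ by $N:=\mathbb R^d\rtimes\mathbb R t$, which is normal in $G$ only when $W$ acts compatibly. Because all the actions are diagonal and commute, $\mathbb R^d\rtimes\mathbb R t$ is in fact normal in $G$: conjugating by $w\in W$ preserves $\mathbb R^d$ and fixes $t$. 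Hence the ``in particular'' clause of Corollary~\ref{cor:stability_Mexp_leftcosets} applies, provided that $N\in\mathfrak{M}_{exp}$ and that $T_3$ coarsely embeds into $N$.
\item $T_3$ embeds into $N$: the expanding direction together with $\mathbb R t$ contains a quasi-isometrically embedded real hyperbolic plane $\mathbb R\rtimes\mathbb R$, and this plane contains quasi-isometric copies of $T_3$.
\item $N\in\mathfrak{M}_{exp}$: this follows from the Sol-type results of \cite{bensaid2024coarse}, after splitting off the neutral Euclidean directions $\mathbb R^{d_0}$ (those with $\alpha_i(t)=0$) via Corollary~\ref{cor:stability_Mexp_product}. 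Here homogeneity of the Sol-type factor is used.
\end{enumerate}

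The main obstacle is choosing $t$. I need $\sum_i\alpha_i(t)=0$, which holds automatically by unimodularity, together with $\alpha_i(t)\neq0$ for some $i$. A generic $t$ does this, since finitely many nonzero forms cannot all vanish on an open set. Given such a $t$, there exist $i,j$ with $\alpha_i(t)>0>\alpha_j(t)$, which is exactly the two-sided condition needed for the Sol-type result. The remaining delicacy is verifying that the cited result of \cite{bensaid2024coarse} covers general rank-one unimodular $\mathbb R^{d}\rtimes\mathbb R$ with several distinct weights. If it does not, I would redo its persistent-cut argument directly, viewing $N$ as a horocyclic product of two Heintze groups $\mathbb R^{d_+}\rtimes\mathbb R$ and $\mathbb R^{d_-}\rtimes\mathbb R$, whose horospheres give cuts of exponential size.
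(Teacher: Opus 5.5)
Steps 1--3 of your proposal are fine, and your route differs from the paper's in the choice of subgroup. You take $N=\mathbb{R}^d\rtimes\mathbb{R}t$. This group is normal for the simplest reason: it is the preimage of the line $\mathbb{R}t$ under the homomorphism $G\to\mathbb{R}^k$. So your Step 1 remark that it is normal only when $k=1$ is wrong, as you realise in Step 2. The paper instead picks $a\in\mathbb{R}^k$ and two coordinates $e_i,e_j$ with $\alpha_i(a)>0>\alpha_j(a)$. It then passes to the three-dimensional group $H_0=(\mathbb{R}e_i\oplus\mathbb{R}e_j)\rtimes\mathbb{R}a$, a horocyclic product of two (rescaled) hyperbolic planes, which it places in $\mathfrak{M}_{exp}$ directly by \cite[Theorem~1.4]{bensaid2024coarse}.

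\textbf{The gap is your Step 4.} There you need $\mathfrak{M}_{exp}$ for the horocyclic product of the Heintze groups $\mathbb{R}^{d_+}\rtimes\mathbb{R}$ and $\mathbb{R}^{d_-}\rtimes\mathbb{R}$, where $d_\pm$ may exceed $1$ and several distinct weights may occur. This is more than the paper uses. You have not checked that the cited result covers it, and your fallback (redoing the persistent-cut argument with horospheres) is only announced, not carried out. As written, the one nontrivial input of your proof is unproven.

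The cheapest fix is to use the paper's smaller subgroup, but with care. $H_0$ is in general \emph{not} normal, either in $G$ or in $N$. Conjugating $(0,ra)$ by $(x,0)$ gives $(x-ra\cdot x,\,ra)$, which leaves $\mathbb{R}e_i\oplus\mathbb{R}e_j$ as soon as $a$ moves a third coordinate. For example, take $k=1$ with weights $(2,-1,-1)$: then every admissible choice of the pair $e_i,e_j$ fails. So the normal case of Corollary~\ref{cor:stability_Mexp_leftcosets}, which the paper's text invokes, does not literally apply, but its general clause does.

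\begin{itemize}
\item The plane $V=\mathbb{R}e_i\oplus\mathbb{R}e_j$ is normal in $G$, so $H_0\cap sH_0s^{-1}\supseteq sVs^{-1}=V$ for every $s\in G$.
\item $V$ is exponentially distorted in $G$, since conjugation by $(0,ra)$ rescales $e_i$ by $e^{r\alpha_i(a)}$. Hence $V$ has exponential relative growth.
\item $T_3$ coarsely embeds in $H_0$ through a hyperbolic plane $\mathbb{R}e_i\rtimes\mathbb{R}a$.
\end{itemize}

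Together these give $G\in\mathfrak{M}_{exp}$. With this fix, your intermediate group $N$ and the Euclidean splitting via Corollary~\ref{cor:stability_Mexp_product} become unnecessary. Your version would only be preferable if one already knew that horocyclic products of arbitrary Heintze groups lie in $\mathfrak{M}_{exp}$. In that case it has the merit of using a genuinely normal subgroup.
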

\begin{proof}
The fact that $G$ has exponential growth implies that there exists an element $a\in \mathbb R^k$ with an eigenvalue $\lambda>1$. Since $G$ is unimodular, the product of the eigenvalues of $a$ equals $1$, and therefore $a$ must also have an eigenvalue $\mu<1$.
It follows that $G$ contains a normal subgroup of the form $\mathbb R^2\rtimes \mathbb R$, where $\mathbb R$ acts via $\mathrm{diag}(\lambda^t,\mu^t)$. Endowed with a suitable left-invariant Riemannian metric, this subgroup $H$ can be viewed as a horocyclic product of two copies of $\mathbb H^2_{\mathbb R}$ (see for instance \cite[\S 3.2]{hume2022poincare}). In particular, $H$ contains a quasi-isometrically embedded copy of $\mathbb{H}^2_{\mathbb R}$, and hence of $T_3$. Therefore, by \cite[Theorem 1.4.]{bensaid2024coarse}, we have $H \in \mathfrak{M}_{exp}$. The conclusion now follows from \ref{cor:stability_Mexp_leftcosets}.
\end{proof}

\noindent

\paragraph{Mapping class groups.} 

\begin{thm}
Let $\Sigma$ be a closed surface of genus $g \geq 4$. Then $\mathrm{MCG}(\Sigma)\in \mathfrak{M}_{exp}$.
\end{thm}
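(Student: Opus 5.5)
We will use the thickness machinery of Proposition~\ref{prop:Thick} and Proposition~\ref{prop:coarse_sep_family_converse}, with pieces whose non-separability we already know. The natural pieces in $\mathrm{MCG}(\Sigma)$ are stabilisers of curves and of pairs of disjoint curves. The plan is to choose a finite collection of subgroups $P_1,\dots,P_m$ of $\mathrm{MCG}(\Sigma)$ such that:
\begin{itemize}
\item each $P_i$ is quasi-isometric to a product $X\times Y$ with $X\in\mathfrak{M}_{\exp}$ and $X$ containing a coarsely embedded $T_3$;
\item $\mathrm{MCG}(\Sigma)$ is $(\mathcal{A},\mathcal{B})$-thick, where $\mathcal{A}$ is the family of uniform neighbourhoods of left cosets of the $P_i$ and $\mathcal{B}$ consists of left cosets of subgroups of exponential growth.
\end{itemize}

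For the pieces, take a separating curve $c$ cutting $\Sigma$ into two subsurfaces of genus $\geq 2$ each; this is where the hypothesis $g\geq 4$ is used. Up to finite index and the twist along $c$, the stabiliser of $c$ is quasi-isometric to $\mathrm{MCG}(\Sigma_{2,1})\times\mathrm{MCG}(\Sigma_{g-2,1})\times\mathbb{Z}$, and this embedding into $\mathrm{MCG}(\Sigma)$ is undistorted by Masur--Minsky. Each factor contains a free group, in fact a Morse free subgroup, so coarsely embedded trees are available. To place each $P_i$ in $\mathfrak{M}_{\exp}$, I would use the fact from \cite{bensaid2024coarse} that $\mathbb{F}_2\times\mathbb{F}_2$-like products lie in $\mathfrak{M}_{\exp}$, combined with Corollary~\ref{cor:stability_Mexp_product}. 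The cleanest route is probably recursive: $\mathrm{MCG}(\Sigma_{h,1})$ for $h\geq2$ is acylindrically hyperbolic and homogeneous, so a product of two such groups should be treated directly via a thick decomposition. This is the step I expect to be the main obstacle. To handle it, I would first try to show that $\mathrm{MCG}(\Sigma_{2,1})\times\mathrm{MCG}(\Sigma_{2,1})$ is in $\mathfrak{M}_{\exp}$ by coarsely embedding $\mathbb{F}_2\times\mathbb{F}_2$ through a thick chain. Concretely, using Example~\ref{ex:thick_spaces}(2) together with Lemma~\ref{lem:thick_family_left_cosets}, each product is thick relative to cosets of $F\times\mathrm{MCG}(\Sigma_{2,1})$ with $F$ free. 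Iterating reduces the question to \cite[Theorem~1.3]{bensaid2024coarse} applied to $\mathbb{F}_2\times\mathbb{F}_2$, provided the intersections of consecutive pieces always contain a product of free groups. If this fails, a weaker target suffices: it is enough that each $P_i$ not be coarsely separated by subexponential families. That can be checked through persistent families of cuts as in \cite[Theorem~3.5]{bensaid2024coarse}, with the cuts coming from one free factor.

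For thickness of $\mathrm{MCG}(\Sigma)$ itself, I would use connectivity of the graph of separating curves of the chosen topological type, with edges joining disjoint curves; for genus $g\geq4$ this graph is connected. Given two curves $c,c'$ in the same orbit that are disjoint, the stabilisers $\mathrm{Stab}(c)$ and $\mathrm{Stab}(c')$ intersect in the stabiliser of $c\cup c'$. This intersection contains the mapping class group of a subsurface of genus $\geq1$ with boundary, so it contains a free group and has exponential growth. Now let $S$ be a finite generating set. For each $s\in S$, I would join $c$ to $sc$ by a chain of disjoint curves of this type, yielding finitely many subgroups $P_i$, namely conjugates of $\mathrm{Stab}(c)$. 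The argument of Lemma~\ref{lem:thick_family_left_cosets}, combined with Lemma~\ref{lem:thickness_transitivity}, then gives $(\mathcal{A},\mathcal{B})$-thickness with uniform neighbourhood radii.

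To conclude, suppose a family $\mathcal{Z}$ of subexponential growth coarsely separates $\mathrm{MCG}(\Sigma)$. The elements of $\mathcal{B}$ have exponential growth, so no $Z\in\mathcal{Z}$ can coarsely contain one of them. Proposition~\ref{prop:Thick} therefore shows that $\mathcal{Z}$ coarsely separates $\mathcal{A}$. Each piece contains uniformly coarsely embedded rooted trees at every point, by homogeneity and the presence of free subgroups, so Proposition~\ref{prop:coarse_sep_family_converse} shows that $\mathcal{Z}$ coarsely separates a single coset neighbourhood, hence a single $P_i$. Since $P_i$ is undistorted, $\mathcal{Z}$ still has subexponential growth with respect to the metric of $P_i$. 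This contradicts $P_i\in\mathfrak{M}_{\exp}$.
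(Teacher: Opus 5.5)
\paragraph{The thickness step fails at $g=4$.} This is a genuine gap. For $g=4$, the only splitting with genus $\geq 2$ on both sides is the $(2,2)$-splitting, and two disjoint separating curves of this type are always isotopic.

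To see this, suppose $c'$ is disjoint from $c$. Then $c'$ lies in one genus-$2$ half $A$ of $\Sigma\setminus c$. The side of $c'$ not containing $c$ is a subsurface of $A$ of genus $h\leq 2$, and the region of $A$ between $c$ and $c'$ has genus $2-h$. Requiring $h=2$ forces this region to be an annulus, i.e.\ $c'=c$.

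So your disjointness graph of $(2,2)$-curves has no edges when $g=4$. There is no chain from $c$ to $sc$, and no $(\mathcal{A},\mathcal{B})$-thickness comes out of your pieces. Your claim that this graph is connected for $g\geq 4$ is false exactly at $g=4$. For $g\geq 5$ it is true, since a $(2,g-2)$-curve around handles $4,5$ is disjoint from both $c$ and its image under the one Humphries generator crossing $c$. You would still need to include that argument or a reference.

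The requirement of genus $\geq 2$ on each side is what breaks the argument, and it is not needed. All you use about a factor is that it contains undistorted non-abelian free subgroups, and this already holds for the mapping class group of a genus-one surface with one boundary component. The paper accordingly proceeds as follows:
\begin{itemize}
\item its pieces are the products $\mathrm{MCG}(\Sigma_1)\times\mathrm{MCG}(\Sigma_2)$, with $\Sigma_1,\Sigma_2$ connected of genus $\geq 1$ and of varying topological type;
\item $\mathcal{B}$ consists of the factors;
\item the chains are built directly along words in the Lickorish generators, using $g\geq 4$ to find splittings supporting consecutive generators so that consecutive pieces share a factor.
\end{itemize}
No connectivity result for curve graphs is required.

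\paragraph{The pieces are not shown to lie in $\mathfrak{M}_{\exp}$.} This is the point you flag yourself: you list options but never commit to an argument. Your first option does work, and the proviso that intersections contain products of free groups is unnecessary. One way to carry it out:
\begin{itemize}
\item Take undistorted non-abelian free subgroups $F_1,F_2$ supported on the two sides of $c$, so that $H=\langle F_1,F_2\rangle\cong F_1\times F_2$.
\item Generate the side-preserving stabiliser of $c$ by mapping classes supported on one side.
\item For such a generator $s$, the intersection $H\cap sHs^{-1}$ contains the free subgroup of the other side, so it has exponential growth.
\item Lemma~\ref{lem:thick_family_left_cosets}, Corollary~\ref{cor:stability_Mexp_leftcosets} and \cite[Theorem~1.3]{bensaid2024coarse}, applied to $H$, then give the required non-separability.
\end{itemize}
The paper itself asserts this step in one line, via the presence of free subgroups.

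Once both points are repaired, your final deduction via Propositions~\ref{prop:Thick} and~\ref{prop:coarse_sep_family_converse} goes through.
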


\begin{proof}[Sketch of proof.]
When $g\geq 4$, we show that $G=\mathrm{MCG}(\Sigma)$ is $(\mathcal{A}, \mathcal{B})$-thick where 
$$\mathcal{A}:= \{ \mathrm{MCG} (\Sigma_1) \times \mathrm{MCG}(\Sigma_2) \mid \Sigma = \Sigma_1 \cup \Sigma_2, \ \Sigma_1,\Sigma_2 \text{ disjoint, connected of genus } \geq 1 \},$$ and $\mathcal{B}$ consists of factors of elements of $\mathcal{A}$.

\medskip \noindent
Let $a_1,\ldots, a_g$, $b_1,\ldots, b_g$ and $c_1,\ldots,c_{g-1}$ be the Lickorish generators of $G$. Recall that those are Dehn twists about simple curves $\alpha_i,\beta_i$ and $\gamma_i$, with the only intersecting pairs being $\{\alpha_i,\beta_i\}$, $\{\alpha_i,\gamma_i\}$ and $\{\alpha_{i+1},\gamma_i\}$. 

\medskip \noindent
Consider three such generators $s_1,s_2,s_3$, associated to the curves $\gamma_1$, $\gamma_2$, and $\gamma_3$.
Since $g\geq 4$, one can find a decompositions $\Sigma=\Sigma_1\sqcup \Sigma_2$, and $\Sigma=\Sigma_2\sqcup \Sigma_3$ where each subsurface $\Sigma_i$ are connected and have genius at least $1$, and such that $s_i$ is supported in $\Sigma_i$ (and not along its boundary). This can be verified by a straightforward case-by-case analysis (and the concrete definition of Lickorish generators). In particular, $s_1s_2$ and $s_2s_3$ belong to subgroups $G_1,G_2\in \mathcal{A}$, respectively, with $G_1\cap G_2\in \mathcal{B}$. Moreover, since $s_1\in G_1$, it follows that \[s_1G_1\cap s_1G_2=s_1(G_1\cap G_2),\] which is a coset of a group from $\mathcal{B}$.

\medskip \noindent
It follows that, for any pair of elements $x,y \in G$, one can connect them by a path in the Cayley graph with respect to the Lickorish generators such that any two consecutive vertices $x_i,x_{i+1}$ lie in a coset of some  subgroup $G_i\in \mathcal{A}$, and the intersection of two consecutive such cosets is a coset of a subgroup in $\mathcal{B}$. Hence, $G=\mathrm{MCG}(\Sigma)$ is $(\mathcal{A}, \mathcal{B})$-thick.

\medskip \noindent
To conclude using Corollary~\ref{cor:stability_Mexp_product}, it remains to note that the subgroups $\mathrm{MCG}(\Sigma_i)$ are undistorted (see for instance \cite[Corollary~5.3]{MR2264130}) and contain non-abelian free subgroups (which implies that the $\mathrm{MCG}(\Sigma_1) \times \mathrm{MCG}(G_2)$ are not coarsely separable by families of subexponential growth).
\end{proof}

\addcontentsline{toc}{section}{References}

\bibliographystyle{alpha}
{\footnotesize\bibliography{CoarseSepRAAGs}}

\Address

%

\end{document}